\documentclass[12pt]{amsart}
\usepackage{amsmath,amssymb,amsfonts}
\usepackage{enumerate}
\usepackage{amscd, amssymb, latexsym, amsmath, amscd}
\usepackage[all]{xy}
\usepackage{pb-diagram}

\pagestyle{myheadings}

\newtheorem{thm}[equation]{Theorem}
\newtheorem{prop}[equation]{Proposition}
\newtheorem{cor}[equation]{Corollary}
\newtheorem{con}[equation]{Conjecture}
\newtheorem{lemma}[equation]{Lemma}

\numberwithin{equation}{section}

% The following are only to be used in Math 

\newcommand{\Q}{\mathbb Q}
\newcommand{\Z}{\mathbb Z}
\newcommand{\R}{\mathbb R}

\newcommand{\G}{\mathfrak G}

\newcommand{\A}{\mathbb A}

\def\Hom{{\rm Hom}}
\def\Aut{{\rm Aut}}
\def\ord{{\rm ord}}
\def\G{{\rm G}}
\def\SL{{\rm SL}}
\def\disc{{\rm disc}}

\def\Sp{{\rm Sp}}
\def\Spin{{\rm Spin}}

\def\U{{\rm U}}

\def\GL{{\rm GL}}
\def\PGL{{\rm PGL}}

\def\Gal{{\rm Gal}}
\def\SO{{\rm SO}}
\def\Ind{{\rm Ind}}
\def\Sp{{\rm Sp}}
\def\sign{{\rm sign}}
\def\O{{\rm O}}
\def\Sym{{\rm Sym}}

\usepackage[all]{xy}

\def\A{{\mathbb A}}
\def\ws{{\widetilde{\Sp}}}
\def\wpi{{\widetilde{\pi}}}
\def\GG{{\mathbb G}}
\def\NN{{\mathbb N}}

\def\R{{\mathbb R}}

\def\Va{{V_{\alpha}}}

\def\Z{{\mathbb Z}}

\def\vr{{\varphi}}

\def\ep{{\epsilon}}

\def\AA{{\mathbb A}}

\def\QQ{{\mathbb Q}}
\def\RR{{\mathbb R}}

\def\SS{{\mathbb S}}
\def\CC{{\mathbb C}}

\def\wp{{W^{\perp}}}

\def\G{{\mathbb G}} 
\def\wg{{\widehat{G}}}

\setlength{\oddsidemargin}{0.2in}
\setlength{\evensidemargin}{0.2in}
\setlength{\textwidth}{6.1in}

\title[Restriction Problems for Classical Groups]{Symplectic local root numbers, central critical L-values, and restriction problems in the representation theory of classical groups}

\author{Wee Teck Gan, Benedict H. Gross and Dipendra Prasad}

\address{W.T.G.: Department of Mathematics, University of California at San Diego, 9500 Gilman Drive, La Jolla, 92093} \email{wgan@math.ucsd.edu}
\address{B.H.G: Department of Mathematics, Harvard University, 
Cambridge, MA 02138}\email{gross@math.harvard.edu}
\address{D.P.: School of Mathematics, Tata Institute of Fundamental
Research, Colaba, Mumbai-400005, INDIA}
\email{dprasad@math.tifr.res.in}

\begin{document}
\maketitle

\tableofcontents
\section{Introduction}

It has been almost 20 years since two of us proposed a rather speculative
approach to the problem of restriction of irreducible representations from $\SO_n$ to $\SO_{n-1}$ [GP1, GP2].  Our predictions depended on the Langlands parameterization of
irreducible representations, using $L$-packets and $L$-parameters.  Since then, there has
been considerable progress in the construction of local $L$-packets, as
well as on both local and global aspects of the restriction problem.  We
thought it was a good time to review the precise conjectures which
remain open, and to present them in a more general form, involving
restriction problems for all of the classical groups.
\vskip 10pt

Let $k$ be a local field equipped with an involution $\sigma$ with fixed field $k_0$. Let $V$ be a vector space over $k$ with a non-degenerate sesquilinear form and let $G(V)$ be the identity
component of the classical subgroup of $\GL(V)$ over $k_0$ which preserves this form.
There are four distinct cases, depending on whether the space $V$ is orthogonal,
symplectic, hermitian, or skew-hermitian. In each case, for certain 
non-degenerate
subspaces $W$ of $V$, we define a subgroup $H$ of the locally compact group
$G = G(V) \times G(W)$ containing the diagonally embedded subgroup $G(W)$, 
and a unitary representation $\nu$ of $H$. The local restriction
problem is to determine  
\[  d(\pi) = \dim_{\CC} \Hom_H(\pi \otimes \overline{\nu},\CC), \]
where $\pi$ is an irreducible complex representation of $G$.  
\vskip 5pt

The basic cases are when
$\dim V  - \dim W = 1$ or $0$, where $\nu$ is the trivial representation or a Weil representation respectively. When 
$\dim V - \dim W \geq 2$,
this restriction problem is also known as the existence and uniqueness of Bessel or Fourier-Jacobi models in the literature. As in [GP1] and [GP2], our
predictions involve the Langlands parameterization, in a form suggested by Vogan [Vo],
and the signs of symplectic root numbers.
\vskip 10pt

We show that the Langlands parameters for irreducible representations of classical groups (and for
genuine representations of the metaplectic group) are complex representations of the Weil-Deligne group of $k$, of specified dimension and with certain duality properties. We describe these parameters
and their centralizers in detail, before using their symplectic root numbers to construct certain distinguished characters of the component group. Our local conjecture states that
there is a unique representation $\pi$ in each generic Vogan $L$-packet, such that the
dimension $d(\pi)$ is equal to $1$. Furthermore,  this representation corresponds to a
distinguished character $\chi$ of the component group. For all other representations $\pi$ in the
$L$-packet, we predict that $d(\pi)$ is equal to $0$. The precise statements are contained in Conjectures \ref{conj-mult1} and \ref{conj-character}.
\vskip 10pt

Although this material is largely conjectural, we prove a number
of new results in number theory and representation theory along the way:
\vskip 5pt

\begin{enumerate}[(i)]
\item In Proposition \ref{P:orthogonal-ep}, we give  a generalization of
a formula of Deligne on orthogonal root numbers to the root numbers of conjugate
orthogonal representations. 
\vskip 5pt

\item We describe the L-parameters of classical groups,
and unitary groups in particular, in a much simpler way than currently exists in the literature; this is contained in Theorem \ref{T:classical-par}.

\vskip 5pt

\item We show in Theorem \ref{T:KR} that the irreducible representations of the metaplectic group can be classified in terms of the irreducible representations of odd special orthogonal groups; this largely follows from fundamental results of  Kudla-Rallis [KR], though the statement of the theorem did not appear explicitly in [KR]. 
\vskip 5pt

\item We prove two theorems (cf. Theorems \ref{T:SOVPS} and \ref{T:FJPS}) that allow us to show the uniqueness of general Bessel and Fourier-Jacobi models over non-archimedean local fields. More precisely, we show  that $d(\pi) \leq 1$ in almost all cases (cf. Corollaries \ref{C:orthogonal-hermitian}, \ref{C:symplectic} and \ref{C:skew-hermitian}), reducing this to the basic cases when $\dim W^{\perp} =0$ or $1$, which were recently established by  [AGRS] and [S].   The same theorems allow us to reduce our local conjectures to these basic cases, as shown in Theorem \ref{T:reduction}. 
 \end{enumerate}
 \vskip 10pt
 
 One subtle point about our local conjecture is its apparent dependence on the choice of an additive character $\psi$ of $k_0$ or $k/k_0$. Indeed, 
the choice of such a character $\psi$  is potentially used in 3 places:
\vskip 5pt
\begin{enumerate}[(a)]
\item the Langlands-Vogan parametrization (which depends on fixing a
quasi-split pure inner form $G_0$ of $G$, a Borel subgroup $B_0$ of $G_0$, and a
non-degenerate character on the unipotent radical of $B_0$);
\vskip 5pt
 
\item the definition of the distinguished character $\chi$ of the component group;
\vskip 5pt

\item the representation  $\nu$ of $H$  in the restriction problem.
\end{enumerate}
\vskip 5pt

\noindent Typically, two of the above depend on the choice of $\psi$, whereas the third one doesn't. More precisely, we have:
 \vskip 10pt
 
 \begin{enumerate}[-]
 \item in the orthogonal case, none of (a), (b) or (c) above depends on $\psi$; this explains why this subtlety does not occur in [GP1] and [GP2].
\vskip 5pt

\item in the hermitian case, (a) and (b) depend on the choice of $\psi: k/k_0 \to  \SS^1$, but (c) doesn't.
\vskip 5pt
\item in the symplectic/metaplectic  case,  (a) and (c) depend on $\psi: k_0 \to \SS^1$, but (b) doesn't.
\vskip 5pt

\item in the odd skew-hermitian case, (b) and (c) depend on $\psi: k_0 \to \SS^1$, but (a) doesn't.
\vskip 5pt

\item in the even skew-hermitian case, (a) and (c) depend on $\psi: k_0 \to \SS^1$ but (b) doesn't.
 \end{enumerate}
 \vskip 10pt
 
 Given this, we check in \S \ref{S:compatible} that the dependence on $\psi$ 
 cancels out in each case, so that our local conjecture is internally consistent with respect to changing $\psi$. There is, however, a variant of our local conjectures 
 which is less sensitive to the choice of $\psi$, but is slightly weaker. This variant is given in Conjecture \ref{C:refined2}. Finally, when all the data involved are unramified, we state a more refined conjecture; this is contained in Conjecture \ref{conj-unramified}. 
 
\vskip 10pt
After these local considerations, we study the global restriction problem, for cuspidal tempered representations
 of adelic groups. Here our predictions involve the central values of automorphic $L$-functions,
associated to a distinguished symplectic representation $R$ of the  $L$-group. More precisely, 
let $G=G(V) \times G(W)$ and assume that $\pi$ is an
irreducible cuspidal representation of $G({\Bbb A})$, where ${\Bbb A}$ 
is the ring of ad\`eles of a global field $F$. If the vector space 
$\Hom_{H(\Bbb A)}(\pi \otimes \bar{\nu}, \CC)$ 
is nonzero, our local conjecture implies that the global root number
$\epsilon(\pi,R,\frac{1}{2})$ is equal to 1. If we assume $\pi$ to be tempered,
then our calculation of global root numbers and the general conjectures of 
Langlands and Arthur predict that $\pi$ appears with multiplicity one in the
discrete spectrum of $L^2(G(F)\backslash G(\AA))$. We conjecture that the period
integrals on the corresponding space of functions 
\[  f \mapsto  \int_{H(k) \backslash H(\A)}  f(h)\cdot \overline{\nu(h)} \, dh \]
gives a nonzero element in $\Hom_{H(\Bbb A)}(\pi \otimes \bar{\nu}, \CC)$ 
if and only if the central critical $L$-value $L( \pi, R, \frac{1}{2})$ is 
nonzero. 
\vskip 5pt

This first form of our global conjecture is given in \S \ref{S:global-conj}, after which we examine the  global restriction problem in the framework of Langlands-Arthur's conjecture on the automorphic discrete spectrum, and formulate a more refined global conjecture in \S \ref{S:central}. For this purpose, we formulate an extension of Langlands' multiplicity formula for metaplectic groups; see Conjecture \ref{conj:arthur-meta}.

\vskip 5pt

One case in which all of these conjectures are known to be true is when 
$k = k_0 \times k_0$ is the split quadratic \'etale algebra over $k_0$,  and $V$ is a hermitian space over $k$ of dimension $n$ containing a codimension one nondegenerate subspace $W$.
Then
\[  G \cong \GL_n(k_0) \times \GL_{n-1}(k_0) \quad \text{and} \quad
H \cong \GL_{n-1}(k_0). \]
Moreover, $\nu$ is the trivial representation.  When $k_0$ is local, and $\pi$ is a 
generic representation of $G = \GL_n(k_0) \times \GL_{n-1}(k_0)$, the local theory of Rankin-Selberg integrals [JPSS], together with the multiplicity one theorems of [AGRS], [AG] and [SZ], shows that
\[  \dim \Hom_{H}(\pi, \CC) = 1. \]
This agrees with our local conjecture, as the Vogan packets for $G=\GL_n(k_0)
\times \GL_{n-1}(k_0)$ are singletons. If $k_0$ is global and $\pi$ 
is a cuspidal representation of $G({\Bbb A})$, then $\pi$
appears with multiplicity one in the discrete spectrum. The global theory of Rankin-Selberg integrals [JPSS] implies that the 
period integrals over $H(k)\backslash H({\Bbb A})$ give a nonzero 
linear form on $\pi$ if and only if  
\[  L(\pi,{\rm std}_n \otimes {\rm std}_{n-1},1/2) \ne 0, \] 
where $ L(\pi,{\rm std}_n \otimes {\rm std}_{n-1},s)$ denotes the tensor product L-function.
Again, this agrees with our global conjecture, since  in this case,  the local and global root numbers are all equal to 1, and 
\[   R= {\rm std}_n \otimes {\rm std}_{n-1} +  {\rm std}^{\vee}_n \otimes {\rm std}^{\vee}_{n-1}. \]
  
\vskip 5pt

In certain cases where the global root number $\epsilon =  -1$, so that the central value is zero, we also make a prediction for the first derivative in \S \ref{S:derivative}.   The cases we treat are certain orthogonal and hermitian cases, with $\dim W^{\perp} = 1$. We do not know if there is an analogous conjecture for the first derivative in the symplectic or skew-hermitian cases. 
\vskip 10pt

In a sequel to this paper,  we will
present some evidence for our conjectures, for groups of small rank and for certain discrete $L$-packets where one can calculate the distinguished character explicitly.

\vskip 10pt

\noindent{\bf Acknowledgments:}  W. T. Gan is partially supported by NSF grant DMS-0801071. 
B. H. Gross is partially supported by NSF grant DMS 0901102. 
 D. Prasad was partially  supported by a  Clay Math Institute fellowship during the course of this work. We also thank P. Deligne, S. Kudla, M. Reeder, D. Rohrlich, and J.-L. Waldspurger for their help.

 \vskip 15pt
 
\section{Classical groups and restriction of representations}  \label{S:classical}

Let $k$ be  a field, not of characteristic 2.  Let $\sigma$ be an
involution of $k$ having $k_0$ as the fixed 
field.  If $\sigma = 1$, then $k_0 = k$.  If $\sigma \not= 1$, $k$ is a 
quadratic extension of $k_0$ and  $\sigma$ is the nontrivial element in the 
Galois group $\text{Gal}(k/k_0)$. 

Let $V$ be a finite dimensional vector space over $k$.  Let 
\[ \langle,\rangle : V  \times V \to k \]
be a non-degenerate, $\sigma$-sesquilinear form on $V$, 
which is $\epsilon$-symmetric (for $\epsilon = \pm 1$ in $k^\times$):
\begin{center}
$$
\begin{array}{rcl}
\langle \alpha v + \beta w, u\rangle & = & \alpha \langle v, u\rangle+ \beta \langle w, 
u\rangle \\
\langle u, v\rangle & = & \epsilon \cdot \langle v, u\rangle^{\sigma}.
\end{array}$$
\end{center}

\noindent Let $G(V) \subset \GL(V)$ be the algebraic subgroup of elements $T$ in $\GL(V)$ which 
preserve the form $\langle,\rangle$:  
\[ \langle Tv,Tw\rangle  =  \langle v,w \rangle. \]
Then $G(V)$ is a classical group, defined over the field $k_0$. 
The different possibilities for $G(V)$ are given in the following table.

\vskip 15pt

\begin{tabular}{|c|c|c|c|}
\hline   
& & &  \\
$(k,\epsilon)$ & $ k = k_0$, $\epsilon = 1$ & $k = k_0$, $\epsilon = -1$ & $k/k_0$ quadratic, $\epsilon = \pm 1$ \\
\hline 
& & &  \\
$G(V)$ & orthogonal group $\O(V)$ & symplectic group $\Sp(V)$ & unitary group $\U(V)$  \\
\hline 
\end{tabular}

\vskip 15pt

% When $k 
%= k_0$ and $\epsilon  = +1$, $G(V)$ is the orthogonal group $\O(V)$. In
%this case, the algebraic group $G(V)$ is not connected, and we let $\SO(V)$ be its connected component, which consists of the elements $T$ of determinant $+1$. When 
%$k = k_0$ and $\epsilon = -1$, $G(V)$ is the symplectic group $\Sp(V)$.  When 
%$k$ is quadratic over $k_0$, $G(V)$ is the unitary group $\U(V)$, in both 
%cases $\epsilon = \pm 1$. 

 In our formulation, a classical group will 
always be associated to a space $V$, so the hermitian and skew-hermitian 
cases are distinct.  Moreover, the group $G(V)$ is connected except in the orthogonal case. In that case,
we let $\SO(V)$ denote the connected component, which consists of elements $T$ of determinant $+1$, and shall refer to $\SO(V)$ as a connected classical group.  We will only work with connected classical groups in this paper.

\vskip 5pt

If one takes $k$ to be the quadratic algebra $k_0 \times k_0$ with involution  
$\sigma(x,y) = (y, x)$ and $V$ a free $k$-module, then a non-degenerate 
form $\langle,\rangle$ identifies the $k = k _0\times k_0$ module $V$ with the sum 
$V_0 + V_0^\vee$, where $V_0$ is a finite dimensional vector space over $k_0$ 
and $V_0^\vee$ is its dual.  In this case $G(V)$ is isomorphic to the 
general linear group $\GL(V_0)$ over $k_0$. 
\vskip 5pt

If $G$ is a connected, reductive group over $k_0$, the pure inner forms of
$G$ are the groups $G'$ over $k_0$ which are obtained by inner twisting by elements
in the pointed set $H^1(k_0,G)$. If $\{g_{\sigma}\}$ is a one cocycle on the Galois group
of the separable closure $k_0^s$ with values in $G(k_0^s)$, the corresponding pure inner form $G'$ has points
$$ G'(k_0) = \{ a \in G(k_0^s) : a^{\sigma} = g_{\sigma} a g_{\sigma}^{-1} \}.$$
The group $G'$ is well-defined up to inner automorphism over $k_0$ by the cohomology class 
of $g_{\sigma}$, so one can speak of a representation of $G'(k_0)$.
\vskip 10pt

For connected, classical groups $G(V) \subset \GL(V)$, the pointed set $H^1(k_0,G)$
and the pure inner forms $G'$ correspond bijectively to forms $V'$ of the space $V$ with
its sesquilinear form $\langle,\rangle$ (cf. [KMRT, \S 29D and \S 29E]).

\begin{lemma}
\begin{enumerate}
\item If $G = \GL(V)$ or $G = \Sp(V)$, then the pointed set $H^1(k_0,G) = 1$ and there are no nontrivial pure inner forms of $G$.

\item If $G = \U(V)$, then elements of the pointed set $H^1(k_0,G)$ correspond bijectively to the isomorphism classes of hermitian (or skew-hermitian) spaces $V'$ over $k$ with $\dim(V') = \dim(V)$. The corresponding pure inner form $G'$ of $G$ is the unitary group $\U(V')$.

\item If $G = \SO(V)$, then elements of the pointed set $H^1(k_0,G)$ correspond bijectively to the isomorphism classes of orthogonal spaces $V'$ over $k$ with $\dim(V') = \dim(V)$ and $\disc(V') = \disc(V)$. The corresponding pure inner form $G'$ of $G$ is the special orthogonal group $\SO(V')$. 
\end{enumerate}
\end{lemma}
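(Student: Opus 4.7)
\medskip

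\noindent\textbf{Proof proposal.} The plan is to apply the standard twisting dictionary: if $X$ is a geometric structure (vector space with a form, etc.) defined over $k_0$ and $\Aut(X)$ is its automorphism group scheme, then the pointed set $H^1(k_0,\Aut(X))$ classifies isomorphism classes of $k_0$-forms of $X$, i.e.\ structures $X'$ of the same type which become isomorphic to $X$ over the separable closure $k_0^s$. In each of the four cases, the group $G$ in the lemma \emph{is} the automorphism group scheme of $V$ equipped with its (possibly empty) sesquilinear form, so once the dictionary is in place the three assertions will be translations of classification statements for these forms. I would begin the proof by recalling this dictionary, so that the rest is case-by-case bookkeeping.

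\medskip

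For part (1), I would first treat $\GL(V)$ by invoking Hilbert 90 in the form $H^1(k_0,\GL_n) = 1$, which records the fact that any two $k_0$-vector spaces of dimension $n$ are isomorphic. For $\Sp(V)$ the corresponding statement is that over any field of characteristic $\neq 2$ all non-degenerate symplectic forms of a given (necessarily even) dimension are equivalent; combined with the twisting dictionary this immediately gives $H^1(k_0,\Sp(V)) = 1$. A reference such as Serre's \emph{Galois Cohomology} or \cite{KMRT} makes both statements precise.

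\medskip

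For part (2), the space $V$ carries a non-degenerate $\sigma$-sesquilinear $\epsilon$-symmetric form and $\U(V)$ is by definition the algebraic group of automorphisms of $(V,\langle,\rangle)$ over $k_0$. The twisting dictionary therefore gives a bijection between $H^1(k_0,\U(V))$ and isomorphism classes of such forms $(V',\langle,\rangle')$ which become isomorphic to $(V,\langle,\rangle)$ after base change to $k_0^s$. Over $k_0^s$ any two non-degenerate hermitian (respectively skew-hermitian) spaces of the same dimension over $k \otimes_{k_0} k_0^s$ are isomorphic, so the condition of becoming isomorphic over $k_0^s$ reduces exactly to $\dim V' = \dim V$, and the corresponding twisted group is $\U(V')$ by construction.

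\medskip

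For part (3) the same dictionary gives a bijection between $H^1(k_0,\O(V))$ and isomorphism classes of orthogonal spaces $V'$ with $\dim V' = \dim V$. To pass from $\O$ to $\SO$ I would use the short exact sequence
\[
1 \longrightarrow \SO(V) \longrightarrow \O(V) \overset{\det}{\longrightarrow} \mu_2 \longrightarrow 1
\]
and its associated cohomology sequence; the connecting map $H^1(k_0,\O(V)) \to H^1(k_0,\mu_2) = k_0^\times/(k_0^\times)^2$ is the discriminant (up to a fixed normalization depending on $\disc(V)$), so the fibre over the trivial class consists of orthogonal spaces $V'$ with $\disc V' = \disc V$. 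The twisted group associated to such a class is $\SO(V')$. The main point that needs care here is the identification of the connecting homomorphism with the discriminant, and the verification that the resulting map from $H^1(k_0,\SO(V))$ to the set of isomorphism classes of $V'$ with the right dimension and discriminant is a bijection, not merely a surjection; I would handle this by working with a fixed basis and computing the cocycle corresponding to a given $V'$ explicitly, as in \cite[\S 29D--29E]{KMRT}. This step is the principal obstacle, because one must match the cohomological sign conventions with the chosen normalization of the discriminant; once it is done the statement of (3) follows.
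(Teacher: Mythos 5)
Your proposal is correct and follows exactly the route the paper intends: the paper gives no proof of this lemma, but cites [KMRT, \S 29D--29E] for the same twisting dictionary (forms of $(V,\langle,\rangle)$ classified by $H^1(k_0,\Aut(V,\langle,\rangle))$), with Hilbert 90 and uniqueness of symplectic forms for part (1) and the $\det$-sequence $1 \to \SO(V) \to \O(V) \to \mu_2 \to 1$ with the discriminant identification for part (3). The only cosmetic point is that the map $H^1(k_0,\O(V)) \to H^1(k_0,\mu_2)$ is induced by $\det$ rather than being a connecting map, and injectivity of $H^1(k_0,\SO(V)) \to H^1(k_0,\O(V))$ is most cleanly seen by twisting, since every twisted form $\O(V')$ contains a reflection over $k_0$.
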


Now let $W \subset V$ be a subspace, which is non-degenerate for the 
form $\langle,\rangle$.  Then $V = W + W^{\perp}$. We assume that 
$$\begin{array}{ll}
1) & \epsilon \cdot (-1)^{\dim W^{\perp}} = -1 \\
2) & W^{\perp} \ {\rm is \ a \ split \ space}.
\end{array}
$$

When $\epsilon = -1,$ so dim $W^{\perp} = 2n$ is even, condition 2) 
means that $W^{\perp}$ contains an isotropic subspace $X$  
of dimension $n$.  It follows that $W^{\perp}$ is a direct sum 
\[  \wp = X + Y, \] 
with $X$ and $Y$ isotropic. The pairing $\langle-, - \rangle$ induces a natural map
\[  Y \longrightarrow \Hom_k(X,k) = X^{\vee} \]
which is a $k_0$-linear isomorphism (and $k$-anti-linear if $k \ne k_0$). 
When $\epsilon = + 1$, so dim  $W^{\perp} = 2 n + 1$ is odd, condition 2) means that $\wp$ contains an 
isotropic subspace $X$ of dimension $n$.  It follows that 
\[  \wp = X + Y + E, \]
where $E$ is a non-isotropic line orthogonal to $X + Y$, and $X$ and $Y$ are isotropic. As above, one has a $k_0$-linear isomorphism $Y \cong X^{\vee}$.  
\vskip 5pt

Let $G(W)$ be the subgroup of $G(V)$ which acts trivially on $\wp$.  
This is the classical group, of the same type, associated to the space 
$W$.  Choose an $X \subset \wp$ as above, and let $P$ be the 
parabolic subgroup of $G(V)$ which stabilizes a complete flag of 
(isotropic) subspaces in $X$.  Then $G(W)$, which acts trivially
on both $X$ and $X^\vee$, is contained in a Levi
subgroup of $P$, and acts by conjugation on the unipotent radical $N$ of $P$.  
\vskip 5pt

The semi-direct product $H = N \rtimes G(W)$ embeds as a subgroup of the 
product group $G = G(V) \times  G(W)$ as follows.  We use the defining 
inclusion $H \subset P \subset G(V)$ on the first factor, and the 
projection $H\to H/N = G(W)$ on the second factor.  When
$\epsilon = +1$, the dimension of $H$ is equal to the dimension of
the complete flag variety of $G$. When $\epsilon = -1$, the dimension of $H$ is
equal to the sum of the dimension of the complete flag variety of $G$ and half of
the dimension of the vector space $W$ over $k_0$.
\vskip 10pt

We call a pure inner form $G' = G(V') \times G(W')$ of the group $G$ {\em relevant} if the 
space $W'$ embeds as a non-degenerate subspace of $V'$, with orthogonal
complement isomorphic to $W^{\perp}$. Then a subgroup $H' \to G'$ can be
defined as above.
\vskip 5pt

In this paper, we 
will study the restriction of irreducible complex representations
of the groups $G' = G(V') \times G(W')$ to the subgroups $H'$, when $k$
is a local or a global field.
\vskip 10pt

\vskip 15pt

\section{Selfdual and conjugate-dual representations}  \label{S:selfdual}

Let $k$ be a local field, and let $k^s$ be a separable closure of $k$.  
In this section, we will define selfdual and conjugate-dual representations 
of the Weil-Deligne group $WD$ of $k$. 

When $k = \RR $ or $\CC$, we define $WD$ as the Weil group $W(k)$ of 
$k$, which is an extension of $\Gal(k^s/k)$ by $\CC^\times$, and has
abelianization isomorphic to $k^\times$. A representation of $WD$
is, by definition, a completely reducible (or semisimple) continuous homomorphism 
$$\vr : WD\to \GL(M),$$
where $M$ is a finite dimensional complex vector space.  When $k$ is 
non-archimedean, the Weil group $W(k)$ is the dense subgroup $I \rtimes 
F^{\Z}$ of $\Gal(k^s/k)$, where $I$ is the inertia group and $F$ is a 
geometric Frobenius.   We normalize the isomorphism 
\[  W(k)^{ab} \to k^\times \] 
of local class field theory as in Deligne [D], taking $F$ to a 
uniformizing element of $k^\times$. This defines the norm character
\[  | -|: W(k) \to \RR^\times,\quad  \text{with $|F| = q^{-1}$.}\]
We define $WD$ as the product of $W(k)$ with the group $\SL_2(\CC)$.  A representation is a homomorphism
$$\vr : WD \to \GL(M)$$ 
with 
\begin{enumerate}[(i)]
\item $\vr$ trivial on an open subgroup of $I$,

\item  $\vr(F)$ semi-simple,

\item $\vr : \SL_2(\CC) \to  \GL(M)$ algebraic.

\end{enumerate}
The equivalence of this formulation of representations 
with that of Deligne [D], in which a representation is a homomorphism 
$\rho : W(k) \to \GL(M)$ and a nilpotent endomorphism $N$ of $M$ which 
satisfies $Ad \rho(w) (N) = |w| \cdot N$, is given in [GR, \S 2, Proposition 2.2]
\vskip 10pt

We say two representations $M$ and $M'$ of $WD$ are isomorphic if there 
is a linear isomorphism $f: M \to M'$ which commutes with the action of 
$WD$. If $M$ and $M'$ are two representations of $WD$, we have the
direct sum representation $M \oplus M'$ and the tensor product representation
$M \otimes M'$.  The dual representation $M^\vee$ is defined by the  
natural action on $\Hom(M, \CC)$, and the determinant representation 
$\det(M)$ is defined by the action on the top exterior power.  Since 
$\GL_1(\CC) = \CC^\times$ is abelian, the representation $\det(M)$ factors
through the quotient  $W(k)^{ab} \to k^\times$ of $WD$. 
\vskip 5pt

We now define certain selfdual representations of $WD$.  We say the 
representation $M$ is orthogonal if there is a non-degenerate bilinear 
form 
$$B : M \times M \to \CC$$
which satisfies
$$\begin{cases}
B(\tau m, \tau n)  =  B(m, n) \\ 
B(n, m)  =  B(m, n),
\end{cases}
$$
for all $\tau$  in $ WD$.
\vskip 5pt

We say $M$ is symplectic if there is a non-degenerate bilinear form $B$ on 
$M$ which satisfies 
$$\begin{cases}
B(\tau m, \tau n)  =  B(n, n) \\
B(n, m)  =  -B(m, n),
\end{cases}
$$
for all $ \tau$  in $  WD$.
\vskip 5pt

In both cases, the form $B$ gives an isomorphism of representations 
\[ f :  M \to M^\vee,\]
whose dual
\[  f^{\vee}: M = M^{\vee\vee} \to M^{\vee} \]
satisfies
\[  f^{\vee} = b \cdot f, \quad \text{with $b =$ the sign of $B$.} 
\]
\vskip 5pt

We now note:
\vskip 5pt

\begin{lemma} \label{L:BB'}
Given any two non-degenerate forms $B$ and $B'$ on $M$ preserved by $WD$ with the same sign $ b = \pm 1$, there is  
an automorphism $T$ of M which commutes with $WD$ and such that $B'(m,n) = B(Tm,Tn)$.
\end{lemma}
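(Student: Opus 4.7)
My plan is to reduce the lemma to a statement about anti-involutions on the semisimple algebra $\mathcal{E} := \mathrm{End}_{WD}(M)$ of $WD$-equivariant endomorphisms. Using the form $B$, one identifies the set of non-degenerate $WD$-invariant bilinear forms $B'$ on $M$ with the group of invertible elements $\mathcal{E}^\times \subset \mathcal{E}$, via the correspondence $B'(m,n) = B(Am, n)$. The form $B$ also defines an anti-involution $X \mapsto X^*$ on $\mathcal{E}$ by the rule $B(X^*m, n) = B(m, Xn)$. A direct check shows that $B'$ has the same sign $b$ as $B$ precisely when $A^* = A$, and that $B'(m,n) = B(Tm, Tn)$ for some $T \in \Aut_{WD}(M)$ precisely when $A = T^* T$. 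So the lemma reduces to the algebraic assertion that every $*$-symmetric invertible element of $\mathcal{E}$ is of the form $T^* T$ for some invertible $T \in \mathcal{E}$.

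To establish this, I would use the semisimplicity of $M$ to decompose it into isotypic components, $M \cong \bigoplus_\pi \pi \otimes \CC^{m_\pi}$, where $\pi$ runs over isomorphism classes of irreducible $WD$-representations. By Schur's lemma, $\mathcal{E}$ becomes a product of matrix algebras, and the anti-involution $*$ respects the coarser decomposition pairing each non-selfdual $\pi$ with its dual $\pi^\vee$. Three cases then arise. (a) When $\pi$ is non-selfdual, the non-degeneracy of $B$ forces $m_\pi = m_{\pi^\vee}$, the relevant factor of $\mathcal{E}$ is $M_{m_\pi}(\CC) \times M_{m_\pi}(\CC)$ with $*$ interchanging the two factors up to transpose, and an elementary computation shows that $T \mapsto T^* T$ surjects onto the $*$-symmetric invertible elements. (b) When $\pi$ is selfdual via an invariant form $b_\pi$ of sign $\sigma_\pi = b$, the restriction of $B$ to the $\pi$-isotypic component is a tensor product $b_\pi \otimes q$ with $q$ a non-degenerate \emph{symmetric} form on $\CC^{m_\pi}$; the induced anti-involution on $M_{m_\pi}(\CC)$ is adjoint with respect to $q$, and the claim that every symmetric invertible matrix equals $T^t T$ is equivalent to the fact that all non-degenerate complex symmetric forms are equivalent. (c) When $\sigma_\pi = -b$, the sign identity $\sigma_B|_{\pi\text{-isotypic}} = \sigma_\pi \cdot \sigma_q$ forces $q$ to be alternating (hence $m_\pi$ is even), the induced anti-involution is symplectic, and writing $A = T^* T$ becomes the classical statement that all non-degenerate alternating forms on $\CC^{m_\pi}$ are equivalent under $\GL_{m_\pi}(\CC)$.

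The main bookkeeping lies in case (c), where one must correctly match the sign convention of $q$ with the type of the induced anti-involution on $M_{m_\pi}(\CC)$ and reinterpret the relation $A = T^* T$ as a pullback identity between alternating forms. Once each isotypic contribution is handled separately, the local factors of $T$ reassemble into the desired $T \in \Aut_{WD}(M)$, completing the argument.
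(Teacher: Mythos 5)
Your proposal is correct and takes essentially the same route as the paper's proof: decompose $M$ into isotypic pieces using semisimplicity and Schur's lemma, reduce the comparison of invariant forms to forms on the multiplicity spaces (symmetric when the irreducible summand has the same sign as $B$, alternating when it has the opposite sign, unconstrained for a non-selfdual pair), and invoke the equivalence over $\CC$ of all non-degenerate forms of a given sign under $\GL(V)$ or $\GL(V)\times\GL(V)$. Your repackaging via the anti-involution on ${\rm End}_{WD}(M)$ and the identity $A = T^*T$ is an equivalent formulation of the paper's direct conjugation of forms by the centralizer, not a genuinely different argument.
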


\begin{proof}
Since $M$ is semisimple as a representation of $WD$, we may write
\[  M = \bigoplus_i V_i\otimes  M_i \]
as a direct sum of irreducible representations with multiplicity spaces  $V_i$. Each $M_i$ is either selfdual or else $M_i^{\vee} \cong M_j$ for some $i \ne j$, in which case $\dim V_i = \dim V_j$. 
So we may write
\[  M = \left( \bigoplus_i V_i \otimes M_i \right)  \oplus  \left( \bigoplus_j V_j \otimes (P_j + P_j^{\vee}) \right) \]
with $M_i$ irreducible selfdual and $P_j$ irreducible but $P_j \ncong P_j^{\vee}$. Since any non-degenerate form $B$ remains non-degenerate on each summand above, we are reduced to the cases:
\vskip 5pt

\begin{enumerate}
\item[(a)] $M = V \otimes N$ with $N$ irreducible and selfdual, in which case the centralizer of the action of $WD$ is $\GL(V)$;
\vskip 5pt

\item[(b)] $M = (V \otimes P)  \oplus (V \otimes  P^{\vee})$, with $P$ irreducible and $P \ncong P^{\vee}$, in which case the centralizer of the action of $WD$ is $\GL(V) \times \GL(V)$. 
\end{enumerate}

\vskip 5pt
In case (a), since $N$ is irreducible and selfdual, there is a unique (up to scaling) $WD$-invariant non-degenerate bilinear form on $N$; such a form on $N$ has some sign $b_N$. Thus, giving a $WD$-invariant non-degenerate bilinear form $B$ on $M$ of sign $b$ is equivalent to giving a non-degenerate bilinear form on $V$ of sign $b \cdot b_N$. But it is well-known that any two non-degenerate 
bilinear forms of a given sign are conjugate under $\GL(V)$. This takes care of (a).
\vskip 5pt

In case (b), the subspaces $V \otimes P$ and $V \otimes P^{\vee}$ are necessarily totally isotropic. Moreover, there is a unique (up to scaling) $WD$-invariant pairing on $P \times P^{\vee}$. Thus to give a $WD$-invariant non-degenerate bilinear form $B$ on $M$ of sign $b$ is equivalent to giving a non-degenerate bilinear form on $V$. But any two such forms are conjugate under the action of 
$\GL(V) \times \GL(V)$ on $V \times V$. This takes care of (b) and the lemma is proved.
\end{proof}

\vskip 10pt

 When $M$ is symplectic, $\dim (M)$ is even and $\det (M) = 1$.  When $M$ 
is orthogonal, $\det (M)$ is an orthogonal representation of dimension 
$1$.  These representations correspond to the quadratic characters 
$$\chi : k^\times \to \langle  \pm  1\rangle . $$ 
Since ${\rm char}(k) \not =  2$, the Hilbert symbol gives a perfect pairing 
$$(- , -) : k^\times / k^{\times 2} \times k^\times / k^{\times 2} \to \langle \pm 1\rangle .$$ 
We let $\CC(d)$ be the one dimensional orthogonal 
representation given by the character $\chi_d(c) = (c,d)$. 
 \vskip 10pt

We also note the following elementary result:
\vskip 10pt

\begin{lemma} 
If $M_i$ is selfdual with sign $b_i$, for $i = 1$ or $2$, then $M_1 \otimes M_2$ is selfdual with sign $b_1 \cdot b_2$.  
\end{lemma}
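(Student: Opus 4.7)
The proof plan is essentially a one-step verification, namely to write down the obvious tensor product form and check its properties.

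First, I would pick non-degenerate bilinear forms $B_i : M_i \times M_i \to \CC$ for $i = 1, 2$ that are preserved by $WD$ and satisfy $B_i(n,m) = b_i \cdot B_i(m,n)$. Such forms exist by the assumption that $M_i$ is selfdual of sign $b_i$. Then I would define
\[
B : (M_1 \otimes M_2) \times (M_1 \otimes M_2) \to \CC
\]
on pure tensors by $B(m_1 \otimes m_2,\ n_1 \otimes n_2) = B_1(m_1, n_1) \cdot B_2(m_2, n_2)$, and extend bilinearly.

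Next, I would verify three things. Non-degeneracy of $B$ follows from the non-degeneracy of $B_1$ and $B_2$ together with the fact that a tensor product of perfect pairings is a perfect pairing. The $WD$-invariance of $B$ is immediate from the $WD$-invariance of each $B_i$ and the fact that $WD$ acts diagonally on $M_1 \otimes M_2$. Finally, for the sign,
\[
B(n_1 \otimes n_2,\ m_1 \otimes m_2) = B_1(n_1,m_1) B_2(n_2,m_2) = b_1 b_2 \cdot B_1(m_1,n_1) B_2(m_2,n_2),
\]
which equals $b_1 b_2 \cdot B(m_1 \otimes m_2, n_1 \otimes n_2)$.

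There is no real obstacle here: the statement is a formal consequence of the definitions, and Lemma \ref{L:BB'} guarantees that the sign so produced is intrinsic to $M_1 \otimes M_2$ and does not depend on the choices of $B_1, B_2$ (since any two $WD$-invariant non-degenerate bilinear forms on $M_1 \otimes M_2$ of the same sign are $WD$-equivariantly equivalent).
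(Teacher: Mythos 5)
Your proof is correct and is essentially the paper's own argument: the paper also takes the tensor product form $B_1 \otimes B_2$ and notes it has sign $b_1 \cdot b_2$, with your write-up simply spelling out the routine verifications.
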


\begin{proof}
If $M_i$ is selfdual with respect to a form $B_i$ of sign $b_i$, then $M_1 \otimes M_2$ is selfdual with respect to the tensor product $B_1 \otimes B_2$ which has sign $b_1 \cdot b_2$.
\end{proof}
\vskip 10pt

Next, assume that $\sigma$ is a nontrivial involution of $k$, with 
fixed field $k_0$.  Let $s$ be an element of $WD(k_0)$ which generates 
the quotient group 
\[ WD(k_0)/WD(k) = \Gal(k/k_0) = \langle 1,\sigma\rangle. \]  
If $M$ is 
a representation of $WD$, let $M^s$ denote the conjugate 
representation, with the same action of $\SL_2(\CC)$ and the action 
$\tau_s(m) = s \tau s^{-1}(m)$ for $\tau$ in $W(k)$. 
\vskip 10pt

We say the representation $M$ is conjugate-orthogonal if there is a 
non-degenerate bilinear form $B : M \times M \to \CC$ which satisfies 
$$\begin{cases}
B(\tau m,s \tau s^{-1} n)  =  B(m, n) \\
B (n, m)  =  B(m, s^2n), 
\end{cases}
$$ for all $\tau$   in   $WD$. 
We say $M$ is conjugate-symplectic if there is a non-degenerate bilinear 
form on $M$ which satisfies
$$\begin{cases}
B(\tau m, s\tau s^{-1}n)  =  B(m, n) \\
B(n, m)  =  -B(m, s^2n),
\end{cases}
$$ for all $\tau$ in $WD$.
In both cases, the form $B$ gives an isomorphism of representations 
\[  f  : M^s \to M^\vee,\]
whose conjugate-dual
\[ \begin{CD}
  (f^{\vee})^s: M^s @>>> ((M^s)^{\vee})^s @>\varphi(s)^2>>M 
  \end{CD} \]
  satisfies 
\[  (f^{\vee})^s = b \cdot f \quad \text{with $b = $ the sign of $B$}. \]  
 
We now note:
\vskip 5pt

\begin{lemma} \label{L:BB'2}
Given two such non-degenerate forms $B$ and $B'$ on $M$ with the same sign and preserved by $WD$, there is an automorphism of $M$ which commutes with $WD$ and such  that  $B'(m,n) = B(Tm,Tn)$.
\end{lemma}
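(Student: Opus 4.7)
The proof will be modelled on that of Lemma \ref{L:BB'}, the only new features being the bookkeeping for the $s$-twist and the sign convention $(f^{\vee})^s = b \cdot f$. My plan is to reduce, via isotypic decomposition, to the two analogous basic cases and then invoke Schur together with the classification of non-degenerate bilinear forms on a finite-dimensional complex vector space up to a given sign.

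First, I decompose $M$ as a $WD$-module into a direct sum of isotypic components. For an irreducible constituent $N$, there are two possibilities: either $N^s \cong N^\vee$ (call $N$ \emph{conjugate self-dual}), or $N^s \ncong N^\vee$, in which case $(N^s)^\vee$ is a different irreducible and the multiplicities of $N$ and $(N^s)^\vee$ in $M$ must agree for $M$ to admit a conjugate-dual form at all. Since any $WD$-invariant non-degenerate bilinear form $B$ on $M$ of the prescribed conjugate-dual type induces the isomorphism $f: M^s \to M^\vee$, such a form restricts non-degenerately to each block of the decomposition
\[ M \;=\; \Bigl(\bigoplus_i V_i \otimes N_i \Bigr) \oplus \Bigl(\bigoplus_j (V_j \otimes P_j) \oplus (V_j' \otimes (P_j^s)^\vee)\Bigr),\]
with $N_i$ irreducible conjugate self-dual and $P_j$ irreducible with $P_j^s \ncong P_j^\vee$, and with $\dim V_j = \dim V_j'$. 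This reduces the problem to the two cases below.

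In case (a), $M = V \otimes N$ with $N$ irreducible conjugate self-dual. By Schur's lemma, $\Hom_{WD}(N^s,N^\vee)$ is one-dimensional, so there is, up to scaling, a unique $WD$-invariant conjugate-dual form $B_N$ on $N$; its sign $b_N \in \{\pm 1\}$ is intrinsic to $N$. Under the identification of conjugate-dual forms on $V \otimes N$ with $WD$-equivariant maps $V \otimes N^s \to V^\vee \otimes N^\vee$, any $B$ on $M$ factors as $B_V \otimes B_N$ for a unique non-degenerate bilinear form $B_V$ on $V$; a direct check of the duality condition $(f^{\vee})^s = b\cdot f$ shows that the sign of $B$ is $b_V \cdot b_N$, where $b_V$ is the sign of $B_V$. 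Hence $B$ and $B'$ of the same sign $b$ correspond to $B_V$ and $B_V'$ of the common sign $b\cdot b_N$ on $V$, and any two such forms on a finite-dimensional complex space are conjugate under $\GL(V)$, which is precisely the centralizer of $WD$ in $\GL(M)$.

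In case (b), $M = (V \otimes P) \oplus (V' \otimes (P^s)^\vee)$ with $P^s \ncong P^\vee$. Schur now forces both $V \otimes P$ and $V' \otimes (P^s)^\vee$ to be totally isotropic under any $WD$-invariant bilinear form, and the only non-vanishing pairing comes from the canonical duality between $(P^s)^\vee$ and $P^s$, which by Schur is unique up to scalar. Thus $B$ is determined by a non-degenerate bilinear form on $V \times V'$ (and no sign constraint arises, since the two halves of $M$ are interchanged by the duality), and any two such forms are conjugate under $\GL(V) \times \GL(V')$, the centralizer of $WD$ in this block. Assembling the $T$'s obtained on each block gives the desired $T \in \Aut_{WD}(M)$. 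The main point requiring care is the tracking of the sign $b_N$ and the verification that the duality identity $(f^\vee)^s = b\cdot f$ behaves multiplicatively under tensor product in the conjugate setting, given the extra $\varphi(s)^2$ twist appearing in the definition; once this is checked, the argument runs exactly in parallel with Lemma \ref{L:BB'}.
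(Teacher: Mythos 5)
Your proposal is correct and follows essentially the same route as the paper: reduce via the isotypic decomposition to the two cases $M = V \otimes N$ (with $N$ irreducible conjugate self-dual, sign $b_N$) and $M = (V \otimes P) \oplus (V \otimes (P^s)^\vee)$, then identify $WD$-invariant forms of sign $b$ with forms on the multiplicity space of sign $b\cdot b_N$ (resp.\ with arbitrary non-degenerate forms) and conclude by conjugacy under $\GL(V)$ (resp.\ $\GL(V)\times\GL(V)$), the centralizer of $WD$. The sign bookkeeping you flag is exactly the point the paper leaves implicit, and your treatment of it is fine.
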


\begin{proof}
The proof is similar to that of Lemma \ref{L:BB'}. As before, we may reduce to the following two cases:
\vskip 5pt

\begin{enumerate}
\item[(a)] $M = V \otimes N$ with $N$ irreducible and conjugate-dual, in which case the centralizer of the action of $WD$ is $\GL(V)$;
\vskip 5pt

\item[(b)] $M = (V \otimes P) \oplus (V \otimes (P^s)^{\vee})$ with $P$ irreducible and $P \ncong (P^s)^{\vee}$, in which case the centralizer of the action of $WD$ is $\GL(V) \times \GL(V)$. 
\end{enumerate}
\vskip 5pt

In case (a), if the conjugate-duality of $N$ has sign $b_N$, then giving a $WD$-invariant non-degenerate bilinear form on $M$ of sign $b$ is equivalent to giving a non-degenerate bilinear form on $V$ of sign $b \cdot b_N$, and all such are conjugate under $\GL(V)$. Similarly, in case (b), 
giving a $WD$-invariant non-degenerate bilinear form on $M$ of sign $b$ is equivalent to giving a non-degenerate bilinear form on $V$, and all such are conjugate under $\GL(V) \times \GL(V)$.
\end{proof}
\vskip 5pt

 \vskip 10pt

The isomorphism class of the representation $M^s$ is independent of the 
choice of $s$ in $WD(k_0) - WD$.  If $s' = ts$ is another choice, then 
the map 
$$\begin{array}{rcl}
f & : & M^s \to M^{s'} \\
& & m \mapsto t(m)
\end{array}
$$
is an isomorphism of representations of $WD$.  We denote the isomorphism 
class of $M^s$ and $M^{s'}$ simply by $M^{\sigma}$.  
If $M$ is conjugate-orthogonal or conjugate-symplectic by the pairing $B$  relative to $s$, then it is 
conjugate-orthogonal or conjugate-symplectic by the pairing
$$B' (m, f(n)) = B(m, n)$$ 
relative to $s'$.  In both cases, $M^{\sigma}$ is isomorphic to the dual 
representation $M^\vee$.
\vskip 5pt

If $M$ is conjugate-dual via a pairing $B$ 
with sign $b = \pm 1$, then $\det (M)$ is conjugate-dual with sign = 
$(b)^{\dim(M)}$.  Any conjugate-dual representation of $WD$ of dimension 
1 gives a character $\chi : k^\times \to \CC^\times$ which satisfies 
$\chi^{1+\sigma} = 1$.  Hence $\chi$ is trivial on the subgroup $\NN 
k^\times$, which has index $2$ in $k^\times_0$.  We denote this $1$-dimensional 
representation by $\CC(\chi)$.
 
\begin{lemma} \label{L:1-dim}

The representation $\CC(\chi)$ is  conjugate-orthogonal  if  and  only  if    
$ \chi$   is  trivial  on $ k^\times_0 $, and conjugate-symplectic  if  and 
 only  if $ \chi $ is  nontrivial  on  $ k_0^\times$ but trivial on $\NN k^\times$.

\end{lemma}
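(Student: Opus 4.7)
The plan is to exploit the one-dimensionality of $M = \CC(\chi)$. Write $M = \CC e$, and under the local reciprocity identification $W(k)^{ab} \cong k^\times$ let $\tau \cdot e = \chi(\tau) e$ for $\tau \in W(k)$. Any $WD$-invariant bilinear form $B$ on $M$ is then determined by a single scalar $c = B(e,e) \in \CC^\times$, and the two defining axioms of conjugate-duality reduce to scalar identities that I would evaluate at $m = n = e$.

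The first axiom $B(\tau m, s\tau s^{-1} n) = B(m,n)$ becomes $\chi(\tau) \chi^\sigma(\tau) \cdot c = c$, i.e.\ the relation $\chi^{1+\sigma} = 1$ that is already built into the hypothesis that $\CC(\chi)$ is conjugate-dual; so this condition is automatic and gives nothing new. The sign axiom $B(n,m) = b \cdot B(m, s^2 n)$ collapses to $1 = b \cdot \chi(s^2)$. Since $(\chi|_{k_0^\times})^2 = (\chi \chi^\sigma)|_{k_0^\times} = 1$, the restriction $\chi|_{k_0^\times}$ is a quadratic character and in particular $\chi(s^2) = \pm 1$; hence $b = \chi(s^2)$.

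The substantive step is to identify $\chi(s^2)$ with a value of $\chi|_{k_0^\times}$. For this I would invoke the Verlagerung: for the index-$2$ inclusion $W(k) \hookrightarrow W(k_0)$, a direct coset computation shows that the transfer map $W(k_0)^{ab} \to W(k)^{ab}$ carries $s$ to $s^2$, and local class field theory identifies this transfer with the natural inclusion $k_0^\times \hookrightarrow k^\times$. Therefore $s^2 \in W(k)^{ab}$ corresponds to the element of $k_0^\times$ attached to $s$ by reciprocity for $k_0$; since $s \notin W(k)$, this element lies outside $\NN k^\times$ and represents the nontrivial coset of $k_0^\times / \NN k^\times$. (Well-definedness up to norms is automatic: another lift $s' = ts$ with $t \in W(k)$ gives $s'^2 = t \cdot sts^{-1} \cdot s^2$ in $W(k)^{ab}$, and $t \cdot sts^{-1}$ is a norm.)

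Combining these, $b = \chi(s^2)$ equals $+1$ precisely when $\chi$ vanishes on the nontrivial coset of $k_0^\times / \NN k^\times$, i.e.\ when $\chi|_{k_0^\times}$ is trivial; and $b = -1$ precisely when $\chi|_{k_0^\times}$ is the nontrivial quadratic character of $k_0^\times / \NN k^\times$. This is exactly the dichotomy asserted. I expect the main obstacle to be the class-field-theoretic identification of $s^2$ via the transfer map; once that input is recorded, the rest is a one-line scalar calculation.
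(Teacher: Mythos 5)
Your argument is correct, and its skeleton is the same as the paper's: reduce to a scalar computation on a one-dimensional space, observe that the first invariance axiom just re-encodes $\chi^{1+\sigma}=1$, and read the sign off from $b=\chi(s^2)$, so that everything hinges on identifying the class of $s^2$ in $W(k)^{ab}\cong k^\times$. Where you diverge is in how that identification is made. The paper quotes Weil's explicit model of the relative Weil group $W(k/k_0)$ as the normalizer of $k^\times$ in the quaternion division algebra over $k_0$, from which it is immediate that $s^2$ lies in $k_0^\times$ and generates $k_0^\times/\NN k^\times$; it then exhibits the form $B(z,w)=zw$ with the appropriate sign. You instead compute the Verlagerung for the index-two inclusion $W(k)\subset W(k_0)$, note that it sends the class of $s$ to the class of $s^2$, and use the compatibility of the transfer with local class field theory (transfer corresponds to the inclusion $k_0^\times\hookrightarrow k^\times$) to conclude that $s^2$ represents the nontrivial coset of $k_0^\times/\NN k^\times$; your coset computation $s'^2=t\cdot sts^{-1}\cdot s^2$ also makes the independence of the choice of $s$ modulo $\NN k^\times$ explicit, which the paper leaves implicit. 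The two routes establish the same structural fact about $W(k/k_0)$; the paper's citation is shorter and hands you the concrete relations $s\alpha=\alpha^\sigma s$, $s^2\in k_0^\times\smallsetminus\NN k^\times$ in one stroke, while your transfer argument is self-contained given standard class field theory, works uniformly in the archimedean and non-archimedean cases, and shows directly that \emph{any} invariant form on $\CC(\chi)$ has sign $\chi(s^2)$, so the orthogonal/symplectic dichotomy is forced rather than merely exhibited.
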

\begin{proof}: Since the action of WD on $\CC(\chi)$ factors through the quotient
$W(k)^{ab}$, we may compute with the quotient $W(k/k_0)$ of $W(k_0)$.
The Weil group $W(k/k_0)$ is isomorphic to the 
normalizer of $k^\times$ in the multiplicative group of the quaternion division
algebra over $k_0$ [We1, Appendix III, Theorem 2]. It is therefore
generated by $k^\times$ and $s$, with $s \alpha = \alpha^{\sigma} s$ for 
$\alpha \in k^\times$, and $s^2$ in $k^\times_0$ generating the quotient $k_0^\times 
/ \NN k^\times$. 

If $\chi(s^2) = +1$, then the form $B(z,w) = zw$ on $\CC(\chi)$ is 
conjugate-orthogonal.  If $\chi(s^2) = -1$, then this form is 
conjugate-symplectic. 
\end{proof}
\vskip 10pt

We also note:
\vskip 10pt

\begin{lemma} \label{L:ind-tensor}
\noindent (i) If $M$ is conjugate-dual with sign $b$, then $N = \Ind_{WD(k)}^{WD(k_0)} M$ is selfdual with sign $b$.
\vskip 10pt

\noindent (ii) If $M_i$ is conjugate-dual with sign $b_i$, for $i = 1$ or $2$, then $M_1 \otimes M_2$ is conjugate-dual with sign $b_1 \cdot b_2$.
\end{lemma}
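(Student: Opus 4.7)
The two parts of the lemma are essentially independent.

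For Part (ii), I would take the tensor product of the two given bilinear forms: if $B_i : M_i \times M_i \to \CC$ witnesses the conjugate-duality of $M_i$ with sign $b_i$, set $(B_1 \otimes B_2)(m_1 \otimes m_2, n_1 \otimes n_2) := B_1(m_1, n_1)\, B_2(m_2, n_2)$. Both defining identities for conjugate-duality are bilinear in the form and involve only the group action and multiplication by $s^2$, so they are preserved under the tensor product, with the two signs multiplying to give sign $b_1 b_2$.

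For Part (i), the plan is to construct an explicit $WD(k_0)$-invariant bilinear form on the induced representation $N$. I would work in the functional model $N = \{\, f : WD(k_0) \to M \mid f(\tau g) = \tau f(g) \text{ for all } \tau \in WD(k) \,\}$ with the right-regular action, and fix $s \in WD(k_0) \setminus WD(k)$. Two preliminary observations are needed: first, $s^2 \in WD(k)$ is central in $WD(k_0)$, since conjugation by $s$ induces the order-two Galois involution on $WD(k)$ and applying it twice is the identity; second, applying $B(n,m) = b \cdot B(m, s^2 n)$ twice yields $B(s^2 m, s^2 n) = B(m, n)$, so $s^2$ acts by isometries for $B$.

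I would then set
\[ \tilde B(f, f') := B\bigl( f(1),\, f'(s) \bigr) + B\bigl( f(s),\, s^2 f'(1) \bigr) \]
and verify $WD(k_0)$-invariance in two stages. For $\tau \in WD(k)$, the first summand is invariant directly from $B(\tau m, s\tau s^{-1} n) = B(m, n)$, while the second requires combining this identity with the centrality $s^2 \tau = \tau s^2$. For the generator $s$, using $(sf)(1) = f(s)$ and $(sf)(s) = s^2 f(1)$, the two summands of $\tilde B(sf, sf')$ swap, and the identity $B(s^2 x, s^2 y) = B(x, y)$ completes the check. To compute the sign, I would apply $B(n,m) = b \cdot B(m, s^2 n)$ to each summand of $\tilde B(f', f)$; both summands pick up a factor $b$ and swap, yielding $\tilde B(f', f) = b \cdot \tilde B(f, f')$, so $N$ is selfdual with sign $b$. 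The principal subtlety is choosing the correct formula for $\tilde B$: the apparently natural candidate $B(f(1), f'(s)) + c \cdot B(f(s), f'(1))$ fails to be $s$-invariant unless $s^2$ acts trivially on $M$, and the twist by $s^2$ in the second summand is exactly the correction permitted by $B(s^2 x, s^2 y) = B(x, y)$.
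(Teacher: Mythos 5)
Your part (ii) is exactly the paper's argument, and your part (i) is also, in substance, the paper's proof: the paper realizes $N = M \oplus s^{-1}\cdot M$, declares both summands isotropic, and sets $B_N(m, s^{-1}m') = B(m,m')$ and $B_N(s^{-1}m', m) = b\cdot B_N(m, s^{-1}m')$; under the usual identification of this model with your functional model ($f \leftrightarrow 1\otimes f(1) + s^{-1}\otimes f(s)$), that form is precisely your $\tilde B$, since $B(f(s), s^{2}f'(1)) = b\cdot B(f'(1), f(s))$. The paper dismisses the $WD(k_0)$-invariance as easy to check, so the added value of your write-up is that you actually carry out the verification; your $s$-invariance and sign computations are correct, using the identity $B(s^{2}x, s^{2}y)=B(x,y)$ which you derive correctly. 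You should also say a word about nondegeneracy of $\tilde B$, though it is immediate: the two subspaces $\{f : f(s)=0\}$ and $\{f: f(1)=0\}$ are isotropic and are paired with each other by the nondegenerate form $B$.

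One justification in your argument is wrong, however: your first preliminary observation, that $s^{2}$ is central in $WD(k_0)$, is false in general. Conjugation by $s$ on $WD(k)$ lifts the Galois involution, but its square is conjugation by $s^{2}$, an inner automorphism of $WD(k)$ which is usually nontrivial; for instance, if $k/k_0$ is unramified and $s$ maps to a Frobenius of $k_0$, then $s^{2}$ acts nontrivially on tame inertia, so it commutes with almost nothing. (What is true is that $s^{2}$ becomes central in the relative Weil group $W(k/k_0)$, which is likely the source of the slip, but the invariance to be checked is under $WD(k_0)$.) Fortunately the step where you invoke centrality does not need it: to see that the second summand is $\tau$-invariant, apply the defining identity $B(\tau' m, s\tau' s^{-1} n) = B(m,n)$ with $\tau' = s\tau s^{-1}$, so that the operator in the second slot is $s\tau' s^{-1} = s^{2}\tau s^{-2}$, and then use the group identity $(s^{2}\tau s^{-2})\cdot s^{2} = s^{2}\tau$ to get $B\bigl((s\tau s^{-1})f(s),\, s^{2}\tau f'(1)\bigr) = B\bigl(f(s),\, s^{2}f'(1)\bigr)$, with no commutation of $s^{2}$ past $\tau$ required. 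With that repair your proof is complete and coincides with the paper's.
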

\vskip 5pt

\begin{proof}
For (i), suppose that $M$ is conjugate-dual with respect to a form $B$. As a vector space, $N = M \oplus s^{-1} \cdot M$ for $s \in WD(k_0) \smallsetminus WD(k)$. We define a non-degenerate bilinear form $B_N$ on $N$ by decreeing that $M$ and $s^{-1} \cdot M$ are isotropic spaces and setting 
\[  \begin{cases}
B_N(m, s^{-1} \cdot m') = B(m , m'), \\
B_N(s^{-1} \cdot m', m) = b \cdot B_N(m, s^{-1} \cdot m').
\end{cases}  \]
It is easy to check that $B_N$ is preserved by $WD(k_0)$; this proves (i). The assertion (ii) is also straightforward: if $M_i$ is conjugate-dual with respect to the form $B_i$ of sign $b_i$, then $M_1 \otimes M_2$  is conjugate-dual with respect to the tensor product $B_1 \otimes B_2$ which has sign $b_1 \cdot b_2$. 
\end{proof}

\vskip 15pt

\noindent{\bf Remark:} M. Weissman has pointed out that the representation $M^{\sigma}$ can be more canonically defined (without resorting to the choice of $s \in WD(k) \smallsetminus WD(k_0)$) in the following way. Consider the induced representation $\Ind_{WD(k)}^{WD(k_0)} M$ which can be realized on:
\[  \{ f: WD(k_0) \to M: f(\tau \cdot s) = \tau (f(s)) \, \, 
\text{for all $\tau \in WD(k)$ and $s \in WD(k_0)$} \}. \]
Then the representation $M^{\sigma}$ of $WD(k)$ can be realized on the subspace of such functions which are supported on $WD(k_0) \smallsetminus WD(k)$. We note:
\vskip 5pt
\begin{enumerate}[(i)]
\item Any $WD$-equivariant map $M \to N$ induces a natural $WD$-equivariant map $M^{\sigma} \to N^{\sigma}$. 
\vskip 5pt

\item There is a natural isomorphism $(M^{\sigma})^{\vee} \to (M^{\vee})^{\sigma}$, via the perfect duality on $M^{\sigma} \times (M^{\vee})^{\sigma}$ defined by
\[  (f, f^{\vee}) \mapsto \langle f(s), f'(s) \rangle, \quad \text{for $f \in M^{\sigma}$ and $f^{\vee} \in (M^{\vee})^{\sigma}$}, \]
for any $s \in WD(k_0) \smallsetminus WD(k)$ and where $\langle-,-\rangle$ denotes the natural pairing on $M \times M^{\vee}$. The above pairing is clearly independent of the choice of $s$.
\vskip 5pt

\item On this model of $M^{\sigma}$, there is a canonical isomorphism
\[  (M^{\sigma})^{\sigma} \longrightarrow M \]
given by
\[  F \mapsto F[s](s^{-1}) \]
for any $s \in WD(k_0) \smallsetminus WD(k)$. This isomorphism is independent of the choice of $s$.
\end{enumerate}
\vskip 5pt

Thus a conjugate-duality with sign $b$ is a $WD$-equivariant isomorphism 
\[  f: M^{\sigma} \to M^{\vee} \]
whose conjugate-dual
\[      {^\sigma}f^{\vee}: M^{\sigma} \to  ((M^{\sigma})^{\vee})^{\sigma} \cong ((M^{\vee})^{\sigma})^{\sigma} \cong M^{\vee}\]
satisfies 
\[    {^\sigma}f^{\vee} = b \cdot f. \]
This treatment allows one to suppress the somewhat mysterious looking  identity $B(n,m) = b \cdot B(m, s^2 n)$. 
\vskip 15pt

\section{The centralizer and its group of components}  \label{S:centralizer}

The centralizer $C(M)$ of a representation $M$ of $WD$  is the subgroup of 
$\GL(M)$ which centralizes the image.  Write 
$$M = \bigoplus m_i M_i$$ 
as a direct sum of irreducible representations $M_i$, with 
multiplicities $m_i \ge 1$.  Then by Schur's lemma 
$$C(M)\simeq \prod \GL(m_i,\CC).$$  
In particular, $C(M)$ is a connected reductive group.  
\vskip 5pt
The situation is more interesting for representations $M$ which are 
either selfdual or conjugate-dual, via a pairing $B$ with sign $b = \pm 
1$.  We define $C = C(M,B)$ as the subgroup of $\Aut(M, B) \subset \GL(M)$ which 
centralizes the image of $WD$.  Up to isomorphism, the reductive group $C$ depends only on
the representation $M$.
\vskip 5pt

If we write $M$ as a direct sum of irreducible representations $M_i$, 
with multiplicities $m_i$, and consider their images in $M^\vee$ under the 
isomorphism $M^{\sigma} \to M^\vee$ provided by $B$, we find that there are 
three possibilities:
\begin{enumerate}
\item $M^{\sigma}_i$ is isomorphic to $M^\vee_i$, via a pairing $B_i$ of 
the same sign $b$ as $B$. 
\item $M^{\sigma}_i$ is isomorphic to $M^\vee_i$, via pairing $B_i$ of the 
opposite sign $-b$ as $B$.  In this case the multiplicity $m_i$ is even. 
\item $M^{\sigma}_i$ is isomorphic to $M^\vee_j$, with $j \not= i$.  In 
this case $m_i = m_j$. 
\end{enumerate}
Hence, we have a decomposition
$$M = \bigoplus m_i M_i + \bigoplus 2n_i N_i + \bigoplus p_i(P_i + (P^\sigma_i)^\vee)$$
where the $M_i$ are selfdual or conjugate-dual of the same sign $b$, 
the $N_i$ are selfdual or conjugate-dual of the opposite sign $-b$, 
and $P^{\sigma}_i$ is not isomorphic to $P^\vee_i$, so that $P_i$ and $P_j = 
(P^{\sigma}_i)^\vee$ are distinct irreducible summands. 
\vskip 5pt

In this case, the proofs of Lemmas \ref{L:BB'} and \ref{L:BB'2} give (cf. [GP, $\S$6-7], [P1] and [P3]): 

$$C \simeq \prod \O(m_i,\CC) \times \prod \Sp(2n_i,\CC) \times \prod 
\GL(p_i,\CC).$$ 
In particular, the component group of $C$ is 
$$A = \pi_0(C) \simeq (\Z/2)^k,$$ 
where $k$ is the number of irreducible summands $M_i$ of the same type as 
$M$.  

For each such $M_i$, let $a_i$ be a simple reflection in the orthogonal 
group $\O(m_i)$.  The images of the elements $a_i$ in $A$ give a 
basis over $\Z/2\Z$.  For any element $a$ in $C$, we 
define
$$M^a = \{m \in M:  am = -m\}$$ 
to be the $-1$ eigenspace for $a$ on $M$.  This is a representation of  
$WD$, and the restricted pairing $B : M^a \times M^a \to \CC$ is 
non-degenerate, of the same type as $M$.  For the simple reflections 
$a_i$ in $C$, 
$$M^{a_i} = M_i$$ 
are the irreducible summands of the same type as $M$. 

We can use these representations to define characters $\chi : A \to 
\langle \pm 
1\rangle $.  The basic idea is to define signed invariants $d(M) = \pm 1$ of 
representations $M$ of $WD$, which are either selfdual or conjugate-dual.  

\begin{prop}  \label{P:d(M)}

Let $d(M)$ be an invariant of selfdual or conjugate-dual representations, taking
values in $\pm 1$. Assume that
\begin{enumerate}
\item $d(M+ M') = d(M) \cdot d(M')$
\item the value $d(M^a)$ depends only on the image of $a$ in the quotient group $A = C / 
C^0$.
\end{enumerate}
Then the function $$\chi (a) = d(M^a)$$
defines a character of $A$. 
\end{prop}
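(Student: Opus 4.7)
The plan is as follows. Since by hypothesis (2) the value $d(M^a)$ depends only on the image of $a$ in $A$, the function $\chi : A \to \{\pm 1\}$ is well-defined, and I need only verify multiplicativity $\chi(ab) = \chi(a)\chi(b)$ for arbitrary $a, b \in A$. My strategy is to choose commuting involutive lifts $\tilde a, \tilde b \in C$ of $a, b$, decompose $M$ into their joint $(\pm 1)$-eigenspaces (which will be $B$-orthogonal and $WD$-stable), and then exploit hypothesis (1) to see that the discrepancy between $\chi(a)\chi(b)$ and $\chi(ab)$ collapses to the square $d(M_{-,-})^2 = 1$.

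To produce such lifts, I would use the explicit product structure
\[ C \cong \prod_i \O(m_i) \times \prod_j \Sp(2n_j) \times \prod_\ell \GL(p_\ell). \]
The group $A = \pi_0(C) \cong (\Z/2\Z)^k$ is generated by the classes of the simple reflections $a_i \in \O(m_i) \subset C$; each $a_i$ is itself an involution, and distinct $a_i, a_j$ commute because they lie in different direct factors of $C$. Hence for any $a, b \in A$, writing $a = \prod_{i \in S_a} \bar a_i$ and $b = \prod_{j \in S_b} \bar a_j$, the elements $\tilde a = \prod_{i \in S_a} a_i$ and $\tilde b = \prod_{j \in S_b} a_j$ of $C$ are commuting involutions lifting $a$ and $b$. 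I would then form the joint eigenspace decomposition
\[ M = \bigoplus_{\epsilon_1, \epsilon_2 \in \{\pm 1\}} M_{\epsilon_1, \epsilon_2}, \]
with each $M_{\epsilon_1, \epsilon_2}$ being $WD$-stable since $\tilde a, \tilde b$ centralize $WD$. A short computation using $B$-invariance of $\tilde a$ (resp.\ $\tilde b$) shows that summands with different first (resp.\ second) indices are $B$-orthogonal, so the full decomposition is $B$-orthogonal. Consequently $B$ restricts to a non-degenerate pairing of the same type (selfdual or conjugate-dual, same sign) on each $M_{\epsilon_1, \epsilon_2}$, and $d(M_{\epsilon_1, \epsilon_2}) \in \{\pm 1\}$ is defined.

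With this in hand, I would read off the $(-1)$-eigenspaces
\[ M^{\tilde a} = M_{-,+} \oplus M_{-,-}, \quad M^{\tilde b} = M_{+,-} \oplus M_{-,-}, \quad M^{\tilde a \tilde b} = M_{+,-} \oplus M_{-,+}, \]
and apply hypothesis (1):
\[ d(M^{\tilde a}) \cdot d(M^{\tilde b}) = d(M_{-,+}) \cdot d(M_{-,-})^2 \cdot d(M_{+,-}) = d(M_{+,-}) \cdot d(M_{-,+}) = d(M^{\tilde a \tilde b}), \]
using $d(M_{-,-})^2 = 1$. By hypothesis (2) this reads $\chi(a)\chi(b) = \chi(ab)$. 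The chief potential obstacle would have been the absence of commuting involutive lifts of arbitrary elements of $A$, in which case the neat joint eigenspace decomposition above would be unavailable; but the explicit product structure of $C$ makes such lifts manifest, and the remaining verifications are routine.
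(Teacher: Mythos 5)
Your proof is correct and follows essentially the same route as the paper: the paper also represents classes of $A$ by commuting involutions (the simple reflections in the orthogonal factors of $C$) and uses the identity $M^{ab} + 2(M^a \cap M^b) = M^a + M^b$ of $WD$-representations, which is exactly what your joint eigenspace decomposition makes explicit, together with hypothesis (1) and $d(M^a\cap M^b)^2=1$. Your added verification that the joint eigenspaces are $B$-orthogonal, so that $d$ is defined on each piece, is a detail the paper leaves implicit.
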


Indeed, the different classes in $A$ are all represented by commuting involutions in 
$C$, and for two commuting involutions $a$ and $b$ we have the formula:
$$M^{ab} + 2(M^a \cap M^b) = M^a + M^b$$
as representations of $WD$. Hence $\chi(ab) = \chi(a)\cdot \chi(b)$.

\vskip 10pt

The simplest example of such an invariant, which applies in both the 
conjugate-dual and the selfdual cases, is 
$$d(M) = (-1)^{\dim M}.$$
Since $\dim M^a \pmod{2}$ depends only on the coset of $a \pmod{C^0}$, 
this gives a character of $A$:
$$\eta (a) = (-1)^{\dim M^a}.$$
\vskip 5pt

Now assume $M$ is selfdual.  The character $\eta$ is trivial when $M$ 
is symplectic, as $\dim M^a$ is even for all $a$ in $\Sp(M) = G(M, B)$.  
In the orthogonal case, $\dim M^a$ is even precisely for elements $a$ in 
the centralizer which lie in the subgroup $\SO(M,B)$ of index $2$ in $\O(M,B)$. 
We denote this subgroup by $C^+$.
\vskip 5pt

An element  $c$ of $k^\times/k^{\times 2}$ gives a character 
$$\eta_c(a) = (\det M^a)(c)$$
of $A$. Indeed, the quadratic character $\det M^a$ depends only on the coset of
$a \pmod{C^0}$.  Since $\eta_{cd} = \eta_c \eta_d$, we get a pairing 
$$(c, a) : k^\times /k^{\times 2} \times A \to \langle \pm 1\rangle $$
which is trivial in the symplectic case.  

To construct other characters of $A$, we need  more sophisticated 
signed invariants $d(M)$ of selfdual or conjugate-dual 
representations.   We will obtain these from local root numbers, after 
recalling that theory in the next section. 
\vskip 15pt

\section{Local root numbers}  \label{S:rootnumber}

Let $M$ be a representation of the Weil-Deligne group $WD$ of a local 
field $k$, and let $\psi$ be a nontrivial additive character of $k$.  
In this section, we define the local root number $\ep(M, \psi)$, following 
the articles of Tate [T] and Deligne [De1].  We then study the properties 
of these constants for selfdual and conjugate-dual representations,
and give explicit formulae in the orthogonal and conjugate-orthogonal
cases. The local root numbers are more mysterious in the symplectic
and conjugate-symplectic cases. Indeed, they form the basis of our conjectures
on the restriction of representations of classical groups over local fields.

Let $dx$ be the unique Haar measure on $k$ which is selfdual for 
Fourier transform with respect to $\psi$.  For a representation $M$ of 
the Weil group $W(k)$, we define 
$$\ep(M, \psi) = \ep(M, \psi, dx, 1/2) \ {\rm \ in \ \ }\ \CC^\times,$$
in the notation of [De1, $\S$4-5].  This is the local constant $\ep_L(M, 
\psi)$ in [T, 3.6.1].  In the non-archimedean case, if $M$ is a 
representation of $WD = W(k) \times \SL_2(\CC)$, we may write 
$$M =\sum_{n\geq 0} M_n \otimes {\rm Sym}^n$$
with each $M_n$ a representation of the Weil group.  We define (cf. 
[GR, $\S$2]): 
$$\ep(M, \psi) = {\displaystyle{\prod_{n \geq 0}}} \ep (M_n, \psi)^{n+1} \cdot 
\det (-F|M_n^I)^n.$$
This constant depends only on the isomorphism class of $M$. 

The following formulae involving $\ep(M, \psi)$ are well-known [T, 3.6], for representations
$M$ of the Weil group $W(k)$. For $a$ in $k^\times$, let $\psi_a$ be the nontrivial additive character $\psi_a(x) = 
\psi (ax)$. Then

$$\ep(M, \psi_a) = \det M(a)\cdot \ep(M, 
\psi),$$

$$\ep(M, \psi)\cdot  \ep(M^\vee, \psi^{-1}) =1.$$
Since $\psi^{-1} = \psi_{-1}$, we conclude that

$$\ep(M, \psi) \cdot \ep (M^\vee, \psi) = \det M(-1).$$

\bigskip  
For representations $M = \sum M_n \otimes {\rm Sym}^n$
of $WD$ in the non-archimedean case, we have

$$M^\vee  = \sum M^\vee_n \otimes {\rm Sym}^n,$$

$$\det (M)  =  \prod_{n \geq 0}  \det (M_n)^{n+1} .$$
This allows us to extend the above formulas to the local root numbers
$\ep(M,\psi)$ of representations of $WD$.

\bigskip

Now let $\sigma$ be an involution of $k$, and define $\psi^{\sigma}(x) 
= \psi(x^{\sigma})$. Then $\ep(M^{\sigma}, \psi^{\sigma}) = \ep (M, \psi)$.
If we assume further that $\psi^{\sigma} = 
\psi^{-1}$, then 
$$\begin{array}{rcl}
\ep(M, \psi) \cdot \ep(^{\sigma}M^\vee, \psi) & = & \ep(M, \psi) \cdot \ep(M^\vee,  
\psi^{\sigma}) \\
& = & \ep(M, \psi) \cdot \ep (M^\vee, \psi^{-1})\\
& = & 1. 
\end{array}
$$
When we apply these formulas to selfdual and conjugate-dual representations, we
obtain the following.

\begin{prop}  \label{P:epsilon-WD}
\begin{enumerate}
\item Assume that $M$ is a selfdual representation of $WD$ with $\det (M) = 1$. Then $\ep(M) = \ep(M, 
\psi)$ is independent of the choice of $\psi$ and satisfies 
\[  \ep(M)^2 = 1. \]  
Furthermore, if $M$ is of the form $M = N + N^\vee$, then $\ep(M) = \det N(-1)$. 
\vskip 5pt

\item Assume that  $M$ is a conjugate-dual representation of $WD$ and that the additive character $\psi_0$  of $k$ satisfies $\psi^{\sigma}_0 = 
\psi^{-1}_0$. Then 
\[  \ep(M, \psi_0)^2 = 1.\]
Furthermore, if $M$ is of the form $M = N + {^{\sigma}N}^\vee$, 
then $\ep (M, \psi_0) = 1$. 
\end{enumerate}
\end{prop}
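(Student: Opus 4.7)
The plan is to derive both statements by direct manipulation of the three formal properties of local root numbers recalled in the section: multiplicativity in direct sums, the transformation rule $\epsilon(M,\psi_a)=\det M(a)\cdot \epsilon(M,\psi)$, and the functional equation $\epsilon(M,\psi)\epsilon(M^\vee,\psi^{-1})=1$. These identities, stated in the text for $W(k)$, extend to $WD$ through the explicit formula $\epsilon(M,\psi)=\prod_n \epsilon(M_n,\psi)^{n+1}\cdot\det(-F\mid M_n^I)^n$, so I may use them freely.

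For part (1), I first observe that any nontrivial additive character of $k$ has the form $\psi_a$ for some $a\in k^\times$. The transformation rule then gives $\epsilon(M,\psi_a)=\det M(a)\cdot\epsilon(M,\psi)=\epsilon(M,\psi)$, using $\det M=1$; thus $\epsilon(M)$ is well defined. For the quadratic relation, combine $\epsilon(M,\psi)\epsilon(M^\vee,\psi)=\det M(-1)=1$ with the isomorphism $M\cong M^\vee$ furnished by the selfdual pairing $B$; this gives $\epsilon(M,\psi)^2=1$. For $M=N+N^\vee$, multiplicativity yields $\epsilon(M,\psi)=\epsilon(N,\psi)\epsilon(N^\vee,\psi)$, and this product equals $\det N(-1)$ by the same identity applied to $N$.

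For part (2), the hypothesis $\psi_0^\sigma=\psi_0^{-1}$ together with the identity $\epsilon(M^\sigma,\psi^\sigma)=\epsilon(M,\psi)$ gives
\[
\epsilon(M,\psi_0)\cdot\epsilon({}^\sigma M^\vee,\psi_0)\;=\;\epsilon(M,\psi_0)\cdot\epsilon(M^\vee,\psi_0^{-1})\;=\;1,
\]
exactly as sketched in the text. On the other hand, the conjugate-duality pairing $B$ provides an isomorphism $M^\sigma\cong M^\vee$, hence ${}^\sigma M^\vee\cong M$ (using the canonical identification $(M^\sigma)^\sigma\cong M$ noted in the Remark), so $\epsilon({}^\sigma M^\vee,\psi_0)=\epsilon(M,\psi_0)$ and therefore $\epsilon(M,\psi_0)^2=1$. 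For the case $M=N+{}^\sigma N^\vee$, multiplicativity and the same computation give
\[
\epsilon(M,\psi_0)=\epsilon(N,\psi_0)\cdot\epsilon({}^\sigma N^\vee,\psi_0)=\epsilon(N,\psi_0)\cdot\epsilon(N^\vee,\psi_0^{-1})=1.
\]

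The argument is essentially formal once the right identities are at hand; the only mild subtlety is keeping the $\sigma$-twists straight in part (2), and in particular recognizing that $\psi_0^\sigma=\psi_0^{-1}$ is precisely what is needed to convert a $\sigma$-twist on the representation into an inversion of the additive character, so that the functional equation $\epsilon(M,\psi)\epsilon(M^\vee,\psi^{-1})=1$ can be applied. That is the only place where real care is required.
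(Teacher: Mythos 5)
Your proposal is correct and follows exactly the route the paper intends: the proposition is stated there as an immediate application of the three recalled identities (additivity, $\ep(M,\psi_a)=\det M(a)\,\ep(M,\psi)$, and $\ep(M,\psi)\,\ep(M^\vee,\psi^{-1})=1$, extended to $WD$), combined with $M\cong M^\vee$ in the selfdual case and ${}^{\sigma}M^\vee\cong M$ in the conjugate-dual case. Your write-up just makes explicit the bookkeeping the paper leaves to the reader, including the only delicate point, namely using $\psi_0^\sigma=\psi_0^{-1}$ to trade the $\sigma$-twist for an inverted character.
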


\vskip 5pt

Since we are assuming that the
characteristic of $k$ is not equal to $2$, the characters $\psi_0$ of $k$ which satisfy 
$\psi^{\sigma}_0 = \psi^{-1}_0$ are precisely those characters which are 
trivial on $k_0$, the fixed field of $\sigma$.  These 
characters form a principal homogeneous space for the group $k^\times_0$, and 
the value $\ep(M, \psi_0)$ depends only on the $\NN k^\times$-orbit of 
$\psi_0$.  Indeed $\det M$ is conjugate-dual and hence trivial on 
$\NN k^\times \subset k_0^\times$.  If $\det M$ is conjugate-orthogonal,
 the restriction of $\det M$ to $k_0^\times$ is trivial, and hence the 
value $\ep(M) = \ep(M, \psi_0)$ is independent of the choice of $\psi_0$.
\vskip 10pt

Following Deligne [De2], we can say more when the selfduality or conjugate-duality of $M$ is 
given by a pairing $B$ with sign $b = + 1$. Recall the spin covering of the special orthogonal group, which gives an exact sequence:
$$ 1 \to \Z/2 \to \Spin(M) \to \SO(M) \to 1.$$
\vskip 5pt

\begin{prop} \label{P:orthogonal-ep}
\begin{enumerate}
\item Assume that $M$ is an orthogonal representation and that $\det (M) = 1$.  Then the root number $\ep(M) =\ep(M,\psi)$ is independent of the choice of $\psi$ and satisfies $\ep(M)^2 = 1$. Furthermore $\ep(M) = + 1$ if 
and only if the representation
$\vr : WD  \to \SO(M) $ 
lifts to a homomorphism
$\vr : WD \to \Spin(M)$.
\vskip 5pt
 
\item Assume that $M$ is a conjugate-orthogonal representation and that  $\psi^{\sigma}_0 = 
\psi^{-1}_0$. Then the root number $\ep (M) = \ep(M, \psi_0)$ is 
independent of the choice of $\psi_0$ and satisfies $\ep(M) = +1$.
\end{enumerate}
\end{prop}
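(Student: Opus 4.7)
Plan. The first two claims of Part (1)---independence of $\psi$ and the identity $\epsilon(M)^2 = 1$---are immediate from Proposition \ref{P:epsilon-WD}(1): when $\det M = 1$, the transformation formula $\epsilon(M, \psi_a) = \det M(a) \epsilon(M, \psi)$ collapses, and the square identity is already stated there. The Spin-lifting criterion is Deligne's theorem [De2], formulated originally for representations of $W(k)$. To extend it to $WD = W(k) \times \SL_2(\CC)$, I would decompose $M = \bigoplus_n M_n \otimes \Sym^n$ and observe that any homomorphism $\SL_2(\CC) \to \SO(V)$ lifts canonically to $\Spin(V)$ because $\SL_2(\CC)$ is simply connected; thus the lifting obstruction for $\varphi: WD \to \SO(M)$ coincides with that for its restriction to $W(k)$, where Deligne's theorem applies directly.

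For Part (2), independence of $\psi_0$ is quick: the additive characters of $k$ satisfying $\psi_0^\sigma = \psi_0^{-1}$ form a single $k_0^\times$-orbit under $\psi_0 \mapsto (\psi_0)_a$, and the one-dimensional conjugate-orthogonal character $\det M$ is trivial on $k_0^\times$ by Lemma \ref{L:1-dim}. Hence $\epsilon(M, (\psi_0)_a) = \det M(a) \epsilon(M, \psi_0) = \epsilon(M, \psi_0)$ for every $a \in k_0^\times$.

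The content of Part (2) is the sign assertion $\epsilon(M, \psi_0) = +1$. My plan is to reduce to Part (1) via induction. Let $N := \Ind_{WD(k)}^{WD(k_0)} M$, which is orthogonal as a representation of $WD(k_0)$ by Lemma \ref{L:ind-tensor}(i). Choose $\delta \in k^\times$ with $\delta + \delta^\sigma = 0$ and an additive character $\psi$ of $k_0$ such that $\psi_0(x) = \psi(\tr_{k/k_0}(\delta x))$. The inductivity of $\epsilon$-factors, combined with the identification $\lambda(k/k_0, \psi) = \epsilon(\omega, \psi)$ of the Langlands constant (where $\omega$ denotes the quadratic character of $k_0^\times$ attached to $k/k_0$), gives
\[ \epsilon(N, \psi) = \epsilon(\omega, \psi)^{\dim M} \cdot \det M(\delta^{-1}) \cdot \epsilon(M, \psi_0). \]
A determinant computation using the triviality of $\det M$ on $k_0^\times$ yields $\det N = \omega^{\dim M + 1}$; after twisting $N$ by a suitable one-dimensional orthogonal representation to force the determinant to become trivial, Part (1) evaluates the corresponding epsilon factor as $\pm 1$, determined by a Spin-lifting obstruction.

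The crux---and the main obstacle I foresee---is to prove that this Spin obstruction, multiplied by the epsilon factors contributed by the determinantal twist, cancels the factors $\epsilon(\omega, \psi)^{\dim M} \det M(\delta^{-1})$ to yield $\epsilon(M, \psi_0) = +1$. This amounts to a Galois-cohomological identity computing the second Stiefel-Whitney class of the induced orthogonal representation $\Ind M$ in terms of the conjugate-orthogonal structure of $M$. A more hands-on alternative is to use the decomposition of conjugate-dual representations recalled in Section \ref{S:centralizer} to reduce to the case of an irreducible one-dimensional conjugate-orthogonal $M = \chi$ (a character of $k^\times$ trivial on $k_0^\times$), and to verify $\epsilon(\chi, \psi_0) = +1$ directly from Tate's local functional equation together with the explicit evaluation of Gauss sums for $\chi$ and $\omega$.
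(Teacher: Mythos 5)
Your handling of Part (1) and of the independence assertion in Part (2) is fine and agrees with the paper (which simply quotes Deligne for the orthogonal case and uses the triviality of the conjugate-orthogonal character $\det M$ on $k_0^\times$ for the independence of $\psi_0$). The problem is that the actual content of Part (2), the sign $\ep(M,\psi_0)=+1$, is not proved in your proposal. You correctly set up the inductive reduction to the orthogonal root number of $N=\Ind(M)$, and you correctly identify what remains: a computation of the Spin-lifting obstruction (equivalently $w_2$) of the induced orthogonal representation in terms of the conjugate-orthogonal structure of $M$. But you then stop, flagging this as ``the main obstacle I foresee.'' That obstacle is precisely the paper's Lemma \ref{P:w_2}: after an additivity reduction to $\dim M$ even (using the one-dimensional case, obtained from Fr\"ohlich--Queyrut), $N$ is orthogonal with $\det N =1$, the image of $W(k_0)$ lies in the normalizer $\GL(M)\cdot j$ of a Siegel Levi in $\SO(N)$, and an explicit argument (the involution $j$ lifts to $\Spin(N)$ iff $4\mid\dim M$; the spin cover of $\GL(M)$ is the square root of $\det M$; a character of $k^\times/k_0^\times$ is a square iff it is $+1$ at a trace-zero element $e$; $\omega$ is a square iff $\omega(-1)=+1$) shows the parameter lifts iff $\det M(e)\cdot\omega(-1)^{\dim M/2}=+1$, which combined with Deligne's theorem produces exactly the cancellation you need. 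Without an argument of this kind the proof is incomplete.

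Two further points. First, your determinant computation is wrong: $\det(\Ind M)=\omega^{\dim M}\cdot(\det M)|_{k_0^\times}=\omega^{\dim M}$, not $\omega^{\dim M+1}$; in particular, once one has reduced to $\dim M$ even no twist of $N$ is needed (and in odd dimension your formula would incorrectly give $\det N=1$). Second, and more seriously, your fallback route --- reducing to one-dimensional conjugate-orthogonal $M$ via the decomposition recalled in \S\ref{S:centralizer} --- cannot work: irreducible conjugate-orthogonal representations of $W(k)$ occur in every dimension, and the decomposition only strips off conjugate-symplectic summands (which occur with even multiplicity) and non-conjugate-selfdual pairs $P\oplus(P^\sigma)^\vee$, both of which contribute $+1$ by Proposition \ref{P:epsilon-WD}; one is left with exactly the hard case of irreducible conjugate-orthogonal summands of dimension $>1$. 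Note also that the one-dimensional statement you propose to ``verify directly from Gauss sums'' is essentially the Fr\"ohlich--Queyrut theorem itself (nontrivial in the wildly ramified case), which the paper cites rather than reproves; and for Part (1) over $WD$, observing that $\SL_2(\CC)$ is simply connected handles the lifting side only --- the $WD$ epsilon factor carries the extra factors $\det(-F\mid M_n^I)^n$ and the powers $\ep(M_n,\psi)^{n+1}$, so the two sides of Deligne's criterion must be compared afresh, not merely restricted to $W(k)$.
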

 \vskip 5pt
 
 \begin{proof}
The orthogonal case was proved by Deligne [De2]; we note that in our case the characteristic of $k$ is
not equal to $2$. We will deduce the second result for conjugate-orthogonal representations of $W(k)$ from Deligne's formula, combined with the work of Frohlich and Queyrut [FQ]. The extension of the second result to conjugate-orthogonal representations of the Weil-Deligne group $WD$ is then an amusing exercise, which we leave to the reader. 
\vskip 10pt

If $M$ is conjugate-orthogonal and $\dim(M) = 1$, we have seen that $M$ corresponds to a complex character $\chi$ of the group $k^{\times}/k_0^{\times}$. 
By [FQ, Thm 3], we have the formula
\[ \epsilon(\chi, \psi({\rm Tr})) = \chi(e) \]
where $\psi$ is any non-trivial additive character of $k_0$ and $e$ is any nonzero element of $k$ with ${\rm Tr}(e) = 0$ in $k_0$. The element $e$ is well-defined
up to multiplication by $k_0^{\times}$, and $e^2$ is an element of $k_0^{\times}$. If we define 
the additive character $\psi_0$ of $k$ by
\[  \psi_0(x) = \psi({\rm Tr}(ex)) \]
then $(\psi_0)^{\sigma} = (\psi_0)^{-1}$, and
\[  \epsilon(M) = \epsilon(\chi,\psi_0) = \chi(e)^2 = +1. \]
This establishes the formula when M has dimension 1.

 \vskip 10pt

Since the desired formula is additive in the representation $M$ of $W(k)$, and is true when $\dim(M) =1$, we are reduced to the case of conjugate-orthogonal representations $M$ of even dimension. Then $N = {\rm Ind} (M)$ is an orthogonal representation of determinant $1$. Let $\psi$ be a nontrivial additive character of $k_0$; by the inductivity of local epsilon factors in dimension zero [De1]:

$$ \ep(N, \psi)/\ep(P, \psi)^{\dim(M)} = \ep(M, \psi({\rm Tr}))/\ep(\CC, \psi({\rm Tr}))^{\dim(M)}$$
with $\CC$ the trivial representation and $P = {\rm Ind} (\CC)$ the corresponding
induced representation, which is orthogonal of dimension $2$ and determinant $\omega$.
Since $\ep(\CC, \psi({\rm Tr})) = 1$ and
$\ep(P, \psi)^2 = \omega(-1)$, we obtain the formula
$$\ep(N) = \ep(N, \psi) = \omega(-1)^{\dim(M)/2} \cdot \ep(M, \psi({\rm Tr})).$$
On the other hand, we have
$$\ep(M) = \ep(M, \psi_0) = \det(M)(e) \cdot \ep(M, \psi({\rm Tr})),$$
where $e$ is a nonzero element of $k$ with ${\rm Tr}(e) = 0$ in $k_0$.
Hence, to show that $\ep(M) = +1$, we are reduced to proving the formula
$$\ep(N) = \det(M)(e) \cdot \omega(-1)^{\dim(M)/2}$$
for the root number of the orthogonal induced representation $N$. To do this, we
combine Deligne's formula for the orthogonal root number with the following.
\vskip 5pt

\begin{lemma}  \label{P:w_2}
Let $M$ be a conjugate-orthogonal representation of $W(k)$ of even dimension. Then 
$N = {\rm Ind} (M)$ is an orthogonal representation of $W = W(k_0)$ of determinant $1$. 
The homomorphism $\vr : W  \to \SO(N)$ 
lifts to a homomorphism
$\vr : W \to \Spin(N)$ if and only if $\det M(e)\cdot\omega(-1)^{\dim(M)/2} = +1$.
\end{lemma}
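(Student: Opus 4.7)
The plan is to compute the spin-lifting obstruction of $\vr: W \to \SO(N)$ directly, by Clifford-algebra calculation in an adapted model of $N$. First I would unwind the structure: using the conjugate-orthogonal pairing $B$ on $M$ to identify $M^{\sigma}$ with $M^{\vee}$ as $W(k)$-modules, Lemma~\ref{L:ind-tensor} realizes $N|_{W(k)}$ as the hyperbolic orthogonal space $M \oplus M^{\vee}$ with $M$ and $M^{\vee}$ as complementary Lagrangians. Consequently $\vr|_{W(k)}$ factors through the Levi $\GL(M) \hookrightarrow \SO(N)$ stabilizing this decomposition, and any coset representative $s \in W \smallsetminus W(k)$ acts by composing the swap of the two Lagrangians with the $\GL(M)$-action of $s^2 \in W(k)$.

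Next, the spin cover $\Spin(N) \to \SO(N)$ pulls back along $\GL(M) \hookrightarrow \SO(N)$ to the metaplectic double cover $\widetilde{\GL}(M) = \{(g,z) \in \GL(M) \times \CC^{\times} : z^2 = \det g\}$, so a lift of $\vr|_{W(k)}$ amounts to the choice of a character $\chi: W(k) \to \CC^{\times}$ with $\chi^2 = \det M$; such a $\chi$ exists since $\CC^{\times}$ is divisible. For the lift to extend across $s$, we need a spin lift $\tilde{\vr}(s) \in \Spin(N)$ of $\vr(s)$ such that the intertwining relation $\tilde{\vr}(s)\tilde{\vr}(h)\tilde{\vr}(s)^{-1} = \tilde{\vr}(\sigma(h))$ holds for all $h \in W(k)$, which forces $\chi$ to be $\sigma$-invariant, and such that $\tilde{\vr}(s)^2 = \tilde{\vr}(s^2)$.

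The heart of the argument is the Clifford-algebra computation of $\tilde{\vr}(s)^2$. In a basis of $M$ together with its dual basis of $M^{\vee}$, the pure swap of the two Lagrangians is a product of $n = \dim M$ commuting reflections, one per basis vector; its spin lift $\tilde{\tau}$ in the Clifford algebra of $N$ is the product of the corresponding Clifford vectors, and a direct anticommutation calculation gives $\tilde{\tau}^2 = (-1)^{n(n+1)/2}$, which for even $n$ equals $(-1)^{n/2}$. Combining this with the $\widetilde{\GL}(M)$-lift of the $s^2$-action through $\chi$, the condition $\tilde{\vr}(s)^2 = \tilde{\vr}(s^2)$ reduces to an explicit sign equation in $\{\pm 1\}$ relating $(-1)^{n/2}$, $\chi(s^2)$, and $\det M(s^2)$.

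Finally I would translate this sign criterion into the stated form. By Lemma~\ref{L:1-dim}, $\det M$ is conjugate-orthogonal (since $n$ is even) and hence trivial on $k_0^{\times}$, so $\det M(e)^2 = \det M(-e^2) = 1$ and $\det M(e) = \pm 1$. By Hilbert 90, the $\sigma$-invariant characters of $k^{\times}$ factor through the norm map $k^{\times} \to \NN k^{\times}$; identifying $s^2$ via local class field theory with a generator of $k_0^{\times}/\NN k^{\times}$ and analyzing when such a norm-factored character $\chi$ satisfies both $\chi^2 = \det M$ and the required sign condition at $s^2$, one recovers exactly the equality $\det M(e) \cdot \omega(-1)^{n/2} = +1$. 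The main obstacle is the Clifford computation in the general case where $s^2$ acts nontrivially on $M$: the Lagrangian swap and the $s^2$-action combine nontrivially in $\Spin(N)$, and both the parity-dependent sign $(-1)^{n/2}$ from the reflections and the contribution of $\chi(s^2)$ must be tracked with care.
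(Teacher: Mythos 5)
Your structural setup coincides with the paper's: the image of $\vr$ lies in the normalizer $\GL(M)\cdot j$ of the Siegel Levi, the pullback of $\Spin(N) \to \SO(N)$ to $\GL(M)$ is the square-root-of-determinant cover, and your Clifford computation $\tilde{\tau}^2 = (-1)^{n/2}$ is exactly the paper's statement that $j$ lifts to an element of order $2$ if and only if $4 \mid \dim M$. The genuine gap is in the translation step, where you claim the intertwining relation forces $\chi$ to be $\sigma$-invariant and then invoke Hilbert 90 to factor $\chi$ through the norm. This is the wrong condition. Conjugation by any lift of $\vr(s)$ is the unique automorphism of $\widetilde{\GL}(M) = \{(g,z): z^2 = \det g\}$ covering conjugation by $\vr(s)$, and it sends the tautological character $(g,z) \mapsto z$ to its inverse: the image character has square $\det^{-1}$, and since the character group of the connected group $\widetilde{\GL}(M)$ is torsion-free there is no room for an extra sign. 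Hence the relation $\tilde{\vr}(s)\tilde{\vr}(h)\tilde{\vr}(s)^{-1} = \tilde{\vr}(shs^{-1})$ reads $\chi(shs^{-1}) = \chi(h)^{-1}$, i.e. $\chi \cdot \chi^{\sigma} = 1$: the square root $\chi$ of $\det M$ must be a conjugate-dual character, trivial on $\NN k^{\times}$, not a $\sigma$-invariant one. The difference is fatal as written: since $\det M$ is trivial on $k_0^{\times}$ (Lemma \ref{L:1-dim}), one has $(\det M)^{\sigma} = (\det M)^{-1}$, so a $\sigma$-invariant $\chi$ with $\chi^2 = \det M$ can exist only when $\det M$ is quadratic; with your condition the lifting criterion could never reduce to $\det M(e)\cdot \omega(-1)^{n/2} = +1$, which imposes no such restriction on $\det M$.

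Once the invariance condition is corrected, the relevant tool is not Hilbert 90 but the structure of characters of $k^{\times}$ trivial on $\NN k^{\times}$ (Lemma \ref{L:1-dim}): such a $\chi$ restricts to $k_0^{\times}$ either trivially (conjugate-orthogonal) or as $\omega$ (conjugate-symplectic), and the existence of a conjugate-orthogonal square root of $\det M$ is governed by $\det M(e)$, because $e$ generates the $2$-torsion of the one-dimensional torus $k^{\times}/k_0^{\times}$, while the availability of the conjugate-symplectic option is governed by whether $\omega$ is a square, i.e. by $\omega(-1)$. Your condition (b) at $s^2$, combined with the sign $(-1)^{n/2}$ from $\tilde{\tau}^2$, then selects which type of square root is needed, and the bookkeeping — which you assert rather than carry out in the phrase "one recovers exactly the equality" — produces the stated criterion. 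This is in substance the paper's own argument, so after repairing the $\sigma$-invariance slip and writing out the final sign computation, your proof becomes a Clifford-algebra rendering of the same route rather than a different one.
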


\vskip 5pt
\begin{proof}
Let $T$ be the maximal torus in $\SO(N)$ which consists of the rotations $z_i$ in
$n = \dim(M)$ orthogonal planes.  The restriction of the spin covering $\Spin(N)\rightarrow \SO(N)$
to the torus $T$ is the two-fold covering obtained by pulling back the spin
covering $z \rightarrow z^2$ of $\CC^{\times}$ under the map $F(z_1, \cdots, z_n) = \prod z_i$.

The image of the map $\vr: W \to \SO(N)$ lies in the normalizer of the Levi subgroup $\GL(M)$
which fixes the decomposition $N = M + M^{\vee}$ into maximal isotropic dual subspaces.
There is an involution $j$ of $N$ which switches the subspaces $M$ and $M^{\vee}$.
Since $\det(j) = (-1)^n$ and $n = \dim(M)$ is even, this involution
lies in $\SO(N)$. The normalizer of the Levi is the semi-direct product $\GL(M)\cdot j$.
\vskip 5pt

Since $j$ has $n$ 
eigenvalues which are $+1$, and $n$ eigenvalues which are $-1$, if we view this
involution as a product of rotations $(z_1,\cdots, z_n)$ in orthogonal planes, we get 
$n/2$ values $z_i = -1$ and $n/2$ values $z_i = +1$. Hence the involution $j$ lifts
to an element of order $2$ in $\Spin(N)$ if
and only if $n = \dim(M)$ is divisible by $4$. Since the subgroup $\SL(M)$ is simply-connected,
it always lifts to $\Spin(N)$. Hence the spin cover of $\GL(M)$ is obtained by taking the square root
of the determinant of $M$, via the formula for the covering of $T$ given above.
\vskip 5pt

We now consider the homomorphism
$$\phi: W \rightarrow \GL(M)\cdot j$$ 
whose projection to the quotient $\langle j \rangle$ is the quadratic character
$\omega$ of $Gal(k/k_0)$. The determinant of $M$ corresponds to a character
$$\chi: k^{\times}/k_0^{\times} \rightarrow \CC^{\times}$$
by local class field theory. The character $\chi$ of $k^{\times}/k_0^{\times}$ is a square 
if and only if $\chi(e) = +1$, where $e$ is a nonzero element of $k$ with trace zero
to $k_0$. Indeed, $e$ generates the $2$-torsion subgroup of the one dimensional torus $k^{\times}/k_0^{\times}$.  On the other hand,  the quadratic extension $k$ of $k_0$ can be embedded in a cyclic quartic extension of $k_0$ if and only if the character $\omega$ of $k_0^{\times}$ is a square, or equivalently, if and only if $\omega(-1)=1$. 
Hence the parameter  
$$\phi: W \rightarrow \GL(M)\cdot j \rightarrow \SO(N)$$
lifts to  
$\Spin(N)$ if and only if $\chi(e) \cdot \omega(-1)^{\dim(M)/2} = +1$.
\end{proof}
\vskip 5pt

This - together with the extension to representations of $WD(k)$ - completes the proof of Proposition \ref{P:orthogonal-ep}.
\end{proof}

\vskip 15pt

\section{Characters of component groups}  \label{S:char-comp}

In this section, we will use the results of the previous section on local root numbers, together with Proposition \ref{P:d(M)}, to construct characters of the group $A$ of components of the centralizer $C$ of $(M,B)$.  
\vskip 10pt

First assume $M$ and $N$ are conjugate 
selfdual representations, with signs $b(M)$ and $b(N)$.  Fix $\psi_0$ with $\psi_0^{\sigma} = 
\psi_0^{-1}$,  and for $a$ in $C_M \subset G(M, B)$,  define 
$$\chi_N (a) = \ep (M^a \otimes N, \psi_0).$$

\begin{thm}  \label{T:chiN1}
\begin{enumerate}
\item The value $\chi_N(a)$ depends only on the image of $a$ in $A_M$, 
and defines a character $\chi_N : A_M \to \langle  \pm 1\rangle $ 
\vskip 5pt

\item If $b(M)\cdot b(N) = +1$, then $\chi_N = 1$ on $A_M$.
\vskip 5pt

\item If $b(M) \cdot b(N) = -1$, let $\psi'_0 (x) = \psi_0 (tx)$ with $t$ the nontrivial class in 
$k_0^\times / \NN k^\times$, and define 
\[  \chi'_N(a) = \ep (M^a \otimes N, \psi'_0). \]
Then 
\[  \chi'_N= \chi_N \cdot \eta^{\dim(N)}  \in \Hom(A_M, \pm 1),\] 
where the character $\eta$ of $A_M$ is defined by $\eta(a) = (-1)^{\dim M^a}$.
\end{enumerate}
\end{thm}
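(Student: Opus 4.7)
The plan is to verify the hypotheses of Proposition \ref{P:d(M)} for the invariant $d(X) = \epsilon(X \otimes N, \psi_0)$ on conjugate-dual representations $X$ of sign $b(M)$, thereby obtaining the character $\chi_N$, and then to handle parts (2) and (3) by direct computation with $\epsilon$-factors. For part (1), Lemma \ref{L:ind-tensor}(ii) shows that $M^a \otimes N$ is conjugate-dual of sign $b(M)b(N)$, so Proposition \ref{P:epsilon-WD}(2) gives $d(M^a) \in \{\pm 1\}$; multiplicativity of $\epsilon$ on direct sums yields additivity of $d$. The substantive point is that $d(M^a)$ depends only on the image of $a$ in $A_M$; by Proposition \ref{P:d(M)} it suffices to consider $a$ an involution. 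Using the decomposition
\[
C \cong \prod_i \O(m_i, \CC) \times \prod_j \Sp(2n_j, \CC) \times \prod_k \GL(p_k, \CC)
\]
from Section \ref{S:centralizer} and writing $a = (o_i, s_j, g_k)$, the $WD$-module $M^a$ contains $M_i$ with multiplicity $\dim V_i^{-}(o_i)$, $N_j$ with multiplicity $\dim W_j^{-}(s_j)$, and $P_k + (P_k^\sigma)^\vee$ with multiplicity $\dim U_k^{-}(g_k)$ (the $(-1)$-eigenspaces acting on the respective multiplicity spaces). Only parities matter since $\epsilon = \pm 1$: the parity of $\dim V_i^{-}(o_i)$ equals $(1 - \det o_i)/2$ and so depends only on the component of $o_i$ in $\O(m_i, \CC)$; for an involution $s_j \in \Sp(2n_j, \CC)$ both $\pm 1$ eigenspaces are symplectic and therefore even-dimensional, making the $N_j$-contribution trivial; and $(P_k + (P_k^\sigma)^\vee) \otimes N \cong Q_k + (Q_k^\sigma)^\vee$ with $Q_k = P_k \otimes N$ (using conjugate-duality of $N$), so Proposition \ref{P:epsilon-WD}(2) yields $\epsilon((P_k + (P_k^\sigma)^\vee) \otimes N, \psi_0) = 1$ and the $\GL$-contribution is trivial. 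Thus $d(M^a)$ is determined by the image of $a$ in $A_M \cong (\Z/2\Z)^k$, and Proposition \ref{P:d(M)} produces the character $\chi_N$.

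Part (2) is then immediate: when $b(M)b(N) = +1$, the representation $M^a \otimes N$ is conjugate-orthogonal, so Proposition \ref{P:orthogonal-ep}(2) forces $\chi_N(a) = +1$ for every $a$.

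For part (3), with $b(M)b(N) = -1$, apply the change-of-character formula $\epsilon(X, \psi_t) = \det X(t) \cdot \epsilon(X, \psi)$ from Section \ref{S:rootnumber}:
\[
\chi_N'(a)/\chi_N(a) = \det(M^a \otimes N)(t) = \det(M^a)(t)^{\dim N} \cdot \det(N)(t)^{\dim M^a}.
\]
The character $\det(M^a)$ is one-dimensional and conjugate-dual of sign $b(M)^{\dim M^a}$, and similarly $\det(N)$ is conjugate-dual of sign $b(N)^{\dim N}$. By Lemma \ref{L:1-dim}, a conjugate-dual character evaluated at $t \in k_0^\times \smallsetminus \NN k^\times$ equals its sign, so $\det(M^a)(t) = b(M)^{\dim M^a}$ and $\det(N)(t) = b(N)^{\dim N}$. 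Substituting,
\[
\chi_N'(a)/\chi_N(a) = \bigl(b(M) b(N)\bigr)^{\dim M^a \cdot \dim N} = (-1)^{\dim M^a \cdot \dim N} = \eta(a)^{\dim N},
\]
as desired.

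The main obstacle is the verification in part (1) that the $\GL$-factor contribution to $d(M^a)$ drops out (which relies on the conjugate-duality of $N$ to bring the tensor product into the form $Q + (Q^\sigma)^\vee$) and that the $\O$- and $\Sp$-factor multiplicities in $M^a$ are controlled modulo $2$ by the coset of $a$; everything else is a direct computation with the $\epsilon$-factor formalism built up in the preceding sections.
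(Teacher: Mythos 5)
Your proposal is correct and follows essentially the same route as the paper: part (1) via the decomposition $C_M \simeq \prod \O(m_i,\CC) \times \prod \Sp(2n_i,\CC) \times \prod \GL(p_i,\CC)$ (the argument the paper defers to [GP1, \S 10], which you spell out factor by factor), part (2) from the conjugate-orthogonality of $M^a \otimes N$ together with Proposition \ref{P:orthogonal-ep}(2), and part (3) from the change-of-character formula and the sign of $\det(M^a \otimes N)$ evaluated at $t$ via Lemma \ref{L:1-dim}. The only difference is that you supply the details the paper leaves as citations, and they check out.
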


\begin{proof}
The first statement is proved using the same argument as [GP1, \S 10]. For an element
$a$ in 
$$C_M \simeq \prod \O(m_i,\CC) \times \prod \Sp(2n_i,\CC) \times \prod 
\GL(p_i,\CC),$$ 
we find that $\chi_N(a)$ depends only on the image in the component group.
\vskip 5pt

When $b(M)\cdot b(N) = +1$, the representations $M^a \otimes N$ are all conjugate-orthogonal by Lemma \ref{L:ind-tensor}, so $\chi_N =1$.
\vskip 5pt

The final statement follows from the formula
$$\chi'_N(a) = \chi_N(a) \cdot \det (M^a\otimes N)(t)$$ 
and the calculation of the sign of the conjugate-dual representation
which is the determinant of the tensor product.
\end{proof}
\bigskip

We use this theorem to define the quadratic character
\[ \chi_N(a_M) \cdot \chi_M(a_N) \]
on elements $(a_M,a_N)$ in the component group $A_M \times A_N$. Here $M$ and
$N$ are two conjugate dual representations, although by part 2 of Theorem \ref{T:chiN1} the
character $\chi_N \times \chi_M$ can only be non-trivial when $b(M) \cdot b(N) = -1$.

\bigskip

The case when $M$ and $N$ are selfdual with signs $b(M)$ and $b(N)$ is more complicated.  
First, the function
$$\chi_N (a) = \ep (M^a \otimes N, \psi)$$
on $C_M$ need not take values in $\pm 1$.  Indeed
$$\chi_N(a)^2 = \det (M^a \otimes N) (-1) = \pm 1.$$
Even when $\det (M^a \otimes N) = 1$ for all $a$ in $C_M$, the value $\chi_N(a) 
=\pm 1$ may not be constant on the cosets of $C_M^0$.  For example, 
when $M = P + P^\vee$ with $P$ irreducible and not isomorphic to $P^\vee$ and
$N$ is the trivial representation $\CC$ of dimension $1$, we 
have $C_M = \GL(1,\CC)$.  But 
$$\chi_N(-1) = \ep (M \otimes N, \psi) = \ep (M, \psi) =  \det P(-1),$$ 
which need not be equal to $\chi_N(1) = 1$.  
\vskip 5pt

When $-1$ is a square in $k^\times$, however,  these problems do not arise and $\chi_N$ defines a character of 
$A_M$ as above.  In the general case, we will only consider selfdual representations $M$ and $N$ of even dimension, and elements $a$ in the subgroup $C_M^+$ of $C_M$, where $ \det (a|M) = +1$. Then $\dim (M^a)$ is also even, and 
$$\det(M^a \otimes N) = \det(M^a)^{\dim(N)} \cdot \det(N)^{\dim(M^a)}$$
is clearly trivial. In particular, $\ep (M^a \otimes N, \psi) =  \ep (M^a \otimes N)$ is independent of the choice of  additive character $\psi$ and satisfies
$ \ep (M^a \otimes N)^2 = +1$.
We correct this sign by another
square root of $\det(M^a \otimes N)(-1)$, and define
$$\chi_N (a) = \ep (M^a \otimes N) \cdot \det(M^a)(-1)^{\dim(N)/2} \cdot \det(N)(-1)^{\dim(M^a)/2}.$$

\begin{thm}  \label{T:chiN2}
\begin{enumerate}
\item The value $\chi_N(a)$ depends only on the image of $a$ in $A_M^+$, 
and defines a character $\chi_N : A_M^+ \to \langle  \pm 1\rangle $ 
\vskip 5pt

\item If $b(M)\cdot b(N) = +1$, then $\chi_N(a) = (\det M^a, \det N)$.

\end{enumerate}
\end{thm}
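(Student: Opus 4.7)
The plan for part (1) is to apply Proposition \ref{P:d(M)} with the invariant
\[
d(L) := \ep(L \otimes N) \cdot \det(L)(-1)^{\dim N/2} \cdot \det(N)(-1)^{\dim L/2},
\]
defined for selfdual representations $L$ of $WD$ of even dimension. First I would verify that $d(L) \in \{\pm 1\}$: since both $\dim L$ and $\dim N$ are even, $\det(L \otimes N) = \det(L)^{\dim N}\, \det(N)^{\dim L}$ is trivial, so by Proposition \ref{P:epsilon-WD}(1) the root number $\ep(L \otimes N)$ is independent of $\psi$ and satisfies $\ep(L \otimes N)^2 = 1$; the two correction factors are manifestly in $\{\pm 1\}$. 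Additivity $d(L_1 + L_2) = d(L_1)\, d(L_2)$ then follows from the additivity of $\ep$ in direct sums, combined with the multiplicativity of $\det$ and additivity of $\dim$.

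The main technical step is verifying that $d(M^a)$ depends only on the image of $a$ in $A_M^+$. Following the strategy of [GP1, \S 10], I would use the decomposition $C_M \simeq \prod_i \O(m_i) \times \prod_j \Sp(2n_j) \times \prod_l \GL(p_l)$ corresponding to the isotypic decomposition of $M$, and observe that as $a$ moves within a connected component of $C_M^+$, the isomorphism class of $M^a$ can change only by adding or removing summands of the form $2 M_i$, $2 N_j$, or $P_l + P_l^\vee$. These correspond to critical values of the parameters in the $\O(m_i)$, $\Sp(2n_j)$, and $\GL(p_l)$ factors where the $-1$-eigenspace dimension jumps. A direct computation using $\ep(L_0)\ep(L_0^\vee) = \det(L_0)(-1)$, the relation $(\det M_i)^2 = 1$ for selfdual $M_i$, and the evenness of $\dim N$ shows that $d(2 M_i) = d(2 N_j) = d(P_l + P_l^\vee) = 1$. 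By additivity, $d(M^a)$ is unchanged under these transitions, and Proposition \ref{P:d(M)} yields that $\chi_N \colon A_M^+ \to \{\pm 1\}$ is a character. This bookkeeping is the principal technical hurdle.

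For part (2), assume $b(M)\, b(N) = +1$, so that $M^a \otimes N$ is selfdual with sign $+1$, hence orthogonal, and has trivial determinant by the above. Deligne's theorem (Proposition \ref{P:orthogonal-ep}(1)) identifies $\ep(M^a \otimes N)$ with the second Stiefel-Whitney class $w_2(M^a \otimes N) \in H^2(WD, \F_2) \cong \{\pm 1\}$, the latter isomorphism coming from the Brauer--Hilbert-symbol identification. The Whitney product formula for Stiefel-Whitney classes of tensor products yields, modulo $2$ and for both $\dim V$, $\dim W$ even,
\[
w_2(V \otimes W) \equiv \tfrac{\dim W}{2}\, w_1(V)^2 + \tfrac{\dim V}{2}\, w_1(W)^2 + w_1(V) \cup w_1(W) \pmod{2}.
\]
Under the cup-product/Hilbert-symbol correspondence, $w_1(V) \cup w_1(W) = (\det V,\, \det W)$, while the Hilbert symbol identity $(a, a) = (a, -1)$ yields $w_1(V)^2 = \det(V)(-1)$. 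Substituting $V = M^a$ and $W = N$ gives
\[
\ep(M^a \otimes N) = \det(M^a)(-1)^{\dim N/2} \cdot \det(N)(-1)^{\dim M^a/2} \cdot (\det M^a,\, \det N),
\]
and plugging this into the definition of $\chi_N(a)$ yields
\[
\chi_N(a) = (\det M^a,\, \det N) \cdot \bigl[\det(M^a)(-1)^{\dim N/2}\bigr]^2 \cdot \bigl[\det(N)(-1)^{\dim M^a/2}\bigr]^2 = (\det M^a,\, \det N),
\]
since both bracketed quantities are $\pm 1$. This establishes (2).
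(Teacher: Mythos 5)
Your argument follows essentially the route the paper intends: its proof of this theorem is a one-line appeal to ``the method of [GP1, \S 10]'', which is exactly what you reconstruct --- apply Proposition \ref{P:d(M)} (with $A_M^+$ in place of $A$) to the invariant $d(L) = \ep(L\otimes N)\cdot \det(L)(-1)^{\dim N/2}\cdot \det(N)(-1)^{\dim L/2}$, check that $d$ equals $1$ on the pieces $2M_i$, $2N_j$, $P_l+P_l^{\vee}$ by which $M^a$ can change within a connected component of $C_M^+$, and for (2) convert $\ep(M^a\otimes N)$ into Stiefel--Whitney/Hilbert-symbol data via Proposition \ref{P:orthogonal-ep}(1). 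Part (1) and the orthogonal--orthogonal case of part (2) are fine as written.

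The one step that does not go through as literally stated is part (2) when $b(M)=b(N)=-1$: there $M^a$ and $N$ are symplectic, so they carry no orthogonal structure and hence no classes $w_1(M^a)$, $w_1(N)$, and the product formula you quote (the universal formula for $\O(m)\times \O(n)\to \O(mn)$) is being applied outside its domain. The conclusion is still correct, and the fix is short: in that case $\det M^a=\det N=1$, so the correction factors are trivial and one only needs $\ep(M^a\otimes N)=+1$; this holds because $WD\to \SO(M^a\otimes N)$ factors through $\Sp(M^a)\times \Sp(N)$, which is connected and simply connected, so the map lifts to $\Spin(M^a\otimes N)$ and Proposition \ref{P:orthogonal-ep}(1) gives $\ep(M^a \otimes N)=+1$, i.e. $\chi_N(a)=1=(\det M^a,\det N)$. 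Equivalently, $w_1$ and $w_2$ pulled back from $\Sp\times\Sp$ vanish, which is the degenerate case ``$w_1(V)=w_1(W)=0$'' of your formula --- but that has to be argued separately rather than deduced from the orthogonal product formula.
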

\begin{proof}

Again, this follows from the method of [GP1, \S 10]. We note that $C^0 \subset C^+ \subset C$,
so the component group $A^+$ has index either $1$ or $2$ in $A$.
\end{proof}

\bigskip

We use this theorem to define the quadratic character
$$ \chi_{N}(a_M) \cdot \chi_{M}(a_N)$$

\noindent on elements $(a_M, a_N)$ in the component group $A_{M}^+ \times A_{N}^+$, for two
selfdual representations $M$ and $N$ of even dimension.  As in the conjugate-dual case, it follows from Theorem \ref{T:chiN2}(2) that the character 
$\chi_{N}  \times \chi_{M}$ is only interesting when $b(M)\cdot b(N) = -1$. When the
representations $M$ and $N$ are both symplectic, $\chi = 1$. When $M$ and
$N$ are both orthogonal, the character $\chi$ is given by a product of
Hilbert symbols
\[  \chi_N(a_M) \cdot \chi_M(a_N) = (\det M^{a_M}, \det N)\cdot  (\det M, \det N^{a_N}). \]
\vskip 15pt

\section{$L$-groups of classical groups} \label{S:L-groups}

Having defined representations, selfdual representations, and 
conjugate-dual representations $M$ of the Weil-Deligne 
group $WD$ of $k$, our next goal is to relate these to the Langlands parameters of classical groups. Before doing 
that, we recall the $L$-group attached to each of the classical groups, 
with particular attention to the $L$-groups of unitary groups.

\vskip 10pt
If $G$ is a connected reductive group over $k_0$, the $L$-group of $G$ is a semi-direct product
\[  {^L}G = \widehat{G} \rtimes \Gal(K/k_0) \]
where $\widehat{G}$ is the complex dual group and $K$ is a splitting field for the quasi-split inner form of $G$, with $\Gal(K/k_0)$ acting on $\widehat{G}$ via pinned automorphisms.  For $G = \GL(V)$, 
we have $\widehat{G} = \GL(M)$ with $\dim M = \dim V$.  If $\{ e_1, \cdots, 
e_n\} $ is the basis of the character group of a maximal torus $T \subset 
G$ given by the weights of $V$, then the weights of the dual torus 
$\widehat{T}$ on $M$ are  the dual basis $\{ e'_1, \cdots, e_n'\} $.  
\vskip 10pt

Now assume that $G \subset \GL(V/k)$ is a connected classical group, defined by a 
$\sigma$-sesquilinear form $\langle,\rangle : V \times V \to k$ of sign $\epsilon$. The group $G$ and its dual group $\widehat{G}$, as well as the splitting field $K$ of its quasi-split inner form,  are given by the following table:

\vskip 15pt
\begin{center}
\begin{tabular}{|c|c|c|c|c|}
\hline 
& & & & \\
$(k, \epsilon)$ & $G$ & $\widehat{G}$ & $K$ & ${^L}G$ \\
\hline
& & & & \\
$k = k_0$ & $\SO(V),$  &  $\Sp_{2n}(\CC)$ & $k_0$ &  $\Sp_{2n}(\CC)$ \\

$\epsilon = 1$ & $\dim V = 2n+1$  & & & \\
\hline
& & & & \\
$k = k_0$&  $\SO(V)$, & $\SO_{2n}(\CC)$ & $k_0( \sqrt{\disc(V)})$ & 
$\O_{2n}(\CC)$ \,  ($\disc(V)\notin k^{\times 2}$)  \\
$\epsilon =1$ & $\dim V = 2n$ & &  &   $\SO_{2n}(\CC)$ \, ($\disc(V) \in k^{\times 2}$)  \\
\hline 
& & & & \\
$k = k_0$ & $\Sp(V)$,  &  $\SO_{2n+1}(\CC)$ & $k_0$ & $\SO_{2n+1}(\CC)$ 
\\ 
$\epsilon = -1$ & $\dim V = 2n$ & & & \\
\hline
& & & & \\
$k \ne k_0$ & $\U(V)$,  & $\GL_n(\CC)$ & $k$ & $\GL_n(\CC) \rtimes \Gal(k/k_0)$ \\
$\epsilon = \pm 1$ &$\dim V = n$  & & & $=$ $L$-group of compact $\U(n)$ \\
\hline
  \end{tabular}
\end{center}

\vskip 20pt
From the above table, we note that the 
$L$-group  $\GL(M)\rtimes \Gal(k/k_0)$ of $\U(V)$ is isomorphic 
to the $L$-group of the anisotropic real group $\U(n)$, associated to a definite hermitian 
space of the same dimension as $V$ over $\CC$.

Before specializing to the case of unitary groups, let us make some general observations on the 
representation theory of the $L$-groups of anisotropic groups over 
$\RR$. In this case, we have 
\[ ^L  G = 
\widehat{G} \rtimes \Gal (\CC/\RR), \]
where the Galois group acts by a pinned 
involution (possibly trivial), which maps to the opposition involution 
in ${\rm Out}(\widehat{G})$ and takes any representation $V$ to its dual $V^\vee$.  
\vskip 10pt

The pinning of $\widehat{G}$ gives a principal $\SL_2$ in $\widehat{G}$, 
which is fixed by the action of $\Gal(\CC/\RR)$, so that we have 
\[ 
\delta : \SL_2 \times \Gal(\CC/\RR) \to \widehat{G} \rtimes \Gal (\CC/\RR). \]
Let $\ep$ in $Z(\widehat{G})^{\Gal (\CC/\RR)}$ be 
the image of $-I$ in $\SL_2$.  Then $\ep^2 =1 $ and $\ep$ acts as a scalar 
on every irreducible representation of $^LG$.  The following result is due
to Deligne.

\vskip 5pt

\begin{prop}Let $G$ be an anisotropic group over $\RR$.
Then every complex representation $N$ of $^LG$ is selfdual.  
If $N$ is irreducible, the $^LG$-invariant pairing $\langle ,\rangle_N : N 
\times N \to \CC$ is unique up to scaling, and is $(\ep|N)$-symmetric.
\end{prop}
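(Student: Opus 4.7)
The plan is to first reduce to the case that $N$ is an irreducible representation of ${}^LG$, since both selfduality and the sign computation are additive over direct sums. With $N$ irreducible, I will apply Clifford theory to the normal subgroup $\widehat{G} \lhd {}^LG$: the restriction $N|_{\widehat{G}}$ is either irreducible or is of the form $V \oplus V^\sigma$ with $V$ irreducible and $V^\sigma \not\cong V$. The crucial structural input, used throughout, is the stated property that the Galois action on $\widehat{G}$ is by a pinned involution mapping to the opposition involution in $\text{Out}(\widehat{G})$; hence $V^\sigma \cong V^\vee$ as $\widehat{G}$-representations for every irreducible $V$.

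In the induced case $N \cong \Ind_{\widehat{G}}^{{}^LG} V$, the identification $V^\sigma \cong V^\vee$ makes $N|_{\widehat{G}} \cong V \oplus V^\vee$, and I would construct the ${}^LG$-invariant form as a suitable symmetrization of the natural duality pairing between $V$ and $V^\vee$. In the case $N|_{\widehat{G}} = V$ is irreducible with $V \cong V^\sigma \cong V^\vee$, Schur's lemma yields a unique-up-to-scalar $\widehat{G}$-invariant form $B_0$; writing $\phi = N(\sigma)$ and using that $\sigma^2 = 1$ in ${}^LG$ gives $\phi^2 = 1$, the pullback $B_0(\phi\cdot,\phi\cdot) = \lambda B_0$ must have $\lambda^2 = 1$. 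The main step, which I expect to be the principal obstacle for selfduality, is to prove $\lambda = +1$; I would deduce this from the fact that the pinned involution realizing opposition is compatible with a compact real form of $\widehat{G}$, equipping $N$ with an invariant positive-definite Hermitian form with respect to a compact real form of ${}^LG$ and forcing $B_0$ to be $\sigma$-invariant.

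For irreducible $N$, uniqueness of the pairing then follows from the inequality $\dim \Hom_{{}^LG}(N, N^\vee) \le 1$ by Schur, combined with the existence just established.

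For the sign assertion, I would restrict $N$ through $\delta$ to the principal $\SL_2$ and decompose $N = \bigoplus_k W^{(k)} \otimes \Sym^k$ into $\delta(\SL_2)$-isotypic components. Since $\ep = \delta(-I)$ is central in ${}^LG$ and of order two, Schur forces it to act by the single scalar $\ep|N \in \{\pm 1\}$, so only one parity of $k$ contributes and $(-1)^k = \ep|N$ on all components. The ${}^LG$-invariant pairing, being $\SL_2$-invariant, factors on each isotypic piece as $B_{W^{(k)}} \otimes B_{\Sym^k}$ where $B_{\Sym^k}$ is the standard form of sign $(-1)^k = \ep|N$. The remaining hard step, which I again anticipate as the main obstacle, is to show that $B_{W^{(k)}}$ is symmetric (sign $+1$) on the multiplicity space; I would derive this from the compact-form input once more, since $W^{(k)}$ is a representation of the ${}^LG$-centralizer of $\delta(\SL_2 \times \Gal(\CC/\RR))$, which inherits a compact real structure from the anisotropy of $G$ and so carries only orthogonal (i.e.\ symmetric) invariant forms.
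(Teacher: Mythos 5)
Your skeleton (reduction to irreducible $N$, Clifford theory for $\widehat{G}\lhd {}^LG$ via $V^\sigma\cong V^\vee$, principal $\SL_2$ for the sign) is reasonable, but the two steps you yourself single out as the main obstacles are exactly where the argument breaks, and in both places the appeal to compactness is not a valid substitute for an argument. In the extended case, unitarity does \emph{not} force $\lambda=+1$: a positive-definite Hermitian form invariant under a (disconnected) compact form of ${}^LG$ is perfectly compatible with $B_0(\phi m,\phi n)=-B_0(m,n)$. A concrete warning: take the finite (hence compact) group $Q_8$ with its $2$-dimensional symplectic representation and the outer involution $\theta$ interchanging $i$ and $j$; the normalized intertwiner $\phi$ with $\phi^2=1$ can be taken to be $(\sigma_y+\sigma_z)/\sqrt{2}$, which has determinant $-1$ and therefore anti-preserves the symplectic form, and correspondingly neither extension of this representation to $Q_8\rtimes\Z/2\Z$ is selfdual, even though everything in sight is unitary. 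So the assertion you need is genuinely about the involution being the \emph{pinned} automorphism inducing the opposition involution; it fails for a general involution with $V^\theta\cong V^\vee$ and hence cannot be deduced from the existence of a compact form. Likewise, in your last paragraph the claim that the centralizer of $\delta(\SL_2\times\Gal(\CC/\RR))$ ``carries only orthogonal invariant forms because it is compact'' is false: compact groups have irreducible symplectic (quaternionic) representations, and in any case the multiplicity space $W^{(k)}$ need not be irreducible, so $B_{W^{(k)}}$ need not even be $\pm$-symmetric a priori.

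The missing idea, which is how the paper (following Bourbaki, Ch.~VIII, \S 7) proceeds, is that the pinned involution fixes the principal $\SL_2$ pointwise, so $\phi$ commutes with $\delta(\SL_2)$. Restrict $N$ to $\delta(\SL_2\times\Gal(\CC/\RR))$: there is an irreducible summand $\Sym^k\otimes\chi$ occurring with multiplicity one (coming from the top piece of the principal $\SL_2$ in $N|_{\widehat{G}}$), and any invariant pairing restricts nondegenerately to it. In the extended case $\phi$ preserves this summand and acts on it by a scalar $\pm 1$ (Schur, since the summand is $\SL_2$-irreducible of multiplicity one), whence $B_0(\phi\,\cdot,\phi\,\cdot)=B_0$ there and $\lambda=+1$; this simultaneously rules out $N^\vee\cong N\otimes\omega$ and proves selfduality. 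For the sign you do not need to control every $W^{(k)}$: on the distinguished multiplicity-one summand the multiplicity space is a line, so the restricted pairing is the canonical pairing on $\Sym^k\otimes\chi$, whose sign is that of $-I$, i.e.\ $(\ep|N)$, and the sign of a $\pm$-symmetric form on $N$ is read off from any subspace on which it restricts nondegenerately. Uniqueness of the pairing for irreducible $N$ is Schur's lemma, as you say.
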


\begin{proof} The proof is similar to Bourbaki [B, Ch. VIII, \S 7, Prop. 12 and Ex. 6], which treats the irreducible, 
selfdual representations $M$ of $\widehat{G}$.  There one restricts the 
pairing on $M$ to the highest weight summand for the principal $\SL_2$, 
which occurs with multiplicity one.  In this case, we restrict the pairing 
on $N$ to the subgroup $\SL_2 \times \Gal(\CC/\RR)$, which again has an 
irreducible summand which occurs with multiplicity $1$.  The sign 
$(\ep|N)$ is the sign of $-I$ on this summand, which determines the sign 
of the pairing. 
\end{proof}

If $M$ is an irreducible representation of $\widehat{G}$ which is 
selfdual, then $M$ extends in two ways to $^{L } G$.  If $M$ is 
{\it not} isomorphic to $M^{\vee}$, then the induced representation
$$N = \Ind (M)$$
of $^{L}G$ is irreducible.  If we restrict $N$ to $\widehat{G}$, it 
decomposes as a direct sum 
$$M + \alpha \cdot M \simeq M + M^{\vee}$$
where $\alpha$ generates $\Gal (\CC/\RR)$.  Since $M$ is not isomorphic 
to $M^{\vee}$, the subspaces $M$ and $\alpha \cdot M$ of $N$ are isotropic for the 
pairing $\langle ,\rangle _N$, which gives a 
non-degenerate pairing $\langle ,\rangle_M  : M \times M
\to \CC$ defined by 
\begin{equation} \label{E:real}
\langle m, m'\rangle_M  = \langle m ,  \alpha m'\rangle _N.
\end{equation}
This is a conjugate-duality on $M : $ for $g$ in $\widehat{G}$, we have
$$\begin{cases}
\langle gm, \alpha g \alpha^{-1} m'\rangle_M   =  \langle m, m'\rangle_M  \\
\langle m', m\rangle_M   =  (\ep|M) \cdot \langle m, m'\rangle_M 
\end{cases}$$

\bigskip

We now specialize these arguments to the case of  
unitary groups.  Since the representation of the principal $\SL_2 \to 
\GL(M)$ on $M$ is irreducible and isomorphic to ${\rm Sym}^{n-1}$, we have
$$(\ep|M) = (-1)^{n-1}.$$
Hence the selfduality on $N$ and the conjugate-duality on $M$ are 
$(-1)^{n-1}$-symmetric. In particular, we have
\vskip 10pt

\begin{prop} \label{P:normalizer}
If $G = \U(V)$, with $\dim_k V = n$, then
\[  {^L}G \hookrightarrow \begin{cases}
\Sp(N) &= \Sp_{2n}(\CC), \text{  if $n$ is even;} \\
\O(N) &= \O_{2n}(\CC), \text{  if $n$ is odd.} \end{cases} \]
In each case, ${^L}G$ is identified with the normalizer of a Levi subgroup of a Siegel parabolic subgroup in $\Sp_{2n}(\CC)$ or $\O_{2n}(\CC)$.
\end{prop}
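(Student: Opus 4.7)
The plan is to apply the preceding proposition (due to Deligne) to the $2n$-dimensional induced representation $N = \Ind_{\widehat{G}}^{{^L}G}(M)$, where $M$ is the standard representation of $\widehat{G} = \GL_n(\CC)$, and then identify the image of ${^L}G$ geometrically as the normalizer of a Siegel Levi. As already noted in the paper, the ${^L}$-group of $\U(V)$ agrees with that of the compact real group $\U(n)$, so the anisotropic framework of the preceding proposition is indeed applicable.

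First I would verify irreducibility and compute the sign. The pinned Galois action on $\widehat{G}$ is the opposition involution $g \mapsto (g^T)^{-1}$, which sends $M$ to $M^{\vee}$; since $M \not\cong M^{\vee}$ as $\widehat{G}$-modules (the case $n = 1$ being trivial), the induced representation $N$ is irreducible. The preceding proposition then produces a unique (up to scaling) ${^L}G$-invariant non-degenerate bilinear form $\langle,\rangle_N$ on $N$ of sign $(\ep|N)$. Since the principal $\SL_2 \subset \widehat{G}$ acts on $M$ as $\Sym^{n-1}$, the element $-I \in \SL_2$ lands in $\widehat{G}$ as the scalar $(-1)^{n-1} I$, and hence acts on $N$ as the scalar $(-1)^{n-1}$. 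Thus $(\ep|N) = (-1)^{n-1}$, and the form is symplectic for $n$ even and symmetric for $n$ odd, yielding the advertised embeddings ${^L}G \hookrightarrow \Sp(N) \cong \Sp_{2n}(\CC)$ or ${^L}G \hookrightarrow \O(N) \cong \O_{2n}(\CC)$.

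For the second statement, I would fix a generator $\alpha$ of $\Gal(k/k_0)$ and write $N|_{\widehat{G}} = M \oplus \alpha \cdot M$, with $\alpha \cdot M \cong M^{\vee}$ as $\widehat{G}$-modules. Because $M$ and $M^{\vee}$ are non-isomorphic irreducible $\widehat{G}$-modules, Schur's lemma forces the ${^L}G$-invariant pairing $\langle,\rangle_N$ to restrict trivially to each summand and to pair the two summands perfectly. Consequently both $M$ and $\alpha \cdot M$ are maximal isotropic subspaces of $N$, so $\widehat{G} = \GL(M)$ is precisely the Levi of the Siegel parabolic stabilizing $M$, while $\alpha$ swaps the two isotropic summands and therefore lies in the non-identity coset of $N_{\Sp(N)}(\GL(M))$ (resp.\ $N_{\O(N)}(\GL(M))$). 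Since ${^L}G = \widehat{G} \rtimes \langle \alpha \rangle$ and the normalizer each consist of $\GL(M)$ together with a single further coset, a component count forces the map ${^L}G \to N(\GL(M))$ to be an isomorphism. The only point requiring care is verifying that the image of $\alpha$ genuinely lies in $\Sp(N)$ or $\O(N)$ rather than only in the similitude group $\GSp(N)$ or $\GO(N)$, but this is automatic from the full ${^L}G$-invariance (not merely semi-invariance) of $\langle,\rangle_N$ supplied by the preceding proposition.
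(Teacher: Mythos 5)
Your proof is correct and follows essentially the same route as the paper: the paper also deduces the proposition from Deligne's result for the anisotropic group $\U(n)$, applied to the irreducible induced representation $N=\Ind(M)$, computing the sign $(\ep|N)=(-1)^{n-1}$ via the principal $\SL_2$ acting on $M$ as $\Sym^{n-1}$, and observing that $M$ and $\alpha\cdot M$ are maximal isotropic since $M\ncong M^{\vee}$, so that ${}^LG$ is the normalizer of the Siegel Levi. Your added details (irreducibility of $N$, the component count, and the isometry--not merely similitude--check for $\alpha$) just make explicit what the paper leaves implicit.
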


Finally, it is instructive to describe the $L$-groups of the classical groups from the point of view of invariant theory. As one observes from the above table, the $L$-groups of symplectic and orthogonal groups $G(V)$ are themselves classical groups over $\CC$ and have natural realizations as  subgroups of $\GL(M)$ for complex vector spaces $M$ of appropriate dimensions. These subgroups can be described as follows. One has a decomposition 
\[  M \otimes M \cong \Sym^2 M \bigoplus \bigwedge^2 M \]
of $\GL(M)$-modules. The action of $\GL(M)$ on $\Sym^2 M$ or 
$\wedge^2 M$ has a unique open orbit consisting of 
 nondegenerate symmetric or skew-symmetric forms on $M^{\vee}$. Then 
we note:
\vskip 5pt

\begin{enumerate}[(i)]
\item The stabilizer of a nondegenerate vector 
$B$ in $\Sym^2 M$ (resp. $\wedge^2 M$) is the orthogonal group 
$\O(M,B)$  (resp. the symplectic group $\Sp(M,B)$); these groups 
exhaust the $L$-groups of symplectic and orthogonal groups.  
\vskip 5pt

\item The action of this stabilizer on the other representation $\wedge^2 M$ (resp. $\Sym^2 M$) is its adjoint representation. 
\vskip 5pt

\item The two representations $\Sym^2 M$ and $\wedge^2 M$ are also useful for characterizing the selfdual representations of $WD(k)$ introduced in \S \ref{S:selfdual}: a representation $M$ of $WD(k)$ is
orthogonal (resp. symplectic) if and only if $WD(k)$ fixes a nondegenerate vector in $\Sym^2 M$ (resp. $\wedge^2 M$).
\end{enumerate}
\vskip 10pt

These rather obvious remarks have analogs  
for the unitary group $\U(V)$, which we now describe. Suppose that the $L$-group of $\U(V)$ is $\GL(M) \rtimes \Gal(k/k_0)$.  Consider the semi-direct product 
\[  H  = (\GL(M) \times \GL(M)) \rtimes \Z/2\Z \]
where $\Z/2\Z$ acts by permuting the two factors of $\GL(M)$; this is the $L$-group of $\text{Res}_{k/k_0}(\GL(V/k))$ with $\dim_k V = \dim M$. The irreducible representation $M \boxtimes M$ of $H^0 = \GL(M) \times \GL(M)$ is invariant under $\Z/2\Z$ and thus has two extensions to $H$. In one such extension, the group $\Z/2\Z = S_2$ simply acts by permuting the two copies of $M$; the other extension is then given by 
twisting by the nontrivial character of $H/H^0$.
In honor of Asai, we denote these two extensions by ${\rm As}^{+}(M)$ and ${\rm As}^-(M)$ respectively. They can be distinguished by
\[  \text{Trace}(c| {\rm As}^+(M)) = \dim M \quad \text{and}\quad 
 \text{Trace}(c| {\rm As}^-(M)) = -\dim M, \]
where $c$ is the nontrivial element in $\Z/2\Z$. One has
\[  \text{Ind}_{H^0}^H (M \boxtimes M) = {\rm As}^+(M) \bigoplus {\rm As}^-(M). \]
\vskip 10pt

The action of $H^0$ on ${\rm As}^{\pm}(M)$ has an open dense orbit, consisting of isomorphisms $M^{\vee} \to M$ and whose elements we call nondegenerate. Now we have
\vskip 5pt

\begin{prop} \label{P:asai}
If $\dim M = n$, then the stabilizer in $\GL(M) \times \GL(M)$ of a nondegenerate vector in ${\rm As}^{(-1)^{n-1}}(M)$ is isomorphic to the $L$-group of $\U(V)$.  Moreover, the action of this stabilizer on the other representation ${\rm As}^{(-1)^n}(M)$ is the adjoint representation 
of ${^L}U(V)$. 
\end{prop}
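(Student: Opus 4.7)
The plan is to realize a nondegenerate vector $v \in {\rm As}^\epsilon(M)$ as an isomorphism $\phi : M^\vee \to M$, equivalently, after choice of basis, an invertible $n \times n$ matrix $A$, and to compute the stabilizer directly. Under this identification $(g_1,g_2) \in H^0 = \GL(M) \times \GL(M)$ acts by $A \mapsto g_1 A g_2^t$, while the extra generator $c \in H$ acts on ${\rm As}^\epsilon(M)$ by $A \mapsto \epsilon A^t$. The $c$-invariant nondegenerate vectors in ${\rm As}^\epsilon(M)$ are thus the invertible $A$ with $A = \epsilon A^t$: for $\epsilon = (-1)^{n-1}$ these are nondegenerate symmetric matrices when $n$ is odd, and nondegenerate antisymmetric matrices when $n$ is even, and exist in both cases.

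Having fixed such an $A$, the stabilizer in $H^0$ consists of $(g_1,g_2)$ with $g_1 A g_2^t = A$; parametrising by $g = g_2$ gives the subgroup $\{(A g^{-t} A^{-1}, g) : g \in \GL(M)\}$, isomorphic to $\GL(M)$, i.e.\ to the dual group of $\U(V)$. Because $A$ is $c$-invariant, $c$ itself also lies in the stabilizer, and the full stabilizer in $H$ is the semidirect product $\GL(M) \rtimes \langle c \rangle$. A direct matrix computation shows that conjugation by $c$ sends $g$ to $A g^{-t} A^{-1}$. Choosing $A$ to be the standard anti-diagonal form (symmetric for $n$ odd, symplectic for $n$ even) makes this into the pinned Chevalley-type involution of $\GL_n(\CC)$ that defines ${}^L \U(V) = \GL_n(\CC) \rtimes \Gal(k/k_0)$, so the full stabilizer is canonically isomorphic to ${}^L \U(V)$.

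For the adjoint claim, I restrict ${\rm As}^{(-1)^n}(M)$ to this stabilizer. As an $H^0$-module it is still $M \boxtimes M$, so via the embedding $g \mapsto (A g^{-t} A^{-1}, g)$ the subgroup $\GL(M)$ acts on $M \otimes M$ by $w \mapsto A g^{-t} A^{-1} \cdot w \cdot g^t$. Using the $A$-dependent isomorphism $\tau : M \otimes M \to M \otimes M^\vee \cong \mathfrak{gl}(M)$ sending $w \mapsto A^{-1} w$, this action becomes $X \mapsto g^{-t} X g^t$, which is equivalent to the usual adjoint action via the Chevalley involution of $\GL(M)$. The $c$-action on the ``other'' Asai carries the opposite sign $-\epsilon = (-1)^n$; transported through $\tau$ it becomes the differential of the pinned involution $g \mapsto A g^{-t} A^{-1}$ on $\mathfrak{gl}(M)$, which is precisely the Galois action defining the adjoint representation of ${}^L \U(V)$.

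The hardest step will be the sign bookkeeping in the last paragraph: verifying that the sign $(-1)^n$ on the other Asai is exactly the sign produced by differentiating the Chevalley-style pinned involution $g \mapsto A g^{-t} A^{-1}$, and that the equivalence $X \mapsto g^{-t} X g^t \sim X \mapsto g X g^{-1}$ of $\GL(M)$-actions is compatible with this Galois twist. Choosing $A$ in the standard anti-diagonal form aligned with the pinning of $\GL_n(\CC)$ should reduce the whole matching to a short matrix identity.
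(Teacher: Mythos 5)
Your computation is correct, and in fact the paper offers no proof of Proposition \ref{P:asai} to compare against: it is stated as the unitary analogue of the ``obvious'' $\Sym^2 M/\wedge^2 M$ remarks, so the direct matrix verification you outline is exactly what is needed. Two points to tighten. First, the stabilizer must be taken in the full group $H=(\GL(M)\times \GL(M))\rtimes \Z/2\Z$ (as you do, and as the statement's conclusion forces, the $L$-group being disconnected), and the reduction to a $c$-fixed nondegenerate vector should be justified by the sentence preceding the proposition: the nondegenerate vectors form a single open $H^0$-orbit, so all their stabilizers in $H$ are conjugate; note that the existence of a $c$-fixed representative, i.e.\ of an invertible $A$ with $A=\epsilon A^t$, is precisely where the sign $\epsilon=(-1)^{n-1}$ enters. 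Second, the sign bookkeeping you defer does close, and most cleanly with a single intertwiner rather than in two steps: take $A=J$ the standard pinning matrix, so $J^t=(-1)^{n-1}J$, and on ${\rm As}^{(-1)^n}(M)$ define $\Psi(w)=w^tJ^{-1}$ (your $\tau$ followed by transpose, up to a scalar). Using $J^{-t}=(-1)^{n-1}J^{-1}$ one checks
\[
\Psi\left( Jg^{-t}J^{-1}\, w\, g^t \right) = g\,\Psi(w)\,g^{-1},
\qquad
\Psi\left( (-1)^n w^t \right) = (-1)^n\, w J^{-1} = -J\,\Psi(w)^t J^{-1},
\]
so the identity component of the stabilizer acts through the honest adjoint action and $c$ acts by $X\mapsto -JX^tJ^{-1}$, which is the differential of the pinned involution, i.e.\ the Galois action on $\widehat{\mathfrak{g}}=\mathfrak{gl}_n(\CC)$. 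The crucial identity is $(-1)^n=-(-1)^{n-1}$: the sign carried by the ``other'' Asai is exactly the $-1$ produced by differentiating the involution, which settles the compatibility you were worried about.
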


\vskip 10pt

The representations ${\rm As}^{\pm}(M)$ are also useful for characterizing conjugate-dual representations of $WD$, which were discussed in \S \ref{S:selfdual}. Indeed, given a representation 
\[  \varphi: WD(k) \to \GL(M), \]
one obtains a map
\[  \tilde{\varphi}: WD(k_0) \to H \]
by setting
\[  \tilde{\varphi}(\tau) = (\varphi(\tau),  \varphi(s \tau s^{-1})) \in \GL(M) \times \GL(M),  \]
for $\tau \in WD(k)$, and
\[  \tilde{\varphi}(s) =(1,\varphi(s^2)) \cdot c \in H \smallsetminus H^0. \] 
The choice of $s$ is unimportant, since the maps $\tilde{\varphi}$'s 
thus obtained for different choices of $s$ are naturally conjugate under 
$H^0$. Through this map, $WD(k_0)$ acts on 
${\rm As}^{\pm}(M)$. In fact, the representation 
${\rm As}^+(M)$ of $WD(k_0)$ is obtained from $M$ by the process of 
{\it multiplicative induction} 
[P2] or {\it twisted tensor product}; 
it is an extension of the representation $M \otimes M^{\sigma}$ of $WD(k)$ to 
$WD(k_0)$, and ${\rm As}^-(M)$ is the twist of ${\rm As}^+(M)$ by the quadratic 
character $\omega_{k/k_0}$ associated to the quadratic extension $k/k_0$.
\vskip 10pt

Now we have:
 \vskip 5pt
 
\begin{prop}  \label{P:cd-invariant}
If $M$ is a representation of $WD(k)$, then 

\begin{enumerate}[(i)]
\item  $M$ is conjugate-orthogonal if and only if $WD(k_0)$ fixes a nondegenerate vector in
${\rm As}^+(M)$. When $M$ is irreducible, this is equivalent to ${\rm As}^+(M)^{WD(k_0)} \ne 0$.
 \vskip 5pt
 
 \item $M$ is conjugate-symplectic if and only if $WD(k_0)$ fixes a nondegenerate vector in ${\rm As}^-(M)$. When $M$ is irreducible, this is equivalent to ${\rm As}^-(M)^{WD(k_0)} \ne 0$.  
 \end{enumerate}
 \end{prop}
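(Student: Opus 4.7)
My plan is to unwind the definitions of ${\rm As}^{\pm}(M)$ and of the homomorphism $\tilde{\varphi}: WD(k_0) \to H$, and then to match $WD(k_0)$-fixed nondegenerate vectors with conjugate-dual pairings of the appropriate sign, using the computation of the component group centralizers in the spirit of Lemma \ref{L:BB'2}.

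First, I would identify the underlying vector space $M \otimes M$ of ${\rm As}^{\pm}(M)$ with the space of bilinear forms on $M^{\vee}$, via the $\GL(M) \times \GL(M)$-equivariant map sending $v = \sum a_i \otimes b_i$ to $B_v(\ell,\ell') = \sum \ell(a_i) \ell'(b_i)$. Under this identification, a vector $v$ is nondegenerate (i.e.\ in the open orbit) exactly when $B_v$ is a nondegenerate bilinear form on $M^{\vee}$; by dualizing, this is equivalent to giving a nondegenerate bilinear form on $M$.

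Next, I would restrict the action through $\tilde{\varphi}$ to $WD(k)$. Since $\tilde{\varphi}(\tau) = (\varphi(\tau), \varphi(s\tau s^{-1}))$ for $\tau \in WD(k)$, the restriction of ${\rm As}^{\pm}(M)$ to $WD(k)$ coincides with $M \otimes M^{\sigma}$. Invariance of $v$ under $WD(k)$ then becomes
\[
B_v\bigl(\varphi(\tau)^t \ell, \, \varphi(s\tau s^{-1})^t \ell'\bigr) \;=\; B_v(\ell,\ell') \qquad (\tau \in WD(k)),
\]
which, on dualising, is the first defining identity of a conjugate-dual pairing on $M$, namely $B(\tau m, s\tau s^{-1} n) = B(m,n)$. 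At this stage no sign has yet been pinned down.

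The sign of the pairing comes from the additional constraint that $v$ be fixed by $\tilde{\varphi}(s) = (1, \varphi(s^2)) \cdot c$. Here $c$ acts on $M \otimes M$ by the swap in ${\rm As}^+(M)$ and by minus the swap in ${\rm As}^-(M)$. A direct computation shows that $\tilde{\varphi}(s) \cdot v$ corresponds to the bilinear form $(\ell,\ell') \mapsto \pm B_v(\varphi(s^2)^t \ell', \ell)$, where the sign is $+$ in ${\rm As}^+(M)$ and $-$ in ${\rm As}^-(M)$. Demanding that this equal $B_v$ and translating back to the dual pairing $B$ on $M$ gives precisely
\[
B(n,m) \;=\; \pm\, B(m, s^2 n),
\]
i.e.\ the conjugate-orthogonality condition in ${\rm As}^+(M)$ and the conjugate-symplectic condition in ${\rm As}^-(M)$. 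This pins down the correspondence between nondegenerate $WD(k_0)$-fixed vectors and conjugate-dual forms on $M$ of the prescribed sign.

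For the final clause, I would invoke Schur's lemma together with Lemma \ref{L:BB'2}: when $M$ is irreducible, any nonzero $WD(k)$-equivariant map $M^{\sigma} \to M^{\vee}$ is an isomorphism, so every nonzero element of $({\rm As}^{\pm}(M))^{WD(k_0)}$ is automatically nondegenerate, and the existence of such a vector is detected by nonvanishing of ${\rm As}^{\pm}(M)^{WD(k_0)}$. I expect the main bookkeeping obstacle to be keeping the various dualisations and the action of $c$ consistent, so that the signs $\pm 1$ coming from ${\rm As}^{\pm}(M)$ match the signs $b = \pm 1$ in the definitions of conjugate-orthogonal and conjugate-symplectic in Section \ref{S:selfdual}; once the conventions are fixed as above, the verification is a direct calculation.
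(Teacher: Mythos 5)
Your argument is correct, and it is exactly the verification the paper has in mind: the paper states Proposition \ref{P:cd-invariant} without proof, treating it as an immediate unwinding of the definition of $\tilde{\varphi}$ and of ${\rm As}^{\pm}(M)$, and your computation (identifying nondegenerate vectors with nondegenerate forms, extracting the first conjugate-duality identity from invariance under $\tilde{\varphi}(WD(k))$, and the sign identity $B(n,m)=\pm B(m,s^2n)$ from invariance under $\tilde{\varphi}(s)=(1,\varphi(s^2))\cdot c$) is the intended one, with the signs matching the normalization ${\rm Trace}(c|{\rm As}^{\pm}(M))=\pm\dim M$. One small remark: for the final clause Schur's lemma alone suffices, since a nonzero $WD(k)$-invariant vector already yields a nonzero equivariant map between the irreducible representations $M^{\sigma}$ and $M^{\vee}$, hence an isomorphism; the appeal to Lemma \ref{L:BB'2} is not needed.
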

\vskip 10pt

\section{Langlands parameters for classical groups}  \label{S:L-parameter}

In this section, we discuss the Langlands parameters of 
classical groups. In particular, we show that these Langlands parameters can be understood in terms of selfdual or conjugate-dual representations $M$ of $WD(k)$. 
\vskip 5pt

If $G$ is a connected, reductive group over $k_0$, a Langlands parameter 
is a homomorphism 
$$\vr : WD(k_0) \to {^L}G = \widehat{G} \rtimes  \Gal (K/k_0).$$
This homomorphism is
required to be continuous on $WD = W(k_0)$ when $k_0 = \RR$ on $\CC$.  In 
the non-archimedean case, it is required to be trivial on an open subgroup of the inertia group, and the image of Frobenius is required to be semi-simple.  In all cases, the 
projection onto $\Gal(K/k_0)$ is the natural map $W(k_0)/W(K)\to \Gal (K/k_0)$.  
Finally, two Langlands parameters are considered equivalent if they are conjugate 
by an element in $\widehat{G}$.  
\vskip 5pt

Associated to any Langlands parameter 
is the reductive group 
$$C_{\vr} \subset \widehat{G}$$
which centralizes the image, and its component group 
$$A_{\vr} = C_{\vr} / C_{\vr}^0.$$
The isomorphism class of both $C_{\vr}$ and $A_{\vr}$ are determined
by the equivalence class of the parameter $\vr$.
\vskip 5pt

For $G = \GL(V/k_0)$, we have $\widehat{G} = \GL(M)$ with $\dim M = \dim V$.  If $\langle e_1, \cdots, 
e_n\rangle $ is the basis of the character group of a maximal torus $T \subset 
G$ given by the weights of $V$, then the weights of the dual torus 
$\widehat{T}$ on $M$ are  the dual basis 
$\langle e'_1, \cdots, e_n'\rangle $.  The Langlands parameters for $G$ are simply equivalence classes of representations of $WD(k_0)$ on $M$.
\vskip 10pt

Now assume that $G \subset \GL(V/k)$ is a connected classical group, defined by a $\sigma$-sesquilinear form $\langle , \rangle : V \times V \to k$ of sign $\epsilon$. 
We will see that for each classical group $G$, a Langlands 
parameter $\vr$ for $G$ corresponds to a natural complex representation  $$WD(k)\to \GL(M)$$ 
with some additional structure, as given in the following theorem. 
\vskip 10pt

\begin{thm}  \label{T:classical-par}
(i) A Langlands parameter $\vr$ of the connected 
classical group $G \subset \GL(V/k)$ determines a selfdual or 
conjugate-dual representation $M$ of $WD(k)$, with the following structure:

\vskip 15pt
\begin{center}
\begin{tabular}{|c|c|c|c|c|}
\hline 
& & & & \\
 $G$ & $\dim(V)$ & $M$ & $\dim M$  &   \\
\hline
& & & & \\
 $\Sp(V)$  &  $2n$ & {\rm Orthogonal} &  $2n+1$ & $\det M =1$ \\
\hline
& & & & \\
 $\SO(V)$ &  $2n+1$ & {\rm Symplectic} & $2n$ &  \\
\hline
& & & & \\
$\SO(V)$ & $2n$ & {\rm Orthogonal} & 2n & $\det M = \disc \,V$  \\ 
\hline 
& & & & \\
 $\U(V)$  &  $2n+1$  & {\rm Conjugate-orthogonal} & $2n+1$ &  
\\ 
\hline
& & & & \\
 $\U(V)$  & $2n$ & {\rm Conjugate-symplectic} & $2n$ & \\
\hline
  \end{tabular}
\end{center}

\vskip 20pt

\noindent (ii) The isomorphism  class of the representation $M$ determines the 
equivalence class of the parameter $\vr$, except in the case when  $M$ is
orthogonal and every irreducible orthogonal summand $M_i$ of $M$ has even dimension.
In the exceptional case, $M$ and $V$ have even dimension and there are two equivalence classes $\{\vr, \vr'\}$ of parameters for $SO(V)$ which give rise to the same orthogonal 
representation $M$. 
\vskip 10pt

\noindent (iii) In the unitary cases, the group $C_{\vr} \subset \widehat{G}$  which centralizes 
the image of $\vr$ is isomorphic to the group $C$ of elements $a$ in 
${\rm Aut}(M, B)$ which centralize the image $WD(k) \to \GL(M)$. In the orthogonal
and symplectic cases, the group $C_{\vr} \subset \widehat{G}$  is isomorphic to the
subgroup $C^+$ of $C$, consisting of
those elements which satisfy $det (a|M) = 1$.

\end{thm}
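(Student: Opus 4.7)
The plan is to handle parts (i)--(iii) in order, reducing in each case to the explicit description of ${^L}G$ from Section \ref{S:L-groups}. For (i), I would construct $M$ from $\varphi$ via the natural embedding of ${^L}G$ into some $\GL(N)$. In the symplectic and orthogonal cases, where $k = k_0$ and $WD(k_0) = WD(k)$, the dual group $\widehat{G}$ is itself a classical group sitting inside $\GL(M)$ as the stabilizer of a nondegenerate bilinear form of the prescribed type, and composing $\varphi$ with this inclusion directly yields a selfdual representation $M$ with the asserted dimension and determinant (for instance, in the even orthogonal case the Galois twist in ${^L}G$ forces $\det M$ to agree with the quadratic character cut out by $\disc V$). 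For $\U(V)$, Proposition \ref{P:normalizer} embeds ${^L}G$ into $\Sp_{2n}(\CC)$ or $\O_{2n}(\CC)$ according to the parity of $n$; the composition yields a selfdual representation $N$ of $WD(k_0)$ of dimension $2n$, and the desired $M$ of dimension $n$ is extracted as the standard representation of the $\GL(M)$-factor of ${^L}G$ restricted to $WD(k)$, viewed as a maximal $WD(k)$-stable isotropic subspace of $N$. The conjugate-duality on $M$ is produced from the pairing on $N$ via the analog of equation (\ref{E:real}), with sign matching $(-1)^{n-1}$ by Proposition \ref{P:normalizer}; this can equivalently be seen from Proposition \ref{P:cd-invariant}.

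For part (ii), suppose two parameters $\varphi, \varphi'$ yield the same underlying representation $M$ (after identifying the underlying spaces). Both land in $\widehat{G} \subset \GL(M)$ and therefore both preserve the fixed form $B$ on $M$. For any $T \in \GL(M)$ intertwining $\varphi$ and $\varphi'$, the pullback $T^* B$ is another $\varphi$-invariant form on $M$ of the same sign as $B$. Lemma \ref{L:BB'} (in the selfdual cases) or Lemma \ref{L:BB'2} (in the conjugate-dual cases) then produces $S$ in the centralizer of the image of $\varphi$ with $T^* B = S^* B$, and $U = T S^{-1}$ conjugates $\varphi$ to $\varphi'$ while preserving $B$, hence lies in $\Aut(M, B)$. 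In the unitary, symplectic, and odd-dimensional orthogonal cases, $\Aut(M,B)$ equals $\widehat{G}$ on the relevant component and we are done. In the even orthogonal case, $\widehat{G} = \SO(M,B)$ has index two in $\O(M,B)$, and we must arrange $\det U = 1$. If $M$ has an odd-dimensional orthogonal summand then the centralizer $C$ contains a reflection of determinant $-1$, which we multiply into $S$ to finish; if every orthogonal summand is even-dimensional, then $C \subset \SO(M,B)$, no such correction is available, and exactly two $\widehat{G}$-classes lie over the same $M$, giving the exceptional pair $\{\varphi, \varphi'\}$ asserted.

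Part (iii) is then largely formal. In the unitary case, $C_\varphi$ is by definition the centralizer in $\widehat{G}^0 = \GL(M)$ of $\varphi|_{WD(k)}$ together with the requirement of commuting with the action of any $s \in WD(k_0) \setminus WD(k)$; this latter requirement translates into preserving the conjugate-dual form $B$ on $M$, so $C_\varphi$ agrees with the centralizer $C$ of Section \ref{S:centralizer}. In the symplectic and orthogonal cases, $\widehat{G}$ is directly $\Sp(M,B)$, $\SO_{2n+1}(M,B)$, or $\SO(M,B)$, and the centralizer in $\widehat{G}$ is $C \cap \widehat{G} = C^+$; note that in the symplectic and odd orthogonal cases $C \subset \widehat{G}$ already, so $C^+ = C$ automatically. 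The main obstacle, and the most delicate part of the whole argument, is the analysis of the exceptional case in part (ii): one must keep careful track of the difference between $\O(M,B)$-conjugacy and $\SO(M,B)$-conjugacy of parameters, and verify that the centralizer $C$ contains an element of determinant $-1$ precisely when $M$ admits an odd-dimensional orthogonal irreducible constituent.
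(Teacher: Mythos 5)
Your route is the same as the paper's: (i) via the explicit description of ${^L}G$ and, in the unitary case, the pairing of equation (\ref{E:real}) and Proposition \ref{P:normalizer}; (ii) by reduction to Lemmas \ref{L:BB'} and \ref{L:BB'2} together with the determinant-correction analysis in the even orthogonal case; (iii) from the centralizer computations of \S\ref{S:centralizer}. However, your write-up of the unitary case of (ii) does not go through as stated. For $G = \U(V)$ one has $\widehat{G} = \GL(M)$, the parameters take values in $\GL(M) \rtimes \Gal(k/k_0)$ rather than in any $\Aut(M,B)$, there is no single form ``preserved by both'' extensions, and the claim that $\Aut(M,B)$ equals $\widehat{G}$ on the relevant component is false. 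The correct argument (the one the paper gives) attaches to each extension $\vr(s) = (A,\alpha)$, $\vr'(s) = (A',\alpha)$ its own conjugate-duality $B(m,m') = \langle m, A^{-1}m'\rangle_M$, resp.\ $B'$, of sign $(-1)^{n-1}$, applies Lemma \ref{L:BB'2} to produce $T$ centralizing $\vr|_{WD(k)}$ with $B'(m,m') = B(Tm,Tm')$, and then verifies the identity $A' = T^{-1} A T^{\alpha}$; it is this last computation at the element $s$ that shows $T \in \GL(M) = \widehat{G}$ conjugates $\vr$ to $\vr'$, and it is precisely the step your $U \in \Aut(M,B)$ framing elides. (The conclusion you need is $\GL(M)$-conjugacy of the full parameters, not conjugacy inside $\Aut(M,B)$.) Your selfdual argument and the even orthogonal dichotomy --- a determinant $-1$ element exists in $C$ exactly when some irreducible orthogonal summand of $M$ has odd dimension --- are correct and match the paper.

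A second, smaller error: in (iii) you assert that ``in the symplectic and odd orthogonal cases $C \subset \widehat{G}$ already, so $C^+ = C$ automatically.'' This holds only for $G = \SO(V)$ with $\dim V$ odd, where $M$ is symplectic and $C \subset \Sp(M) = \widehat{G}$. For $G = \Sp(V)$ the representation $M$ is orthogonal of odd dimension $2n+1$, the element $-1$ lies in $C$ with determinant $-1$, and $C^+$ has index $2$ in $C$; this is exactly why $A_{\vr} = A_M^+$ has order $2^{m-1}$ for $\Sp(V)$ in \S\ref{S:vogan-classical}. The main identification $C_{\vr} \cong C \cap \widehat{G}$ that you state is fine; only this parenthetical needs to be dropped.
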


\begin{proof}
This is well-known if $G$ is an orthogonal or symplectic 
group. Indeed, the $L$-group ${^L}G$ is essentially the automorphism
group of a nondegenerate symmetric or skew-symmetric bilinear form $B$ on a complex vector space $M$ of appropriate dimension. So the theorem amounts to the assertion that if two homomorphisms $WD(k) \to \text{Aut}(M,B) \subset \GL(M)$ are conjugate in $\GL(M)$, then they are conjugate in $\text{Aut}(M,B)$. This
is the content of Lemma \ref{L:BB'}. Moreover, the description of the component group $C_{\varphi}$ follows directly from the results of \S \ref{S:centralizer}. 
\vskip 10pt

Henceforth we shall focus on the unitary case.
For $G = \U(V)$, a parameter is a homomorphism 
$$\vr : WD (k_0) \to \GL(M) \rtimes \Gal (k/k_0)$$ 
with $\dim M = \dim V= n$.  Again, we normalize $M$ so that the weights 
for $\widehat{T}$ on $M$ are dual to the weights for the torus $T = 
\U(1)^n$ on $V$.  The restriction of $\vr$ to $WD(k)$ gives the desired 
representation $M$.  We must next show that $M$ is conjugate-dual with sign $(-1)^{n-1}$.
\vskip 5pt

If $s$ generates the quotient $WD(k_0)/WD(k)$, then 
$$\vr(s) = (A, \alpha) = (A, 1) (1, \alpha) \ {\rm in} \ 
^{L}G$$
with $A$ in $\GL(M)$.  In the previous section (cf. equation (\ref{E:real})), we have seen that
the standard representation $M$ of $\GL(M)$ has a conjugate-duality 
 $\langle-, -\rangle_M$ of sign $(-1)^{n-1}$ with respect to the nontrivial element $\alpha \in \Gal(k/k_0) \subset {^L}G$. 
We define the bilinear form 
$$B(m,m') = B_s (m, m') = \langle m, A^{-1}m'\rangle_M, $$
Then the form $B$ is
non-degenerate on $M$ and satisfies
$$B(\tau m, s \tau s^{-1} m') = B(m, m')$$ 
for all $\tau$ in $WD$, and 
$$B(m', m) = (-1)^{n-1} \cdot B(m, s^2 m').$$ 
Hence $M$ is conjugate-dual with $\sign = (-1)^{n-1}$. 
\vskip 10pt

It is clear that the conjugation of a parameter $\vr$ by an element of 
$\GL(M)$ gives an isomorphism of the associated conjugate-dual 
representations.  Hence we are reduced to showing that every conjugate 
dual  representation $M$ of $\sign = (-1)^{n-1}$ extends to a Langlands 
parameter $\vr$ of $WD(k_0)$, and that the isomorphism class of $M$
determines the equivalence class of $\vr$.
\vskip 10pt

Suppose then that $M$ is a conjugate-dual representation of $WD(k)$ of $\sign 
= (-1)^{n-1}$ with $n = \dim M$. To obtain an extension, consider the induced representation 
$N = \Ind (M)$ of $WD(k_0)$.  By Lemma \ref{L:ind-tensor}(i), $N$ is 
selfdual of $\sign = (-1)^{n-1}$.
Moreover, the proof of Lemma \ref{L:ind-tensor} shows that
the image of $WD(k_0)$ in $\Sp(4d)$ or $\O(4d+2)$ (depending on whether $n = 2d$ or $n 
= 2d+1)$ is contained in the normalizer of a Levi subgroup in a Siegel parabolic 
subgroup.  By Proposition \ref{P:normalizer}, this normalizer is isomorphic to the $L$-group of $\U(V)$: it splits as a semi-direct product $\GL(M) \rtimes  \langle \alpha\rangle $, with $\det (\alpha|N) = (-1)^n$. Thus, we have produced an $L$-parameter for $\U(V)$ whose restriction to $WD(k)$ is the given $M$.
\vskip 10pt

Finally we need to show that the extension obtained above is unique, up to conjugacy by $\widehat{G}$.  If $\vr$ and $\vr'$ are two 
parameters extending $\rho : WD(k) \to \GL(M)$, we must show that the 
elements 
$$\begin{array}{rcl}
\vr(s) & = & (A, \alpha) \\ 
\vr'(s) & = & (A', \alpha) \ {\rm of} \ ^{L}G
\end{array}$$
are conjugate by an element of $\GL(M)$ centralizing the image of $\rho$.  
The bilinear forms
$$\begin{array}{rcl}
B(m,m') & = & \langle  m, A^{-1} m'\rangle_M  \\
B'(m,m') & = & \langle  m, (A')^{-1} m'\rangle_M  
\end{array}$$
give two conjugate-dualities of $M$ which are preserved by $WD(k)$ and non-degenerate with sign 
$(-1)^{n-1}$.  By Lemma \ref{L:BB'2},  there is an element $T$ in $\GL(M)$ centralizing the image 
of $\rho$ with 
$$B' (m, m') = B(Tm, Tm').$$
This gives the identity 
$$\langle m, (A')^{-1}m'\rangle  = \langle  m, (T^{-1})^{\alpha} A^{-1} T m'\rangle $$ 
for all $m$ and $m'$.  Hence
$$A' = T^{-1} A T^{\alpha}$$
and the elements are conjugate by the element $T \in \GL(M) = \widehat{G}$.  
\vskip 5pt

The argument identifying the group $C_{\vr}$ with either the group $C$ or $C^+$ of $(M,B)$ is contained in \S \ref{S:centralizer}. This completes the proof of the theorem.

\end{proof}

\begin{cor} \label{C:unitary}
A representation $M$ of $WD(k)$ gives rise to a Langlands parameter for a quasi-split unitary group $\U(V)$ if and only if $WD(k_0)$ fixes a non-degenerate vector in ${\rm As}^{(-1)^{n-1}}(M)$, with $n = \dim M$. 
\end{cor}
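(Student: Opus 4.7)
The plan is to combine Theorem \ref{T:classical-par} with Proposition \ref{P:cd-invariant}, checking only that the signs line up correctly in each parity of $n=\dim M$.

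First I would translate the condition on the left-hand side. By Theorem \ref{T:classical-par}(i), a representation $M$ of $WD(k)$ of dimension $n$ extends to a Langlands parameter $\vr : WD(k_0) \to {^L}\U(V)$ for a quasi-split unitary group $\U(V)$ with $\dim V = n$ if and only if $M$ is conjugate-orthogonal when $n$ is odd, and conjugate-symplectic when $n$ is even. In other words, the existence of a Langlands parameter for $\U(V)$ is equivalent to $M$ being conjugate-dual of sign $(-1)^{n-1}$.

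Next I would translate the condition on the right-hand side using Proposition \ref{P:cd-invariant}. That proposition states precisely that $M$ is conjugate-orthogonal if and only if $WD(k_0)$ fixes a nondegenerate vector in ${\rm As}^+(M)$, and conjugate-symplectic if and only if $WD(k_0)$ fixes a nondegenerate vector in ${\rm As}^-(M)$. Recognising that $(-1)^{n-1}=+1$ for $n$ odd and $-1$ for $n$ even, so that ${\rm As}^{(-1)^{n-1}}(M)$ is exactly the Asai representation matching the sign required in each parity, the two characterisations align and the corollary follows.

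There is no real obstacle here: the corollary is essentially an amalgamation of two results already established. The only point worth double-checking is the bookkeeping with the sign $(-1)^{n-1}$, namely that Theorem \ref{T:classical-par} pairs \emph{odd}-dimensional $V$ with conjugate-\emph{orthogonal} $M$ (sign $+1$) and \emph{even}-dimensional $V$ with conjugate-\emph{symplectic} $M$ (sign $-1$), matching the superscript $(-1)^{n-1}$ in the statement of the corollary.
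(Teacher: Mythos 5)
Your proposal is correct and is essentially the paper's own argument: the paper proves the corollary by exactly this combination of Theorem \ref{T:classical-par} (odd $\dim V$ $\leftrightarrow$ conjugate-orthogonal, even $\dim V$ $\leftrightarrow$ conjugate-symplectic, i.e. sign $(-1)^{n-1}$) with Proposition \ref{P:cd-invariant}. The sign bookkeeping you flag is the only content, and you have it right.
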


\begin{proof}
This is a consequence of the theorem and Proposition \ref{P:cd-invariant}.
\end{proof}
\vskip 10pt

\noindent{\bf Remark:} 
 When $M$ is orthogonal of even dimension, it is often convenient 
to view it as defining a unique Langlands parameter for the full 
orthogonal group $\O(V)$ (which is not connected), with the equivalence being defined by $\O(M)$-conjugacy; see [P1].

\bigskip

We obtain the following simple invariants of the representation $M$.  For $G = \GL(V)$
we have the character
$$\det M : k^\times \to \CC^\times.$$
For $G = \U(V)$,  we obtain the character 
$$\det M : k^\times / k^\times_0 \to \CC^\times$$
as the sign of $\det M$ is $(-1)^{n(n-1)} = +1$.  Finally, for $G = \Sp(V)$ or $G=\SO(V)$ with
$\dim(V)$ even and $\disc(V) = 1$, the representation $M$ is orthogonal with $ \det (M) = 1$.
Hence we have the root number $\ep(M) = \ep(M,\psi)$ independent of the additive character $\psi$ of $k$,
and 
$$\ep (M) = \pm 1.$$ 
We will relate these invariants to the central characters of certain representations of $G$ after introducing Vogan $L$-packets in the next section.
\vskip 15pt

\section{Vogan $L$-packets - Desiderata}  \label{S:vogan-desiderata}

Let $G$ be a quasi-split, connected, reductive group over a local field 
$k_0$. In this section, we will discuss Langlands parameters $\vr$ as the
conjectural parameters for the isomorphism classes of irreducible smooth admissible
complex representations of the locally compact group $G(k_0)$. 
Before coming to that, we briefly recall the notions of smooth and  admissible representations of $G(k_0)$ when $k_0$ is a local field.
\vskip 10pt

When $k_0$ is local and discretely valued, a smooth representation $\pi$ is simply  a homomorphism 
\[  \pi: G(k_0) \longrightarrow \GL(E) \]
for a complex vector space $E$ (possibly infinite-dimensional) such that
\[  E = \cup_K E^K,\]
where the union is over all open compact subgroups $K$ of $G(k_0)$.
Such a smooth representation is admissible if
$E^K$ is finite dimensional for any open compact subgroup $K$. 
A homomorphism from $(\pi,E)$ to $(\pi', E')$ is simply a linear map $E \longrightarrow E'$ which commutes with the action of $G(k_0)$. 
\vskip 10pt

When $k_0 = \R$ or $\CC$, we will consider the category of smooth Frechet representations $(\pi,E)$ of moderate growth, as introduced 
by Casselman [C] and Wallach [W1]. An admissible representation is such a representation whose subspace of 
$K$-finite vectors (where $K$ is a maximal compact subgroup of $G(k_0)$) is the direct sum of 
irreducible representations of $K$ with finite multiplicities. 
A homomorphism $(\pi,E) \longrightarrow (\pi',E')$ is a continuous linear map $E \longrightarrow E'$ which commutes with the action of $G(k_0)$.
\vskip 10pt

We come now to the local Langlands conjecture.
We shall present this conjecture in a form proposed by Vogan [Vo], which treats representations $\pi$ of
all pure inner forms $G'$ of $G$ simultaneously.
\vskip 10pt

All of the pure inner forms $G'$ of $G$ have the same center $Z$ over $k_0$, and
each irreducible representation $\pi$ of $G'(k_0)$ has a central character
$$\omega_{\pi} : Z(k_0) \to \CC^{\times}.$$
The adjoint group $G'_{ad}(k_0)$ acts on $G'(k_0)$ by conjugation, and
hence acts on the set of its irreducible complex representations. The quotient
group $G'_{ad}(k_0)/{\rm Im~} G'(k_0)$ acts on the set of isomorphism classes of
representations of $G'(k_0)$. This quotient is abelian, and canonically isomorphic to the
cohomology group
$$E = {\rm ker} (H^1(k_0,Z) \to H^1(k_0,G')).$$
\vskip 5pt

Let $B$ be a Borel subgroup of $G$ over $k_0$, with 
unipotent radical $N$.  The quotient torus $T = B/N$ acts on the group 
$\Hom(N, \CC^\times)$.  We call a character $\theta : N(k_0) \to \CC^\times$ {\it 
generic} if its stabilizer in $T(k_0)$ is equal to the center $Z(k_0)$.  If 
$\pi$ is an irreducible representation of $G(k_0)$ and $\theta$ is a generic 
character, then the complex vector space $\Hom_{N(k_0)} (\pi, \theta)$ has 
dimension $\le 1$.  When the dimension is $1$, we say 
$\pi$ is $\theta$-generic.  This depends only on the $T(k_0)$-orbit of 
$\theta$.  
\vskip 5pt

When $Z = 1$, the group $T(k_0)$ acts simply-transitively on the set of 
generic characters.  In general, the set $D$ of $T(k_0)$-orbits on the set of all generic 
characters $\theta$ of $N(k_0)$ forms a principal homogeneous space for the abelian group $E$,
which is also isomorphic to 
$$T_{ad}(k_0)/{\rm Im~} T(k_0) = {\rm ker}(H^1(k_0, Z) \to H^1(k_0, T)).$$
\vskip 5pt

We are now ready to describe the desiderata for Vogan $L$-packets, which
will be assumed in the rest of this paper. These properties are known to hold
for the groups $\GL(V)$ and $\SL(V)$, as well as for some classical groups of
small rank.

\bigskip

\begin{enumerate}

\item Every irreducible representation $\pi$ of $G'$ (up to isomorphism)
determines a Langlands parameter 
$$\vr : WD(k_0) \to \widehat{G} \rtimes \Gal (K/k_0)$$
(up to equivalence).

Each Langlands parameter $\vr$ for $G$ corresponds to a finite set $\Pi_{\vr}$  of
irreducible representations of $G(k_0)$ and its pure inner forms $G'(k_0)$. Moreover,
the cardinality of the finite set $\Pi_{\vr}$ is equal to the number of irreducible
representations $\chi$ of the finite group $A_{\vr} = \pi_0(C_{\vr})$.

\bigskip

\item Each choice of a $T(k_0)$-orbit $\theta$ of generic characters for $G$ gives a bijection
of finite sets 

$$J(\theta): \Pi_{\vr} \to {\rm Irr}(A_{\vr}).$$  

The $L$-packet $\Pi_{\vr}$ contains at most one $\theta'$-generic representation, for each $T$-orbit of generic characters $\theta'$ of $G$. It contains such a representation precisely when the adjoint $L$-function of $\vr$ is regular at the point $s=1$. In this case, we say the $L$-packet $\Pi_{\vr}$ is generic.

Assume that the $L$-packet $\Pi_{\vr}$ is generic. In the bijection $J(\theta)$, the unique $\theta$-generic representation $\pi$ in $\Pi_{\vr}$ corresponds to the trivial representation of $A_{\vr}$. The $\theta'$-generic representations correspond to the one dimensional representations $\eta_g$ described below.

\bigskip

\item The finite set $\Pi_{\vr}$  of irreducible representations is stable under the adjoint action, which permutes the different generic representations for $G$ in an $L$-packet transitively.

In any of the bijections $J(\theta)$, the action of $g \in G'_{ad}(k_0)$ on ${\rm Irr}(A_{\vr})$ is given by tensor product with the one-dimensional representation $\eta_g$ of $A_{\vr}$ alluded to above.

More precisely, Tate local duality gives a perfect pairing 
$$H^1(k_0, Z) \times H^1(K/k_0, \pi_1(\wg))\to \CC^\times.$$
The coboundary map $C_{\vr} \to H^1(K/k_0, \pi_1(\wg))$ factors through 
the quotient $A_{\vr}$, and gives a pairing 
$$H^1(k_0, Z) \times A_{\vr} \to \CC^\times.$$
The adjoint action by the element $g$ in $G'_{ad}(k_0) \to  H^1(k_0, Z)$, viewed as a 
one dimensional  representation $\eta_g$ of $A_{\vr}$, will take $\pi(\vr, 
\chi)$ to the representation $\pi(\vr, \chi \otimes \eta_g)$.

\bigskip

\item In any of the bijections $J(\theta)$, the pure inner form which acts on the representation with parameter $(\vr,\chi)$ is constrained by the restriction of the irreducible representation $\chi$ to the image of the group $\pi_0(Z(\wg))^{\Gal (K/k_0)}$ in $A_{\vr}$. 

More precisely, when $k_0 \not= \RR$, Kottwitz has 
identified the pointed set $H^1(k_0,G)$ with the group of characters of the component group 
of $Z(\wg)^{\Gal (K/k_0)}$.  The inclusion 
$$Z(\wg)^{\Gal(K/k_0)} \to C_{\vr}$$
induces a map on component groups, whose image is central in $A_{\vr}$.  
Hence an irreducible representation $\chi$ of $A_{\vr}$  has a central 
character on $\pi_0(Z(\wg))^{\Gal (K/k_0)}$, and determines a class in 
$H^1(k_0, G)$.  This is the pure inner form $G'$ that acts on the 
representation $\pi(\vr, \chi)$.

\bigskip

\item All of the irreducible representations $\pi$ in $\Pi_{\vr}$ have the same central character $\omega_{\pi}$. This character is determined by $\vr$, using the recipe in [GR, \S 7].

\end{enumerate}

\vskip 15pt

\section{Vogan $L$-packets for the classical groups}  \label{S:vogan-classical}

We now make the desiderata of Vogan $L$-packets completely explicit for the classical 
groups $G \subset \GL(V/k)$. We have already described the Langlands parameters $\vr$ for
$G$ explicitly, as certain representations $M$ of $WD =WD(k)$, in section 6. In all cases, the component group $A_{\vr}$ is an elementary abelian $2$-group, so $\rm Irr(A_{\vr}) = \rm Hom(A_{\vr}, \pm 1)$. We treat each family of groups in turn.
\vskip 15pt

\noindent{\bf \underline{The General Linear Group $G = \GL(V)$}}

\bigskip

\begin{enumerate}

\item A Langlands parameter is a representation $M$ of $WD$, with $\dim(M) = \dim(V)$. The
group $C_{\vr} = C(M)$ is connected, so $A_{\vr} = 1$. Hence $\Pi_{\vr}$ consists of a single element.
In this case, the full Langlands conjecture is known (by [HT] and [He]).

\bigskip

\item There is a unique $T$-orbit on the generic characters, and the regularity of the adjoint $L$-function of $\vr$ at $s=1$ detects generic $L$-packets $\Pi_{\vr}$.

\bigskip

\item The adjoint action is trivial, as the center $Z$ of $G$ has trivial first cohomology.

\bigskip

\item The only pure inner form is $G = \GL(V)$.

\bigskip

\item The center $Z(k) = k^{\times}$, and the central character of $\pi(\vr)$ has parameter $\det(M)$. 

\end{enumerate}

\bigskip

\noindent{\bf \underline{The Symplectic Group $G = \Sp(V)$}}

\bigskip

\begin{enumerate}

\item A Langlands parameter is an orthogonal representation $M$ of $WD$, with 
\[ \text{$\dim(M) = \dim(V) + 1$ and $\det(M) = 1$.} \]
The group $A_{\vr} = A_M^+$ has order $2^{m-1}$, where $m$ is the number of distinct irreducible orthogonal summands $M_i$ in $M$. The full Langlands conjecture is known when $\dim(V)$ = $2$ [LL] or $4$ [GT2].

\bigskip

\item The set $D$ of $T$-orbits on generic characters is a principal homogeneous space for the group $E = H^1(k,Z) = k^{\times}/k^{\times 2}$. We will see in \S \ref{S:nu} that the choice of the symplectic space $V$ identifies the set $D$ with the set of $k^{\times 2}$-orbits on the nontrivial additive characters $\psi$ of $k$.

\bigskip

\item The adjoint action is via elements $c$ in the group $E = k^{\times}/k^{\times 2}$. This acts on the irreducible representations of $A_{\vr}$ via tensor product with the character $\eta_c(a) = \det (M^a)(c)$, and on the set $D$ of orbits of generic characters by mapping $\psi(x)$ to $\psi(cx)$.

\bigskip

\item The only pure inner form is $G = \Sp(V)$.

\bigskip

\item The center $Z(k) = \langle\pm1\rangle$, and the central character of $\pi(\vr)$ maps the element $-1$ in $Z(k)$ to the local root number $\ep(M)$. 

\end{enumerate}

\bigskip

\noindent{\bf \underline{The Odd Special Orthogonal Group $G = \SO(V)$, $\dim(V) = 2n+1$}}

\bigskip

\begin{enumerate}

\item A Langlands parameter is a symplectic representation $M$ of $WD$, with $\dim(M) = \dim(V) - 1$. The group $A_{\vr} = A_M$ has order $2^m$, where $m$ is the number of distinct irreducible symplectic summands $M_i$ in $M$. The full Langlands conjecture is known when $\dim(V)$ = $3$ [Ku] or $5$ [GT1].

\bigskip

\item Since $G$ is an adjoint group, there is a unique $T$-orbit on the set of generic characters, and hence a single natural bijection $J : \Pi_{\vr} \to 
\Hom(A_{\vr}, \pm 1)$.

\bigskip

\item The adjoint action on the $L$-packet is trivial.

\bigskip

\item The pure inner forms of $G$ are the groups $G' = \SO(V')$, where $V'$ is an orthogonal space over $k$ with $\dim(V') = \dim(V)$ and 
$\disc(V') = \disc(V)$. 

If $k$ is non-archimedean and $n \geq 1$, there is a unique non-split pure inner form $G'$, which has $k$-rank $(n-1)$. The representation $\pi(\vr,\chi)$ is a representation of $G$ if $\chi(-1) = +1$ and a representation of $G'$ if $\chi(-1) = -1$. If $k = \RR$ and $G = \SO(p,q)$, then the pure inner forms are the groups $G' = \SO(p',q')$ with $ q' \equiv q \mod 2$, and $\pi(\vr,\chi)$ is a representation of one of the groups $G'$ with $(-1)^{(q-q')/2} = \chi(-1)$.

\bigskip

\item The center $Z$ of $G$ is trivial.

\end{enumerate}

\bigskip

\noindent {\bf \underline{The Even Special Orthogonal Group $G = \SO(V)$, $\dim(V) = 2n$, $\disc(V) = d$}}

\bigskip

\begin{enumerate}

\item A Langlands parameter determines an orthogonal representation $M$ of $WD$, with
\[  \text{$\dim(M) = \dim(V)$ and $\det(M) = \CC(d)$.} \]
 The group $A_{\vr} = A_M^+$ has order $2^m$, where $m$ is either the number of distinct irreducible orthogonal summands $M_i$ in $M$, or the number of distinct orthogonal summands minus 1.
\vskip 5pt

\noindent  The latter case occurs if some irreducible orthogonal summand $M_i$ has odd dimension (in which case the orthogonal representation $M$ determines the parameter $\vr$).  
If every irreducible orthogonal summand $M_i$ of $M$ has even dimension,
then $A_M^+ = A_M$ and there are two parameters $\{\phi, \phi^*\}$ which
determine the same orthogonal representation $M$. The representations
$\pi(\phi,\chi)$ and $\pi(\phi^*,\chi)$ are conjugate under the outer action
of $\O(V)$ on $\SO(V)$.
\vskip 5pt

\noindent The full Langlands conjecture is known when $\dim(V)$ = $2$ or $4$ or $6$.
\bigskip

\item The set $D$ of $T$-orbits on generic characters is a principal homogeneous space for the group $E = \NN K^{\times}/k^{\times 2}$, where $K$ is the splitting field of $G$. We will see in \S \ref{S:nu} that the choice of the orthogonal space $V$ identifies the set $D$ with the set of $G$-orbits on the set of non-isotropic lines $L \subset V$, such that the space $L^{\perp}$ is split.

\bigskip

\item The adjoint action is via elements $c$ in the group 
$E = \NN K^{\times} /k^{\times 2}$. This acts on the irreducible representations of $A_{\vr}$ via tensor product with the character $\eta_c(a) = \det (M^a)(c)$, and on the set $D$ of orbits of generic characters by mapping a line $L = kv$ with $\langle v,v \rangle =\alpha$ in $k^{\times}$ to a line $L' =  kv'$ with $\langle v',v' \rangle 
= c\cdot \alpha$. 

\bigskip

\item The pure inner forms of $G$ are the groups $G' = \SO(V')$, where $V'$ is an orthogonal space over $k$ with $\dim(V') = \dim(V)$ and $\disc(V') = \disc(V)$. 
\vskip 5pt

\noindent If $k$ is non-archimedean and $V$ is not the split orthogonal space of dimension 2, there is a unique pure inner form $G'$, such that the Hasse-Witt invariant of $V'$ is distinct from the Hasse-Witt invariant of $V$. The representation $\pi(\vr,\chi)$ is a representation of $G$ if $\chi(-1) = +1$ and a representation of $G'$ if $\chi(-1) = -1$. If $k = \RR$ and $G = \SO(p,q)$, then the pure inner forms are the groups $G' = \SO(p',q')$ with $ q' \equiv q \mod 2$, and $\pi(\vr,\chi)$ is a representation of one of the groups $G'$ with $(-1)^{(q-q')/2} = \chi(-1)$.

\bigskip

\item If $\dim(V) = 2$, then $Z = G$. If $\dim(V) \geq 4$, then $Z(k) = \langle \pm 1\rangle$ and the central character of $\pi(\vr)$ maps the element $-1$ in $Z(k)$ to $\ep(M,\psi)/ \ep(\det M,\psi)$. 

\end{enumerate}

\bigskip

\noindent{\bf \underline{The Odd Unitary Group $G = \U(V)$, $\dim V = 2n + 1$}}

\bigskip

\begin{enumerate}

\item A Langlands parameter is a conjugate-orthogonal representation $M$ of $WD$, with $\dim(M) = \dim(V)$. The group $A_{\vr} = A_M$ has order $2^m$, where $m$ is the number of distinct irreducible conjugate-orthogonal summands $M_i$ in $M$. The full Langlands conjecture is known when $\dim(V)$ = $1$ or $3$ [Ro].

\bigskip

\item There is a unique $T$-orbit on the set of generic characters, and hence a single natural isomorphism $J : \Pi_{\vr} \to \rm Hom(A_{\vr}, \pm 1)$.

\bigskip

\item The adjoint action on the $L$-packet is trivial.

\bigskip

\item The pure inner forms of $G$ are the groups $G' = \U(V')$, where $V'$ is a hermitian (or skew-hermitian) space over $k$ with $\dim(V') = \dim(V)$. 
\vskip 5pt

\noindent  If $k_0$ is non-archimedean, 
there is a unique pure inner form $G'$ such that the discriminant of $V'$ is distinct from the  discriminant of $V$. The representation $\pi(\vr,\chi)$ is a representation of $G$ if $\chi(-1) = +1$ and a representation of $G'$ if $\chi(-1) = -1$. If $k _0= \RR$ and $G = \U(p,q)$, then the pure inner forms are the groups $G' = \U(p',q')$, and $\pi(\vr,\chi)$ is a representation of one of the groups $G'$ with $(-1)^{q-q'} = \chi(-1)$.

\bigskip

\item The center $Z(k_0) = k^{\times}/k_0^{\times} = \U(1)$, and the central character of $\pi(\vr,\chi)$ has parameter $\det(M)$.

\end{enumerate}

\bigskip

\noindent{\bf \underline{The Even Unitary Group $G = \U(V)$, $\dim V = 2n$}}

\bigskip

\begin{enumerate}

\item A Langlands parameter is a conjugate-symplectic representation $M$ of $WD$, with $\dim(M) = \dim(V)$. The group $A_{\vr} = A_M$ has order $2^m$, where $m$ is the number of distinct irreducible conjugate-symplectic summands $M_i$ in $M$. The full Langlands conjecture is known when $\dim(V)$ = $2$  [Ro].

\bigskip

\item The set $D$ of $T$-orbits on generic characters is a principal homogeneous space for the group $E = H^1(k,Z) = k_0^{\times}/\NN k^{\times }$ of order $2$. We will see in \S \ref{S:nu} that the choice of a hermitian space $V$ identifies the set $D$ with the set of $\NN k^{\times}$-orbits on the nontrivial additive characters $\psi_0$ of $k/k_0$. Similarly, the choice of a skew-hermitian space $V$ identifies the set $D$ with the set of $\NN k^{\times}$-orbits on the nontrivial additive characters $\psi$ of $k_0$.

\bigskip

\item The adjoint action is via elements $c$ in the group $E = k_0^{\times}/\NN k ^{\times}$. The nontrivial class $c$ acts on the irreducible representations of $A_{\vr}$ via tensor product with the character $\eta(a) = (-1)^{\dim(M^a)}$, and on the set $D$ of orbits of generic characters by mapping $\psi_0(x)$ to $\psi_0(cx)$, or $\psi(x)$ to $\psi(cx)$.
\bigskip

\item The pure inner forms of $G$ are the groups $G' = \U(V')$, where $V'$ is a hermitian (or skew-hermitian) space over $k$ with $\dim(V') = \dim(V)$. 
\vskip 5pt

\noindent  If $k_0$ is non-archimedean, there is a unique pure inner form $G'$ such that the discriminant of $V'$ is distinct from the  discriminant of $V$. The representation $\pi(\vr,\chi)$ is a representation of $G$ if $\chi(-1) = +1$ and a representation of $G'$ if $\chi(-1) = -1$. If $k _0= \RR$ and $G = \U(p,q)$, then the pure inner forms are the groups $G' = \U(p',q')$, and $\pi(\vr,\chi)$ is a representation of one of the groups $G'$ with $(-1)^{q-q'} = \chi(-1)$.

\bigskip

\item The center $Z(k_0) = k^{\times}/k_0^{\times} = \U(1)$, and the central character of $\pi(\vr,\chi)$ has parameter $\det(M)$.

\end{enumerate}
\vskip 10pt

The forthcoming book of Arthur [A3] and the papers [Mo1, Mo2] of Moeglin should
establish most of the above expectations. 
\vskip 15pt

\section{Vogan $L$-packets for the metaplectic group}  \label{S:metaplectic}

Let $(W, \langle - ,- \rangle_W)$ be a symplectic space of dimension $2 n \geq 0$ over the 
local field $k$.  We assume, as usual, that char$(k) \not= 2$.  In this 
section, we also assume that $k \not= \CC$.
\vskip 10pt

Let $\widetilde{\Sp}(W)$ denote the nontrivial double cover of the 
symplectic group $\Sp(W)(k)$.  We will use the Howe duality correspondence (also known as the theta correspondence) to describe the (genuine) representation theory of $\widetilde{\Sp}(W)$ in terms of the  representation theory  of  the groups $\SO(V)$ over $k$, with $\dim V 
= 2 n + 1$.  Assuming the Langlands-Vogan parameterization of 
irreducible representations of $\SO(V)$ over $k$, with $\dim V 
= 2 n +1$, we then obtain a notion of Vogan $L$-packets for the  
genuine irreducible representations $\widetilde{\pi}$ of 
$\widetilde{\Sp}(W)$.  More precisely, 
the Langlands parameter of a genuine representation of $\widetilde{\Sp}(W)$ will be a symplectic 
representation
$$\vr : WD \to \Sp(M) \quad \text{with} \quad    \dim M = 2n,$$
and the individual representations $\widetilde{\pi}(\vr, 
\chi)$ in the Vogan packet $\Pi_{\vr}$ will be indexed by quadratic characters 
\[  \chi :  A_{\vr} = A_M \to \langle \pm 1\rangle. \] 
\vskip 10pt

This parameterization of the irreducible genuine
representations of $\ws(W)$ will depend on 
the choice of a nontrivial additive character $\psi$ of $k$, up to 
multiplication by $k^{\times 2}$.  Such an orbit of additive characters 
$\psi$ determines an orbit of generic characters $\theta : N \to \CC^\times$ 
for $\Sp(W)$.  The character $\theta$ also determines a character 
$\widetilde{\theta}$ of the unipotent radical $\widetilde{N}  \simeq N$ of 
$\widetilde{\Sp}(W)$.   Our parameterization is normalized so that for 
generic parameters $\vr$, the unique representation $\widetilde{\pi} \in \Pi_{\vr}$ which is $\widetilde{\theta}$-generic corresponds to the 
trivial character $\chi = 1$ of $A_{\vr}$. Such a dependence of the Langlands parameterization 
on the choice of an additive character $\psi$ is already present in the case of the linear classical groups discussed in the previous section. For the metaplectic groups, the dependence is more serious: even the Langlands parameter $\vr$ associated to $\widetilde{\pi}$ depends on the choice of $\psi$. 

\vskip 10pt

To define the parameters $(\vr, \chi)$ of $\widetilde{\pi}$, we 
let $(V,q)$ be a quadratic space over $k$ with 
\[  \dim V = 2n +1 \quad \text{and} \quad \disc  (V) = (-1)^n \det (V) \equiv 1 \in k^\times/k^{\times 2}. \]
Note that the discriminant above refers to the discriminant of the {\em quadratic} space $(V,q)$. 
The quadratic form $q$ on $V$ gives rise to a symmetric bilinear form
\[  \langle x,y \rangle_V = q(x+y) - q(x) -q(y) \]
so that
\[  \langle x,x \rangle_V = 2 \cdot q(x), \]
and
\[  \disc(V ,\langle-,-\rangle_V) = 2 \cdot \disc(V,q) = 2 \in k^{\times}/k^{\times 2}. \]
\vskip 5pt

The space $W \otimes V$  is a symplectic space over $k$ with the skew-symmetric form 
\[  \langle- , -\rangle_W \otimes \langle-, -\rangle_V, \]
and one  has the  associated Heisenberg group 
\[  H(W \otimes V) = k \oplus (W \otimes V),\]
which has a one dimensional center $k$.
Associated to $\psi$, $H(W \otimes V)$ has a unique irreducible representation $\omega_{\psi}$ with central character $\psi$ (by the Stone-von-Neumann theorem). Now $\Sp(W \otimes V)$ acts as automorphisms of $H(W \otimes V)$ via its natural action on $W \otimes V$ and the trivial action on $k$. Thus $\omega_{\psi}$ gives rise to a projective representation of $\Sp(W \otimes V)$ and it was shown by Weil that this projective representation is a linear representation of $\widetilde{\Sp}(W \otimes V)$. 
We thus have a representation $\omega_{\psi}$ of the semi-direct product 
\[  \widetilde{\Sp}(W \otimes V) \ltimes  H(W \otimes V). \] 
This is the so-called Weil representation (associated to $\psi$). As a representation of $\widetilde{\Sp}(W \otimes V)$, it is the direct sum of two irreducible representations, and its isomorphism class depends only on the $k^{\times 2}$-orbit of $\psi$. 
 \vskip 5pt
  
Via a natural homomorphism 
\[  \widetilde{\Sp}(W) \times \O(V) \longrightarrow \widetilde{\Sp}(W \otimes V), \]  
we regard the Weil representation $\omega_{\psi}$ as a representation $\omega_{W,V, \psi}$ of  
$\widetilde{\Sp}(W) \times \O(V)$. The theory of  Howe duality gives a 
correspondence between irreducible genuine representations $\wpi$ 
of $\ws(W)$ and certain irreducible representations $\sigma$ of $\O(V)$. 
\vskip 10pt

More precisely, given an irreducible representation $\sigma$ of $\O(V)$, the maximal $\sigma$-isotypic quotient of $\omega_{W,V, \psi}$ has the form
\[  \sigma \boxtimes \Theta_{W,V, \psi}(\sigma) \]
for some smooth representation $\Theta_{W,V,\psi}(\sigma)$ (the big theta lift of $\sigma$) of $\ws(W)$. It is known ([K] and [MVW]) that $\Theta_{W,V,\psi}(\sigma)$ is either zero or has finite length. 
Let $\theta_{W,V,\psi}(\sigma)$ (the small theta lift of $\sigma$) denote the maximal semisimple quotient of $\Theta_{W,V,\psi}(\sigma)$. It is known by Howe [Ho] and Waldspurger [Wa3] that when the residue characteristic of $k$ is different from $2$, then $\theta_{W,V,\psi}(\sigma)$ is irreducible or zero; this is the so-called Howe's conjecture. In the following, we will assume that the same holds when the residue characteristic of $k$ is $2$. 
\vskip 10pt

Analogously, if $\wpi$ is an irreducible representation of $\ws(W)$, we have the representations $\Theta_{W,V, \psi}(\wpi)$ and $\theta_{W,V,\psi}(\wpi)$ of $\O(V)$. 
\vskip 10pt

Now we have the following theorem, which is due to Adams-Barbasch [AB] when $k  = \R$ and essentially due to Kudla-Rallis [KR] when $k$ is non-archimedean.
\vskip 5pt

\begin{thm} \label{T:KR}  
Assume that the local field $k$ is either real or non-archimedean with odd residual characteristic.
Then corresponding to the choice of an additive character $\psi$ of $k$, there is a natural bijection given by the theta correspondence between
\[\xymatrix{ \left\{ \text{irreducible genuine  representations  $\wpi$  of  $\ws(W)$} \right \}  \ar@{<->}[d] \\
\coprod \left\{\text{irreducible  representations  $ \sigma' $ of  $\SO(V')$} \right\}
}\]
where the union is disjoint, and taken over all the isomorphism  classes of 
orthogonal spaces $V'$ over $k$ with $\dim V' = 2n+1$ and $\disc(V') =1$.  
\vskip 5pt 

More precisely,  given an irreducible  representation $\wpi$ of $\ws(W)$, 
there is a unique $V'$ as above such that 
$\theta_{W,V', \psi}(\wpi)$ is nonzero, in which case the image of 
$\wpi$ under the above bijection is the restriction of 
$\theta_{W,V', \psi}(\wpi)$ to $\SO(V')$. 
\end{thm}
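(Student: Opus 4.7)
The plan is to deduce the bijection from the machinery of the local theta correspondence for the dual pairs $(\ws(W), \O(V'))$, combined with the Kudla-Rallis conservation relation. First I would note that since $\dim V' = 2n+1$ is odd, we have $\O(V') = \SO(V') \times \langle -I \rangle$, so every irreducible representation $\sigma$ of $\O(V')$ restricts irreducibly to $\SO(V')$, and each $\sigma' \in \text{Irr}(\SO(V'))$ extends to $\O(V')$ in exactly two ways, as $\sigma$ and $\sigma \otimes \det$. By Howe duality (which holds under the stated hypothesis on the residual characteristic), for each $V'$ the assignment $\sigma \mapsto \theta_{W, V', \psi}(\sigma)$ defines an injection from $\{\sigma \in \text{Irr}(\O(V')) : \Theta_{W,V',\psi}(\sigma) \neq 0\}$ into the set of irreducible genuine representations of $\ws(W)$, whose inverse on its image sends $\wpi$ to $\theta_{V',W, \psi}(\wpi)$.

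With this setup, the theorem reduces to two dichotomy statements at the equal-rank boundary $\dim V' = \dim W + 1$: (a) for each irreducible genuine $\wpi$, exactly one of the two isomorphism classes of $V'$ (of dimension $2n+1$ with $\disc V' = 1$, differing by Hasse-Witt invariant) satisfies $\Theta_{W,V',\psi}(\wpi) \neq 0$; and (b) for each $\sigma' \in \text{Irr}(\SO(V'))$, exactly one of the two extensions $\sigma$ and $\sigma \otimes \det$ to $\O(V')$ satisfies $\Theta_{W,V',\psi}(\sigma) \neq 0$. Both (a) and (b) are instances of the Kudla-Rallis conservation relation, which constrains the sum of first occurrences in the two relevant orthogonal Witt towers. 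Granting these, the bijection is defined by $\wpi \mapsto \sigma'$, where $\sigma'$ is the restriction to $\SO(V')$ of $\theta_{W,V',\psi}(\wpi)$ for the unique $V'$ from (a); the inverse $\sigma' \mapsto \wpi$ selects the correct extension $\sigma$ via (b) and applies $\theta_{W,V',\psi}$. Functoriality of the theta correspondence makes these mutually inverse.

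The main obstacle is establishing the two dichotomies (a) and (b), which constitute the delicate equal-rank case of the Kudla-Rallis conservation relation. In the non-archimedean setting with odd residual characteristic, this is carried out in [KR] via the doubling method: one analyzes the poles of doubling Eisenstein series attached to Siegel parabolics of larger symplectic groups, together with nonvanishing properties of local zeta integrals, to pin down first occurrences in the two Witt towers. In the archimedean case, [AB] takes a different route, using Vogan's classification of admissible representations of real reductive groups together with a direct analysis of the $(\mathfrak{g}, K)$-module structure of the Weil representation. Once (a) and (b) are in hand, the remaining verifications --- well-definedness of the two maps, their mutual inverseness, and the compatibility with restriction from $\O(V')$ to $\SO(V')$ --- follow straightforwardly from the formal properties of the theta correspondence.
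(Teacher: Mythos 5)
Your overall architecture is the same as the paper's: reduce the theorem to the two dichotomies (a) and (b), use Howe duality (Waldspurger's theorem, which is exactly where the odd residual characteristic hypothesis enters) for injectivity, and quote Adams--Barbasch for $k=\R$. The gap is in how you dispose of (a) and (b). What is actually available in [KR] is only the ``at most one'' half of (a) ([KR, Thm.~3.8]: $\wpi$ participates with at most one of $\O(V)$, $\O(V')$); the conservation relation as an \emph{equality} of first occurrences was at that point a conjecture (established in general only later by Sun--Zhu), so the ``at least one'' halves of both dichotomies do not follow from what is in [KR]. The paper is explicit that the theorem is not stated in [KR] and that precisely this non-vanishing is the ``little extra work'': one shows via [KR, Prop.~4.1] that $\wpi$ lifts to $\O(V)$ or $\O(V')$ if and only if $\Hom_{\widetilde{\Sp}(W)\times\widetilde{\Sp}(W)}(I_P(0),\wpi\boxtimes\wpi^{\vee})\ne 0$, uses the decomposition $I_P(0)=R(V)\oplus R(V')$ of the degenerate principal series at $s=0$ (which also gives irreducibility of $R(V)$ and $R(V')$), and then produces a nonzero element of this Hom space as the leading Laurent coefficient of the Piatetski-Shapiro--Rallis doubling zeta integral; the same zeta-integral argument yields that at least one of $\sigma$, $\sigma\otimes\det$ has nonzero lift. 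Simply citing ``the Kudla--Rallis conservation relation, carried out in [KR]'' leaves exactly these steps unproved.

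Your framing of (b) is also slightly off: it is not a statement about two orthogonal Witt towers (there is only one metaplectic tower in play), but the $\det$-twist dichotomy for representations of $\O(V')$. The paper's ``at most one'' argument for (b) is a seesaw argument: if both $\sigma$ and $\sigma\otimes\det$ lifted to $\widetilde{\Sp}(W)$, the seesaw identity for the pair $\widetilde{\Sp}(W+W^-)$, $\O(V)\times\O(V)$ would force $\Theta_{W+W^-,V,\psi}(\det)\ne 0$, contradicting Rallis' result that the determinant character of $\O(V)$ does not occur in the correspondence with $\widetilde{\Sp}(4r)$ for $r\le n$. So to complete your proposal you must either supply these arguments (zeta integral non-vanishing plus the seesaw/Rallis step), or invoke the full conservation relations of Sun--Zhu, which is a genuinely different (and historically later) input than the one the paper uses.
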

 
 \vskip 5pt

\vskip 10pt
 
\begin{proof}
We give a sketch of the proof of Theorem \ref{T:KR} when $k$ is non-archimedean.
The reader will not find Theorem \ref{T:KR} in the reference [KR], so let us explain how it follows from the results there (plus a little extra work).  There are 2 isomorphism classes of quadratic space of dimension $2n+1$ and trivial discriminant; we denote these by $V$ and $V'$, and assume that $V$ is split. 
To simplify 
notation, we shall write $\Theta$ in place of $\Theta_{W,V, \psi}$ and $\Theta'$ in place of $\Theta_{W,V',\psi}$. 
\vskip 5pt

We now divide the proof into two steps:
\vskip 10pt

\noindent (i) Given an irreducible representation $\wpi$ of $\widetilde{\Sp}(W)$, exactly one of 
$\Theta(\wpi)$ or $\Theta'(\wpi)$ is nonzero.
\vskip 5pt

This assertion was also shown in a recent preprint of C. Zorn [Z]. 
In any case, [KR, Thm. 3.8] shows that any irreducible representation $\wpi$ of $\widetilde{\Sp}(W)$ participates in theta correspondence with at most one of $\O(V)$ or $\O(V')$.
We claim however that $\wpi$ does have nonzero theta lift to $\O(V)$ or 
$\O(V')$. 
To see this, note that [KR, Prop. 4.1] shows that $\wpi$ has nonzero theta lift to $\O(V)$  
if and only if
\[  \Hom_{\widetilde{\Sp}(W) \times \widetilde{\Sp}(W)} (R(V), \wpi \boxtimes \wpi^{\vee}) \ne 0, \]
where $R(V)$ is the big theta lift of the trivial representation of $\O(V)$ to 
$\widetilde{\Sp}(W+ W^-)$ (where $W^-$ is the symplectic space obtained from $W$ by scaling its form by $-1$). Similarly, one has the analogous statement for $V'$. On the other hand,
if $I_P(s)$ denotes the degenerate principal series representation of $\widetilde{\Sp}(W+ W^-)$ unitarily induced from the character $\chi_{\psi} \cdot |\det|^s$ of a Siegel parabolic subgroup, then one can show that
\[  I_P(0)  = R(V) \oplus R(V'), \]
so that $R(V)$ and $R(V')$ are unitarizable and thus irreducible (since they have a unique irreducible quotient). In particular, we conclude that $\wpi$ has nonzero theta lift to one of $\O(V)$ or $\O(V')$ 
if and only if
\[  \Hom_{\widetilde{\Sp}(W) \times \widetilde{\Sp}(W)} (I_P(0), \wpi \boxtimes \wpi^{\vee}) \ne 0. \] 
We thus need to show that this Hom space is nonzero. This can be achieved by  the doubling method of Piatetski-Shapiro and Rallis [GPSR], which  provides a zeta integral 
\[    Z(s):  I_P(s) \otimes \wpi \otimes \wpi^{\vee} \longrightarrow \CC. \]
The precise definition of $Z(s)$ need not concern us here; it 
suffices to note that for 
a flat section $\Phi(s) \in I_P(s)$ and $f \otimes f^{\vee} \in \wpi \otimes \wpi^{\vee}$, 
$Z(s, \Phi(s), f \otimes f^{\vee})$ is a meromorphic function in $s$. Moreover, at any $s = s_0$, the leading term of the Laurent expansion of $Z(s)$ gives a nonzero element 
\[  Z^*(s_0) \in \Hom_{\widetilde{\Sp}(W) \times \widetilde{\Sp}(W)} 
(I_P(s_0), \wpi \boxtimes \wpi^{\vee}). \]
This proves our contention that $\wpi$ participates in the theta correspondence with exactly one of 
$\O(V)$ or $\O(V')$.  

\vskip 10pt
 By (i), one obtains a map
\[\xymatrix{
\left\{ \text{irreducible genuine  representations  $\wpi$  of  $\ws(W)$} \right\} \ar[d]\\
\coprod \left \{\text{irreducible  representations  $ \sigma' $ of  $\O(V')$} \right \}
}\]

%$$\{ \text{irreducible genuine  representations  $\wpi$  of  $\ws(W)$} \}$$
%$$\downarrow$$
%$$\coprod \{\text{irreducible  representations  $ \sigma' $ of  $\O(V')$} \}$$
Moreover, this map is injective by the theorem of Waldspurger [Wa3] proving Howe's conjecture.
 \vskip 10pt

\noindent (ii) An irreducible representation $\pi_0$ of $\SO(V)$ has two extensions to 
$\O(V) = \SO(V) \times \langle \pm 1\rangle$, and 
exactly one of these extensions participates in the theta correspondence with $\widetilde{\Sp}(W)$. 
The same assertion holds for representations of $\SO(V')$.
\vskip 5pt

Suppose on the contrary that $\pi$ is an irreducible representation of $\O(V)$ such that both $\pi$ and $\pi \otimes \det$ participate in theta correspondence with $\widetilde{\Sp}(W)$, say
\[  \wpi = \theta(\pi) \quad\text{and} \quad \wpi'  = \theta(\pi \otimes \det).  \]
Now consider the seesaw diagram:

\[\xymatrix{ \widetilde{\Sp}(W+W^-)   \ar@{-}[dd]  & \O(V)\times \O(V) \ar@{-}[dd]\\
 & \\
 \widetilde{\Sp}(W) \times \widetilde{\Sp}(W)    & \O(V).
}\]
%\begin{picture}(250,80)
%\put(69,50){$\widetilde{\Sp}(W+W^-)$
%\put(150,45){\line(1,-1){76}}
%\put(222,45){\line(-1,-1){72}}
%\put(234,50)
%\put(69,-36){$\widetilde{\Sp}(W) \times \widetilde{\Sp}(W)$}
%\put(269,38){\line(0,-1){54}}
%\put(253,-36){$\O(V)$}
%\end{picture}
The seesaw identity implies that
\[   \Hom_{\widetilde{\Sp}(W) \times \widetilde{\Sp}(W)}(\Theta_{W+W^-, V,\psi}(\det), \wpi' \boxtimes \wpi^{\vee})
  \supset 
\Hom_{\O(V)}((\pi\otimes \det)   \otimes \pi ^{\vee}, \det) \ne 0. \]
This implies that 
\[ \Theta_{W+W^-, V, \psi}(\det) \ne 0.\]
However, a classical result of Rallis [R, Appendix] says that the determinant character of $\O(V)$ does not participate in the theta correspondence with $\widetilde{\Sp}(4r)$ for $r \leq n$. This gives the desired contradiction.
\vskip 5pt

We have thus shown that at most one of $\pi$ or $\pi \otimes \det$ could have nonzero theta lift to $\widetilde{\Sp}(W)$.  On the other hand, the analog of the zeta integral argument in (i) shows that
one of $\pi$ or $\pi \otimes \det$ does lift to $\widetilde{\Sp}(2n)$. This proves (ii).

\vskip 10pt

Putting (i) and (ii) together,  we have established the theorem.
\end{proof}
\vskip 10pt

The only reason for the assumption of odd residue characteristic in the theorem is that Howe's conjecture for local theta correspondence is only known under this assumption.   \vskip 10pt

Since $\SO(V)$ is an adjoint 
group, there is a unique orbit of generic characters, and the Vogan 
parameterization of irreducible representations $\sigma'$ of the groups
$\SO(V')$ requires no further choices. 
So we label $\wpi = \wpi (M, \chi)$ using 
the Vogan parameters $(M,\chi)$ of the representation $\sigma' = \Theta_{W,V',\psi}(\wpi)$.   The theorem thus gives the following corollary.
\vskip 5pt

\begin{cor}  \label{C:KR}
Assume that the residue characteristic of $k$ is odd.
Suppose that the local Langlands-Vogan  parameterization holds for $\SO(V')$.
Then one has a parameterization (depending on $\psi$)
of
$$\{ \text{irreducible genuine  representations  $\wpi$  of  $\ws(W)$} \}$$
by  the set of isomorphism classes of pairs $(\varphi,\chi)$ such that 
\[  \varphi: WD \longrightarrow \Sp(M) \]
is a symplectic representation of $WD$ and $\chi$ is an irreducible character of the component group $A_{\varphi}$. 
\end{cor}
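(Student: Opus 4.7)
The plan is to compose two parameterizations. First, Theorem~\ref{T:KR} provides, for the chosen additive character $\psi$, a natural bijection between the irreducible genuine representations of $\widetilde{\Sp}(W)$ and the disjoint union of the irreducible representations of $\SO(V')$, where $V'$ ranges over all isomorphism classes of quadratic spaces over $k$ of dimension $2n+1$ and discriminant $1$. Second, the assumed Langlands--Vogan parameterization for odd special orthogonal groups, as spelled out in Section~\ref{S:vogan-classical}, attaches to each pair $(\varphi,\chi)$ with $\varphi : WD \to \Sp(M)$ a symplectic parameter of dimension $2n$ and $\chi \in \mathrm{Irr}(A_\varphi)$ a unique irreducible representation of some pure inner form of the split $\SO(V)$.

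The content of the proof is then simply to check that these two parameterizations match up at the level of the orthogonal side. By Lemma~2.1, the pure inner forms of the split $\SO(V)$ (with $\dim V = 2n+1$, $\disc V = 1$) are precisely the groups $\SO(V')$ for $V'$ quadratic of the same dimension and the same discriminant $1$ as $V$; this is exactly the indexing set appearing in Theorem~\ref{T:KR}. Moreover, Vogan's formalism (together with the Kottwitz pairing recalled in Section~\ref{S:vogan-desiderata}, property~(4)) distributes the members of each $L$-packet $\Pi_\varphi$ across these inner forms via the value $\chi(-1)$: the representation $\pi(\varphi,\chi)$ sits on the split form when $\chi(-1) = +1$ and on the unique non-split form when $\chi(-1) = -1$. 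Composing the two bijections then yields a bijection between the irreducible genuine representations of $\widetilde{\Sp}(W)$ and the set of pairs $(\varphi,\chi)$ described in the statement.

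Finally, the dependence of the resulting parameterization on $\psi$ is inherited entirely from the theta-correspondence bijection of Theorem~\ref{T:KR}, since the Langlands--Vogan parameterization of $\SO(V')$ requires no auxiliary datum (the group is adjoint, so there is a unique $T$-orbit of generic characters). As noted in Section~\ref{S:metaplectic}, the Weil representation $\omega_\psi$ only depends on $\psi$ up to its $k^{\times 2}$-orbit, so the bijection $\widetilde{\pi} \leftrightarrow (\varphi,\chi)$ likewise depends only on this orbit. There is no substantive obstacle here: once Theorem~\ref{T:KR} and the Langlands--Vogan parameterization for $\SO(V')$ are granted, the corollary is a formal consequence, and the only step that requires any care is the bookkeeping that identifies the orthogonal spaces appearing in Theorem~\ref{T:KR} with the pure inner forms of $\SO(V)$ relevant for the Vogan formalism.
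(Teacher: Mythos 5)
Your proposal is correct and follows exactly the paper's route: the corollary is obtained by composing the theta-correspondence bijection of Theorem \ref{T:KR} with the assumed Langlands--Vogan parameterization of the groups $\SO(V')$, after observing (as the paper does in one sentence) that these $V'$ are precisely the pure inner forms of the split odd special orthogonal group of discriminant $1$, that $\SO(V')$ is adjoint so no choice of generic character is needed, and that the $\psi$-dependence enters only through the theta lift. Nothing essential differs from the paper's treatment.
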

\vskip 5pt

It follows that 
the various desiderata for the Vogan packets of $\ws(W)$ can be obtained from those of $\SO(V')$ if one understands the properties of the theta correspondence. For example, 
in the theta correspondence, generic representations of 
the split group $\SO(V)$ lift to $\widetilde{\theta}$-generic representations of $\ws(W)$.  Hence 
the $\widetilde{\theta}$-generic element in the $L$-packet of $M$ corresponds to the trivial
character of the component group $A_M$.  Also, when $k$ is non-archimedean, $\wpi(M,\chi)$ 
is lifted from the split group $\SO(V)$ precisely when $\chi(-1)=1$.

 \bigskip 
One difference between metaplectic  and linear groups is in the description of the action of the adjoint group by outer automorphisms on the set of irreducible representations. 
The adjoint action of the symplectic similitude group $\text{GSp}(W)$ on the set of genuine irreducible  representations of $\ws(W)$ factors through the quotient
$$k^\times/k^{\times 2} = 
{\rm PSp}(W) (k) /{\rm Image~} 
\Sp(W)(k).$$  
In the metaplectic case, this outer action does not permute the representations $\widetilde{\pi}$ in an individual Vogan $L$-packet, and we predict a more complicated recipe, as follows.
\vskip 5pt

\vskip 5pt

\begin{con} \label{conj-meta}
 If $\widetilde{\pi}$ has $\psi$-parameter $(M,\chi)$ and $c$ is a class in $k^{\times}/k^{\times 2}$, the
conjugated representation $\widetilde{\pi^c}$ has $\psi$-parameter $(M(c), \chi \cdot \eta[c])$.
Here $M(c)$  is the twist of 
$M$ by the one-dimensional orthogonal representation $\CC(c)$ so that its component group $A_{M(C)}$ is canonically isomorphic to $A_M$. The character $\eta[c]$ is defined by 
\[  \eta[c]= \chi_N : A_M \to \langle\pm1\rangle, \]
where $N$ is the two dimensional orthogonal representation $N = \CC + \CC(c)$, so that
$$\eta[c] (a)   =  \ep(M^a)\cdot  \ep(M(c)^a)\cdot  (c, -1)^{\frac{1}{2}\dim M^a}.$$ 
\end{con}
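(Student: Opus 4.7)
The plan is to exploit the fundamental relation between conjugation in $\ws(W)$ by the similitude group and rescaling of the additive character $\psi$. For an element $g_c \in \GSp(W)$ with similitude factor $c$, a direct computation in the Schr\"odinger model of the Weil representation (conjugation by $g_c$ sends the unipotent element $n(S)$ to $n(cS)$, so the Heisenberg center character $\psi$ is replaced by $\psi_c$) yields the intertwining
$$
\omega_{W,V',\psi}\circ(\Ad(g_c)\times 1)\;\cong\;\omega_{W,V',\psi_c}
$$
of representations of $\ws(W)\times\O(V')$. Consequently $\Theta_{W,V',\psi}(\wpi^c)\cong\Theta_{W,V',\psi_c}(\wpi)$ for each allowed $V'$, so the $\psi$-Vogan parameter of $\wpi^c$ coincides with the $\psi_c$-Vogan parameter of $\wpi$. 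This reduces the conjecture to comparing the $\psi$- and $\psi_c$-parameterizations of a fixed $\wpi$.

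For the $L$-parameter part, I would invoke the known behaviour of the $(\ws(2n),\SO(2n+1))$ theta correspondence under rescaling of $\psi$: replacing $\psi$ by $\psi_c$ twists the correspondence at the level of $L$-parameters by the quadratic character attached to $c$, giving $M\mapsto M\otimes\CC(c)=M(c)$. Twisting by a one-dimensional orthogonal character preserves the isotypic decomposition of $M$ and the sign of each irreducible summand, so $A_M$ and $A_{M(c)}$ are canonically identified, and $\chi$ transports to a character of $A_{M(c)}$.

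The heart of the argument is showing that the character attached to $\wpi^c$ in the $\psi$-parameterization is $\chi\cdot\eta[c]$ and not merely $\chi$. My approach would be to determine this correction by comparing the normalizations of the two parameterizations at their distinguished generic members: the $\widetilde{\theta}$-generic member of $\Pi_{M}^{(\psi)}$ corresponds to the trivial character of $A_M$, while the $\widetilde{\theta_c}$-generic member of $\Pi_{M(c)}^{(\psi_c)}$ corresponds to the trivial character of $A_{M(c)}$; in a fixed parameterization the ratio of these two characters is the desired correction. Expressing this ratio via Theorem \ref{T:chiN2} applied to $N=\CC+\CC(c)$ (so $b(M)\cdot b(N)=-1$ and $\det M^a=1$ since $M^a$ is symplectic), one gets
$$
\chi_N(a)\;=\;\ep(M^a\otimes N)\cdot\det(N)(-1)^{\dim M^a/2}\;=\;\ep(M^a)\cdot\ep(M(c)^a)\cdot(c,-1)^{\frac{1}{2}\dim M^a},
$$
which is exactly $\eta[c]$.

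The main obstacle is identifying the correction as this specific epsilon-factor character $\chi_N$ rather than some other character of $A_M$. Tracking the change in Whittaker normalization under rescaling of $\psi$ is formal, but linking it to the root-number recipe requires a careful analysis of how the theta correspondence interacts with the generic model and how the Vogan character varies with $\psi$. This ultimately depends on the explicit epsilon-factor identity of Proposition \ref{P:orthogonal-ep} applied to the two-dimensional orthogonal representation $N=\CC+\CC(c)$, together with the multiplicativity and twisting behaviour of local root numbers recalled in Section \ref{S:rootnumber}.
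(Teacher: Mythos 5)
You should first be aware that the statement you set out to prove is Conjecture \ref{conj-meta}: the paper gives no proof of it. It is stated as a conjecture, said to be known only when $\dim W = 2$ by Waldspurger's theorems, and the recipe for $\eta[c]$ is explicitly described as ``suggested by his results.'' So there is no proof in the paper to compare against; the only question is whether your argument actually establishes the statement, and it does not.

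Your opening reduction is sound: conjugating by $g_c \in \GSp(W)$ with similitude $c$ intertwines $\omega_{W,V',\psi}$ with $\omega_{W,V',\psi_c}$, so the $\psi$-parameter of $\wpi^c$ equals the $\psi_c$-parameter of $\wpi$, and this is exactly how the paper itself passes between the conjecture and its reformulation in terms of replacing $\psi$ by $\psi_c$. The gap comes immediately afterwards. The ``known behaviour of the $(\ws(W),\SO(2n+1))$ theta correspondence under rescaling of $\psi$'' that you invoke --- that the parameter is twisted to $M(c)$ and the packet member is shifted by a definite character of $A_M$ --- is not a known theorem in general rank; it is precisely what the conjecture asserts. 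What is actually available (Theorem \ref{T:KR} and Corollary \ref{C:KR}) is only that, for each fixed $\psi$, theta correspondence gives a bijection with representations of the two forms of $\SO(2n+1)$; nothing there computes how this bijection varies with $\psi$ beyond $\dim W=2$. Your proposed normalization argument also cannot close this: comparing the $\widetilde{\theta}$-generic member of the $\psi$-packet with the $\widetilde{\theta_c}$-generic member of the $\psi_c$-packet pins down at most how one base point moves, whereas the conjecture is a statement about the relabeling of \emph{every} member of the packet, including those living on the non-split form $\SO(V')$; to show the relabeling is tensoring by a single character, and to identify that character, one needs control of how each individual theta lift (and hence the dichotomy between $V$ and $V'$, which is where epsilon factors genuinely enter) changes with $\psi$ --- input your argument never supplies. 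Your closing computation, that for $N=\CC+\CC(c)$ Theorem \ref{T:chiN2} gives $\chi_N(a)=\ep(M^a)\cdot\ep(M(c)^a)\cdot(c,-1)^{\frac{1}{2}\dim M^a}$, is correct, but it only verifies that the conjectured character has the stated closed form; it does not show that this is the character by which the parameterization actually changes. The ``main obstacle'' you acknowledge at the end is not a technical point to be tightened later --- it is the entire content of the conjecture, which at present is only known in the two-dimensional case.
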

\bigskip

This conjecture is known when $\dim W = 2$, where it is a result of Waldspurger ([W1] and [W2]); our recipe above is suggested by his results.
\vskip 10pt

The above conjecture has the  following consequence. If one replaces the character $\psi$ by the character 
\[  \psi_c : x \mapsto \psi(cx) \]
of $k$, then   the new Vogan parameter (relative to $\psi_c$) of $\widetilde{\pi}$ will be $(M(c), \chi \cdot \eta[c])$.
\vskip 10pt

A consequence of this is the following.
Suppose that $\wpi$ is such that 
\[  \theta_{W, V, \psi}(\wpi) \ne 0 \quad \text{and} \quad  \theta_{W,V,\psi_c}(\wpi) \ne 0 \]
as representations of $\SO(V)$.
Then, when $\dim W = 2$, a basic result of Waldspurger says that
\[  \theta_{W,V,\psi}(\wpi) \cong \theta_{W,V,\psi_c}(\wpi) \otimes \chi_c, \]
where $\chi_c$ is the character
\[  
\begin{CD}
\SO(V)(k) @>{\text{spinor norm}}>> k^{\times} /k^{\times 2} @>(c,-)>> \langle \pm 1\rangle. \end{CD} \]
However, according to the conjecture above, if the Vogan parameter of  $\theta_{W,V\psi}(\wpi)$ is 
$(M ,\chi)$, then that of $\theta_{W,V,\psi_c}(\wpi) \otimes \chi_c$ is $(M, \chi \cdot \eta[c])$. So the two representations are equal if and only if the character $\eta[c]$ is trivial. The assumption that $\wpi$ has nonzero theta lift to $\SO(V)$ with respect to both $\psi$ and $\psi_c$ implies that 
\[  \eta[c](-1) = 1. \]
When $\dim W = 2$, this is equivalent to saying that $\eta[c]$ is trivial. But when $\dim W  >2$, this is no longer the case and one can construct such counterexamples already when $\dim W = 4$. 
\vskip 10pt

Another way of saying the above is that
if the Langlands-Vogan parameter of a representation $\sigma$ of $\SO(V)(k)$
is $(M,\chi)$, then $(M(c),\chi)$ is the parameter of $\sigma \otimes \chi_c$ and 
 under the $\psi$-theta correspondence, $\sigma \otimes \chi_c$ should be paired with an element in the Vogan packet of $\widetilde{\pi^c}$, but not necessarily to that specific representation. 
\vskip 10pt

 \bigskip

\section{The representation $\nu$ of $H$ and generic data}  \label{S:nu}

In this section, we shall describe the remaining ingredient in the restriction problem to be studied. Suppose as before that $k$ is a local field with an involution $\sigma$ (possibly trivial) and $k_0$ is the fixed field of $\sigma$. Let $V$ be a $k$-vector space endowed with a non-degenerate sesquilinear form $\langle-, -\rangle$ with sign $\epsilon$. Moreover, suppose that $W \subset V$ is a non-degenerate subspace satisfying:
\vskip 5pt
\begin{enumerate}
\item $\epsilon \cdot (-1)^{\dim W^{\perp}} = -1$ 
\vskip 5pt

\item $W^{\perp}$ is a split space. 
\end{enumerate}
\vskip 5pt

\noindent So we have
\[  \dim W^{\perp} = \begin{cases}
\text{odd, if $\epsilon = 1$, i.e. $V$ is orthogonal or hermitian;} \\
\text{even, if $\epsilon = -1$, i.e. $V$ is symplectic or skew-hermitian.} \end{cases} \]
\vskip 5pt

Let $G(V)$ be the identity component of the automorphism group of $V$ and $G(W) \subset G(V)$ the subgroup which acts as identity on $W^{\perp}$. Set
\[  G = G(V) \times G(W). \]
As explained in Section \ref{S:classical}, $G$ contains a subgroup $H$ defined as follows. 
Since $W^{\perp}$ is split, we may write
\[  W^{\perp} = X + X^{\vee} \quad \text{or} \quad W^{\perp} = X+ X^{\vee} + E \]
depending on whether $\dim W^{\perp}$ is even or odd, where in the latter case, $E$ is a non-isotropic line.  Let $P$ be a parabolic subgroup which stabilizes a complete flag of (isotropic) subspaces in $X$. Then $G(W)$ is a subgroup of a Levi subgroup of $P$ and thus acts by conjugation on the unipotent radical $N$ of $P$.  We set
\[  H = N \rtimes G(W). \]
Note that there is a natural embedding $H \hookrightarrow G$  which is the natural inclusion 
\[  H \subset P \subset G(V) \]
in the first factor and is given by the projection
\[  H \longrightarrow H/N = G(W) \]
in the second factor.  When $G' = G(V') \times G(W')$ is a relevant pure inner form of $G$, a similar construction gives a distinguished subgroup $H'$. 
\vskip 10pt

The goal of this section is to describe a distinguished representation $\nu$ of $H$ (and similarly $H'$).
 It will turn out that 
$\dim \nu = 1$ if $\dim W^{\perp}$ is odd (orthogonal and hermitian cases), whereas $\nu$ has Gelfand-Kirillov dimension $1/2 \cdot \dim(W/k_0)$ when $\dim W^{\perp}$ is even (symplectic and skew-hermitian cases). Because of this, we will treat the cases when $\dim W^{\perp}$ is even or odd separately.
\vskip 10pt

\noindent{\bf \underline{Orthogonal and Hermitian Cases (Bessel Models)}}
\vskip 10pt

Assume that $\dim W^{\perp} = 2n+1$ and write 
\[ W^{\perp} = X + X^{\vee}+E \quad \text{with} \quad E =  \langle e \rangle,  \]
where $X$ and $X^{\vee}$ are maximal isotropic subspaces which are in duality using the form $\langle-,-\rangle$ of $V$, and $e$ is a non-isotropic vector.  Let $P(X)$ be the parabolic subgroup in $G(V)$ stabilizing the subspace $X$, and let $M(X)$ be the Levi subgroup of $P(X)$
which stabilizes both $X$ and $X^\vee$, so that  
\[  M(X) \cong \GL(X) \times G(W \oplus E). \]
We have 
\[  P(X)= M(X) \ltimes N(X) \]
where $N(X)$ is the unipotent radical of $P(X)$. The group $N(X)$
sits in an exact sequence of $M(X)$-modules,
\[  \begin{CD}
0 @>>> Z(X) @>>> N(X) @>>> N(X)/Z(X)  @>>> 0, 
\end{CD} \]
and using the form on $V$, one has natural isomorphisms
\[  Z(X) \cong \{ \text{skew-hermitian forms on $X^{\vee}$}\}, \]
  and
\[  N(X)/Z(X) \cong \Hom(W+E, X) \cong (W +E) \otimes X.  \]
Here, when $k = k_0$, skew-hermitian forms on $X^{\vee}$  simply mean symplectic forms.
In particular, $Z(X)$ is the center of $N(X)$ unless $k = k_0$ and $\dim X =1$, in which case $Z(X)$ is trivial and $N(X)$ is abelian.
  \vskip 5pt

Now let 
\[  \ell_X: X \rightarrow k \] 
be a nonzero $k$-linear homomorphism, and let 
\[  \ell_W: W \oplus E  \longrightarrow k \]
be a nonzero $k$-linear  homomorphism which is zero on the hyperplane $W$. Together, these give a map 
\[  \ell_X \otimes \ell_W: X \otimes (W + E ) \longrightarrow k, \]
and one can  consider the composite map
\[  \begin{CD}
 \ell_{N(X)} : N(X) @>>> N(X)/Z(X) \cong X \otimes (W + E)
@>\ell_X \otimes \ell_W>> k.
\end{CD} \]
Let $U_X$ be any maximal unipotent subgroup  of $\GL(X)$ which stabilizes $\ell_X$. Then
the subgroup
\[  U_X \times G(W) \subset M(X) \]
fixes the homomorphism $\ell_{N(X)}$.
 
 \vskip 5pt
 
 Now the subgroup 
 \[  H \subset G = G(V) \times G(W) \] 
 is given by 
 \[  H = (N(X) \rtimes (U_X \cdot G(W)) = N \rtimes G(W). \]
We may extend the map $\ell_{N(X)}$ of $N(X)$ to $H$, by making it trivial on $U_X \times G(W)$.  If $\psi$ is a non-trivial additive character of $k$, and
\[  \lambda_X: U_X \longrightarrow \SS^1 \]
is a generic character of $U_X$, then
 the representation $\nu$ of $H$ is defined by
 \[   \nu =  (\psi \circ \ell_{N(X)} )\boxtimes \lambda_X. \]
 The pair $(H,\nu)$ is uniquely determined up to conjugacy
in the group $G = G(V) \times G(W)$ by the pair $W \subset V$. 

\vskip 10pt

One can give a more explicit description of $(H ,\nu)$, by explicating the choices of $\ell_X$, $U_X$ and $\lambda_X$ above. To do this, 
choose a  basis $\{ v_1, \cdots, v_n\} $ of $X$, 
with dual basis $\{ v_i'\} $ of $X^{\vee}$.  
Let $P \subset G(V)$ be the parabolic subgroup which stabilizes the flag
$$0 \subset 
  \langle v_1\rangle  \subset  \langle v_1, v_2\rangle  \subset 
\cdots  \subset \langle v_1, \cdots,   v_n\rangle  = X,$$ 
and let 
\[  L = (k^\times)^n \times  G(W + E) \]
be the Levi subgroup of $P$ which stabilizes
the lines $\langle v_i\rangle $ as well as the subspace $W +E$.  
The torus $T = (k^\times)^n$ scales these lines: 
\[  t(v_i) =  t_i v_i, \]
and $G(W + E)$ acts trivially. 
\vskip 10pt

Let $N$ be the unipotent radical of $P$, so that 
\[  N = U_X \ltimes N(X) \]
where $U_X$ is the unipotent radical of the Borel subgroup in $\GL(X)$ stabilizing the chosen flag above.  Now define a homomorphism $f : 
N \to k^n$ by 
$$
\begin{array}{rcl}
f(u) & = & (x_1, \cdots, x_{n-1}, z),  \\
x_i & = & \langle u v_{i+1}, v'_i \rangle , \ \ \ \  i = 1,2,\cdots, n-1 \\
z & = & \langle ue, v'_n \rangle . 
\end{array}
$$
\noindent 
The subgroup of $L$ which fixes $f$ is $G(W)$, the subgroup of $G(W + 
E)$ fixing the vector $e$.  The torus acts on $f$ by 
$$f(tut^{-1}) = (t_1^{\sigma} / t_2 x_1, t^{\sigma}_2/t_3 x_2, \cdots, 
t^{\sigma}_n z).$$  
\vskip 5pt

Consider the subgroup  $H = N.G(W)$ of  $G = G(V) \times G(W)$. 
Then, for a non-trivial additive character $\psi$ of $k$,  the representation $\nu$ is given by:
$$\begin{array}{lcl} 
\nu & : & H \to \CC^\times \\
&& (u,g) \mapsto \psi (\sum x_i + z).
\end{array}
$$
It is as regular as possible on $N$, among the characters fixed by $G(W)$.  
As noted above, up to $G$-conjugacy, the pair $(H, \nu)$ depends only on the initial data 
$W \subset V$, and not on the choices of $\psi$, $\{v_i\}$, or $e$ used 
to define it.  
\vskip 5pt

 \vskip 10pt

A special case of $(H , \nu)$ is worth noting.
If $V$ is orthogonal of even dimension and $W$ 
has dimension 1, then $\SO(W) = 1$ and $H = N$ is the unipotent radical 
of a Borel subgroup $P \subset G = \SO(V)$.  In this case, $\nu$ is simply a 
generic character $\theta_W$ of $N$. By choosing different non-isotropic lines $L$ in the 2-dimensional orthogonal space $W + E$, so that $L^{\perp} = X + X^{\vee} + L'$, and using $L$ in place of $W$ in the above construction, the map $L \mapsto \theta_L$ gives a bijection
\[ \xymatrix { \left \{ \text{$T$-orbits of generic characters on $N$} 
\right \} \ar@{<->}[d] \\ 
  \{ \text{$\SO(V)$-orbits of non-isotropic lines 
$L$ with $L^{\perp}$ split} \} }\]
in the even orthogonal case.
 
\vskip 15pt

\noindent{\bf \underline{Symplectic and Skew-Hermitian Cases (Fourier-Jacobi Models)} }
\vskip 10pt

We now treat the symplectic and skew-hermitian cases, so that $W^{\perp}$ is split  of 
even dimension $2n$ and we may write
\[  W^{\perp} = X + X^{\vee}, \]
where $X$ and $X^{\vee}$ are maximal isotropic subspaces which are in duality using the form on $V$. In this case, $G(W)$ is a subgroup of $\Sp(W/k_0)$, preserving the 
form ${\rm Tr}_{k/k_0} \circ \langle -, - \rangle$.  It will turn out that the representation $\nu$ of $H$ depends on some other auxiliary data besides the spaces $W \subset V$. 
As in the case of Bessel models, we include the case $k = k_0 \times k_0$  in our discussion below.  
\vskip 10pt

We assume first that $\dim X > 0$.
 Let $P(X) = M(X) \cdot N(X)$ be the parabolic subgroup
in $G(W)$ stabilizing the subspace $X$,  with Levi subgroup
\[  M(X) \cong \GL(X) \times G(W) \]
stabilizing both $X$ and $X^\vee$.
Let $Z(X)$ be the center of the unipotent radical $N(X)$ of $P(X)$, so that one has the exact sequence of $M(X)$-modules: 
\[  \begin{CD}
0 @>>> Z(X) @>>> N(X) @>>> N(X)/Z(X) @>>> 0,
\end{CD} \]
Using the form on $V$, one has natural isomorphisms
\[  Z(X) \cong  \{\text{hermitian forms on $X^{\vee}$}\} \]
 and
\[  N(X)/Z(X) \cong \Hom(W, X) \cong W \otimes X.\] 
 Here, if $k = k_0$, then hermitian forms on $X^{\vee}$ simply mean symmetric bilinear forms.
 
 \vskip 5pt
\noindent The commutator map 
\[  [ -, - ]: N(X) \times N(X) \rightarrow N(X) \]
factors through $N(X)/Z(X)$ and takes value in $Z(X)$. It thus
gives rise to a skew-symmetric $k_0$-bilinear map 
\[  \Lambda_{k_0}^2(X \otimes W) \longrightarrow Z(X) =\{ \text{hermitian forms on $X^{\vee}$}\}, \] 
or  equivalently by duality, a map
\[ \{ \text{hermitian forms on $X$} \} \longrightarrow \Lambda_{k_0}^2(X^\vee \otimes W) = \{ \text{symplectic forms on $\text{Res}_{k/k_0}(X \otimes W)$}\}. \] 
Indeed, this last map is a reflection of the fact that, using the skew-hermitian  structure on $W$, the space of hermitian  forms on $X$ can be naturally embedded in the space of 
skew-hermitian forms on $X \otimes W$, and then by composition with the trace map if necessary, 
in the space of symplectic forms on $\text{Res}_{k/k_0}(X \otimes W)$.
\vskip 10pt

Now let 
\[  \ell_X: X \rightarrow k \]
be a nonzero homomorphism, and let $U_X \subset \GL(X)$ be a maximal unipotent subgroup which fixes $\ell_X$.  Then the group $H$ is defined by 
\[  H = N(X) \rtimes (U_X \times G(W)) =  N \rtimes G(W) \]
with $N = N(X) \rtimes U_X$. If   
\[  \lambda_X: U_X \longrightarrow \SS^1  \]
is a generic character of $U_X$, then by composing with  the projection from $H$ to $U_X$, we may regard $\lambda_X$ as a character of $H$.
 \vskip 10pt

On the other hand, by pullinag back, the homomorphism $\ell_X$ gives rise to a linear map 
\[   k_0 = \{ \text{hermitian forms on $k$} \} \longrightarrow \{ \text{hermitian forms on $X$} \}, \]
and hence by duality 
\[  \ell_{Z(X)}:  Z(X) = \{ \text{hermitian forms on $X^{\vee}$} \} \longrightarrow k_0. \] 
Moreover, $\ell_X$ gives a $k$-linear map 
\[  \ell_W: X \otimes W \rightarrow W \]
 making the following 
diagram commute:

\vskip 5pt
 $$
\begin{CD}
  \Lambda_{k_0}^2(X \otimes W) @>[-,-]>> Z(X) \\ 
 @V\ell_WVV   @VV\ell_{Z(X)}V    \\
\Lambda_{k_0}^2(W) @>>\frac{2}{[k:k_0]} \cdot Tr_{k/k_0}(\langle -, - \rangle)> k_0.
\end{CD}
$$
\vspace{4mm}

 For example, when $k = k_0$ and $\dim X = 1$, then the commutator map $[-,-]$ is given by the skew-symmetric form $2 \cdot \langle - , -\rangle_W$ on $N(X)/Z(X) = W$. On the other hand, when $k \ne k_0$ and $\dim X = 1$, it is given by the skew-symmetric form $Tr_{k/k_0}(\langle - , -\rangle_W)$ on $W/k_0$. 
In any case, let us  set 
 \[  V_1 =   \begin{cases}
 \text{the quadratic space with discriminant $1$,   if $k = k_0$;} \\
 \text{the hermitian space of discriminant $1$ if $k \ne k_0$,}
 \end{cases} \]
 and let $H(V_1 \otimes W)$ be the Heisenberg group associated to the symplectic vector space $V_1 \otimes W$ over $k_0$ with form 
 \[  Tr_{k/k_0}( \langle - , -\rangle_{V_1} \otimes \langle - , - \rangle _W). \]
Here, given a quadratic space $(V,q)$ over $k_0$, the associated symmetric bilinear form is
 \[  \langle v_1, v_2 \rangle= q(v_1+v_2) - q(v_1) - q(v_2). \]
 Thus, when $k = k_0$, the form on $V_1$ is such that $\langle v,  v\rangle_V =2$, so that 
 $H(V_1 \otimes W)$ is the Heisenberg group associated to the symplectic vector space $(W, 2 \cdot \langle - , - \rangle_W)$.
   \vskip 5pt

  Now one has the following  commutative diagram of algebraic groups over $k_0$:
\vskip 5pt
 $$
\begin{CD}
0 @>>>  Z(X) @>>> N(X) @>>> X \otimes W @>>> 0\\ 
& & @V\ell_{0,X}VV @VVV @V\ell_W VV \\
0 @>>> k_0 @>>> H(V_1 \otimes W) @>>> W @>>> 0.
\end{CD}
$$
\vspace{4mm}

Given a nontrivial character $\psi:k_0 \rightarrow \SS^1$, one may consider the
unique irreducible  unitarizable representation $\omega_{W, \psi}$ of $H(V_1 \otimes W)$ of GK-dimension $\frac{1}{2} \cdot \dim_{k_0} W$, on which the center of $H(V_1 \otimes W)$ acts by $\psi$. 
Pulling back by 
the above diagram, one obtains an irreducible representation 
$\omega_{\psi}$ of $N(X)$ with central character $\psi \circ \ell_{Z(X)}$.
Up to conjugation by $M(X)$, the representation $\omega_{\psi}$ depends only on $\psi$ up to multiplication by $(k^{\times})^{1+\sigma}$.
The representation $\omega_{\psi}$ can be extended trivially to $U_X$.
Moreover, the group $G(W)$ acts as outer automorphisms of $H(V_1 \otimes W)$, so the theory of Weil representations furnishes us with a projective representation of $G(W)$ on $\omega_{\psi}$.
Thus, one has a projective representation $\omega_{\psi}$ of $H$.

\vskip 10pt

 As in the orthogonal and hermitian cases, we can make the above discussion completely explicit by making specific choices of $\ell_X$, $U_X$ and $\lambda_X$. 
 Assuming that $\dim X = n > 0$, 
choose a basis $\{ v_i\} $ for $X$ and let $\{ v'_i\} $ be the dual basis of $X^{\vee}$.  Let $P \subset G(V)$ be the 
subgroup stabilizing the flag 
$$0 \subset  \langle v_1\rangle  \subset  \cdots  \subset \langle v_1, \cdots, v_n\rangle  
\subset  X,$$ 
and let 
\[  L = G(W) \times (k^\times)^n \]
be the Levi subgroup of $P$  stabilizing the lines $\langle v_i\rangle $ as well as the subspace $W$.  
\vskip 5pt

Let $N$ be the unipotent radical of $P$ and define a homomorphism to 
a vector group
$$f : N \to k^{n-1} \oplus W$$
given by:
$$\begin{array}{rcl}
f(u) & = & (x_1, \cdots, x_{n-1}, y) \\
x_i & = & \langle uv_{i+1}, v'_i\rangle  \\
y & = & \sum_j \langle uw_j, v'_n \rangle . w'_j.
\end{array}$$
\vskip 5pt

\noindent  Here $\{ w_1, \cdots, w_n\} $ is a basis for $W$ over $k$ and $\langle w_i, 
w'_j \rangle  = \delta_{ij}$.  Thus $y$ is the unique vector in $W$ with 
$$\langle w, y \rangle  = \langle uw, v'_n \rangle $$ 
for all $w$ in $W$. 
\vskip 5pt

The torus $T = (k^\times)^n$ acts on $f$ by 
$$f(t u t^{-1}) = (t^{\sigma}_1/t_2 \cdot x_1, \cdots, t^{\sigma}_{n-1}/t_n \cdot  
x_n, t^{\sigma}_n \cdot y)$$ 
and an element $g \in G(W)$ acts by 
$$f(g u g^{-1}) = (x_1, \cdots, x_n, g(y)).$$
\vskip 10pt

Now  the maps $(x_1, \cdots, x_{n-1})$ give a functional 
$$\begin{array}{rcl}
\ell & : & N \to k_0 \\
&& n \mapsto {\rm Tr} (\sum x_i) 
\end{array}
$$
which is fixed by $G(W)$, and is as regular as possible subject to this 
condition. 
Choose a nontrivial additive character $\psi$ of $k_0$.  Then the 
character 
$$\begin{array}{rcl}
\lambda & : & N \to  \SS^1 \\
&& u \mapsto \psi (\ell(u)) = \psi({\rm Tr}(\sum x_i))
\end{array}
$$
is regular, and up to conjugacy by the torus, independent of the choice 
of $\psi$. Since it is fixed by $G(W)$, we may extend it trivially to $G(W)$ and obtain a character $\lambda$ of $H$.

\vskip 10pt

On the other hand, one may define a homomorphism of $N$ to a Heisenberg group as follows.
Let $N_0 \lhd  N$ be the kernel of the map 
$$\begin{array}{cl}
N& \to W \\
u & \mapsto y
\end{array}$$
and define a homomorphism 
$$\begin{array}{rcl}
f_0 & : & N_0 \to k \\
&& u_0 \mapsto z = \langle u_0 v'_n, v'_n\rangle. 
\end{array}
$$
Note that the element $z$ lies in the subfield $k_0$ of $k$, since 
\[   \text{$u_0 v'_n$ is isotropic} \Longrightarrow  z-z^{\sigma} = 0. \]
Hence, we have
\[  f_0 : N_0 \longrightarrow  k_0. \]
The torus act by 
$$f_0 (tut^{-1}) = t_n^{1+\sigma} \cdot z$$ 
and $G(W)$ acts trivially. 
 The above two maps combine to give a 
homomorphism from $N$ to the Heisenberg group $H(W/k_0)$: 
$$
\begin{CD}
0 @>>> N_0 @>>> N @>>> W @>>>0 \\
  @.  @Vf_0VV   @VVV   @VVidV    @. \\
 0 @>>> k_0 @>>>H(V_1 \otimes W) @>>>W @>>> 0 
 \end{CD}
 $$
which is equivariant for the action of $G(W)$ on $N$ and $G(W) \subset 
\Sp(V_1 \otimes W)$ on $H(V_1 \otimes W)$.
The non-trivial additive character $\psi$ then gives rise to the projective representation $\omega_{\psi}$ of $H$ as above.  
%\xymatrix{
%0 \ar[r]&N_0\ar[d]_{z}\ar[r] & N\ar[d]^{F} \ar[r]^{w}& 
%W \ar @{=}[d] \ar[r]&   0\\ 
%0  \ar[r]&k\ar[r]&J\ar[r]&W \ar[r] & 0} 
%$$
\vskip 10pt

 \vskip 10pt

It is now more convenient to consider the symplectic and skew-hermitian cases separately. 
\vskip 15pt
\begin{enumerate}
\item[(i)]  (symplectic case) When $k = k_0$, we have $G(W) = \Sp(W)$.  
In this case, it is known that the projective representation 
$\omega_{\psi}$ of $G(W)$ lifts to a linear representation of the double 
cover $\widetilde{G}(W) = \ws(W)$, the metaplectic group. 
Recalling that 
\[  H = N \rtimes G(W), \]
we thus obtain a unitary representation 
$$\nu_{\psi} = \omega_{\psi} \otimes \lambda$$
of 
\[  \widetilde{H} = N \rtimes \widetilde{G(W)} \] 
in the case when $\dim W^{\perp} >  0$. 
\vskip 5pt

When $W^{\perp} = 0$, so that $W=V$, we have 
\[  N = \{1\} \quad \text{and} \quad H= G(W) = G(V). \] 
In this case, we simply set
\[  \nu_{\psi} =  \omega_{\psi}, \]
which is a representation of $\widetilde{H}$. 
\vskip 5pt

In each case, the representation $\nu_{\psi}$
has GK-dimension $1/2 \cdot \dim(W/k_0)$. Up to conjugation by the 
normalizer of $H$ in $G$,  $\nu_{\psi}$ depends  only on $\psi$ up to the 
action of $(k^\times)^2$.

\vskip 10pt

A particular case of this is worth noting. When $W = 0$, so that $G(W)$ is trivial, the group $H$ is simply the unipotent radical $N$ of the Borel subgroup $P$ and $\nu_{\psi}$ is simply a generic character 
of $N$. This gives a bijection
\[ \xymatrix{ \left \{ \text{$T$-orbits of generic characters of $N$} \right \} \ar@{<->}[d] \\ 
 \left \{ \text{$k^{\times 2}$-orbits of 
nontrivial characters $\psi$ of $k$} \right \}} \] 
in the symplectic case.
\vskip 15pt

\item[(ii)] (skew-hermitian case) When $k \ne k_0$, $G(W) = \U(W)$. 
In this case, the projective 
representation $w_{\psi}$ of $G(W) = \U(W)$ lifts to a linear 
representation of $G(W)$, but when $\dim W > 0$, the lifting is not 
unique:  it requires the choice of a character 
\[  \mu : k^\times \to \CC^\times \] 
whose restriction to $k_0^{\times}$ is the quadratic character $\omega_{k/k_0}$ associated to $k/k_0$ [HKS].  Equivalently, when $k$ is a field, it  requires the choice of a $1$-dimensional, conjugate-dual 
representation  of $WD_k$ with sign $c = -1$. 
Given such a $\mu$, we let $\omega_{\psi, \mu}$ be the corresponding representation of  $G(W)$ and set
$$\nu_{\psi,\mu} = \omega_{\psi, \mu} \otimes \lambda.$$
Hence, we have defined an irreducible unitary representation $\nu_{\psi,\mu}$ of $H = N. \U(W)$ 
when $\dim W^{\perp} > 0$.
\vskip 5pt

When $W^{\perp} = 0$, so that $W=V$, we have
\[  N = \{1 \} \quad \text{and} \quad H = G(W) = G(V). \]
In this case, we simply set
\[  \nu_{\psi,\mu} = \omega_{\psi, \mu}. \]
\vskip 10pt

In each case, the representation $\nu_{\psi,\mu}$  has
GK-dimension $1/2 \cdot \dim(W/k_0)$.  It  depends, up to conjugation by the normalizer of $H$ in $G = \U(V) \times \U(W)$,  on $\psi$ up to the action of $\NN k^\times$ (as well as the choice of $\mu$).
\vskip 10pt

A particular case of this is noteworthy. When $W = 0$ (and $V$ is even dimensional), so that $G(W)$ is trivial, the group $H$ is simply the unipotent radical $N$ of the Borel subgroup $P$ and there is no need to choose $\mu$. 
Hence, $\nu_{\psi,\mu} = \nu_{\psi}$ is simply a generic 
character of $N$. This gives a bijection

\[ \xymatrix{ \left \{ \text{$T$-orbits of generic characters of $N$} \right \} \ar@{<->}[d] \\ 
 \left \{ \text{$\NN k^{\times }$-orbits of 
nontrivial characters $\psi$ of $k_0$} \right \}} \] 

in the even skew-hermitian case.
 
\vskip 15pt

\noindent{\bf Remarks:} Note that when $W = 0$, there is no need to invoke the Weil representation at all. Hence, the above description of generic characters could be carried out for hermitian spaces (of even dimension $2n$) as well. One  would consider the situation
\[  W = 0 \subset V \quad \text{of hermitian spaces.} \]
and note that the homomorphisms 
\[  f: N \longrightarrow k^{n-1} \]
and 
\[ f_0 : N=N_0 \longrightarrow k \]
can still be defined by the same formulas. But now the image of $f_0$ lies in the subspace of trace zero elements of $k$ (as opposed to the subfield $k_0$ when $V$ is skew-hermitian). 
The torus actions on $f$ and $f_0$ are again given by the same formulas. Thus, giving a $T$-orbit of generic characters of $N$ in the even  hermitian case amounts to giving a nontrivial character of $k$ trivial on $k_0$, up to the action of $\NN k^{\times}$. 
\end{enumerate}
\vskip 15pt

This completes our definition of the  representation $\nu$ of $H$.  
\vskip 10pt

As we noted in the course of the discussion above, 
special cases of the pair $(H,\nu)$  give the determination of $T$-orbits of generic characters. 
Recall that if $G = G(V)$ is quasi-split, with Borel subgroup $B = T \cdot N$, then the set $D$ of $T(k_0)$-orbits of generic characters of $N$  is a principal homogeneous space for the abelian group 
$$E = T^{ad}(k)/{\rm Im~} T(k) = {\rm ker}(H^1(k, Z) \to H^1(k, T)).$$
When $G = \GL(V)$ or $\U(V)$ with $\dim V$ odd or $\SO(V)$ with $\dim V$ odd or $\dim V = 2$, the group $E$ is trivial. In the remaining cases, $E$ is a finite elementary abelian 2-group.
In Section \ref{S:vogan-classical}, we have described the $E$-torsor $D$ explicitly for the various classical groups $G(V)$, but did not say how this was done.  Our discussion of $(H, \nu)$ above  
has thus filled this gap, and we record the result in the following proposition for ease of reference.

\vskip 10pt
\begin{prop}  \label{P:generic}
\begin{enumerate}
\item If $V$ is symplectic, $E = k^\times / k^{\times 2}$,  and we have a bijection 
of $E$-spaces 
$$D \longleftrightarrow k^{\times 2}-{\rm orbits \ on \ nontrivial} \ 
\psi:k \to \CC^\times.$$ 

\item If $V$ is hermitian of even dimension, $E = k_0^\times /\NN k^\times$, and we 
have a bijection of $E$-spaces 
$$ D \longleftrightarrow \NN k^\times -{\rm orbits \ on \ nontrivial} \ \psi: 
k/k_0\to \CC$$ 
\item If $V$ is skew-hermitian of even dimension, $E = k_0^\times/\NN k^\times$,
 and we 
have a bijection of $E$-spaces 
$$D \longleftrightarrow \NN k^\times-{\rm orbits \ on \ nontrivial} \ \psi: k_0 
\to \CC^\times$$ 
\item If $V$ is orthogonal of even dimension and split by the quadratic 
algebra $K$, then $E = \NN K^\times /k^{\times 2}$, and we have a bijection of $E$-spaces
$$D \longleftrightarrow \SO(V)-{\rm orbits \ on \ non-isotropic \ lines} \ 
L\subset V, \ {\rm with} \ L^\perp \ {\rm split}.$$ 
\end{enumerate}
\end{prop}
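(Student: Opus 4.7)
The plan is to prove each case by specializing the $(H,\nu)$ construction of the previous section to the smallest possible $W$, so that $H = N$ is exactly the unipotent radical of a Borel subgroup of $G(V)$ and $\nu$ is a generic character of $N$. The $T(k_0)$-action is then explicit from the formulas for the maps $f$ and $f_0$ above, and one reads off the $E$-torsor structure by direct inspection.

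First I would handle cases (1), (2), (3) in a single stroke by taking $W = 0$, so that $V = W^\perp$ is split symplectic, hermitian, or skew-hermitian of even dimension $2n$. Fix a polarization $V = X + X^{\vee}$ with basis $\{v_i\}$ of $X$ and dual basis $\{v_i'\}$, let $P = TN$ be the Borel stabilizing the flag $\langle v_1\rangle \subset \cdots \subset X$, and consider the maps
\[
f(u) = (x_1,\dots,x_{n-1}), \qquad f_0(u_0) = z = \langle u_0 v_n', v_n'\rangle,
\]
defined in the preceding section. Together they identify $N^{\mathrm{ab}}$ with $k^{n-1} \oplus L$, where $L = k$ in case (1), $L = \{x \in k : x + x^{\sigma} = 0\}$ in case (2), and $L = k_0$ in case (3). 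A character of $N$ is generic exactly when each of the $n-1$ coordinates $x_i$ is paired with a nontrivial additive character of $k$ and the last coordinate with a nontrivial additive character of $L$. The torus $T = (k^{\times})^n$ acts through $(t_1^{\sigma}/t_2, \dots, t_{n-1}^{\sigma}/t_n, t_n^{1+\sigma})$, so the $n-1$ initial coordinates can be normalized by a unique choice of $t_1,\dots,t_{n-1}$ and the remaining ambiguity on the final character is precisely scaling by the image of $t \mapsto t^{1+\sigma}$. This image is $k^{\times 2}$ in case (1) and $\NN k^{\times}$ in cases (2) and (3), which exactly yields the claimed $E$-torsor bijection.

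Next I would treat case (4) using the Bessel construction with $W \subset V$ a non-isotropic line instead of the zero subspace. Then $\dim W^{\perp} = 2n-1$ is odd and the requirement that $W^{\perp}$ be split can be met because $V$ is split by $K$. For such a $W$, the group $H$ constructed in Section \ref{S:nu} is the unipotent radical $N$ of a Borel of $\SO(V)$ and $\nu = \nu_W$ is a generic character of $N$. Conversely, the data of a generic character of $N$ is equivalent to the data of a complete isotropic flag in $X$ together with a non-isotropic vector $e$ in the rank-two perp of the flag, which is exactly the data of a non-isotropic line $L \subset V$ with $L^{\perp}$ split. Two lines give $T$-conjugate characters if and only if they are $\SO(V)$-conjugate, so $L \mapsto \nu_L$ induces an injection from $\SO(V)$-orbits on lines into $D$. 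Checking that this map is $E$-equivariant (scaling $\psi$ by $c \in E$ on the character side matches scaling $\langle v,v\rangle$ by $c$ on the line side) shows simultaneously that it is surjective and hence bijective.

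The main obstacle is the last $E$-equivariance statement for case (4), which depends on classifying $\SO(V)$-orbits on non-isotropic lines $L$ with $L^{\perp}$ split. By Witt's theorem, the $\O(V)$-orbit of a non-isotropic vector $v$ is determined by the isomorphism class of $v^{\perp}$, and since $\disc(V) \in k^{\times}/\NN K^{\times}$ is fixed, the condition that $v^{\perp}$ be split forces $\langle v,v\rangle$ to lie in a single coset of $\NN K^{\times}$; conversely, every class in that coset is achieved. Passing from vectors to lines contributes the further quotient by $k^{\times 2}$, and passing from $\O(V)$ to $\SO(V)$ introduces no new identifications on lines (the $-1$ element of $\O(W) \subset \O(V)$ fixes $L$ pointwise as a line). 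This identifies the $\SO(V)$-orbits with $\NN K^{\times}/k^{\times 2} = E$, completing the proposition.
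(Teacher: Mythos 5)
Your proposal is correct and follows essentially the same route as the paper, which obtains the proposition precisely by specializing the $(H,\nu)$ construction of \S\ref{S:nu} to $W=0$ in the symplectic, hermitian and skew-hermitian cases and to a non-isotropic line $W$ in the even orthogonal case, reading off the torus action from the explicit formulas for $f$ and $f_0$. The details you add — the recursive normalization of the first $n-1$ simple-root coordinates leaving only the residual scaling by $t_n^{1+\sigma}$ (hence $k^{\times 2}$ or $\NN k^\times$), and the Witt-theorem classification of $\SO(V)$-orbits of non-isotropic lines with split orthogonal complement — are exactly the steps the paper leaves implicit.
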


\vskip 15pt

 \section{Bessel and Fourier-Jacobi models for $\GL(n)$}

The construction of the pair $(H,\nu)$ given in the previous section includes the case 
when $k = k_0 \times k_0$ is the split quadratic algebra. In this case, the groups $G(V)$
and $G(W)$ are general linear groups, and it is useful to give a direct construction of $(H ,\nu)$ 
in the context of general linear groups, rather than regarding them as unitary groups of hermitian or skew-hermitian spaces over $k$. We describe this direct construction in this section.
\vskip 10pt

We first give a  brief explanation of how one translates from the context of unitary groups to that of general linear groups. The hermitian or skew-hermitian space $V$ has the form
\[  V = V_0 \times V_0^{\vee} \]
for a vector space $V_0$ over $k_0$. Moreover, up to isomorphism, the hermitian form on $V$ can be taken to be
\[  \langle (x,x^{\vee}), (y, y^{\vee}) \rangle = (\langle x, y^{\vee} \rangle,  \langle y, x^{\vee}\rangle) \in k , \]
whereas the skew-hermitian form on $V$ can be taken to be 
\[  \langle (x,x^{\vee}), (y, y^{\vee}) \rangle =( \langle x, y^{\vee} \rangle, - \langle y, x^{\vee}\rangle) \in k. \]
Then, by restriction to  to $V_0$,  one has an isomorphism
\[  G(V) \cong \GL(V_0). \] 
\vskip 10pt

If $W \subset V$ is a nondegenerate subspace, then $W=W_0 \times W_0^{\vee}$ gives rise to $W_0 \subset V_0$. 
If, further, $W^{\perp}$ is split, and $X \subset W^{\perp}$ is a maximal isotropic subspace, then $X$ has the form
\[  X = X_0 \times Y_0^{\vee} \subset V_0 \times V_0^{\vee} \]
with the natural pairing of $X_0$ and $Y_0^{\vee}$ equal to zero, so that $X_0$ is contained in the kernel of $Y_0^{\vee}$. Writing the kernel of $Y_0^{\vee}$ as $X_0 + W_0$, we see that the isotropic space $X$ determines a decomposition
\[ V_0 =  X_0  + W_0 + Y_0, \] 
with a natural perfect pairing between $Y_0$ and $Y_0^{\vee}$. Then the parabolic subgroup $P(X)$ stabilizing $X$ in $G(V)$ is isomorphic to the parabolic subgroup of $\GL(V_0)$ stabilizing the flag
\[  X_0 \subset  X_0 + W_0 \subset V_0. \]
It is now easy to translate the construction of $(H,\nu)$ given in the previous section to the setting of $W_0 \subset V_0$, and we simply describe the answer below.
\vskip 10pt

\noindent{\bf \underline{Bessel Models for $\GL(n)$}}
\vskip 10pt

In this case, we start with a vector space $V_0$ over $k_0$ with a decomposition
\[V_0 = X_0 + W_0 + E_0 + X_0^\vee ,\]
where $E_0 = \langle e \rangle$ is a line.
Consider the (non-maximal) parabolic subgroup $Q$ stabilizing the flag
\[  X_0 \subset X_0 +W_0+E_0 \subset V_0. \]
It has Levi subgroup
\[  L = \GL(X_0) \times \GL(W_0 + E_0) \times \GL(X_0^{\vee}) \]
and  unipotent radical $U$ sitting in the exact sequence:
\[ \begin{CD}
0 @>>> \Hom(X_0^\vee,X_0) @>>> U @>>> \Hom(X_0^\vee,W_0+E_0) 
+ \Hom(W_0+E_0 ,X_0) @>>> 0.\end{CD} \]
We may write the above exact sequence as:
\[ \begin{CD}
0 @>>> X_0 \otimes X_0 @>>> U @>>> X_0 \otimes 
(W_0+ E_0 ) 
+ (W_0^\vee + E_0^{\vee} )\otimes X_0 @>>> 0,\end{CD}  \]
where $E_0^{\vee} = \langle f \rangle $ is the dual of $E_0$.
\vskip 10pt

Let
\[  \ell_{X_0} : X_0 \longrightarrow k_0 \]
be any non-trivial homomorphism, and let  $U_{X_0} \times U_{X_0^{\vee}}$ be a maximal
unipotent subgroup of $\GL(X_0)\times \GL(X_0^{\vee})$ which fixes $\ell_{X_0}$.  
 On the other hand, let 
 \[  \ell_{W_0} : (W_0 +E_0) +(W_0^{\vee} + E_0^{\vee}) \longrightarrow k_0 \]
 be a linear form which is trivial on $W_0 + W_0^{\vee}$ but non-trivial on $E_0$ and $E_0^{\vee}$. 
 Together, the homomorphisms $\ell_{X_0}$ and $\ell_{W_0}$ give a map
 \[ \ell =   \ell_{X_0} \otimes \ell_{W_0}:  U  \longrightarrow X_0 \otimes (W_0+ E_0 ) 
+ (W_0^\vee + E_0^{\vee} )\otimes X_0 \longrightarrow k_0. \]
Since $\ell$ is fixed by $U_{X_0} \times U_{X_0^{\vee}} \times \GL(W_0)$, we may extend $\ell$ trivially to this group. Thus, we may regard $\ell$ as a map on
 \[  H = U \rtimes ((U_{X_0} \times U_{X_0^{\vee}} ) \times \GL(W_0)). \]
Choose any non-trivial additive character $\psi$ of $k_0$ and any generic character 
\[  \lambda: U_{X_0} \times U_{X_0^{\vee}} \longrightarrow \SS^1, \]
which we may regard as a character of $H$. 
Then the representation $\nu$ of $H$ is defined by
\[  \nu = (\psi \circ \ell) \otimes \lambda. \]
The pair $(H ,\nu)$ depends only on the spaces $W_0 \subset V_0$, up to conjugacy by $\GL(V_0)$. This completes the construction of $(H , \nu)$ in the case when ${\rm codim} W_0$ is odd.

\vskip 20pt

\noindent{\bf \underline{Fourier-Jacobi models for $\GL(n)$}}
\vskip 5pt

 In this case, we consider  a vector space $V_0$ over $k_0$, together with 
a decomposition 
\[ V_0 = X_0 + W_0 + X_0^\vee.  \]
As before, let $Q$ be the parabolic subgroup  stabilizing the flag 
\[  X_0 \subset X_0 + W_0 \subset V_0. \]
Thus $Q$ has Levi subgroup

%\[ P= \left ( \begin{array}{ccc} 
 %\Aut (X_0) & \Hom(W_0,X_0) & \Hom (X_0^\vee,X_0) \\
%0 & \Aut(W_0) & \Hom(X_0^\vee,W_0) \\
%0 & 0 & \Aut(X_0^\vee)
%\end{array} \right ),\]
%with unipotent radical $U$,
%\[ U = \left ( \begin{array}{ccc} 
 %I & \Hom(W_0,X_0) & \Hom (X_0^\vee,X_0) \\
%0 & I  & \Hom(X_0^\vee,W_0) \\
%0 & 0 & I
%\end{array} \right ).\]
 
\[  L = \GL(X_0) \times \GL(W_0) \times \GL(X_0^{\vee}), \]
and its unipotent radical $U$ sits in the exact sequence,
\[ 0 \rightarrow \Hom(X_0^\vee,X_0) \rightarrow U \rightarrow \Hom(X_0^\vee,W_0) 
+ \Hom(W_0,X_0) \rightarrow 0, \]
in which $\Hom(X_0^\vee,X_0)$ is central. The group $U$ is completely described
by the natural bilinear map 
\[\Hom(X_0^\vee,W_0) \times \Hom(W_0,X_0) \rightarrow \Hom(X_0^\vee,X_0). \]
Indeed, given a bilinear map 
\[  \langle-, -\rangle : B \times C \rightarrow A, \]
of vector groups,  there is a 
natural central extension of $B \times C$ by $A$ defined by a group structure
on $A \times B \times C$ given by 
\[  (a_1,b_1,c_1)(a_2,b_2,c_2)=(a_1+a_2+ \langle b_1,c_2 \rangle,
b_1+b_2,c_1+c_2). \]
\vskip 5pt

Given a linear map 
\[  \ell_{X_0}: X_0 \rightarrow k_0, \]
let 
\[  U_{X_0} \times U_{X_0^{\vee}} \subset \GL(X_0) \times\GL(X_0^{\vee}), \]
be a maximal unipotent subgroup fixing $\ell_{X_0}$.
 Let 
\[  \lambda: U_{X_0} \times U_{X_0^{\vee}}  \longrightarrow \SS^1 \]
be a generic character, which we may regard as a character of
\[  H = U \rtimes (U_{X_0} \times U_{X_0^{\vee}}  \times GL(W_0) ) \]
via projection onto $U_{X_0} \times U_{X_0^{\vee}} $. 
\vskip 10pt

On the other hand, the homomorphism $\ell_{X_0}$ allows one to define  
a homomorphism from $U$  to the Heisenberg group  $H(W_0+ W_0^\vee)$:
 
$$
\begin{CD}
0 @>>> X_0 \otimes X_0  @>>>  U  @>>> W_0^\vee \otimes X_0 + X_0 \otimes W_0 @>>>0 \\
  @.  @V   VV   @VVV   @VV   V    @. \\
 0 @>>> k_0 @>>>H(W_0^\vee+ W_0) @>>>W_0^\vee + W_0 @>>> 0,
 \end{CD}
 $$
which is clearly equivariant under the action of $U_{X_0}\times U_{X_0^{\vee}} \times  \GL(W_0)$.
  \vskip 10pt

Thus, given any non-trivial additive character $\psi$ of $k_0$, we may consider the
unique irreducible representation of $H(W_0^{\vee} +W_0)$ with central character $\psi$, and regard it as a representation of $U$ using the above diagram. This representation can be extended trivially to $U_{X_0} \times U_{X_0^{\vee}}$, and is realized naturally on the space $\mathcal{S}(W_0)$ of Sxhwarz-Bruhat functions on $W_0$.
For any character $\mu: k_0^{\times} \longrightarrow \CC^{\times}$, one then obtains a Weil representation $\omega_{\psi, \mu}$ of
\[  H =  (\GL(W_0) \times U_{X_0} \times U_{X_0^{\vee}})\ltimes U. \] 
on ${\mathcal S}(W_0)$, where the action of $\GL(W_0)$ is given by
\[  (g \cdot f)(w) = \mu(\det(g)) \cdot f(g^{-1} \cdot w). \]

\vskip 10pt

When $\dim X_0 > 0$, the representation $\nu_{\psi, \mu}$ of $H$ is then given by 
\[  \nu_{\psi,\mu} = \omega_{\psi,\mu} \otimes \lambda. \] 
When $\dim X_0 = 0$, we have $W_0 = V_0$ and we take the representation $\nu_{\psi,\mu}$ of $H = \GL(W_0)$ to be the representation $\omega_{\psi, \mu}$ of $\GL(W_0)$ on $\mathcal{S}(W_0)$ defined above.
In either case, the isomorphism class of $\omega_{\psi,\mu}$ is independent of $\psi$ and the pair $(H, \nu_{\psi,\mu})$ is independent of $\psi$,  up to conjugacy in $\GL(V_0)$. 
This completes the definition of $(H, \nu)$ when ${\rm codim} W_0$ is even.

 \vspace{4mm}

This concludes our direct construction of the pair $(H ,\mu)$ for general linear groups.
  \vskip 15pt

 \section{Restriction Problems and Multiplicity One Theorems}  \label{S:multiplicity-one}

We are now ready to formulate the local restriction problems
 studied in this paper. 

\vskip 10pt

Let $W \subset V$ be as in \S \ref{S:nu}, so that $G = G(V) \times G(W)$ contains the subgroup $H = N \rtimes G(W)$. We have defined a representation $\nu$ of $H$ (or its double cover), which may depend on some auxiliary data such as $\psi$ or $\mu$. Let $\pi = \pi_V \boxtimes \pi_W$ be an irreducible representation of $G$ (or an appropriate double cover). Then the restriction problem of interest is to determine 
\[  \dim_{\CC} \Hom_H(\pi \otimes \overline{\nu}, \CC). \] 
More precisely, we have:
\vskip 5pt

\begin{enumerate}
\item In the orthogonal or hermitian cases, the representation $\nu$ of $H$ depends only on $W\subset V$ and so we set
\[  d(\pi) =  \dim_{\CC} \Hom_H(\pi\otimes \overline{\nu}, \CC) = \dim_{\CC} \Hom_H(\pi,\nu). \] 

\vskip 5pt

In the literature, this restriction problem is usually referred to as a problem about the existence of Bessel models. Indeed, for an irreducible representation $\pi= \pi_V \boxtimes \pi_W$ of $G$, the space $\Hom_H(\pi,\nu)$ is usually called the space of $\pi_W^{\vee}$-Bessel models of $\pi_V$.

\vskip 5pt

\item In the symplectic case, the representation $\nu_{\psi}$ is a 
representation of the double cover $\widetilde{H} = N \rtimes 
\ws(W)$ and depends on a nontrivial additive character $\psi$ of 
$k = k_0$ up to the action of $k^{\times 2}$. In this case, for the above Hom space to be nonzero, the representation $\pi = \pi_V \boxtimes \pi_W$ must be a genuine representation when restricted to $\widetilde{H}$. Hence, we have to take  an irreducible representation 
\[  \widetilde{\pi} = \pi_V \boxtimes \widetilde{\pi}_W \quad \text{of $\Sp(V) \times \ws(W)$} \]
 or 
\[ \widetilde{\pi} = \widetilde{\pi}_V \boxtimes \pi_W \quad \text{of $\ws(V) \times \Sp(W)$}. \]
In this case, we set
\[  d(\tilde{\pi}, \psi) =   \dim_{\CC} 
\Hom_{\widetilde{H}}(\widetilde{\pi} \otimes \overline{\nu}_{\psi}, \CC). \] 
In the literature, this restriction problem is usually referred to as one about  Fourier-Jacobi models.
Indeed, the space 
$\Hom_{\widetilde{H}}(\widetilde{\pi} \otimes \overline{\nu}_{\psi}, \CC)$ is usually called the $(\pi_W^{\vee}, \psi)$-Fourier-Jacobi models of $\pi_V$.
 \vskip 5pt
 
 \item In the skew-hermitian case, the representation $\nu_{\psi,\mu}$ of $H$ depends on a nontrivial additive character $\psi$ of $k_0$, up to the action of $\NN k^{\times}$, and also on the choice of a character $\mu$ of $k^{\times}$ whose restriction to $k_0^{\times}$ is the quadratic character associated to $k/k_0$. In this case, we set
 \[  d(\pi, \mu, \psi) =   \dim_{\CC} \Hom_H(\pi \otimes 
\overline{\nu}_{\psi,\mu}, \CC). \] 
 In the literature, this restriction problem is usually referred to as a problem about the existence of Fourier-Jacobi models in the context of unitary groups.
Indeed, the space 
$\Hom_{H}(\widetilde{\pi} \otimes \overline{\nu}_{\psi,\mu}, \CC)$ is usually called the $(\pi_W^{\vee}, \psi, \mu)$-Fourier-Jacobi models of $\pi_V$.
 \end{enumerate}
 \vskip 10pt

 We remark that in the orthogonal and hermitian cases, since $\nu$ is 1-dimensional and unitary, one has:
 \[   \Hom_H(\pi\otimes \overline{\nu}, \CC) \cong \Hom_H(\pi \otimes \nu^{\vee},\CC) \cong  \Hom_H(\pi,\nu). \]
In the symplectic and skew-hermitian cases over non-archimedean fields, the same assertion holds, even though $\nu$ is infinite-dimensional. However, over archimedean fields, it is only clear to us that:
\[ \Hom_H(\pi, \nu) \subseteq \Hom_H( \pi \otimes \nu^{\vee}, \CC) \cong  \Hom_H(\pi\otimes \overline{\nu}, \CC). \]
The difficulty arises in the subtlety of duality in the theory of topological vector spaces. In any case,
 we work with $\Hom_H(\pi \otimes \overline{\nu}, \CC)$ since this is the space which naturally arises in the global setting. We should also mention that, over archimedean fields, the tensor product $\pi \otimes \overline{\nu}$ refers to the natural completed tensor product of the two spaces (which are nuclear Fr\'{e}chet spaces) and $\Hom(-, \CC)$ refers to continuous linear functionals. For a discussion of these archimedean issues, see [AG, Appendix A].
 \vskip 10pt
 
 A basic conjecture in the subject is the assertion that 
 \[  d(\pi) \leq 1 \]
in the various cases.  Recently, there has been much progress in the most basic 
cases where $\dim W^{\perp} = 0$ or $1$. We describe these in the following theorem.
 \vskip 10pt
 
 \begin{thm} \label{T:mult1}
 Assume that $\dim W^{\perp} = 0$ or $1$. 
 \vskip 5pt
 
 \noindent  (i) In the orthogonal case,  with $G = \O(V) \times \O(W)$, we have
 \[  d(\pi) \leq 1. \]
 \vskip 5pt
 
\noindent  (ii) In the hermitian case (including the case when $k = k_0 \times k_0$), we have
\[  d(\pi) \leq 1. \]

\vskip 5pt

\noindent (iii) In the symplectic case, suppose that $k$ is non-archimedean. Then we have
\[  d(\pi,\psi) \leq 1. \]
\vskip 5pt

\noindent (iv) In the skew-hermitian case (including the case $k = k_0 \times k_0$), suppose that $k$ is non-archimedean and that $\pi = \pi_V \boxtimes \pi_W$ with at least one of $\pi_V$ or $\pi_W$ supercuspidal. Then we have
\[  d(\pi, \mu,\psi) \leq 1. \]
 \end{thm}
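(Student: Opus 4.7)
The plan is to establish $d(\pi) \leq 1$ in each case via the Gelfand–Kazhdan criterion: exhibit an anti-involution $\theta$ of $G$ that preserves the pair $(H,\nu)$ (more precisely, $\theta(H) = H$ and $\nu \circ \theta \cong \nu^\vee$), and then show that every distribution $T$ on $G$ transforming by $\nu \otimes \nu^\vee$ under $H \times H$ is automatically $\theta$-invariant. By the standard argument of Bernstein–Gelfand–Kazhdan, this implies that both $\pi$ and its dual $\pi^\vee$ satisfy $d(\pi) = d(\pi^\vee)$ while simultaneously $d(\pi) \cdot d(\pi^\vee) \leq 1$, forcing the desired inequality. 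The candidate for $\theta$ is, in each case, the involution $g \mapsto g^*$ where $g^*$ is the transpose-inverse with respect to a suitable basis adapted to $W \subset V$; one checks that $\theta$ stabilizes $H$ and acts trivially (up to $H$-conjugacy) on the character data defining $\nu$.

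For parts (i) and (ii) (the Bessel cases with $\dim W^\perp = 0$ or $1$), $\nu$ is either trivial or a generic character on a unipotent subgroup, and the $\theta$-invariance of $(H\times H,\nu\otimes\nu^\vee)$-equivariant distributions on $G$ is exactly what is proved in [AGRS] in the non-archimedean case (with [SZ] and [AG] handling the archimedean case for the orthogonal and hermitian situations). The essential mechanism there is a localization of distributions along $H$-double cosets in $G$, followed by a descent to a geometric statement about $\GL_n$-invariant distributions on pairs of symmetric or hermitian matrices; the latter is proved using Harish-Chandra's method of descent to semisimple elements and a delicate analysis of the nilpotent cone. One should emphasize that the case $\dim W^\perp = 0$ (where $W = V$ and $H = G(W)$ is diagonally embedded) follows from the codimension one case by Frobenius reciprocity applied to the parabolic induction from $G(W) \times G(W)$ to $G(V) \times G(W)$.

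For parts (iii) and (iv) (the Fourier–Jacobi cases with $\dim W^\perp = 0$), the representation $\nu_\psi$ or $\nu_{\psi,\mu}$ is now a Weil representation, which is infinite-dimensional, and the reduction to the Bessel case is the content of [S]. The strategy is to realize $\omega_{\psi}$ (resp. $\omega_{\psi,\mu}$) on a Schr\"odinger model $\mathcal{S}(W_0)$ attached to a Lagrangian $W_0 \subset W \otimes V_1$, which translates the Hom-space into a space of equivariant distributions on a larger homogeneous space $G \times W_0$. One then applies the same anti-involution strategy, now with $\theta$ extended to act on the Heisenberg group underlying the Weil representation. The key new input is an invariance statement for distributions on a symplectic or skew-hermitian vector space, which is established in [S] and which requires the supercuspidality hypothesis in (iv) to avoid contributions from the continuous spectrum in certain boundary orbits.

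The main obstacle, and the reason for the restrictions in parts (iii) and (iv), is the analytic delicacy of infinite-dimensional $\nu$ in the archimedean setting: the tensor product $\pi \otimes \overline{\nu}$ must be handled via nuclear Fr\'echet topologies and the appropriate category of moderate-growth representations, and the localization techniques from [AGRS] do not transfer verbatim. The supercuspidal assumption in (iv) is a technical device that ensures all matrix coefficients of $\pi$ are compactly supported modulo center, which simplifies the orbital analysis for the skew-hermitian Weil representation where both the character $\mu$ and the additive character $\psi$ interact non-trivially with the $\theta$-involution.
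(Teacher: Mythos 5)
For parts (i)--(iii) your proposal coincides with the paper: these cases are quoted from [AGRS], [SZ], [AG] and [S], and your sketch of the Gelfand--Kazhdan/localization mechanism underlying those references is a fair description of what those papers do. (One small slip: in the orthogonal and hermitian cases the parity condition $\epsilon\cdot(-1)^{\dim W^{\perp}}=-1$ forces $\dim W^{\perp}=1$, so the ``$\dim W^{\perp}=0$, $H=G(W)$ diagonal'' situation you invoke for (i)--(ii), and your claimed reduction of it by ``Frobenius reciprocity from $G(W)\times G(W)$'', does not arise there; conversely in the symplectic and skew-hermitian cases only $\dim W^{\perp}=0$ occurs.)

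For part (iv), however, there is a genuine gap. You assert that the key equivariant-distribution statement for the skew-hermitian Weil representation ``is established in [S]'', but [S] treats only the symplectic case (iii); no such theorem exists in the cited literature for unitary groups, and the paper itself says only that the methods of [S] ``could likely be adapted'' --- it does not rely on such an adaptation. Instead, the paper proves (iv) by a completely different route: it deduces (iv) from the hermitian Bessel case (ii) via the theta correspondence. One chooses (by [HKS]) a hermitian space $V$ and a representation $\tau$ of $\U(V)$ whose theta lift $\Theta_{\psi,\mu}(\tau)$ has $\pi_1$ as a quotient, and then the seesaw identity for the pair $\bigl(\U(W)\times \U(W),\ \U(V\oplus\langle -1\rangle)\bigr)$ versus $\bigl(\U(W),\ \U(V)\times\U(\langle -1\rangle)\bigr)$ bounds $d(\pi_1\boxtimes\pi_2,\mu,\psi)$ by $\dim\Hom_{\U(V)}(\theta_{\psi,\mu}(\pi_2^{\vee}),\tau)\le 1$, the last inequality being exactly case (ii). In this argument the supercuspidality of $\pi_2$ is used for one precise purpose: to guarantee $\Theta_{\psi,\mu}(\pi_2^{\vee})=\theta_{\psi,\mu}(\pi_2^{\vee})$ is irreducible, so that the Hom space on the $\U(V)$ side is controlled by multiplicity one for Bessel models. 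Your explanation of the supercuspidal hypothesis (compactly supported matrix coefficients simplifying an orbital analysis, avoiding ``continuous spectrum in boundary orbits'') does not correspond to any step of an actual proof and cannot be substantiated as stated; as written, your treatment of (iv) rests on an unproved input and so is incomplete. If you want to keep a distribution-theoretic route for (iv) you would have to actually carry out the adaptation of [S] to unitary groups (in which case the supercuspidality hypothesis should be unnecessary); otherwise the seesaw deduction from (ii) is the argument to give.
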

 
 \vskip 5pt
 
 \begin{proof}
 The cases (i) and (ii) are due to Aizenbud-Gourevitch-Rallis-Schiffmann [AGRS] in the p-adic case  and to Sun-Zhu [SZ]  and Aizenbud-Gourevitch [AG] in the archimedean case. The case (iii) is a result of Sun [S] in the non-archimedean case (the archimedean case seems to be still open). The methods of [S] could likely be adapted to give (iv) completely in the non-archimedean case.
 However,  we shall show how (iv) can also be deduced from (ii) using theta correspondence.
 \vskip 5pt

Thus, suppose that $W$ is a skew-hermitian space and $\pi_1$ and $\pi_2$ are irreducible representations of $\U(W)$ with $\pi_2$ supercuspidal.
For a hermitian space $V$ of the same dimension as $W$, one may consider the theta lifting between $\U(W)$ and $\U(V)$, with respect 
to the fixed additive character $\psi$ and the fixed character $\mu$.
By [HKS], there is a (unique) $V$ such that 
the theta lift of $\pi_1$ to $\U(V)$ is nonzero; 
in other words, there is an irreducible representation $\tau$ of $\U(V)$ such that $\Theta_{\psi,\mu}(\tau)$ has $\pi_1$ as a quotient. Now consider the seesaw diagram

\[\xymatrix{
\U(W) \times \U(W) \ar@{-}[dd]  &  \U(V\oplus \langle -1 \rangle) \ar@{-}[dd]\\
 & \\
\U(W)  & \U(V) \times \U(\langle -1 \rangle).
}\]
%\begin{picture}(250,80)
%\put(69,50){$\U(W) \times \U(W))$}
%\put(150,45){\line(1,-1){76}}
%\put(222,45){\line(-1,-1){72}}
%\put(234,50){$\U(V \oplus \langle -1\rangle)$}
%\put(103,38){\line(0,-1){54}}
%\put(86,-36){$\U(W)$}
%\put(269,38){\line(0,-1){54}}
%\put(230,-36){$\U(V) \times \U(\langle -1 \rangle)$}
%\end{picture}
The resulting seesaw identity gives
\begin{align}
 d(\pi_1 \boxtimes \pi_2, \mu, \psi) 
 &=  \dim \Hom_{\U(W)}(\pi_1 \otimes \overline{\omega_{\psi,\mu}},\pi_2^{\vee})   \notag \\
 &\leq \dim \Hom_{\U(W)}(\Theta_{\psi,\mu}(\tau) \otimes \overline{\omega_{\psi,\mu}},\pi_2^{\vee}) \notag \\
 &= \dim \Hom_{\U(V)}(\Theta_{\psi,\mu}(\pi_2^{\vee}),   \tau) \notag \\
 &=\dim \Hom_{\U(V)}(\theta_{\psi,\mu}(\pi_2^{\vee}),   \tau) \notag \\
 &\leq 1,\notag
 \end{align}
 as desired. Here, we have used the assumption that $\pi_2$ is supercuspidal to deduce that $\Theta_{\psi,\mu}(\pi_2^{\vee}) = \theta_{\psi,\mu}(\pi_2^{\vee})$ is irreducible.
 
 \vskip 10pt
 The above seesaw argument works in the case when $k = k_0 \times k_0$ as well, thus completing the proof of (iv). 
 Indeed, this argument shows more generally that
 the multiplicity one results in (ii) and (iv) are equivalent, modulo the issue of whether $\Theta(\tau) = \theta(\tau)$. Similarly, 
the analogous argument for symplectic-orthogonal dual pairs shows that the results of (i) and (iii) are equivalent, with the same caveat on $\Theta$ versus $\theta$.
\end{proof}
 
 \vskip 10pt
 
 \noindent{\bf Remarks:} In the orthogonal case, it is still not known whether the multiplicity one result of Theorem \ref{T:mult1} holds for special orthogonal groups, i.e. for $G = \SO(V) \times \SO(W)$. It is expected but  does not seem to follow from the result for orthogonal groups.
 
\vskip 15pt

 \section{Uniqueness of Bessel Models} \label{S:bessel}
 
 In this section, we show that if $k$ is non-archimedean, the multiplicity one theorem for the general Bessel models can be deduced from Theorem \ref{T:mult1}(i) and (ii) in the orthogonal and hermitian cases. We remind the reader that the case $k = k_0 \times k_0$ is included in our discussion. In particular, the results we state below are valid in this case as well, though we frequently write our proofs only for $k$ a field, and leave the adaptation to the case $k = k_0 \times k_0$ to the reader.  Moreover, in the orthogonal case, we shall work with orthogonal groups, rather than special orthogonal groups, since Theorem \ref{T:mult1} only applies to the former. 
 \vskip 10pt

Thus, we consider the case when $W \subset V$ are orthogonal or hermitian spaces of odd codimension.  Then we have
 \[  W^{\perp} = X +X^{\vee}+ E \]
 where $E = k \cdot e$ is a non-isotropic line and 
 \[  X = \langle v_1, v_2, \cdots ,v_n \rangle \]
is an isotropic subspace with $\dim X= n   > 0$ and dual basis $\{ v_i'\}$ of 
$X^{\vee}$.
With $G = G(V) \times G(W)$ and $H = N \cdot G(W)$, we would like to show that
\[  \dim \Hom_H(\pi_V \otimes \pi_W, \nu) \leq 1 \]
for any irreducible representation $\pi_V \boxtimes \pi_W$ of $G$. 
\vskip 10pt

Let 
 \[ E^- = k \cdot f  \]
denote the rank 1 space equipped with a form which is the negative 
of that on $E$, so that $E+ E^-$ is a split rank 2 space. 
The two isotropic lines in $E + E^-$ are spanned by
 \[    v_{n+1} =  e + f \quad \text{and} \quad v_{n+1}' = \frac{1}{2 \cdot \langle e,e \rangle} \cdot (e -f). \] 
 Now consider the space
 \[  W' = V  \oplus   E^- \]
 which contains $V$ with codimension $1$ and isotropic subspaces 
 \[  Y = X + k \cdot  v_{n+1} = \langle v_1, \cdots,v_{n+1} \rangle \]
 and
 \[  Y^{\vee} = X^{\vee} + k \cdot v'_{n+1} = \langle v_1', \cdots,v_{n+1}' \rangle. \]
 Hence we have
 \[  W' =  Y + Y^{\vee} + W. \]
Let $P = P(Y)$ be the parabolic subgroup of $G(W')$ stabilizing $Y$ and let $M$ be its  
Levi subgroup stabilizing $Y$ and $Y^{\vee}$, so that
\[   M \cong \GL(Y) \times G(W). \]
Let $\tau$ be a supercuspidal representation of $\GL(Y)$ and $\pi_W$ an irreducible representation of $G(W)$ and let
\[  I(\tau, \pi_W) = {\rm Ind}_P^{G(W')} (\tau \boxtimes \pi_W) \]
be the (unnormalized) induced representation of $G(W')$ from the representation $\tau \boxtimes \pi_W$ of $P$.  

\vskip 10pt

Our goal is is to prove the following theorem:
\vskip 5pt

\begin{thm} \label{T:SOVPS}
Assume that $k$ is non-archimedean. 
With the notations as above, we have
\[  \Hom_{G(V)}(I(\tau, \pi_W) \otimes \pi_V, \CC) = \Hom_H(\pi_V \otimes \pi_W, \nu) \] 
 as long as $\pi_V^{\vee}$ does not belong to the Bernstein component of $G(V)$ associated to 
$(\GL(Y') \times G(V'), \tau \otimes \mu)$, where $Y' \subset V$ is isotropic of dimension equal to 
$\dim Y$ with $V=  Y' + {Y'}^{\vee} + V'$ and  $\mu$ is any irreducible representation of 
$G(V')$.
 \end{thm}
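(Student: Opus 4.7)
The plan is to analyze
\[ \Hom_{G(V)}\bigl(I(\tau,\pi_W),\; \pi_V^{\vee}\bigr) \]
(equivalent to the left-hand side of the theorem) by the Bernstein--Zelevinsky geometric lemma applied to the restriction from $G(W')$ to the subgroup $G(V)$. Since $V$ has codimension one in $W'$ and $G(V)$ fixes the complementary line $E^{-} = kf$ pointwise, the double coset space $P\backslash G(W')/G(V)$ parametrizes $G(V)$--orbits of isotropic $(n+1)$--dimensional subspaces $Y'\subset W'$. An easy argument (no isotropic $Y'$ can contain $f$, since $\langle f,f\rangle = -\langle e,e\rangle \ne 0$, so the projection $W'\to V$ is injective on every isotropic $Y'$) combined with Witt's theorem shows there are exactly two such orbits, distinguished by $\dim(Y' \cap V) \in \{n, n+1\}$. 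This yields a two-step Mackey filtration of $I(\tau,\pi_W)|_{G(V)}$, and the theorem reduces to analyzing the contribution of each subquotient to $\Hom_{G(V)}(-,\pi_V^{\vee})$.

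The closed orbit consists of $Y'\subset V$ isotropic of dimension $n+1$, yielding a decomposition $V = Y' + (Y')^{\vee} + V'$. The associated Mackey subquotient is of the form $\Ind_{P'}^{G(V)}(\tau \boxtimes \sigma)$, where $P'$ is the parabolic of $G(V)$ with Levi $\GL(Y') \times G(V')$ and $\sigma$ is a Jacquet module of $\pi_W$ considered as a representation of $G(V')$. By Frobenius reciprocity, its contribution to $\Hom_{G(V)}(\cdot, \pi_V^{\vee})$ can be nonzero only if $\pi_V^{\vee}$ has inertial support of the form $(\GL(Y') \times G(V'),\, \tau \boxtimes \mu)$ for some irreducible $\mu$ of $G(V')$. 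The hypothesis on $\pi_V$ excludes this, and the same Bernstein--component disjointness kills the relevant $\mathrm{Ext}^{1}$ term arising from the long exact sequence, so this orbit contributes nothing.

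The open orbit has canonical representative $Y = X + k(e+f)$, and a direct computation yields
\[ \mathrm{Stab}_{G(V)}(Y) \;=\; N(X) \rtimes \bigl(\GL(X) \times G(W)\bigr), \]
since $g\in G(V)$ preserving $Y$ must stabilize $X = Y\cap V$ and satisfy $g(e) - e \in X$, which cuts the Levi factor $G(W+E)$ of $P(X)\subset G(V)$ down to $G(W)$ while imposing no constraint on $N(X)$ or on $\GL(X)$. The Mackey contribution is a normalized induction from this stabilizer of the restriction of $\tau\boxtimes \pi_W$ along the natural embedding $\mathrm{Stab}_{G(V)}(Y) \hookrightarrow P$, under which $\GL(X)\subset\GL(Y)$ is the stabilizer of the line $k(e+f)$ and $N(X)$ maps into $\GL(Y)$ via the unipotent action on $Y$. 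Pairing with $\pi_V^{\vee}$ and applying Frobenius reciprocity reduces to a twisted Jacquet module computation for $\tau$: because $\tau$ is supercuspidal, all untwisted proper Jacquet modules vanish and only the top (fully generic) Whittaker layer survives, producing a one-dimensional character on $U_X\ltimes N(X)$. Matching the Whittaker character of $\tau$ attached to the flag $\langle v_1\rangle \subset\cdots\subset \langle v_1,\dots,v_n\rangle = X \subset Y$ with the character on $N(X)$ coming from the form-theoretic identification of $N(Y)/(N(Y)\cap \mathrm{Stab}(Y))$ exactly recovers $\nu = (\psi\circ \ell_N)\boxtimes \lambda_X$ on $H = (U_X\ltimes N(X))\rtimes G(W)$, so the open orbit contribution equals $\Hom_H(\pi_V\otimes \pi_W,\,\nu)$.

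The hard part is the explicit character matching in the last step: one must carefully track normalized versus unnormalized induction, the modulus character of the quotient $N(Y)/(N(Y)\cap G(V))$, and the action of the Levi $\GL(X)\subset \GL(Y)$ on the unipotent character, and then verify that the result agrees precisely with the Bessel character $\nu$ defined in Section~\ref{S:nu}. The supercuspidality of $\tau$ is used crucially to kill every intermediate twisted Jacquet module along non-Whittaker parabolics of $\GL(Y)$ and to reduce the computation on the $\GL(Y)$--side to a one-dimensional space (the Whittaker model), while the Bernstein component hypothesis on $\pi_V$ is what eliminates the closed orbit.
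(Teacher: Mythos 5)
Your proposal is correct and follows essentially the same route as the paper: the Mackey filtration of $I(\tau,\pi_W)|_{G(V)}$ coming from the two $G(V)$-orbits of isotropic $(n+1)$-planes in $W'$, the Bernstein-component hypothesis eliminating the closed-orbit layer, the computation of the open-orbit stabilizer $Q=N(X)\rtimes(\GL(X)\times G(W))$, and the Gelfand--Kazhdan/supercuspidal-derivative identification $\tau|_{R}\cong\mathrm{ind}_{U}^{R}\chi$ leading to $\mathrm{ind}_{H}^{G(V)}(\pi_W\otimes\nu^{\vee})$ and Frobenius reciprocity. Only minor slips: the closed-orbit layer is $\mathrm{Ind}_{P_V(Y')}^{G(V)}\bigl(\tau\boxtimes\pi_W|_{G(V')}\bigr)$ (a restriction of $\pi_W$, not a Jacquet module, since $N_V(Y')$ lands inside $N(Y')$), and part of $N(X)$ (namely $N(Y)\cap Q$) maps trivially to the Levi of $P(Y)$, with only the quotient $\Hom(E,X)$ hitting the unipotent radical of the mirabolic --- neither affects your argument.
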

\vskip 5pt

\begin{proof}
 We assume that $k$ is a field in the proof and calculate the restriction of $\Pi := I(\tau,\pi_W)$ to $G(V)$ by Mackey's orbit method. 
For this, we begin by observing that $G(V)$ has at most two orbits on the flag
variety $G(W')/P(Y)$ consisting of:
\vskip 5pt

\begin{enumerate} 
\item $n+1$-dimensional isotropic subspaces of $W'$ which are contained
in $V$; these exist if and only if $W$ is isotropic, in which case if $Y'$ is a representative of this closed orbit, then its stabilizer in $G(V)$ is the parabolic subgroup $P_V(Y') = P(Y') \cap G(V)$;
\vskip 5pt

\item $n+1$-dimensional isotropic subspaces of $W'$ which are not contained
in $V$; a representative of this open orbit is the space $Y$ and its stabilizer in $G(V)$ is the subgroup 
$Q = P(Y) \cap G(V)$. 
\end{enumerate}
\vskip 5pt
\noindent By Mackey theory, this gives a filtration on the restriction of $\Pi$ 
to $G(V)$ as follows:
\vskip 5pt
\[  \begin{CD}
0 @>>>  {\rm ind}_{Q}^{G(V)} (\tau \otimes\pi_W)|_Q 
@>>>  \Pi|_{G(V)} @>>>  
{\rm Ind}_{P_V(Y')}^{G(V)} \tau \otimes \pi_W|_{G(V')}  @>>> 0, \end{CD} \] \vskip 5pt

\noindent where the induction functors here are unnormalized.

 \vskip 5pt

By our assumption, $\pi_V^{\vee}$ does not appear as a quotient of the 3rd term of the above short exact sequence and we have
\[  \Hom_{G(V)}({\rm ind}_{Q}^{G(V)}(\tau \otimes\pi_W)|_{Q}, \pi_V^{\vee}) = 
\Hom_{G(V)}(\Pi, \pi_V^{\vee}). \]
It thus suffices to analyze the representations of $G(V)$ which appear on the open 
orbit. For this, we need to determine the group $Q = P(Y) \cap G(V)$ as a subgroup of $G(V)$ and 
$P(Y)$.
\vskip 5pt

 \vskip 5pt

Recall that $W' = Y \oplus W \oplus Y^\vee $, and $V$
is the codimension 1 subspace 
$X  \oplus W \oplus X^\vee \oplus  E$
which is the orthogonal complement of $f$.
It is not difficult to see that as a subgroup of $G(V)$,
\[  Q = G(V) \cap P(Y) \subset P_V(X). \]
Indeed,  if $g \in Q$, then $g$ fixes $f$ and stabilizes $Y$, and we need to show that it stabilizes $X$. 
If $x \in X$, it suffices to show that  $\langle g \cdot x\,  ,  e-f \rangle  = 0$.
But 
\vskip 5pt
\[  \langle g \cdot x\,  , e-f \rangle = \langle x \, , g^{-1} \cdot (e-f) \rangle = \langle x \, ,  g^{-1} \cdot v_{n+1} -2f \rangle = 0, \]
\vskip 5pt
\noindent as desired. 
\vskip 10pt

Now we claim that as a subgroup of $P_V(X)$,
\[  Q =  (\GL(X) \times G(W)) \ltimes N_V(X), \]
where $N_V(X)$ is the unipotent radical of $P_V(X)$. 
To see this, given an element $h \in P_V(X)$, note that $h \in Q$ if and only if  
\[ h \cdot v_{n+1}  \in Y,  \quad \text{or equivalently} \quad  h \cdot e - e \in Y. \] 
We may write
\[  h \cdot e = \lambda \cdot e + w + x, \quad \text{with $w \in W$ and $x  \in X$.} \]
\vskip 5pt
\noindent Then we see that  $h \cdot e - e \in Y$ if and only if $\lambda = 1$ and $w = 0$, so that 
$h$ fixes $e$ modulo $X$ and hence stabilizes $W$ modulo $X$, in which case $h \in (\GL(X) \times G(W)) \ltimes N_V(X)$, as desired.
  \vskip 10pt

Since we are restricting the representation $\tau \boxtimes \pi_W$ of $P(Y)$ to the subgroup 
$Q$, we also need to know how $Q$ sits in $P(Y)$. For this, we have:
\vskip 5pt

\[  \begin{CD}
0 @>>> N(Y) @>>> P(Y) @>>> \GL(Y) \times G(W) @>>> 0 \\
&  &  @AAA     @AAA  @AAA  \\
0@>>> N(Y) \cap Q @>>> Q @>>>  R \times G(W) @>>> 0 
\end{CD} \]
where
\[  R  \subset  \GL(Y) \]
is the mirabolic subgroup which stabilizes the subspace $X \subset Y$ and fixes $v_{n+1}$ modulo $X$.
Note also that  $N(Y) \cap Q \subset N_V(X)$ and
 \[  N_V(X) / (N(Y) \cap Q) \cong \Hom(E,  X). \]
As a consequence, one has:  
\[  (\tau \boxtimes \pi_W)|_Q = \tau|_{R} \boxtimes \pi_W. \]
\vskip 10pt

By a well-known result of Gelfand-Kazhdan, one knows that  
\[  \tau|_R \cong {\rm ind}_{U}^{R} \chi \]
where $U$ is the unipotent radical of the Borel subgroup of $\GL(Y)$ stabilizing the flag
\[  \langle v_1 \rangle \subset \langle v_1, v_2 \rangle \subset \cdots \subset \langle v_1,\cdots ,v_{n+1}\rangle = Y, \] 
and $\chi$ is any generic character of $U$.
Now observe that the pre-image of $U \times G(W)$ in $Q$ is precisely the subgroup $H$ of $G(V)$ and the representation $\chi \boxtimes \pi_W$ of $U \times G(W)$ pulls back to the representation 
$\nu^{\vee} \otimes \pi_W$ of $H$.    
Hence, by induction in stages,
 we conclude that 
\[  {\rm ind}_{Q}^{G(V)}(\tau \otimes\pi_0)|_Q  \cong {\rm ind}_{H}^{G(V)} 
\pi_W \otimes \nu^{\vee}. \]
Thus, by dualizing and 
Frobenius reciprocity, one has
\[  \Hom_{G(V)}(I(\tau, \pi_W), \pi_V^{\vee}) \cong \Hom_H(\pi_V \otimes \pi_W,  \nu). \]
This completes the proof of the theorem.
\end{proof} 

\vskip 15pt

\begin{cor}  \label{C:orthogonal-hermitian}
In the orthogonal or hermitian cases over a non-archimedean $k$, with $W\subset V$ of odd codimension, we have
\[  \dim_{\CC} \Hom_H(\pi ,\nu) \leq 1 \]
for any irreducible representation $\pi$ of $G = G(V) \times G(W)$.
 \end{cor}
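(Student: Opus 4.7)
The strategy is to reduce the general Bessel problem to the basic codimension-one case of Theorem \ref{T:mult1}, with Theorem \ref{T:SOVPS} serving as the bridge. Write $\dim W^{\perp} = 2n+1$ and $W^{\perp} = X + X^{\vee} + E$ as in the setup, and form the ambient space $W' = V \oplus E^-$, in which $V$ sits as a codimension-one nondegenerate subspace. Let $Y \subset W'$ be the maximal isotropic subspace of dimension $n+1$ from the proof of Theorem \ref{T:SOVPS}, with associated parabolic $P = P(Y) \subset G(W')$.

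Given an irreducible representation $\pi = \pi_V \boxtimes \pi_W$ of $G(V) \times G(W)$, I would choose an auxiliary supercuspidal representation $\tau$ of $\GL(Y) \cong \GL_{n+1}(k)$ satisfying two genericity conditions:
\begin{enumerate}
\item[(a)] $\tau$ and its unramified twists are absent from the cuspidal support of $\pi_V^{\vee}$, so that $\pi_V^{\vee}$ lies outside every Bernstein component of the form $(\GL(Y') \times G(V'), \tau \otimes \mu)$ excluded by Theorem \ref{T:SOVPS};
\item[(b)] the induced representation $I(\tau, \pi_W) = \Ind_P^{G(W')}(\tau \boxtimes \pi_W)$ is irreducible as a $G(W')$-module.
\end{enumerate}
Condition (a) rules out only finitely many Bernstein equivalence classes of supercuspidals of $\GL_{n+1}(k)$, so one fixes $\tau$ in a safe class. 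Within that class, condition (b) is then achieved by replacing $\tau$ by $\tau \otimes |\det|^s$ for $s \in \CC$ outside the discrete set of reducibility points of the family $s \mapsto I(\tau \otimes |\det|^s, \pi_W)$. Since condition (a) is preserved under such unramified twisting, both conditions can be enforced simultaneously.

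With such a $\tau$ fixed, Theorem \ref{T:SOVPS} yields the identification
\[
\Hom_H(\pi_V \otimes \pi_W, \nu) \;\cong\; \Hom_{G(V)}(I(\tau, \pi_W), \pi_V^{\vee}).
\]
Because $I(\tau,\pi_W)$ is now an \emph{irreducible} representation of $G(W')$ and $V \subset W'$ has codimension one, the right-hand side is exactly the Hom space appearing in the basic codimension-one Bessel problem, to which Theorem \ref{T:mult1}(i)--(ii) applies, bounding the dimension by one. In the orthogonal case a small further step is required to pass from the full orthogonal groups appearing in Theorem \ref{T:mult1}(i) to the connected special orthogonal groups of our $G = G(V)\times G(W)$, but the essential content is already contained in the above reduction.

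The main obstacle I anticipate is step (b): ensuring that an unramified twist of $\tau$ can be chosen to make $I(\tau,\pi_W)$ irreducible without spoiling (a). This rests on the standard discreteness of the reducibility locus of a parabolically induced representation in the complex twisting parameter, which follows from the theory of standard intertwining operators and the rational structure of the Plancherel measure.
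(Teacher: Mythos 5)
Your proposal is essentially the paper's own proof: one chooses a supercuspidal $\tau$ satisfying the exclusion condition of Theorem \ref{T:SOVPS} and such that $I(\tau,\pi_W)$ is irreducible, and then the identification of Hom spaces reduces the general Bessel problem to the codimension-one case settled by Theorem \ref{T:mult1}; your discussion of unramified twisting to secure irreducibility while staying outside the forbidden Bernstein classes just makes explicit a point the paper leaves implicit.

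One caveat: your closing remark in the orthogonal case is not right. In this part of the paper $G(V)\times G(W)$ is deliberately taken to be the \emph{full} orthogonal groups $\O(V)\times\O(W)$, precisely because Theorem \ref{T:mult1}(i) is only known for orthogonal groups; the paper's remark after that theorem states that multiplicity one for $\SO(V)\times\SO(W)$ is expected but is \emph{not} known to follow from the result for $\O(V)\times\O(W)$. So the passage from $\O$ to $\SO$ is not a "small further step already contained in the reduction" --- it is an open issue independent of the argument above, and the corollary should be read (as the paper does) with orthogonal rather than special orthogonal groups in the orthogonal case.
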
 
\vskip 5pt

\begin{proof}
We apply the theorem with a choice of the supercuspidal representation $\tau$ satisfying
the conditions of the theorem and such that the induced representation $I(\tau,\pi_W)$ is irreducible. 
\end{proof}

\vskip 15pt

\noindent{\bf Remarks:} In the archimedean case, a recent preprint of Jiang-Sun-Zhu [JSZ] uses an adaptation of the proof of Theorem \ref{T:SOVPS}  gives the containment
 \[  \Hom_H(\pi_V \otimes \pi_W, \nu)  \subset  \Hom_{G(V)}(I(\tau, \pi_W) \otimes \pi_V, \CC). \] 
Namely, to each element on the LHS, [JSZ] constructs an associated element on the RHS, using an explicit integral. This is enough to deduce  the multiplicity one result of Corollary \ref{C:orthogonal-hermitian}from the results of [SZ] and [AG].

%In Theorem \ref{T:SOVPS}, we have calculated the branching from $G(V)$
%to $G(W)$ (with $\dim W^{\perp}=1$) assuming that the representation of the larger group
%$G(V)$ is a principal series representation. One can
%similarly calculate the branching from $G(V)$
%to $G(W)$ assuming that the representation of the smaller group $G(W)$ is a principal series representation. We merely state the end result.
%\vskip 5pt

%\begin{thm}
%Let $W \subset V$ be a non-degenerate quadratic or hermitian subspace of codimension 1. Suppose that
%\[  W=Y \oplus  W_0 \oplus Y^\vee\]
%and 
%\[ V= Y \oplus V_0 \oplus Y^\vee, \]
%with $Y$ and $Y^\vee$ isotropic subspaces and $W_0 \subset V_0$. Let
%$P_W(Y)$ be the parabolic in $G(W)$ stabilizing $Y$ with Levi subgroup
%\[  M = \GL(Y)  \times G(W_0) \]
%For an irreducible supercuspidal
%representation $\tau$ of $\GL(Y)$ and an irreducible admissible
%representation $\pi_0$ of $G(W_0)$, let 
%\[  I(\tau,\pi_0) = {\rm Ind}_{P(Y)}^{G(W)} \tau \boxtimes \pi_0 \] 
%be the corresponding (unnormalized) principal series representation of $G(W)$. Let
%$\pi$ be an irreducible admissible representation $G(V)$ which does
%not belong to the Bernstein component associated to $(\GL(Y) \times G(V_0), \tau \boxtimes \mu)$
%for any irreducible representation $\mu$ of $G(V_0)$. Then
 
%\[ \Hom_{G(W)}(\pi \otimes I(\tau, \pi_0), \C) \cong 
%\Hom_{H_{W_0,V}}(\pi \otimes \pi_0 , \nu_{W_0,V}). \]
%\end{thm}

\vskip 15pt

\section{Uniqueness of Fourier-Jacobi Models} \label{S:FJ}

In this section, we continue with the assumption that $k$ is non-archimedean and our goal is to establish the analog of Theorem \ref{T:SOVPS} in the symplectic and skew-hermitian cases, which 
will imply that 
\[  d(\pi,\psi)  \leq 1. \]
Before coming to the analogous result, which is given in Theorem \ref{T:FJPS}, we need to recall certain structural results about parabolic induction for the metaplectic groups. 
\vskip 5pt

Recall that  if $W$ is a symplectic space, then a 
parabolic subgroup $\widetilde{P}$ in $\widetilde{\Sp}(W)$ is nothing but the inverse
image of a parabolic $P$ in $\Sp(W)$. It is known that the
metaplectic covering splits (uniquely) 
over unipotent subgroups, so for a 
Levi decomposition $P= M \cdot N$, it makes sense to speak of the 
corresponding Levi decomposition 
\[  \widetilde{P}= \widetilde{M} \cdot N \quad \text{ in} \quad 
\widetilde{\Sp}(W). \]
 Furthermore, we note that for a maximal parabolic subgroup $P(X)$ of $\Sp(W)$
 with Levi subgroup
of the form $M= \GL(X)\times \Sp(W_0)$ in $\Sp(W)$, 
\[  \widetilde{M}= \left( \widetilde{\GL}(X)\times \widetilde{\Sp}(W_0)\right) / \Delta\mu_2  \]
where $\widetilde{\GL}(X)$ is a certain two-fold cover of $\GL(X)$ defined as follows. As a set, we write
\[  \widetilde{\GL}(X) = \GL(X) \times \{ \pm 1\}, \]
and the multiplication is given by
\[  (g_1, \epsilon_1) \cdot (g_2,\epsilon_2) = (g_1 g_2, \epsilon_1\epsilon_2 \cdot (\det g_1 , \det g_2)), \]
where $(-,-)$ denotes the Hilbert symbol on $k^{\times}$ with values in $\{\pm1 \}$. 
\vskip 10pt

The two-fold cover $\widetilde{\GL}(X)$ has a natural genuine 1-dimensional character
\[  \chi_{\psi}:  \widetilde{\GL}(X)  \longrightarrow \CC^{\times} \]
 defined as follows. The determinant map gives rise to a natural group homomorphism
 \[  \det: \widetilde{\GL}(X) \longrightarrow \widetilde{\GL}(\wedge^{top}X) = \widetilde{\GL}(1). \]
 On the other hand, one has a genuine character on $\widetilde{\GL}(1)$ defined by
 \[  (a, \epsilon) \mapsto \epsilon \cdot \gamma(a,\psi)^{-1}, \]
  where 
  \[  \gamma(a,\psi)  = \gamma(\psi_a)/\gamma(\psi) \]
  and $\gamma(\psi)$ is an 8-th root of unity associated to $\psi$ by Weil. Composing this character with $\det$ gives the desired genuine character $\chi_{\psi}$ on $\widetilde{\GL}(X)$, which satisfies:
  \[  \chi_{\psi}^2 (g,\epsilon) = (\det(g), -1). \]
 Thus, there is a bijection between the set of irreducible representations of $\GL(X)$ and the set of genuine representations of $\widetilde{\GL}(X)$, given simply by
 \[   \tau \mapsto \tilde{\tau}_{\psi} = \tau \otimes \chi_{\psi}. \]
 Note that this bijection depends on the additive character $\psi$ of $k$.
 Now associated to a representation $\tau$ of $\GL(X)$ and $\pi_0$ of $\widetilde{\Sp}(W_0)$,
one has the representation
\[   \tilde{\tau}_{\psi} \boxtimes \pi_0 \quad \text{ of} \quad  \widetilde{M}. \]
Then one can consider the (unnormalized) induced representation 
\[ I_{\psi}(\tau, \pi_0) = {\rm Ind}_{\widetilde{P}}^{\widetilde{\Sp}(W)}
(\tilde{\tau}_{\psi} \boxtimes \pi_0). \] 
 
 \vskip 10pt

Here is the analog of Theorem \ref{T:SOVPS} in the symplectic case.
\vskip 10pt

\begin{thm} \label{T:FJPS}
Consider  $W= X \oplus W_0  \oplus X^\vee$  with $X \ne 0$ and fix the additive character $\psi$ of the non-archimedean local field $k$. Let 
\begin{itemize}
\item $\tau$ be a supercuspidal representation of $\GL(X)$;
\vskip 5pt

\item $\pi_0$ be a genuine representation 
of $\widetilde{\Sp}(W_0)$;
\vskip 5pt

\item $\pi$ be an irreducible  representation of ${\Sp}(W)$,
\end{itemize}
\vskip 5pt
\noindent and consider the (unnormalized) induced representation $I_{\psi}(\tau,\pi_0)$ of $\widetilde{\Sp}(W)$.
Assume that $\pi^{\vee}$ does not belong to the Bernstein component associated to $(\GL(X) \times \Sp(W_0),  \tau \boxtimes \mu)$ for any representation $\mu$ of $\Sp(W_0)$. 
Then 
\[  \Hom_{\widetilde{\Sp(W)}}(I_{\psi}(\tau,  \pi_0) \otimes \pi,\omega_{W, \psi})
\cong \Hom_H(\pi \otimes \pi_0, \nu_{W, W_0, \psi}). \]
\end{thm}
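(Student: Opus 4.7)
The strategy parallels the proof of Theorem \ref{T:SOVPS}: the Mackey orbit analysis on a flag variety in the Bessel case is here replaced by an analysis of the Weil representation $\omega_{W,\psi}$ as a module over $\widetilde{P}$, combined with a Kirillov-model input for $\tau$.

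First, the projection formula
\[
I_\psi(\tau,\pi_0)\otimes\pi \;\cong\; \Ind_{\widetilde P}^{\widetilde{\Sp}(W)}\!\bigl((\widetilde\tau_\psi \boxtimes \pi_0) \otimes \pi|_{\widetilde P}\bigr),
\]
combined with Frobenius reciprocity (valid because $\widetilde P\backslash \widetilde{\Sp}(W)$ is compact, so full and compact parabolic induction coincide), identifies the left-hand side with
\[
\Hom_{\widetilde P}\!\bigl((\widetilde\tau_\psi\boxtimes\pi_0)\otimes \pi|_{\widetilde P},\; \omega_{W,\psi}|_{\widetilde P}\bigr).
\]

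Second, I would analyze $\omega_{W,\psi}|_{\widetilde P}$ in the mixed Schr\"odinger model. Choose a Lagrangian $L_0$ of $W_0$, so that $L = X + L_0$ is a Lagrangian of $W$; then $\omega_{W,\psi}$ is realized on $\mathcal{S}(X)\otimes \mathcal{S}(L_0)$, with $\widetilde{\Sp}(W_0)$ acting as $\omega_{W_0,\psi}$ on $\mathcal{S}(L_0)$, $\widetilde{\GL}(X)$ acting by its twisted Weil action on $\mathcal{S}(X)$, and $N(X)$ acting by a combination of multiplications by Heisenberg characters from $Z(X)\cong\mathrm{Sym}^2 X$ and translations on $\mathcal{S}(L_0)$ coming from $N(X)/Z(X)\cong W_0\otimes X$. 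The $\GL(X)$-orbit decomposition $X = \{0\}\sqcup(X\smallsetminus\{0\})$ yields a $\widetilde P$-stable short exact sequence
\[
0\to \mathcal{S}(X\smallsetminus\{0\})\otimes\mathcal{S}(L_0)\to\omega_{W,\psi}|_{\widetilde P}\to\mathbb{C}\otimes\omega_{W_0,\psi}\to 0,
\]
in which the quotient is (as an $\widetilde M$-module) of the form $(\chi_\psi^{-1}\cdot|\det|^{-1/2})\boxtimes\omega_{W_0,\psi}$. The supercuspidality of $\tau$ combined with the Bernstein component hypothesis on $\pi^\vee$ ensures that any $\widetilde P$-map from $(\widetilde\tau_\psi\boxtimes\pi_0)\otimes\pi|_{\widetilde P}$ into the quotient vanishes, since after taking $N(X)$-coinvariants any such contribution would land in the Bernstein component $(\GL(X)\times\Sp(W_0),\tau\boxtimes\mu)$ ruled out by hypothesis.

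Third, I would treat the surviving subrepresentation $\mathcal{S}(X\smallsetminus\{0\})\otimes\mathcal{S}(L_0)$, which as a $\GL(X)$-module is compactly induced from the stabilizer of a nonzero vector in $X$. Applying the Gelfand-Kazhdan/Kirillov model $\tau|_{R_X}\cong\mathrm{ind}_{U_X}^{R_X}\chi$ for the mirabolic subgroup $R_X\subset\GL(X)$, and using induction in stages, the Hom space reduces to one computed over the preimage in $\widetilde{P}$ of $U_X\times\widetilde{\Sp}(W_0)$, which is precisely the Fourier-Jacobi subgroup $H = N\rtimes\widetilde{\Sp}(W_0)$. A careful bookkeeping shows that the Heisenberg action of $Z(X)$, the translation action of $N(X)/Z(X)$ on $\mathcal{S}(L_0)=\omega_{W_0,\psi}$, the genuine Weil twist $\chi_\psi$, and the generic character $\chi$ of $U_X$ combine to reproduce the representation $\nu_{W,W_0,\psi}=\omega_\psi\otimes\lambda$ from Section \ref{S:nu}, yielding the desired identification with $\Hom_H(\pi\otimes\pi_0,\nu_{W,W_0,\psi})$.

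The principal difficulty lies in the second step: producing the rank filtration of $\omega_{W,\psi}|_{\widetilde P}$ coherently with the genuine characters on the metaplectic cover, and showing that the entire quotient contribution is absorbed into the excluded Bernstein component. This bookkeeping, which amounts to a version of Kudla's analysis of the Jacquet module of the Weil representation along a non-Siegel parabolic combined with the Bernstein decomposition for metaplectic groups, is the technical heart of the proof.
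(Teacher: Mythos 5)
Your overall architecture is the same as the paper's: the mixed (Schr\"odinger) model of $\omega_{W,\psi}$ along $\widetilde{P}(X)$, the two-step filtration coming from the $\GL(X)$-orbits $\{0\}$ and its complement, killing the rank-zero piece via the hypothesis on the Bernstein component of $\pi^{\vee}$, and using the Gelfand--Kazhdan model $\tau|_R \cong {\rm ind}_U^R\chi$ on the open piece to reassemble the pair $(H,\nu_{\psi})$. The genuine gap is in the direction of your adjunctions. Your first step claims $\Hom_{\widetilde{\Sp}(W)}(I_{\psi}(\tau,\pi_0)\otimes\pi,\,\omega_{W,\psi}) \cong \Hom_{\widetilde{P}}\bigl((\widetilde\tau_{\psi}\boxtimes\pi_0)\otimes\pi|_{\widetilde{P}},\,\omega_{W,\psi}|_{\widetilde{P}}\bigr)$ ``by Frobenius reciprocity, since $\widetilde{P}\backslash\widetilde{\Sp}(W)$ is compact.'' Compactness of $P\backslash G$ only gives ${\rm ind}=\Ind$; it does not make restriction right adjoint to parabolic induction. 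Frobenius reciprocity computes maps \emph{into} $\Ind_{\widetilde{P}}$, whereas maps \emph{out of} $\Ind_{\widetilde{P}}$ are governed by Bernstein's second adjointness, which would produce the Jacquet module of $\omega_{W,\psi}$ along the opposite parabolic, not the plain restriction $\omega_{W,\psi}|_{\widetilde{P}}$. So the identification from which your whole computation starts is not valid as stated. The same issue recurs at the open piece: $\mathcal{S}(X\smallsetminus\{0\})\otimes\mathcal{S}(L_0)$ is a \emph{compact} induction ${\rm ind}_{\widetilde{Q}}^{\widetilde{P}}(\cdots)$ from the subgroup $\widetilde{Q}$ generated by the mirabolic, $\widetilde{\Sp}(W_0)$ and $N(X)$, and $\widetilde{Q}$ is not cocompact in $\widetilde{P}$ (the quotient is essentially $X^{\vee}\smallsetminus\{0\}$); hence mapping into this submodule is also not computed by Frobenius reciprocity, and your third step has no mechanism to descend from $\widetilde{P}$ to $\widetilde{Q}$, i.e.\ to $H$.

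The repair is to dualize at the outset, which is exactly what the paper does: since $\omega_{W,\psi}$ and $\pi$ are admissible, rewrite the left-hand side as $\Hom_{\widetilde{\Sp}(W)}(I_{\psi}(\tau,\pi_0)\otimes\omega_{W,\psi}^{\vee},\,\pi^{\vee})$, pull $\omega_{W,\psi}^{\vee}$ inside the induction by the projection formula, and filter $\omega_{W,\psi}^{\vee}|_{\widetilde{P}}$ by the evaluation-at-zero sequence. Now every arrow goes \emph{out of} the resulting short exact sequence of $\Sp(W)$-modules into $\pi^{\vee}$: the fully induced quotient contributes nothing by the Bernstein-component hypothesis on $\pi^{\vee}$, and the compactly induced sub is, after Gelfand--Kazhdan and induction in stages, exactly ${\rm ind}_H^{\Sp(W)}(\pi_0\otimes\nu_{\psi}^{\vee})$; one then concludes by duality together with Frobenius reciprocity for the full induction of the contragredient. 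With this reorientation your filtration and bookkeeping go through and the argument coincides with the paper's proof.
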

\vskip 10pt

\begin{proof} 
We shall compute
\[  \Hom_{\widetilde{\Sp(W)}}(I_{\psi}(\tau, \pi_0) \otimes \omega_{W, \psi}^{\vee}, \pi^{\vee}). \]
Let $P(X) = M(X)\cdot N(X)$ be the parabolic subgroup
in $\Sp(W)$ stabilizing the subspace $X$, so that
\[  \widetilde{M}(X) \cong \left( \widetilde{\GL}(X) \times \widetilde{\Sp}(W_0)\right)/\Delta \mu_2. \]
 The Weil representation $\omega_{W, \overline{\psi}} = \omega_{W,\psi}^{\vee}$ has a convenient description as a $\widetilde{P}(X)$-module; this is the so-called mixed model of the Weil representation. This model of $\omega_{W,\psi}^{\vee}$
is realized on the space
\[  \SS(X^{\vee}) \otimes \omega_{W_0, \psi}^{\vee} \]
of Schwartz-Bruhat functions on $X^{\vee}$ valued in $\omega_{W_0,\psi}^{\vee}$.
In particular, evaluation at $0$ gives a $\widetilde{P}(X)$-equivariant map
\[ ev:  \omega_{W,\psi}^{\vee} \longrightarrow \chi_{\overline{\psi}} |{\det}_X|^{1/2} \boxtimes \omega_{W_0,\psi}^{\vee}, \]
where $N(X)$ acts trivially on the target space. In fact, this map is the projection of $\omega_{W,\psi}^{\vee}$ onto its space of $N(X)$-coinvariants.  

\vskip 10pt

On the other hand, to determine the kernel of the map $ev$, 
note that $\GL(X)$ acts transitively on the nonzero elements of 
$X^{\vee}$. Recall that we have fixed a basis 
$\{v_1,\cdots, v_n \}$ of $X$ 
in the definition of the data $(H , \nu_{\psi})$, with dual basis 
$\{ v_1',\cdots, v_n' \}$ of $X^{\vee}$.  
Let $R$ be the stabilizer of $v_n'$ in $\GL(X)$, so that $R$ is the 
mirabolic subgroup of $\GL(X)$ and set
\[  Q = \left( R \times \Sp(W_0) \right) \cdot N(X) \subset P(X)  \]
so that its inverse image in $\widetilde{P}(X)$ is 
\[  \widetilde{Q} = \left( (\widetilde{R} \times \widetilde{\Sp}(W_0))/\Delta \mu_2 \right)   \cdot N(X) \subset \widetilde{P}(X). \]
Then  one deduces the following short 
exact sequence of ${\widetilde{P}(X)}$-modules: 
\vskip 5pt
\[   \begin{CD}
0 
@>>>
 {\rm ind}^{\widetilde{P}(X)}_{\widetilde{Q}} 
 \chi_{\overline{\psi}}|{\det}_X|^{1/2}  \boxtimes \omega_{W_0,\psi}^{\vee} 
@>>>
 \omega_{W,\psi}^{\vee} 
 @>ev>>
\chi_{\overline{\psi}} |{\det}_X|^{1/2}  \boxtimes \omega_{W_0,\psi}^{\vee}
 @>>>
 0, \end{CD} \]
\vskip 5pt

\noindent where the compact induction functor ${\rm ind}$ is unnormalized and
the action of $\widetilde{Q}$ on $\omega_{W_0,\psi}^{\vee}$ is via the Weil representation of $\widetilde{\Sp}(W_0) \cdot N(X)$ with respect to the character $\overline{\psi}$. 
\vskip 10pt

Tensoring the above short exact sequence by $\tilde{\tau}_{\psi} \boxtimes \pi_0$ and then inducing to 
$\widetilde{\Sp}(W)$, one gets a short exact sequence of ${\Sp}(W)$-modules:
\vskip 5pt
\[  \begin{CD}
0\\
@VVV \\
 {\rm ind}^{{\Sp}(W)}_{Q} 
|{\det}_X|^{1/2} \cdot \tau|_{R} \otimes (\pi_0 \otimes \omega_{W_0,\psi}^{\vee}) = A \\
@VVV \\
  I_{\psi}(\tau,\pi_0) \otimes \omega_{W ,\psi}^{\vee}  =B\\ 
@VVV \\
  {\rm Ind}_{P(X)}^{{\Sp}(W)}(\tau \cdot  |{\det}_X|^{1/2} \otimes 
(\pi_0 \otimes \omega_{W_0,\psi}^{\vee})) = C\\
@VVV \\
 0.
\end{CD} \]
\vskip 5pt

By our assumption on $\pi$, 
\[  \Hom_{\Sp(W)}(C, \pi^{\vee}) = 0 \quad \text{and}  \quad \Hom_{\Sp(W)}(B, \pi^{\vee}) = 
\Hom_{\Sp(W)}(A, \pi^{\vee}). \]
Moreover, by a well-known result of Gelfand-Kazhdan, one has
\[   \tau|_R \cong {\rm ind}_{U}^{R} \chi, \]
where $U$ is the unipotent radical of the Borel subgroup of $\GL(X)$ stabilizing the flag
\[  \langle v_1 \rangle \subset \langle v_1, v_2\rangle \subset \cdots\subset \langle v_1,\cdots,v_n \rangle = X \]
and $\chi$ is any generic character of $U$.
Observing that 
\[  H = \left( U \times \Sp(W_0) \right) \cdot N(X), \]
we conclude that
\[  A = {\rm ind}_{H}^{\Sp(W)} (\pi_0 \otimes \nu_{\psi}^{\vee}). \]
Therefore, the desired result follows by Frobenius reciprocity.
 \end{proof}
\vskip 10pt

\begin{cor}  \label{C:symplectic}
In the symplectic case over a non-archimedean local field, we have
\[  \dim_{\CC} \Hom_H (\pi \otimes  \overline{\nu_{\psi}}, \CC)  \leq 1 \]
for any irreducible representation $\pi$ of $G = G(V) \times G(W)$.
 \end{cor}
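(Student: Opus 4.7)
The strategy is to adapt the argument used for Corollary \ref{C:orthogonal-hermitian}, invoking Theorem \ref{T:FJPS} to collapse the general Fourier--Jacobi restriction problem into the basic case $W = V$ (i.e.\ $\dim W^{\perp} = 0$), which is Theorem \ref{T:mult1}(iii), due to Sun. A pleasant feature of the symplectic case is that because $\dim W^{\perp}$ is even, a single application of Theorem \ref{T:FJPS} already lands us on the full metaplectic group $\widetilde{\Sp}(V)$, so no iteration of the reduction is needed.

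Write $\pi = \pi_V \boxtimes \pi_W$, where one of the two factors is genuine on the appropriate metaplectic cover so that the restriction to $\widetilde{H}$ is genuine. Fix a decomposition $V = X \oplus W \oplus X^{\vee}$ with $X \neq 0$ a maximal isotropic subspace of $W^{\perp}$, as in the setup of Theorem \ref{T:FJPS}. I would then choose a supercuspidal representation $\tau$ of $\GL(X)$ satisfying two genericity conditions: (a) $\pi_V^{\vee}$ does not lie in the Bernstein component of $\Sp(V)$ attached to $(\GL(X) \times \Sp(W),\, \tau \boxtimes \mu)$ for any irreducible $\mu$ of $\Sp(W)$, as required by Theorem \ref{T:FJPS}; and (b) the parabolically induced representation $I_{\psi}(\tau, \pi_W)$ of $\widetilde{\Sp}(V)$ is irreducible.

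For such a $\tau$, Theorem \ref{T:FJPS} yields the isomorphism
\[
\Hom_H(\pi_V \otimes \pi_W,\, \nu_\psi) \;\cong\; \Hom_{\widetilde{\Sp}(V)}\bigl( I_{\psi}(\tau, \pi_W) \otimes \pi_V,\, \omega_{V,\psi}\bigr),
\]
and the right-hand side is a basic-case Hom space for the pair $V = V$ involving three irreducibles: $I_{\psi}(\tau, \pi_W)$ on $\widetilde{\Sp}(V)$, $\pi_V$ on $\Sp(V)$, and the Weil representation $\omega_{V,\psi}$ on $\widetilde{\Sp}(V)$. Theorem \ref{T:mult1}(iii) bounds the right-hand side, and hence the left, by $1$. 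Combined with the non-archimedean identity $\dim \Hom_H(\pi \otimes \overline{\nu_\psi}, \CC) = \dim \Hom_H(\pi_V \otimes \pi_W, \nu_\psi)$ noted in \S\ref{S:multiplicity-one}, this yields the corollary.

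The main obstacle is the existence of a supercuspidal $\tau$ meeting both (a) and (b) simultaneously. Condition (a) asks that $\tau$ avoid a discrete set of cuspidal supports dictated by the Bernstein decomposition of $\pi_V^{\vee}$, which can be arranged by twisting the central character of $\tau$ by a ramified character of sufficiently large conductor. Condition (b) is likewise generic: the standard reducibility criterion for parabolically induced representations of $\widetilde{\Sp}(V)$ confines the reducible $I_{\psi}(\tau, \pi_W)$ to a discrete locus of unramified twists of $\tau$, determined by the poles of the long intertwining operator attached to $\pi_W$. Since both are codimension-one conditions on the variable $\tau$, a suitable supercuspidal $\tau$ exists, and the reduction goes through. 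The variant in which the metaplectic cover sits on $G(V)$ rather than on $G(W)$ is handled by the analogous version of Theorem \ref{T:FJPS} obtained by interchanging the roles of the two factors.
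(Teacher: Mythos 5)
Your proposal is correct and follows essentially the same route as the paper's (implicit) proof: exactly as in Corollary \ref{C:orthogonal-hermitian}, one picks a supercuspidal $\tau$ satisfying the Bernstein-component hypothesis of Theorem \ref{T:FJPS} and such that $I_{\psi}(\tau,\pi_W)$ is irreducible, and then bounds the resulting basic-case Hom space by Theorem \ref{T:mult1}(iii). The paper leaves the existence of such a $\tau$ and the variant with the metaplectic cover on $G(V)$ just as implicit as you do, so nothing essential is missing.
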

 \vskip 10pt

 One can prove an analog of Theorem \ref{T:FJPS} in the skew-hermitian case, including the case when $k = k_0 \times k_0$, and deduce the following corollary; we omit the details.
 \vskip 5pt
 
 \begin{cor}  \label{C:skew-hermitian}
 In the skew-hermitian case over a non-archimedean $k$, with $W \subset V$ of even codimension,  we have
\[  \dim_{\CC} \Hom_H (\pi \otimes \overline{\nu_{\psi, \mu}}, \CC)  \leq 1 \]
for any irreducible representation $\pi = \pi_V \boxtimes \pi_W$ of $G = G(V) \times G(W)$ with $\pi_V$ supercuspidal.  
\end{cor}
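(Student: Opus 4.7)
The strategy, parallel to Corollary \ref{C:symplectic}, is to establish a skew-hermitian analog of Theorem \ref{T:FJPS} and then to combine it with the basic multiplicity-one result of Theorem \ref{T:mult1}(iv).

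With $V$ skew-hermitian and $V = X \oplus W \oplus X^{\vee}$ where $X$ is a maximal isotropic subspace of $W^{\perp}$, let $P(X) = M(X) \cdot N(X)$ be the parabolic of $\U(V)$ stabilizing $X$, with Levi $M(X) \cong \GL(X) \times \U(W)$. Fix $\psi$ and the splitting character $\mu$ used to define the unitary Weil representation. For a supercuspidal representation $\tau$ of $\GL(X)$ (suitably normalized by a character built from $\mu$) and an irreducible representation $\pi_W$ of $\U(W)$, form
\[ I(\tau,\pi_W) := {\rm Ind}_{P(X)}^{\U(V)}(\tau \boxtimes \pi_W). \]
The analog of Theorem \ref{T:FJPS} asserts: whenever the contragredient of the irreducible representation $\pi_V$ of $\U(V)$ does not belong to the Bernstein component attached to $(\GL(X) \times \U(W'), \tau \otimes \mu')$ for any $\mu'$, one has
\[ \Hom_{\U(V)}\bigl(I(\tau,\pi_W) \otimes \pi_V,\, \omega_{V,\psi,\mu}\bigr) \cong \Hom_H\bigl(\pi_V \otimes \pi_W,\, \nu_{V,W,\psi,\mu}\bigr). \]

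The proof of this analog follows that of Theorem \ref{T:FJPS} essentially line for line. The key input is the mixed model of $\omega_{V,\psi,\mu}^{\vee}$ with respect to $P(X)$, realized on $\mathcal{S}(X^{\vee} \otimes V_1) \otimes \omega_{W,\psi,\mu}^{\vee}$, where $V_1$ is the $1$-dimensional hermitian space of discriminant $1$. The evaluation-at-zero map yields a $P(X)$-equivariant short exact sequence of $P(X)$-modules; after tensoring with the inducing datum $\tau \boxtimes \pi_W$ and parabolically inducing to $\U(V)$, the Bernstein hypothesis on $\pi_V^{\vee}$ kills the closed-orbit contribution, leaving only the compactly-induced open piece. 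Using the Gelfand--Kazhdan realization $\tau|_{R} \cong {\rm ind}_U^R \chi$ for the mirabolic $R \subset \GL(X)$ and identifying the relevant subgroup of $P(X)$ with $H = N \rtimes \U(W)$, Frobenius reciprocity produces the stated isomorphism.

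To deduce the corollary, note that since $\pi_V$ is supercuspidal, its contragredient lies in no Bernstein component of $\U(V)$ arising from a proper parabolic, so the hypothesis on $\pi_V^{\vee}$ is automatic. Choose $\tau$ so that $I(\tau,\pi_W)$ is irreducible; this can be arranged, for instance, by an unramified twist of $\tau$. The analog of Theorem \ref{T:FJPS} then identifies the general Fourier--Jacobi model $\Hom_H(\pi_V \otimes \pi_W, \nu_{\psi,\mu})$ with the basic Fourier--Jacobi model $\Hom_{\U(V)}(I(\tau,\pi_W) \otimes \pi_V, \omega_{V,\psi,\mu})$, and Theorem \ref{T:mult1}(iv) bounds the latter by $1$ since $\pi_V$ is supercuspidal. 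The principal technical obstacle is the correct construction of the mixed model of the unitary Weil representation and the careful bookkeeping of the $\mu$-twists that must be inserted in $I(\tau,\pi_W)$; the case $k = k_0 \times k_0$ requires a parallel but separate treatment built on the direct construction of Fourier--Jacobi data for general linear groups recalled earlier in the paper.
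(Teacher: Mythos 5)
Your proposal is correct and is exactly the route the paper intends: the paper states that one proves a skew-hermitian analog of Theorem \ref{T:FJPS} and deduces the corollary from it together with Theorem \ref{T:mult1}(iv), omitting the details. Your filling-in of those details — the mixed model with the $\mu$-twists, the automatic Bernstein condition for supercuspidal $\pi_V$, the choice of $\tau$ making $I(\tau,\pi_W)$ irreducible so that Theorem \ref{T:mult1}(iv) applies with $\pi_V$ supercuspidal — matches the intended argument.
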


 \vskip 15pt
 
 \section{Local Conjectures} \label{S:local-conj}
 
 In this section, we propose a conjecture for the restriction problem formulated in Section 
 \ref{S:multiplicity-one}.  
 Recall that we are considering the restriction of irreducible representations $\pi = \pi_V \boxtimes \pi_W$ of $G = G(V) \times G(W)$ to the subgroup $H  = N \cdot G(W) \subset G$. Recall also that, with auxiliary data if necessary, we have defined a unitary representation $\nu$ of $H$ (or sometimes its double cover $\widetilde{H}$), which has  dimension $1$ when $W \subset V$ are orthogonal or hermitian, and has Gelfand-Kirillov dimensional $1/2 \cdot \dim (W/k_0)$ when $W\subset V$ are symplectic or skew-hermitian.
Then we are interested in
 \[  d(\pi) =  \dim_{\CC} \Hom_H(\pi \otimes \overline{\nu}, \CC), \]
 which is known to be $\leq 1$ in almost all cases.
 In this section, we shall give precise criterion for this Hom space to be nonzero, in terms of 
 the Langlands-Vogan parameterization of irreducible representations of $G$. We first note the following
 conjecture, which has been called {\it multiplicity one in $L$-packets}.
 \vskip 10pt

 \begin{con} \label{conj-mult1} There is a unique representation $\pi$ of a relevant group $G' = G(V') \times G(W')$ in each generic Vogan $L$-packet $\Pi_{\varphi}$ of $G$ which satisfies
 \[  \Hom_H(\pi \otimes\overline{\nu}, \CC) \ne 0. \]
 \end{con}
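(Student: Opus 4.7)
The plan is to first invoke the reduction theorems of Section \ref{S:multiplicity-one}, specifically Theorems \ref{T:SOVPS} and \ref{T:FJPS}, to reduce the restriction problem for a general codimension pair $W \subset V$ to the basic cases $\dim W^{\perp} \in \{0,1\}$. Concretely, if $V = Y \oplus V_0 \oplus Y^{\vee}$ with $Y$ isotropic of positive dimension, then the nonvanishing of $\Hom_H(\pi_V \boxtimes \pi_W, \nu)$ can be rewritten, for suitably chosen supercuspidal $\tau$ on $\GL(Y)$, as the nonvanishing of $\Hom_{G(V)}(I(\tau,\pi_W), \pi_V^{\vee})$. This moves the problem from a pair $(V,W)$ of large codimension difference to a pair $(V_0 \oplus E, W)$ of smaller difference, where $\pi_V$ is realised as a constituent of a parabolically induced representation in which $\tau$ appears as a Langlands block. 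On the parameter side this simply replaces $\varphi_V$ by $\varphi_V \oplus \varphi_\tau \oplus \varphi_\tau^{\vee}$ (or its conjugate-dual analogue), which does not alter the component group or its predicted distinguished character, so the conjecture for $(V,W)$ and $(V_0\oplus E,W)$ are equivalent.

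In the basic Bessel case $\dim W^{\perp} = 1$, the conjecture specialises to the original Gross--Prasad statement for adjacent classical groups. Here I would use Waldspurger's local relative trace formula: one computes the multiplicity as a spectral expansion involving the $L$-packet data, compares with a geometric expansion obtained by localising test functions on regular semisimple orbits, and matches the answer against the distinguished character $\chi$ of $A_\varphi$ constructed from local root numbers in Section \ref{S:char-comp}. For the unitary analogue one appeals to Beuzart-Plessis's extension of the trace formula method. In the basic Fourier--Jacobi case $\dim W^{\perp} = 0$, the plan is to introduce seesaw pairs such as $(\U(V)\times\U(V),\ \U(W\oplus\langle -1\rangle))$ against $(\U(W),\ \U(V)\times\U(\langle -1\rangle))$, and use the theta correspondence (Theorem \ref{T:KR} in the metaplectic/orthogonal setting and its hermitian/skew-hermitian variants) to convert a Fourier--Jacobi branching problem into a Bessel problem on a smaller dual pair, where the codimension-one answer is already in hand.

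Matching the distinguished representation with the prescribed character $\chi$ of $A_{\varphi}$ then requires three further ingredients, which I would assemble in order: (i) the local Langlands correspondence for classical groups of Arthur--Mok--Moeglin, which furnishes the bijection between $\Pi_\varphi$ and $\Irr(A_\varphi)$ appearing in Sections \ref{S:vogan-desiderata}--\ref{S:vogan-classical}; (ii) the explicit behaviour of Langlands parameters under theta lifting, so that the seesaw identities translate into parameter identities together with a computable shift of the component-group character (as in Conjecture \ref{conj-meta} for the metaplectic side); and (iii) the root number identities for tensor products of selfdual and conjugate-dual representations, for which Propositions \ref{P:epsilon-WD} and \ref{P:orthogonal-ep} supply the orthogonal and conjugate-orthogonal book-keeping needed to recognise the characters $\chi_N$ of Theorems \ref{T:chiN1} and \ref{T:chiN2} on both sides of the seesaw.

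The main obstacle will be the existence half, and bound up with it the question of which relevant pure inner form $G'$ actually contains the distinguished $\pi$. Uniqueness within a fixed $G'$ follows at once from the mult-one results of Corollaries \ref{C:orthogonal-hermitian}, \ref{C:symplectic} and \ref{C:skew-hermitian}, but uniqueness across all relevant pure inner forms is controlled by the restriction of $\chi$ to the image of $\pi_0(Z(\widehat{G}))^{\Gal(K/k_0)}$ in $A_\varphi$, and one must verify that the root-number-defined character of Section \ref{S:char-comp} has precisely the central character predicted by Kottwitz's parametrisation of $H^1(k_0,G)$. Existence itself appears, in non-quasi-split cases, to be inaccessible by purely local methods other than the relative trace formula, so the genuine technical heart of the proof is the delicate harmonic-analytic comparison of Waldspurger and Beuzart-Plessis; extending that comparison uniformly across all four families and across all pure inner forms (in particular in the symplectic/metaplectic and skew-hermitian settings, where a full trace formula treatment is not yet in the literature) is where I would expect the principal difficulty to lie.
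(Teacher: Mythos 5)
The statement you are proving is Conjecture \ref{conj-mult1}; the paper does not prove it. What the paper actually establishes is precisely your first stage: Theorem \ref{T:reduction} reduces Conjectures \ref{conj-mult1} and \ref{conj-character} to the basic cases $\dim W^{\perp}=0,1$ via Theorems \ref{T:SOVPS} and \ref{T:FJPS}, with the same $\epsilon$-factor bookkeeping you describe (Proposition \ref{P:epsilon-WD} killing the contribution of $N_{\tau}+{}^{\sigma}N_{\tau}^{\vee}$), together with the split case $k=k_0\times k_0$ (Theorem \ref{T:GLn}) via Rankin--Selberg theory. Note a small but real slip in how you set up the reduction: one does not split hyperbolic planes off $V$; rather one enlarges the small space, setting $W'=V\oplus E^-\supset V$ and $Y=X+k v_{n+1}$, so that the general Bessel problem for $(V,W)$ becomes the codimension-one problem for the pair $(W',V)$ with $\pi_{W'}=I(\tau,\pi_W)$, and it is the parameter $N$ of the $W$-side that is replaced by $N'=N_{\tau}+N+{}^{\sigma}N_{\tau}^{\vee}$, not $\varphi_V$.

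The genuine gaps are in the remaining stages. First, your claim that ``uniqueness within a fixed $G'$ follows at once from Corollaries \ref{C:orthogonal-hermitian}, \ref{C:symplectic} and \ref{C:skew-hermitian}'' is incorrect: those corollaries bound $\dim\Hom_H(\pi\otimes\overline{\nu},\CC)$ by $1$ for each individual $\pi$, but they do not prevent two distinct members of the same Vogan packet (even on the same pure inner form) from both supporting a nonzero functional; that dichotomy is exactly the content of Conjecture \ref{conj-mult1} and requires the character-identity/trace-formula input, not the multiplicity-at-most-one theorems. Moreover, in the orthogonal case even $d(\pi)\leq 1$ is only known for the full orthogonal groups, and the paper explicitly remarks that the $\SO(V)\times\SO(W)$ statement does not follow --- yet the conjecture concerns connected groups. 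Second, the basic cases themselves are delegated to results that the paper only cites as conditional or partial: Waldspurger's integral-formula papers treat tempered orthogonal packets assuming expected properties of packet characters, and no uniform treatment across the hermitian, metaplectic and skew-hermitian families existed. Third, your see-saw transfer of the Fourier--Jacobi cases to Bessel cases is the paper's own trick in Theorem \ref{T:mult1}(iv), but there it only yields $d(\pi)\leq 1$ (and under a supercuspidality hypothesis); upgrading it to a statement about which member of a packet is distinguished requires knowing precisely how theta lifting moves $L$-parameters and component-group characters, which in this paper is itself conjectural (Conjecture \ref{conj-meta}, and Theorem \ref{T:KR} assumes odd residue characteristic via Howe's conjecture). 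So what you have written is a plausible research program whose first step coincides with the paper's Theorem \ref{T:reduction}, but the heart of the conjecture --- existence, and exclusion of all other members of the packet, uniformly over pure inner forms --- is not supplied by the ingredients you invoke.
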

 \vskip 10pt

In two recent preprints [Wa1] and [Wa2], Waldspurger has made substantial progress towards this conjecture in the orthogonal case. Namely, assuming certain natural and expected properties of the characters of representations in a Vogan $L$-packet,  he has shown that the above conjecture holds for tempered packets in the orthogonal case.
There is no doubt that his methods will give the same result in the hermitian case.   
\vskip 10pt

We also note  that when $k = k_0 \times k_0$, 
we have $G \cong \GL(V_0) \times \GL(W_0)$ so that the Vogan 
packets are all singletons. In this case, the above conjecture simply asserts that $\Hom_H(\pi \otimes \overline{\nu}, \CC) \ne 0$ for any irreducible generic representation $\pi$ of $G$. In this case, we have:
\vskip 10pt

\begin{thm} \label{T:GLn}
(i) If $k = k_0 \times k_0$,  then Conjecture \ref{conj-mult1} holds when $\dim W^{\perp} = 0$ or $1$.
\vskip 5pt

\noindent (ii) If $k = k_0 \times k_0$ is non-archimedean, then Conjecture \ref{conj-mult1} holds in general.
 \end{thm}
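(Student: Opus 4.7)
The plan is to prove part (i) via the classical Rankin--Selberg theory of Jacquet--Piatetski-Shapiro--Shalika, and then to deduce part (ii) from part (i) using the parabolic-induction identities of Theorems \ref{T:SOVPS} and \ref{T:FJPS} (and the skew-hermitian analog of the latter noted in Corollary \ref{C:skew-hermitian}). Throughout, when $k = k_0 \times k_0$ we identify $G$ with $\GL(V_0) \times \GL(W_0)$; since the Langlands centralizers are connected, the Vogan $L$-packets are singletons and Conjecture \ref{conj-mult1} amounts to the assertion that $\Hom_H(\pi \otimes \overline{\nu}, \CC) \ne 0$ for every irreducible generic $\pi = \pi_V \boxtimes \pi_W$.

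For part (i), when $\dim W^{\perp} = 1$ the subgroup $H$ is the semidirect product of a generic unipotent subgroup with $\GL(W_0)$ embedded diagonally in $\GL(V_0) \times \GL(W_0)$ (with $\dim V_0 = \dim W_0 + 1$), and $\nu$ is the corresponding Whittaker character on the unipotent piece, trivial on $\GL(W_0)$. The Rankin--Selberg zeta integral of [JPSS] for $\GL_n \times \GL_{n-1}$,
\[ Z(W_{\pi_V}, W_{\pi_W}, s) = \int_{N_{W_0} \backslash \GL(W_0)} W_{\pi_V}(g)\, W_{\pi_W}(g)\, |\det g|^{s-1/2}\, dg, \]
evaluated at $s = 1/2$ (via meromorphic continuation if necessary), produces a nonzero $H$-invariant functional on $\pi$ for any generic $\pi_V, \pi_W$. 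When $\dim W^{\perp} = 0$ we have $V = W$, $H = \Delta\GL(V_0)$, and $\nu = \omega_{\psi,\mu}$ is realized on $\mathcal{S}(V_0)$ via $(g\cdot f)(w) = \mu(\det g)\, f(g^{-1} w)$; non-vanishing then follows from the $\GL_n \times \GL_n$ Rankin--Selberg integral with an auxiliary Schwartz function $\Phi \in \mathcal{S}(V_0)$.

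For part (ii), assume $k_0$ is non-archimedean and proceed by induction on $\dim W^{\perp}$. In the Bessel case (odd codimension $2n+1 \geq 3$), given generic $\pi_V, \pi_W$, select a generic supercuspidal representation $\tau$ of $\GL_{n+1}(k_0)$ whose Bernstein component is disjoint from the finitely many forbidden by Theorem \ref{T:SOVPS} for $\pi_V^\vee$. Theorem \ref{T:SOVPS} then provides the identity
\[ \Hom_H(\pi_V \otimes \pi_W, \nu) \cong \Hom_{G(V)}(I(\tau, \pi_W) \otimes \pi_V, \CC), \]
in which the right-hand side is a codimension-one Bessel Hom space for $V \subset W' = V \oplus E^-$, with $I(\tau,\pi_W)$ a (possibly reducible) generic representation of $G(W') = \GL(W'_0)$ (genericity by Rodier, since $\tau$ and $\pi_W$ are generic). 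Non-vanishing of the right-hand side then follows from part (i), since the $\GL_{\dim V_0 + 1} \times \GL_{\dim V_0}$ Rankin--Selberg integral is defined for any generic representation, irreducible or not. The Fourier--Jacobi case (even codimension $\geq 2$) is treated identically, using the skew-hermitian analog of Theorem \ref{T:FJPS} to reduce to the basic case $\dim W^{\perp} = 0$ of part (i).

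The principal technical point is the choice of $\tau$: we need an irreducible generic supercuspidal of $\GL_{n+1}(k_0)$ whose cuspidal support avoids a prescribed finite collection of Bernstein data. Such $\tau$ exist in abundance over a non-archimedean local field, since there are infinitely many inequivalent supercuspidal representations of any given $\GL_m(k_0)$; this abundance is precisely what restricts part (ii) to the non-archimedean setting. Once $\tau$ is chosen, everything else---genericity of the induced representation, applicability of part (i) on the right-hand side, and the identities of Theorems \ref{T:SOVPS} and \ref{T:FJPS}---is automatic.
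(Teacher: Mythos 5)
Your proposal is correct and follows essentially the same route as the paper: the basic cases $\dim W^{\perp}=0,1$ are handled by the local Rankin--Selberg theory of [JPSS] (and [JS] in the archimedean case), and the non-archimedean general case is reduced to these via Theorems \ref{T:SOVPS} and \ref{T:FJPS}. The only (harmless) variation is at the level of detail: you allow $I(\tau,\pi_W)$ to be reducible and invoke the Rankin--Selberg integral on its Whittaker model, whereas one can equally choose $\tau$ so that the induced representation is irreducible, as the paper does in the analogous reduction arguments.
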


 \vskip 5pt
 
 \begin{proof}
When $\dim W^{\perp} =0$ or $1$, Conjecture \ref{conj-mult1} is an immediate consequence of the local Rankin-Selberg theory of Jacquet, Piatetski-Shapiro and Shalika ([JPSS] and [JS]). 
Indeed, the local Rankin-Selberg integral gives a nonzero element of $\Hom_H(\pi, \nu)$. When $k$ is non-archimedean, the general case then follows from Theorems \ref{T:SOVPS} and Theorem \ref{T:FJPS}.
\end{proof} 
\vskip 10pt

In each of the remaining cases, we will make the above conjecture more precise by specifying a canonical character $\chi$ of the component group $A_{\varphi}$. The character $\chi$ depends on the choice of a generic character $\theta$ of $G$ (used to make the Langlands-Vogan parameterization canonical) and on the additional data  needed to define the representation $\nu$ when $\epsilon = -1$. 
We then conjecture that the representation $\pi$ in Conjecture \ref{conj-mult1} has parameter 
$\pi = \pi(\varphi, \chi)$ in the Vogan correspondence $J(\theta)$. 
 \vskip10pt
 
 We treat the various cases separately.
 \vskip 10pt
 
 \noindent{\bf \underline{$G = \SO(V) \times \SO(W)$, $\dim W^{\perp}$ odd}} 
 \vskip 5pt
 
 Here the character $\theta$  is determined by the pair of orthogonal spaces $W \subset V$. In view of Proposition \ref{P:generic}, specifying $\theta$ amounts to giving a non-isotropic line $L$ in the even orthogonal space (with $L^{\perp}$ split), and we simply take the line $L$ to have discriminant  equal to the discriminant of the odd space. The representation $\nu$ is also canonical. 
 \vskip 5pt
 
 The $L$-packet $\Pi_{\varphi}$ is determined by a parameter 
 \[  \varphi: WD \longrightarrow \Sp(M) \times \O(N) \]
 with $\dim N$ even. We define 
 \[  \chi = \chi_N \times \chi_M: A_M \times A_N^+ \longrightarrow  \langle \pm 1\rangle, \]
where the characters $\chi_N$ and $\chi_M$ were defined in 
\S \ref{S:char-comp}.
 \vskip 10pt
 
 \noindent{\bf \underline{$G = \U(V) \times \U(W)$, $\dim W^{\perp}$ odd}}
 \vskip 10pt
 
 Here, in view of Proposition \ref{P:generic},  we need to choose a nontrivial character
 \[  \psi_0: k/k_0 \longrightarrow \SS^1 \]
 up to the action of $\NN k^{\times}$ in order to define a 
generic character $\theta_0$ of the even unitary group.  If $\delta$ is the discriminant of the odd hermitian space, then we define 
\[  \theta(x)= \theta( - 2 \cdot \delta \cdot x) \] 
 and use $\theta$ to fix the Vogan parametrization for the even unitary group.
 The representation $\nu$ is canonical.
 \vskip 10pt
 
 The $L$-packet is determined by a parameter 
 \[  \varphi: WD \longrightarrow \GL(M) \times \GL(N) \]
 with $M$ conjugate-symplectic of even dimension and $N$ conjugate-orthogonal of odd dimension. 
 We define:
 \[  \chi = \chi_N \times \chi_M : A_M \times A_N \longrightarrow \langle \pm 1 \rangle, \]
 using the character $\psi_0$ to calculate the local epsilon factors which intervene in the definition of $\chi$. 
 \vskip 15pt
 
 \noindent{\bf \underline{$G = \ws(V) \times \Sp(W)$ or $\Sp(V) \times \ws(W)$, $\dim W^{\perp}$ even}}
 
 \vskip 10pt
 
 Here we need to choose a nontrivial additive character $\psi: k \to \SS^1$ to define a generic character $\theta$ of the symplectic group, 
 the notion of Vogan parameters for the metaplectic group and the representation $\nu_{\psi}$ of $H$.
 \vskip 5pt
 
 The $L$-packet is determined by a parameter
 \[ \varphi :WD \longrightarrow \Sp(M) \times \SO(N) \]
 with $\dim N$ odd. Let 
 \[  N_1 = N \oplus \CC \]
 be the corresponding orthogonal representation of even dimension and define
 \[  \chi = \chi_{N_1} \times \chi_M: A_M \times A^+_{N_1} \longrightarrow \langle \pm 1\rangle. \]
 The group $A_{\varphi}$ is a subgroup of index $1$ or $2$ in $A_M \times A^+_{N_1}$ and we take the restriction of $\chi$ to this subgroup.
 \vskip  15pt
 
 \noindent {\bf \underline{$G = \U(V) \times \U(W)$, $W\subset V$ skew-hermitian and $\dim W \equiv \dim V \equiv 1 \mod 2$}}
 \vskip 10pt
 Here there is a unique orbit of generic character $\theta$. On the other hand, we need to choose 
 \[  \psi : k_0 \to \SS^1 \]
  up to $\NN k^{\times}$ and 
 \[  \mu:  k^{\times}/\NN k^{\times} \longrightarrow \CC^{\times} \]
 to define the representation $\nu_{\psi,\mu}$ of $H$. 
 \vskip10pt
 
 Let $e$ be the discriminant of $V$ and $W$ which is a nonzero element of trace $0$ in $k$, well-defined up to $\NN k^{\times}$. Let
 \[  \psi_0(x) = \psi({\rm Tr}(ex)) \]
 which is a nontrivial  character of $k/k_0$, 
well-defined    up to $\NN k^{\times}$.\vskip 10pt

The $L$-packet has parameter
\[  \varphi: WD \longrightarrow \GL(M) \times \GL(N) \]
with $M$ and $N$ conjugate-orthogonal representations of odd dimension. We define:
\[  \chi = \chi_N \times \chi_{M(\mu^{-1})} = \chi_{N(\mu^{-1})} \times \chi_M : A_M \times A_N \longrightarrow \langle \pm 1\rangle, \]
using $\psi_0$ to calculate the local epsilon actors which intervene in the definition of $\chi$.
\vskip 15pt

\noindent{\bf \underline{$G = \U(V) \times \U(W)$, $W\subset V$ skew-hermitian, $\dim W \equiv \dim V \equiv 0 \mod 2$}}

\vskip10pt
In this case, we must choose $\psi: k_0 \to \SS^1$ to define $\theta$ for both groups, and $\mu: k^{\times}/\NN k^{\times} \to \CC^{\times}$ to define $\nu = \nu_{\psi,\mu}$. 
\vskip 10pt

The parameter of an $L$-packet is
\[ \varphi: WD \longrightarrow \GL(M) \times \GL(N) \]
with $M$ and $N$ conjugate-symplectic representations of even dimension. We define
\[  \chi = \chi_N \times \chi_{M(\mu^{-1})} = \chi_{N(\mu^{-1})} \times \chi_M: A_M \times A_N \longrightarrow \langle \pm 1\rangle. \]
Since the representations both have even dimension, the values of $\chi$ are independent  of the choice of $\psi_0$ used to define the epsilon factors.
\vskip 10pt

\begin{con} \label{conj-character}
Having fixed the Langlands-Vogan parameterization for the group $G$ and its pure inner forms in the various cases above, the unique representation $\pi$ in a generic Vogan packet $\Pi_{\varphi}$ which satisfies $\Hom_H(\pi \otimes \overline{\nu}, \CC) \ne 0$ has parameters 
\[  \pi = \pi(\varphi, \chi) \]
where $\chi$ is as defined above.
\end{con}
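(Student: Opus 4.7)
The plan is to first use the reduction machinery (Theorem \ref{T:reduction}), which rests on Theorems \ref{T:SOVPS} and \ref{T:FJPS}, to reduce Conjecture \ref{conj-character} to the two basic cases: $\dim W^{\perp} = 1$ (Bessel case with trivial $\nu$, so restriction to $G(W)$ alone) and $\dim W^{\perp} = 0$ (Fourier-Jacobi case with $W = V$ and $\nu$ the Weil representation). The reduction step requires verifying that the distinguished character $\chi = \chi_N \times \chi_M$ defined in \S \ref{S:char-comp} behaves correctly under parabolic induction: twisting $\pi_W$ (or $\pi_V$) by a supercuspidal $\tau$ of $\GL(X)$ to form $I(\tau,\pi_W)$, the representation of the larger group, and tracking which summand contributes on the open orbit in the Mackey analysis used in the proofs of \ref{T:SOVPS} and \ref{T:FJPS}. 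By the multiplicativity of local epsilon factors and the inductivity-in-degree-zero formulas of \S \ref{S:rootnumber}, the predicted $\chi$ on the larger $L$-packet, restricted to the component subgroup coming from $(\pi_V, \pi_W)$, must equal the predicted $\chi$ on the smaller pair; the required check is purely a formal calculation with $\varepsilon$-factors of tensor products $M \otimes \tau$ and $M \otimes \tau^{\vee}$.

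For the basic cases I would proceed by a double induction on $\dim V + \dim W$ using local theta correspondence as the main engine. In the Fourier-Jacobi basic case, a seesaw $(G(V), G(V)) \subset (G(V \oplus V^-), G(V))$ converts the Hom space involving $\omega_{\psi,\mu}$ into a restriction problem (of Bessel type, with $\dim W^{\perp} = 1$) for a pair of groups of a different type and comparable size; conversely, in the Bessel basic case, a seesaw converts the problem into a Fourier-Jacobi restriction for smaller groups. Alternating these two moves, together with Kudla-Rallis conservation, should collapse the problem either onto a ``ground case'' of very low rank (e.g.\ a torus, $\U(1) \times \U(1)$, or $\SO(3) \simeq \PGL_2$) where explicit character calculation is feasible, or onto the generic tempered representation of a quasi-split group, where the local Rankin-Selberg integral of [JPSS] in the linear case and its analogues via Whittaker models exhibit an explicit nonzero element of the Hom space.

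To verify that the character $\chi$ is exactly the $\varepsilon$-factor character predicted, one must track how the Langlands-Vogan parameter and its component-group character transform under a single theta lift. The inputs are: (a) a formula describing the theta lift at the level of $L$-parameters (Adams's conjecture with the refinements of M\oe glin), which amounts to tensoring $M$ with the parameter of the dualizing character $\mu$ and adjusting by a Weil-Deligne ${\rm Sym}^?$ factor; (b) the behaviour of local root numbers under tensor products, induction, and Asai-type operations (Proposition \ref{P:orthogonal-ep} and Lemma \ref{L:ind-tensor}); and (c) the fact that, in the seesaw identity used above, the element of $\Hom_{H}(\pi \otimes \overline{\nu},\CC)$ arising from a specific Vogan parameter on one side produces, on the other side, the Vogan parameter predicted by applying Theorem \ref{T:chiN1} and Theorem \ref{T:chiN2} verbatim, once the dictionary in (a) is applied. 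The compatibility of these three ingredients is what pins down $\chi$ uniquely.

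The principal obstacle is item (a): the precise image in a Vogan $L$-packet under local theta lifting in full generality (not just the stable range), including which pure inner form receives the nonzero lift. For tempered parameters outside the stable range this is governed by the delicate Kudla-Rallis conservation relations, and making the epsilon dichotomy match the predicted sign change in $\chi$ across the Witt tower is where the hardest computation lies. Secondary obstacles are the archimedean case, where local theta correspondence at the parameter level is less fully developed and one must substitute cohomological induction together with the Vogan-Zuckerman classification, and the identification of the ``ground case'' under the double induction, where one must show by hand that the predicted character agrees with the branching multiplicity. Once these inputs are in place, the induction itself is formal and the conjecture follows.
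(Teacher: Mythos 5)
You should be aware that the statement you are trying to prove is left open in the paper: Conjecture \ref{conj-character} is precisely that, a conjecture, and the authors prove no part of it beyond two reductions. What the paper does establish is (i) Theorem \ref{T:reduction}: over a non-archimedean field, Conjectures \ref{conj-mult1} and \ref{conj-character} follow from the basic cases $\dim W^{\perp} = 0$ or $1$, via Theorems \ref{T:SOVPS} and \ref{T:FJPS} together with the $\varepsilon$-factor identity $\chi_{N_{\tau} + {}^{\sigma}N_{\tau}^{\vee}} = 1$ from Proposition \ref{P:epsilon-WD}; and (ii) Theorem \ref{T:GLn}, settling the split case $k = k_0 \times k_0$ where packets are singletons and no character $\chi$ needs to be identified. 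Your first paragraph is essentially a correct paraphrase of (i), including the $\varepsilon$-factor compatibility check. For the basic cases themselves the paper offers no argument; it only records the expectation that Waldspurger's methods [Wa1], [Wa2] (integral formulas for multiplicities in terms of stable characters, i.e.\ a local trace formula approach) will settle the orthogonal and hermitian cases — a quite different engine from the theta-tower induction you propose.

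The genuine gap in your proposal is that its load-bearing input, your item (a), is not available and is not a formal matter: a precise description of the local theta lift at the level of Vogan parameters — including which pure inner form receives the nonzero lift, the epsilon dichotomy across the Witt tower, and the effect on the component-group character — is of essentially the same depth as Conjecture \ref{conj-character} itself, so invoking it makes the induction circular rather than a reduction. (The paper uses theta only in much weaker ways: Theorem \ref{T:KR} to \emph{define} metaplectic packets, and a seesaw in Theorem \ref{T:mult1}(iv) to deduce a multiplicity-\emph{bound}, neither of which identifies $\chi$.) Likewise your proposed ground case is not actually handled: the Rankin--Selberg/Whittaker-model argument only produces a nonzero functional on the $\theta$-generic member of the quasi-split group, i.e.\ the trivial character of $A_{\varphi}$; it says nothing about which member of the packet on the pure inner forms carries the Hom space for non-generic $\chi$, which is the whole content of the conjecture. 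Finally, note that Theorem \ref{T:reduction} is stated only for non-archimedean $k$, so even the reduction step of your plan is unavailable at archimedean places, where (as you note) the parameter-level theta correspondence you would need is also missing. In short: your outline is a reasonable research program, and its reduction step coincides with the paper's, but it does not constitute a proof, and the paper contains none to compare it against.
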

\vskip 15pt
It seems likely that the approach used by Waldspurger in [Wa1] and [Wa2] will lead to a proof of this precise local conjecture in the orthogonal and hermitian cases.
\vskip 15pt

\section{Compatibilities of local conjectures}  \label{S:compatible}
In this section, we  verify that the precise conjecture \ref{conj-character} is independent of:
\vskip 5pt

\begin{enumerate}
\item the bijection 
\[  J(\theta) : \Pi_{\varphi} \leftrightarrow \Hom(A_{\varphi}, \pm 1) \]
given by the choice of a generic character $\theta$ of $G$;
\vskip 5pt

\item the scaling of the form $\langle-,-\rangle$ on $V$ and hence $W$, which does not change the groups $G$ and $H$;
\vskip 5pt
\item the data needed to define the representation $\nu$ of $H$. 
\end{enumerate}
\vskip 5pt

\noindent This serves as a check on the internal consistency of the conjecture. Again, we consider the various cases separately. 
\vskip 10pt

In the orthogonal case, the generic character $\theta$ and the representation $\nu$ of $H$ are determined by the pair of spaces $W \subset V$ and are unchanged if the bilinear form on $V$ is scaled by $k^{\times}$. The character $\chi = \chi_N \times \chi_M$ depends only on the Langlands parameter $\varphi: WD \longrightarrow \Sp(M) \times \O(N)$. So our conjecture is internally consistent in this case, as there is nothing to check.
\vskip 10pt

In the hermitian case, both the generic character $\theta$ and  the character 
$\chi= \chi_N \times \chi_M$ depend on a choice of nontrivial $\psi_0 : k/k_0 \to \SS^1$, up to multiplication by $\NN k^{\times}$, while the representation $\nu$ is determined by the spaces $W \subset V$.  
On the other hand, if we scale the hermitian form on $W \subset V$ by an element of $k_0^{\times}$, 
the generic character $\theta$, the representation $\nu$ of $H$ and the character $\chi$ of the component group  are unchanged. 

\vskip 5pt

To see the dependence of our conjecture on the choice of $\psi_0$, 
suppose that  $t$ represents the nontrivial coset of $k_0/\NN k^{\times}$ and let $\theta^t$ be the generic character associated to $\psi_0^t(x) = \psi_0(tx)$.  For $(a,b) \in A_M \times A_N$, we have
\begin{align} 
\chi^t(a,b) &= \epsilon(M^a \otimes N, \psi_0^t) \cdot \epsilon(M \otimes N^b, \psi_0^t) \notag \\
&= \det M^a (t) \cdot \epsilon(M^a \otimes N, \psi_0) \cdot \epsilon(M \otimes N^b, \psi_0) \notag \\
&= (-1)^{\dim M^a} \cdot \chi(a,b) \notag \\
&= \eta(a) \cdot \chi(a,b)  \notag 
\end{align}
Here we have used the facts that $M$ is conjugate-symplectic of even dimension and 
$N$ is conjugate-orthogonal of odd dimension. 
\vskip 5pt

Now if the parameter of $\pi$ under $J(\theta)$ is $(\varphi, \chi)$, then its parameter under $J(\theta^t)$ is $(\varphi,\chi \cdot \eta) = (\varphi, \chi^t)$, according to the desiderata in \S \ref{S:vogan-classical}. Hence our conjecture is independent of the choice of  $\psi_0$ in the hermitian case.
\vskip 10pt

\vskip 10pt
In the symplectic case, we will discuss representations of $G = \ws(W) \times \Sp(V)$; the case of representations of $\Sp(W) \times \ws(V)$ is similar. In this case, we used the choice of an additive character $\psi: k \to \SS^1$, up to multiplication by $k^{\times 2}$, to 
\vskip 5pt

\begin{enumerate}[(i)]
\item  define the notion of  $L$-parameters $M$ for representations of $\ws(W)$;
\vskip 5pt

\item define a generic character $\theta$ for $\Sp(V)$ and a generic character $\widetilde{\theta}$ for $\ws(W)$;
\vskip 5pt

\item define the representation $\nu = \nu_{\psi}$ for $H$.
\end{enumerate}
Note, however, that the character $\chi$ of the component group $A_M$ is independent of the choice of $\psi$.

\vskip 5pt
Suppose that under the $\psi$-parameterization, the parameter $(M,N, \chi)$ corresponds to the representation 
$\widetilde{\pi}$ of $G$, so that our conjecture predicts that
\[  \Hom_H(\widetilde{\pi},  \nu_{\psi}) \ne 0. \]
Now replace the character $\psi$ by $\psi_c$ for $c \in k^{\times}/k^{\times 2}$ and let $\widetilde{\pi}'$  
be the representation of $G$ corresponding to  $(M ,N, \chi)$ under the $\psi_c$-parameterization.
By our construction of the Vogan parameterization for metaplectic groups, it is easy to see that $\widetilde{\pi}'$ is isomorphic to the conjugated representation $\widetilde{\pi}^c$. Thus, our conjecture for the character $\psi_c$ predicts that
\[  \Hom_H(\widetilde{\pi}^c, \nu_{\psi_c}) \ne 0 \quad \text{and hence} \quad  \Hom_H(\widetilde{\pi}, \nu_{\psi_c}^c) \ne 0. \]
Since $\nu_{\psi}^c = \nu_{\psi_c}$, our conjecture is internally consistent with respect to changing 
$\psi$.
\vskip 10pt

Note that the use of $\psi$ in (i) and (ii) above concerns the Vogan parameterization, whereas  its use in (iii) concerns the restriction problem in representation theory.  Hence there is no reason why one needs to use the same character $\psi$ for these two different purposes. 
\vskip 10pt

Suppose that one continues to use $\psi$ for the Vogan parameterization
 in (i) and (ii), but uses the character $\psi_c(x) = \psi(cx)$  to define the representation $\nu_{\psi_c}$ of 
$H$.  Then for a given Vogan packet of $G$ with $\psi$-parameter $\varphi$, one can ask which representation $\pi \in \Pi_{\varphi}$ satisfies $\Hom_H(\pi, \nu_{\psi_c}) \ne 0$. This can be answered using Conjecture \ref{conj-character}, together with Conjecture \ref{conj-meta} from 
\S \ref{S:metaplectic}. We have:
\vskip 10pt

\begin{prop}
Assume the conjectures \ref{conj-meta} and \ref{conj-character}. 
 Let 
\[  \varphi : WD \longrightarrow \Sp(M) \times \SO(N) \]
 be a generic Langlands parameter for $\ws(W) \times \Sp(V)$ relative to  the nontrivial additive character $\psi$ of $k$. Let 
 \[  N_c = N(c) \oplus  \CC  \quad \text{for $c \in k^{\times}/k^{\times 2}$} \]
 Then the unique representation $\pi$ in $\Pi_{\varphi}$  with $\Hom_H(\pi, \nu_{\psi_c}) \ne 0$ 
 corresponds under the bijection $J(\widetilde{\theta} \times \theta)$  to the restriction of the character
 \[  \chi_{N_c} \times \chi_M: A_M \times A_{N_c}^+ \longrightarrow \langle \pm 1\rangle \]
 to the subgroup  $A_{\varphi} = A_M \times A_N^+$, multiplied by the character 
 \[  \eta_c(a) = \det N^a(c) \quad \text{  of $A_N^+$}. \]
 \end{prop}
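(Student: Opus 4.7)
The plan is to apply Conjecture \ref{conj-character} with $\psi_c$ in place of $\psi$, and then translate the resulting character back to the $\psi$-parameterization using Conjecture \ref{conj-meta} on the metaplectic factor and the adjoint-action recipe of \S \ref{S:vogan-classical} on the symplectic factor. For clarity, I write $\chi_{N'}^{M'}$ for the character on the relevant component group attached to the pair of selfdual representations $(M',N')$ by Theorem \ref{T:chiN2}.

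First I would relabel the packet via Conjecture \ref{conj-meta}: the $\psi_c$-parameter of $\Pi_\varphi$ is $\varphi_c = (M(c), N)$. Applying Conjecture \ref{conj-character} with $\psi_c$, the unique $\pi$ satisfying $\Hom_H(\pi, \nu_{\psi_c}) \neq 0$ corresponds under $J(\widetilde\theta_{\psi_c} \times \theta_{\psi_c})$ to the restriction of the character $\chi_{N_1}^{M(c)} \times \chi_{M(c)}^{N_1}$ (with $N_1 = N \oplus \CC$) to $A_{M(c)} \times A_N^+$. Converting back to the $J(\widetilde\theta_\psi \times \theta_\psi)$-parameter means multiplying by $\eta[c]$ on the $A_M$-factor (Conjecture \ref{conj-meta}) and by $\eta_c$ on the $A_N^+$-factor (adjoint action of $c \in k^{\times}/k^{\times 2}$), since both characters have order two. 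Thus the $\psi$-parameter of $\pi$ on $A_M \times A_N^+$ is
\[
\bigl(\chi_{N_1}^{M(c)} \cdot \eta[c]\bigr) \times \bigl(\chi_{M(c)}^{N_1} \cdot \eta_c\bigr).
\]

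It remains to identify this with the character $\chi_{N_c}^M \times (\chi_M^{N_c} \cdot \eta_c)$ of the statement, which reduces to the two $\epsilon$-factor identities
\[
\chi_{N_1}^{M(c)}(a) \cdot \eta[c](a) = \chi_{N_c}^M(a) \quad (a \in A_M), \qquad \chi_{M(c)}^{N_1}(b) = \chi_M^{N_c}(b) \quad (b \in A_N^+).
\]
For the second identity, the identification $M \otimes N_c^b \cong M(c) \otimes N^b \cong (M \otimes N^b) \otimes \CC(c)$ makes the $\epsilon$-factors match, while the Hilbert symbol correction produced by $\det(N_c^b) = \det(N^b) \cdot \CC(c)^{\dim N^b}$ in Theorem \ref{T:chiN2} is trivial because $\dim M$ and $\dim N^b$ are both even. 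For the first identity, the decompositions $M(c)^a \otimes N_1 = (M^a \otimes N)(c) \oplus M^a(c)$ and $M^a \otimes N_c = (M^a \otimes N)(c) \oplus M^a$ share the common summand $(M^a \otimes N)(c)$, so the ratio of $\epsilon$-factors collapses to $\epsilon(M^a(c))/\epsilon(M^a)$. The explicit formula $\eta[c](a) = \epsilon(M^a) \cdot \epsilon(M^a(c)) \cdot (c,-1)^{\dim M^a/2}$ from Conjecture \ref{conj-meta}, combined with the Hilbert symbol contributions produced by $\det(M(c)^a) = \det(M^a) \cdot \CC(c)^{\dim M^a}$ and $\det(N_c) = \CC(c)^{\dim N} = \CC(c)$ (using that $\dim N$ is odd and $\det N = 1$), then reassembles exactly into the desired equation.

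The main obstacle is purely computational: correctly tracking all the Hilbert symbol exponents $(c,-1)^\bullet$ and $\CC(c)$-twists arising from the four determinant terms in Theorem \ref{T:chiN2} when shifting between $(M, N_c)$ and $(M(c), N_1)$, and matching these against the $(c,-1)^{\dim M^a/2}$ in the explicit formula for $\eta[c]$. The dimension parities ($\dim M$, $\dim N^b$, $\dim M^a$ all even and $\dim N$ odd) ensure that every exponent lies in the correct residue class mod $2$, so no conceptually new input beyond Conjectures \ref{conj-meta} and \ref{conj-character} and standard $\epsilon$-factor identities is required.
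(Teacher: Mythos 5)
Your proposal is correct and takes essentially the same route as the paper: both use Conjecture \ref{conj-meta} together with the adjoint-action twist by $\eta_c$ to pass between the $\psi$- and $\psi_c$-parameterizations, and then verify the same two $\epsilon$-factor identities via the decompositions $M(c)^a\otimes N_1 \cong (M^a\otimes N)(c)\oplus M^a(c)$ and $M^a\otimes N_c \cong (M^a\otimes N)(c)\oplus M^a$ together with the parity and determinant bookkeeping. The only difference is the (immaterial) direction of the translation: the paper starts from the candidate $\psi$-parameter and checks it matches Conjecture \ref{conj-character} at $\psi_c$, whereas you start from the $\psi_c$-prediction and convert back.
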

 \vskip 10pt
 
 \begin{proof}
 Let $\pi$ be the representation whose $\psi$-parameter is
$(M,N, \chi)$, where $\chi$ is as given in the proposition:
\[  \chi = \chi_{N_c} \times \chi_M \cdot \eta_c : A_M \times A_N^+ \longrightarrow \langle \pm 1\rangle. \]
We want to show that 
\[ \Hom_H(\pi, \nu_{\psi_c}) \ne 0. \]
By Conjecture \ref{conj-meta}, the $\psi_c$-parameters of $\pi$ are 
$(M(c), N, \chi')$ with
\[  \chi' = \chi_{N_c} \cdot \eta[c] \times \chi_M \cdot \eta_c^2. \]
Hence, it suffices to show that $\chi'$ is equal to the character predicted by Conjecture \ref{conj-character} relative to $\psi_c$. More precisely, we need to show that
\[  \chi' = \chi_{N_1} \times \chi_{M(c)}. \]
We now calculate this character on an element  
\[  (a',a) \in A_M \times A_N^+ = A_{M(c)} \times A_N^+, \]
using the fact that for $a$ in $C_N^+ \to C_{N_c}^+$, 
\[  N_c^a = N(c)^a. \]
Since this space has even dimension,
 \[  \det N_c^a =\det N^a = \det N_1^a. \]
 Hence we have
 \begin{align}
 \chi'(a',a) &= (\chi_{N_c} \cdot \eta[c])(a') \cdot \chi_M(a) \notag \\
 &= \epsilon(M^{a'} \otimes N_c) \cdot \epsilon(M^{a'} \otimes (\CC \oplus \CC(c))) \cdot \epsilon(M \otimes N_c^a) \cdot \det(N_c^a)(-1)^{\frac{1}{2} \cdot \dim M} \notag \\
 &= \epsilon(M^{a'} \otimes (N(c) + \CC(c))  \cdot \epsilon(M \otimes N^a(c)) \cdot \det(N^a(c))(-1)^{\frac{1}{2} \cdot  \dim M} \notag \\
 &= \epsilon(M(c)^{a'} \otimes N_1)  
 \cdot \epsilon(M(c) \otimes N_1^a) \cdot \det(N_1^a)(-1)^{\frac{1}{2} \cdot \dim M} \notag \\
 &= (\chi_{N_1} \times \chi_{M(c)})(a',a). \notag 
 \end{align}
 \vskip 5pt
 
 \noindent This proves the proposition.
 \end{proof}
 \vskip 10pt

Finally, we consider the skew-hermitian case with $W \subset V$ of even codimension. We consider the two cases: 
\begin{enumerate}
\item[(i)] $\dim W \equiv \dim V \equiv 1 \mod 2$;
\vskip 5pt

\item[(ii)] $\dim W \equiv \dim V \equiv 0 \mod 2$ 
\end{enumerate}
in turn.
\vskip 5pt

Assume first that $\dim W \equiv \dim V \equiv 1 \mod 2$ and the discriminant of $W$ and $V$ is a trace zero element $e \in k$. In this case, the Vogan parameterization is 
completely canonical, given the spaces $W \subset V$. However, the representation $\nu$ of $H$ depends not only on the spaces $W \subset V$ but also on
the choice of an additive character $\psi: k_0 \to \SS^1$ and on the choice of a multiplicative character 
$\mu: k^{\times} \to \CC^{\times}$ which is trivial on $\mathbb{N}(k^{\times})$ but nontrivial on $k_0^{\times}$. Thus, to be completely precise, we denote the group $H$ by $H_{W,V}$ and the representation $\nu$ by $\nu_{W,V, \psi,\mu}$.
Finally, the character $\chi$ of the component group is defined using $\mu$ and the additive character
\[  \psi_0(x) = \psi({\rm Tr}(ex))  \]
of $k$ and depends on $\psi$ up to multiplication by $\mathbb{N}k^{\times}$.
 
 \vskip 10pt

Suppose without loss of generality that the representation $\pi$ with parameter $(\varphi,\chi)$ is one for 
the group $G(W) \times G(V)$, so that our conjecture (for the character $\psi$) predicts that
\[  \Hom_{H_{W,V}}(\pi,  \nu_{W,V, \psi,\mu}) \ne 0.  \]
If $t$ represents the nontrivial coset of $k_0^{\times}/\mathbb{N}k^{\times}$, let $\chi_t$ be the character of $A_{\varphi}$ defined using the character
$\psi_t(x) = \psi(tx)$.  Then we have
\begin{align}
 \chi_t(a,b) &= \epsilon(M^a \otimes N(\mu^{-1}), \psi_0^t) \cdot \epsilon(M \otimes N(\mu^{-1})^b, \psi_0^t) \notag \\
&= (-1)^{\dim M^a}  \cdot (-1)^{\dim N^b} \cdot \chi(a,b) \notag \\
&= \eta(a,b)  \cdot \chi(a,b). \notag
\end{align}
Thus, the representation $\pi_t$ indexed by the character $\chi_t$ is one for the pure inner form $G(W') \times G(V')$. Moreover, the spaces
 $W' \subset V'$ are simply the spaces $tW \subset tV$ obtained from $W \subset V$ by scaling the skew-hermitian forms by $t$. 
Thus, our conjecture (for the character $\psi_t$) predicts that
 \[  \Hom_{H_{tW,tV}}(\pi_t,  \nu_{tW,tV, \psi_t, \nu}) \ne 0. \] 
To see that this is equivalent to the prediction of our conjecture for the character $\psi$, note that $G(W') \times G(V')$ is canonically identified with $G(W) \times G(V)$ as a subgroup of $\GL(W) \times \GL(V)$ and under this identification, one has
$\pi_t = \pi$. Moreover,  we also have $H_{W',V'} = H_{W,V}$ as subgroups of $G(W) \times G(V)$ and 
\[   \nu_{tW, tV, \psi_t, \mu} = \nu_{W,V, \psi, \mu}. \]
This proves that our conjecture is internally consistent with changing $\psi$.
\vskip 5pt

On the other hand, if we replace $\mu$ by $\mu'$, then $\mu' = \mu  \cdot \mu_0$ for some character 
$\mu_0: k^{\times}/ k_0^{\times} \to \CC^{\times}$. Moreover, we have [HKS]
\[  \nu_{\mu', \psi} \cong \mu_0  \cdot \nu_{\mu, \psi}. \]
Hence 
\[ \Hom_H(\pi \otimes \overline{\nu}_{\mu',\psi}, \CC) \cong \Hom_H((\pi \cdot \mu_0^{-1}) \otimes \overline{\nu}_{\mu, \psi}, \CC). \]
Now our conjecture for $\mu'$ says that the left hand side
 of the above is nonzero if and only if $\pi$ has Vogan parameter $(M,N, \chi_{M,N, \mu'})$ with
\begin{align}
  \chi_{M,N, \mu'}(a,b) &= \epsilon(M^a \otimes N({\mu'}^{-1}),\psi)\cdot \epsilon(M \otimes N({\mu'}^{-1})^b, \psi)   \notag \\
   &= \epsilon(M^a \otimes (N \cdot \mu_0^{-1})(\mu^{-1}), \psi) \cdot \epsilon(M \otimes (N \cdot \mu_0^{-1})(\mu^{-1})^b, \psi)\notag \\
 &= \chi_{M, N(\mu_0^{-1}), \mu}(a,b). \notag  
\end{align}
On the other hand, our conjecture for $\mu$ says that the right hand side 
 is nonzero if and only if $\pi \cdot \mu_0^{-1}$ has Vogan parameter 
$(M, N(\mu_0)^{-1}, \chi_{M,N(\mu_0^{-1}), \mu})$. 
Thus, our conjecture for $\mu'$ is equivalent to that for $\mu$. 
 \vskip 10pt
 
 Finally, consider the case when $\dim W \equiv \dim V \equiv 0 \mod 2$. In this case, we need the additive character $\psi: k_0 \to \SS^1$ to specify the Vogan parameterization, and both $\psi$ and $\mu$ to define the representation $\nu_{W,V, \psi,\mu}$. The character $\chi$, on the other hand, is independent of $\psi$ but depends on $\mu$. 
 \vskip 5pt
 
 Suppose that under the $\psi$-Vogan parameterization, the representation $\pi$ corresponding to the character $\chi$ is one for the group $G(V) \times G(W)$, so that our conjecture for $\psi$ predicts that
 \[  \Hom_{H_{W,V}}(\pi \otimes \overline{\nu}_{W,V, \psi,\mu}, \CC) \ne 0. \] 
 If we replace the additive character $\psi$ by $\psi_t$ with $t \in k_0^{\times}$ but 
 $t \notin \mathbb{N} k^{\times}$, then under the $\psi_t$-Vogan parameterization, the character $\chi$
 corresponds to the conjugated representation $\pi^t$ (using an element in the similitude group with  similitude $t$). So our conjecture for $\psi_t$ predicts that
 \[  \Hom_{H_{W,V}}(\pi^t \otimes \overline{ \nu}_{W,V,\psi_t, \mu}, \CC) \ne 0. \]
 But one may check that
 \[  \nu_{W,V, \psi_t, \mu} \cong \nu_{W,V,\psi,\mu}^t, \]
 so that the two predictions (for $\psi$ and $\psi_t$) are consistent with each other. 
 The consistency with changing $\mu$ is similar to the analogous situation  treated above.
 \vskip 5pt

  %%%%%%%%%%%%%%%%%%%%%%%%%%%%%%%%%%%%%%%%%%%%%%%
 
 \section{Reduction to basic cases} \label{S:reduction}
 
 In this section, we shall show:
\vskip 5pt

\begin{thm} \label{T:reduction}
Assume that $k$ is a non-archimedean local field. Then
Conjectures \ref{conj-mult1} and  \ref{conj-character} follow from  the basic cases where $\dim W^{\perp} = 0$ or $1$. 
\end{thm}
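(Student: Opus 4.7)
The strategy is to combine the conjectures in the basic cases ($\dim W^{\perp} = 0$ or $1$) with the identities of Hom spaces established in Theorems \ref{T:SOVPS} and \ref{T:FJPS} (and the skew-hermitian analog of Theorem \ref{T:FJPS} alluded to in Corollary \ref{C:skew-hermitian}). These theorems were designed precisely to reduce a general Bessel or Fourier--Jacobi restriction problem, with codimension $2n+1$ or $2n$ respectively, to a basic one on a larger ambient space.

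I will describe the orthogonal/hermitian case in detail; the remaining cases are parallel. Assume $W \subset V$ has $\dim W^{\perp} = 2n+1 \geq 3$, and fix a generic Vogan $L$-packet $\Pi_{\varphi_V} \times \Pi_{\varphi_W}$ for $G(V) \times G(W)$. Form $W' = V \oplus E^-$ as in \S\ref{S:bessel}, so that $V \subset W'$ has codimension one, and choose a supercuspidal representation $\tau$ of $\GL(Y)$ with Langlands parameter $M_\tau$, where $\dim Y = n+1$. For $M_\tau$ sufficiently generic (which is easy to arrange over a non-archimedean field), $I(\tau,\pi_W)$ is irreducible for every $\pi_W \in \Pi_{\varphi_W}$, lies in a generic $L$-packet with parameter
\[  \varphi_{W'} = M_\tau \oplus \varphi_W \oplus M_\tau^{\vee} \]
(or the conjugate-dual analog for hermitian $V$), and no $\pi_V^\vee$ with $\pi_V \in \Pi_{\varphi_V}$ belongs to the Bernstein component excluded by Theorem \ref{T:SOVPS}. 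Since $M_\tau \oplus M_\tau^\vee$ contributes only a connected $\GL$-factor to the centralizer, the natural map $A_{\varphi_W} \to A_{\varphi_{W'}}$ is an isomorphism, and a standard Jacquet-module computation identifies the Vogan character attached to $I(\tau,\pi_W)$ with the character attached to $\pi_W$ via this isomorphism.

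Applying Theorem \ref{T:SOVPS}, one obtains
\[  \Hom_H(\pi_V \otimes \pi_W, \nu) \;\cong\; \Hom_{G(V)}\bigl(I(\tau,\pi_W) \otimes \pi_V, \CC\bigr), \]
so the restriction problem for $W \subset V$ is equivalent to a codimension-one Bessel problem for $V \subset W'$. Granting Conjectures \ref{conj-mult1} and \ref{conj-character} in this basic case yields a unique distinguished pair $(\pi_V, I(\tau,\pi_W))$ whose parameter $(\chi_V, \chi_{W'})$ is given by the recipe of \S\ref{S:local-conj}. It remains to verify that this distinguished pair corresponds, under the identification $A_{\varphi_{W'}} \simeq A_{\varphi_W}$, to the pair predicted for the original problem. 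By the multiplicativity $\chi_{N_1 \oplus N_2} = \chi_{N_1}\cdot \chi_{N_2}$ from Theorems \ref{T:chiN1} and \ref{T:chiN2}, the ratio of the two predicted characters on $A_{\varphi_V}$ is the character $\chi_{M_\tau \oplus M_\tau^\vee}$, whose value on $a \in A_{\varphi_V}^+$ is controlled by $\epsilon(\varphi_V^a \otimes (M_\tau \oplus M_\tau^\vee))$. By Proposition \ref{P:epsilon-WD}, this root number equals $\det(\varphi_V^a \otimes M_\tau)(-1)$ in the orthogonal case and $1$ in the conjugate-dual case; combined with the half-dimension correction factors $\det(M^a)(-1)^{\dim N/2} \cdot \det(N)(-1)^{\dim M^a/2}$ built into the definition of $\chi_N$ (Theorem \ref{T:chiN2}), and using that $\det(a|\varphi_V) = +1$ forces $\dim \varphi_V^a$ to be even when $a \in A_{\varphi_V}^+$, all correction terms cancel.

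The same strategy applies in the symplectic case via Theorem \ref{T:FJPS}, reducing the general Fourier--Jacobi problem for $W_0 \subset W$ to the codimension-zero problem on $\widetilde{\Sp}(W) \times \Sp(W)$, with $I_\psi(\tau,\pi_0)$ playing the role of the enlarged representation; the skew-hermitian case is analogous, using the unitary analog of Theorem \ref{T:FJPS}. The main obstacle throughout is the bookkeeping needed for the character comparison: one must carefully track the half-dimension exponents in Theorem \ref{T:chiN2}, the selfdual versus conjugate-dual distinctions of Proposition \ref{P:epsilon-WD}, the dependence on the auxiliary character $\mu$ in the skew-hermitian case, and the $\psi$-dependence of the metaplectic Vogan parameterization governed by Conjecture \ref{conj-meta}. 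The internal consistency checks of \S\ref{S:compatible} serve as a valuable guide. A secondary, more routine obstacle is to produce a supercuspidal $\tau$ of sufficient depth to guarantee both irreducibility of the induced representation and avoidance of the Bernstein exclusions in Theorems \ref{T:SOVPS} and \ref{T:FJPS}, which is standard over a non-archimedean field.
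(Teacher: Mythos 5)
Your proposal is correct and follows essentially the same route as the paper: enlarge $W$ to $W' = V \oplus E^-$, induce from a sufficiently generic supercuspidal $\tau$ so that $\{I(\tau,\pi_W)\}$ forms the Vogan packet of $N' = N_\tau + N + {}^\sigma N_\tau^\vee$ with $A_{N'} \cong A_N$, invoke Theorems \ref{T:SOVPS} and \ref{T:FJPS} to transfer the Hom spaces, and kill the discrepancy $\chi_{N_\tau + {}^\sigma N_\tau^\vee}$ on $A_M^+$ via Proposition \ref{P:epsilon-WD}. Your explicit tracking of the half-dimension correction factors and the parity of $\dim M^a$ for $a \in A_M^+$ just spells out the cancellation that the paper asserts in one line.
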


\vskip 5pt

\begin{proof}
As we shall explain, this is a simple consequence of Theorems \ref{T:SOVPS} and \ref{T:FJPS}. We treat the two cases separately.
 
 \vskip 10pt
 
We first consider the orthogonal and hermitian cases. 
Suppose that $W \subset V$ with 
\[  W^{\perp} = X + X^{\vee} + E \]
where $X = \langle v_1,\cdots ,v_n \rangle$ is nonzero isotropic  and $E$ is a non-isotropic line. 
 Let $M$ and $N$ be $L$-parameters for $G(V)$ and $G(W)$ respectively. 
 We would like to verify Conjectures \ref{conj-mult1} and \ref{conj-character} for 
 the associated Vogan packet $\Pi_M \times \Pi_N$ of $G = G(V) \times G(W)$. We shall exploit 
 Theorem \ref{T:SOVPS} for this purpose.
 \vskip 10pt
 
 Recall the setting of Theorem \ref{T:SOVPS}, where we have set
 \[  W' = V \oplus (-E)  = V \oplus k \cdot f \]
and 
\[  Y = \langle v_1, \cdots, v_n, v_{n+1} \rangle  \]
with $v_{n+1} = e+f$. Then we have
\[  W' = Y + Y^{\vee} + W. \]
 Let $\tau$ be an irreducible supercuspidal representation of $\GL(Y)$ with $L$-parameter $N_{\tau}$. 
 We may assume that for any $\pi_W \in \Pi_N$, the induced representation 
 \[  I(\tau, \pi_W)  = {\rm Ind}_{P(Y)}^{G(W')} (\tau \boxtimes \pi_W) \]
 is irreducible. Then the set 
 \[  \{ I(\tau,\pi_W) : \pi_W \in \Pi_N \} \]
 is simply the Vogan packet associated to the parameter 
 \[  N' =N_{\tau} + N + {^\sigma}N_{\tau}^{\vee}. \]
 Moreover, there is a canonical isomorphism
 \[  A_N \cong A_{N'} \]
and the representations $\pi_W$ and $I(\tau,\pi_W)$ are indexed by the same character of $A_N^+ \cong A_{N'}^+$.
\vskip 10pt

We may further assume that $\tau$ is chosen so that the conditions of Theorem \ref{T:SOVPS} are met.
Then by Theorem \ref{T:SOVPS}, we see that for any $\pi_V \in  \Pi_M$, 
\[ \Hom_{G(V)}(I(\tau, \pi_W) \otimes \pi_V, \CC) = \Hom_H(\pi_V \otimes \pi_W, \CC). \]
Thus, Conjecture \ref{conj-mult1} holds for $\Pi_M \times \Pi_N$ if it holds for $\Pi_{N'} \times \Pi_M$. To see that the same implication holds for Conjecture \ref{conj-character}, it suffices to check that
the character 
\[  \chi_N \times \chi_M \quad \text{of $A^+_M \times A^+_N$} \]
agrees with the character
\[  \chi_{N'} \times \chi_M \quad \text{ of $A_M^+ \times A^+_{N'}$.} \]
For $a \in A^+_M$, we see from definition that
\[   \chi_{N'}(a) = \chi_N(a) \cdot \chi_{N_{\tau} + {^{\sigma}}N_{\tau}^{\vee}}(a), \]
 and it follows by Proposition \ref{P:epsilon-WD}
 \[  \chi_{N_{\tau} + {^{\sigma}}N_{\tau}^{\vee}}(a) = 1 \quad \text{for any $a \in A_M^+$.} \]
 This establishes Theorem \ref{T:reduction} in the orthogonal and hermitian cases.
 \vskip 15pt
 
The symplectic and skew-hermitian cases are handled in a similar way, using  Theorem \ref{T:FJPS}; we omit the details.
\end{proof}

 %In this case, we suppose that 
 %\[  V = W + X +X^{\vee} \]
 %where $X$ is nonzero isotropic. Let $M$ and $N$ be $L$-parameters for $G(V)$ and $G(W)$ respectively, with associated Vogan packets $\Pi_M$ and $\Pi_N$.
 %To verify Conjectures \ref{conj-mult1} and \ref{conj-character}, we shall appeal to Theorem \ref{T:FJPS}.
 %\vskip 10pt
 
 %Let $\tau$ be a supercuspidal representation of $\GL(X)$ with $L$-parameter $N_{\tau}$  so that, for any $\pi_W \in \Pi_N$, the induced representation 
 %\[  I(\tau, \pi_W) = {\rm Ind}_{P(X)}^{G(V)} \tau \boxtimes \pi_W \]
 %is irreducible. Then these representations are precisely the elements of the Vogan packet for $G(V)$
 %with $L$-parameter
 %\[  N' = N_{\tau} + N + {^\sigma}N_{\tau}^{\vee}, \]
 %and as above, one has a natural isomorphism $A_N \cong A_{N'}$.
 %By Theorem \ref{T:FJPS}, Conjecture \ref{conj-mult1} for $\Pi_M  \times \Pi_{N'}$ implies that for $\Pi_M \times \Pi_N$. Moreover, to obtain the the same implication for  Conjecture 
%\ref{conj-character}, it suffices to show that the character
%\[  \chi_N \times \chi_M \quad \text{of $A_M \times A_N$} \]
%agrees with the character
%\[  \chi_{N'} \times \chi_M \quad \text{ of $A_M \times A_{N'}$.} \]

 %%%%%%%%%%%%%%%%%%%%%%%%%%%%%%%%%%%%%%%%%%%%%%

 \section{Variant of the local conjecture}  \label{S:variant}

 In this section, we give a variant of the local conjecture \ref{conj-character}.  This variant does not require  the precise parameterization of the members of a Vogan $L$-packet by the characters of the component group, which can be a very delicate issue.  This conjecture is typically what is checked in practice. 
 
 \vskip 15pt
 
 Suppose that $W \subset V$ and we are given an $L$-parameter $M$ of $G(V)$, so that $M$ is a selfdual or conjugate-dual representation of the Weil-Deligne group $WD$ of $k$ with a given sign. 
 As described in \S \ref{S:centralizer}, the component group $A_M$ is an elementary abelian 2-group 
 with a canonical basis $\{a_i\}$, indexed by the distinct isomorphism classes of irreducible summands $M_i$ of $M$ which are of the same type
 as $M$. Hence, we have a canonical isomorphism
 \[  A_M \cong \Z/2\Z \cdot a_1 \times \cdots \times \Z/2\Z \cdot a_k. \]
Here, the elements $a_i$ are such that 
 \[  M^{a_i} \cong M_i. \]
\vskip 10pt

Now consider a representation $\pi$ of $G(V)$ in the Vogan packet $\Pi_M$. Let $\mathcal{L}_W(\pi)$ denote
 the set of generic $L$-parameters $N$ for $G(W)$ such that
 \[  d(\pi, N): = \sum_{\pi' \in \Pi_N} \dim \Hom_H(\pi \boxtimes \pi', \nu) \ne 0. \]
According to our Conjecture \ref{conj-mult1}, one has a partition
\[  \{ \text{generic $L$-parameters of $G(W)$} \} = \bigcup_{\pi \in  \Pi_M} \mathcal{L}_W(\pi). \]
In this context, we have the following variant of Conjecture \ref{conj-character}.
\vskip 10pt

\begin{con} \label{C:refined2}

\noindent (1) Fix $\pi \in \Pi_M$.
For any two $L$-parameters $N$ and $N'$ in $\mathcal{L}_W(\pi)$, we have:
\vskip 5pt
\begin{enumerate}[(i)]
\item in the orthogonal case,
\[  \epsilon (M_i \otimes N, \psi)= \epsilon (M_i \otimes N',\psi) \in \{ \pm 1 \} \quad \text{for any $i$}, \]
where $\psi$ is any nontrivial additive character of $k$;
\vskip 5pt

\item in the unitary case, 
\[  \epsilon(M_i \otimes N, \psi) = \epsilon (M_i \otimes N',\psi) \in \{ \pm 1\} \quad \text{for any $i$}, \]
for any nontrivial additive character $\psi$ of $k/k_0$.

\vskip 5pt

\item in the symplectic case,  with $\nu  = \nu_{\psi}$,
\[  \epsilon(M_i \otimes N, \psi) = \epsilon(M_i \otimes N' ,\psi) \in \{ \pm 1\} \quad \text{for any $i$}. \]
 \vskip 5pt
 
 \item in the skew-hermitian case, with $\nu = \nu_{\psi, \mu}$, 
 \[  \epsilon(M_i \otimes N(\mu^{-1}), \psi) = \epsilon(M_i \otimes N'(\mu^{-1}),\psi) \in \{ \pm 1\} \quad \text{for any $i$}, \] 
where $\psi$ is any nontrivial additive character of $k/k_0$.
 \end{enumerate}
 
 \vskip 10pt
 
 In particular, $\pi$ determines a character $\chi_{\pi}$ on $A_M$, defined by
 \[  \chi_{\pi}(a_i) = \epsilon (M_i \otimes N, \psi) \]
 for any $N \in \mathcal{L}_W(\pi)$ and a fixed $\psi$ appropriate for each of the cases above. 
 
\vskip 10pt

\noindent (2) The map $\pi \mapsto \chi_{\pi}$
gives a bijection (depending on the choice of a character $\psi$) 
 \[  \Pi_M \longleftrightarrow \text{Irr}(A_M). \]
\vskip 10pt

\noindent (3) One has the analogs of (1) and (2) above with the roles of $V$ and $W$ exchanged.
 
 \end{con}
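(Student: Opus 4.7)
The plan is to derive Conjecture \ref{C:refined2} as a direct formal consequence of Conjecture \ref{conj-character} (together with Conjecture \ref{conj-mult1}), rather than to attempt an independent proof. The point is that Conjecture \ref{conj-character} describes the character of $A_\varphi = A_M \times A_N$ (or the appropriate subgroup) attached to the unique relevant pair $(\pi,\pi')$ with $\Hom_H(\pi \boxtimes \pi',\nu) \ne 0$ as a product $\chi_N \times \chi_M$, and the key observation is that the factor $\chi_N$, viewed as a character of $A_M$, evaluates on the canonical basis element $a_i$ to precisely the epsilon factor $\epsilon(M_i \otimes N,\psi_0)$ appearing in the statement (up to the correction factors occurring in Theorem \ref{T:chiN2} in the selfdual case, which involve only determinants of $M^{a_i}$ and $N$, hence only the parity of $\dim M_i$, and so are manifestly independent of $N$ when the signs are fixed).

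First, I would fix $\pi \in \Pi_M$ and let $\chi_\pi \in \Hom(A_M, \pm 1)$ denote the character attached to $\pi$ under the Vogan bijection $J(\theta) : \Pi_M \to \mathrm{Irr}(A_M)$. For any $N \in \mathcal{L}_W(\pi)$, Conjecture \ref{conj-mult1} produces a unique $\pi' \in \Pi_N$ with nonzero Hom-space, and Conjecture \ref{conj-character} identifies the Vogan parameter of $(\pi,\pi')$ as the character $\chi_N \times \chi_M$ (appropriately restricted). Projecting onto the first factor yields the equality $\chi_\pi = \chi_N|_{A_M}$ in $\Hom(A_M,\pm 1)$. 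Evaluating both sides at the canonical basis element $a_i$ — for which $M^{a_i} = M_i$ — and using the definitions of $\chi_N$ from \S\ref{S:char-comp} in Theorems \ref{T:chiN1} and \ref{T:chiN2}, gives
\[
\chi_\pi(a_i) = \epsilon(M_i \otimes N, \psi) \cdot (\text{determinant correction depending only on } M_i, N),
\]
with the correction absent in the conjugate-dual cases and depending only on $\dim M_i$ and $\det N$ in the selfdual cases. Since the right-hand side must equal $\chi_\pi(a_i)$, which is independent of $N$, one obtains the equalities asserted in part (1) for any $N, N' \in \mathcal{L}_W(\pi)$. Part (2) is then immediate: the map $\pi \mapsto \chi_\pi$ is by definition the Vogan bijection $J(\theta)$. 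Part (3) follows by the same argument with the roles of $M$ and $N$ interchanged, using that the character $\chi_N \times \chi_M$ of $A_M \times A_N$ is symmetric in the relevant sense.

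The routine verifications are (a) matching the choice of $\psi$ in each of the four cases (orthogonal, unitary, symplectic, skew-hermitian) with the recipe for $\psi_0$ fixed in \S\ref{S:local-conj}, and (b) in the selfdual cases, checking that the determinant corrections $\det(M^{a_i})(-1)^{\dim N/2} \cdot \det(N)(-1)^{\dim M_i/2}$ are absorbed into the statement of the conjecture — in the symplectic/orthogonal cases they are independent of $N$ at the level of $\mathcal{L}_W(\pi)$, because all $N \in \mathcal{L}_W(\pi)$ share the same dimension and determinant (both being constrained by the type of the $L$-parameter). A further routine check is that one must work with $A_M^+$ rather than $A_M$ in the symplectic/odd-orthogonal cases; here the basis elements $a_i$ (corresponding to $M_i$ of the same type as $M$) automatically lie in $A_M^+$ when $\dim M_i$ is even, which is the only case in which $M_i$ contributes to $A_M^+$.

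The principal obstacle, of course, is not this deduction but the input: Conjecture \ref{conj-character} is itself only known in limited cases. Consequently the most interesting aspect of Conjecture \ref{C:refined2} is not its proof conditional on \ref{conj-character} — which is essentially bookkeeping — but the observation that it is often the formulation amenable to verification in practice, since it requires only that one identify $\mathcal{L}_W(\pi)$ and compute a list of epsilon factors, without having to pin down the fine combinatorics of the Vogan bijection. The genuinely difficult step, should one wish to prove \ref{C:refined2} independently of \ref{conj-character}, would be to show directly that the function $N \mapsto \epsilon(M_i \otimes N,\psi)$ is constant on $\mathcal{L}_W(\pi)$; this appears to require essentially the same input as \ref{conj-character} itself, namely a substantial piece of the local Langlands correspondence together with the character-theoretic methods of Waldspurger [Wa1], [Wa2].
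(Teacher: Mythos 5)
First, a framing point: the statement you are addressing is Conjecture \ref{C:refined2}; the paper gives no proof of it, so the only reasonable ``proof'' is precisely what you attempt, namely a formal deduction from Conjectures \ref{conj-mult1} and \ref{conj-character}. Your deduction is correct in the hermitian and skew-hermitian cases, and also when $M$ is the parameter of the metaplectic member in the symplectic case: there the relevant component group is the full $A_M$, the character furnished by Conjecture \ref{conj-character} is $\chi_N$ (resp.\ $\chi_{N(\mu^{-1})}$, resp.\ $\chi_{N_1}$ with $N_1 = N \oplus \CC$), and the discrepancy with the bare epsilon factors is either absent or a factor (such as $a \mapsto \epsilon(M^a,\psi)$, or a determinant twist involving only $\dim N$ and $\det N$, both constant on $\mathcal{L}_W(\pi)$) that does not depend on $N$; this matches the remark the paper itself makes immediately after the conjecture.

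In the orthogonal and symplectic cases, however, your reduction has a genuine gap, and it is exactly the one signalled by the paper's sentence that ``in the orthogonal and symplectic cases, these two characters may differ.'' Conjecture \ref{conj-character} specifies a character only of $A_{\varphi}$, and in these cases $A_{\varphi}$ involves $A_M^+$: the character $\chi_N$ of Theorem \ref{T:chiN2} is defined only on $A_M^+$. Conjecture \ref{C:refined2}, by contrast, asserts the constancy of $\epsilon(M_i \otimes N, \psi)$ for \emph{every} summand $M_i$ of the same type as $M$, including those of odd dimension, whose basis elements $a_i$ do not lie in $A_M^+$. Your parenthetical claim that the relevant $a_i$ ``automatically lie in $A_M^+$'' is incorrect: the canonical basis of $A_M$ is indexed by all such $M_i$, and odd-dimensional ones occur (necessarily so when $M$ is orthogonal of odd dimension, i.e.\ on the $\Sp$ side, and possibly when $M$ is orthogonal of even dimension). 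For these $a_i$, Conjecture \ref{conj-character} says nothing about $\epsilon(M_i \otimes N,\psi)$, so its constancy on $\mathcal{L}_W(\pi)$ cannot be extracted by bookkeeping; it is strictly additional content. The same issue defeats the claim that part (2) is ``immediate'': when $A_M^+ \subsetneq A_M$ (e.g.\ $G(V) = \Sp(V)$, or $\SO(V)$ with $\dim V$ even and some $M_i$ of odd dimension), the Vogan bijection identifies $\Pi_M$ with ${\rm Irr}(A_M^+)$, which has half the cardinality of ${\rm Irr}(A_M)$, so $\pi \mapsto \chi_{\pi}$ cannot ``by definition'' be a bijection onto ${\rm Irr}(A_M)$ (and when every $M_i$ has even dimension, two inequivalent parameters share the same orthogonal $M$, so $\Pi_M$ itself must be interpreted with care). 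The bijectivity in (2) encodes genuinely finer information --- roughly, that the epsilon data attached to the restriction problem separates representations which characters of $A_M^+$ cannot, such as a representation of an even orthogonal group from its $\O(V)$-conjugate --- and this is precisely why the paper states Conjecture \ref{C:refined2} as a separate conjecture rather than as a corollary of Conjecture \ref{conj-character}. Your closing observation, that an unconditional proof would require the same character-theoretic input as \ref{conj-character} itself, is reasonable, but the conditional derivation as written establishes only the restriction of the statement to $A_M^+$ in the selfdual cases.
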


 \vskip 10pt

The above conjecture says effectively that one can 
exploit the restriction problem for $W \subset V$ and use the collection of epsilon factors  described above to serve as parameters for elements of $\Pi_M$. 
In the hermitian and skew-hermitian cases, the character $\chi_{\pi}$ associated to a given $\pi \in  \Pi_M$ is equal to the character $\chi_N$ of $A_M$ defined in \S \ref{S:char-comp}, 
for any $N \in \mathcal{L}_W(\pi)$, provided the additive character $\psi$ is appropriately chosen. In the orthogonal and symplectic cases, these two characters may differ.

\vskip 15pt
 
 %%%%%%%%%%%%%%%%%%%%%%%%%%%%%%%%%%%%%%%%%

\section{Unramified parameters}  \label{S:unramified}

In this section, we assume that $k$ is a non-archimedean local field with ring of integers $A$, uniformizing element $\varpi$ and finite residue filed $A/\varpi A$. We will also assume that $A/\varpi A$ 
has characteristic $p>2$, so that the group $A^{\times}/A^{\times 2}$ has order 2.  
\vskip 5pt

In the case when $k$ has a nontrivial involution $\sigma$, we will assume that the action of $\sigma$ on $A/\varpi A$ is also nontrivial. Then $k$ is unramified over $k_0$ and every unit in the subring $A_0$ of $A$ fixed by $\sigma$ is the norm of a unit of $A$. 
\vskip 5pt

In addition, we will only consider additive characters $\psi$ of $k$ which are trivial on $A$ but not on $\varpi^{-1}A$. Then $\psi$ is determined up to translation by a unit in $A$. If we insist that 
$\psi^{\sigma} = \psi^{\pm 1}$, then $\psi$ is determined up to translation by a unit in $A_0$.
We call such additive characters of $k$ unramified.

\vskip 5pt

Let $WD = W(k) \times \SL_2(\CC)$ be the Weil-Deligne group of $k$. A representation 
\[  \varphi : WD \longrightarrow \GL(M) \]
is {\em unramified} if $\varphi$ is trivial on $\SL_2$ and on the inertia subgroup $I$ of $W(k)$.
An unramified representation is determined by the semisimple conjugacy class $\varphi(F)$ in $\GL(M)$. Let $\CC(s)$ denote the one dimensional unramified representation with $\varphi(F) = s \in \CC^{\times}$.
Then any unramified representation $M$ is isomorphic to a direct sum of the form
\[  M = \bigoplus_{i=1}^n \CC(s_i), \quad \text{with $n = \dim M$.} \]
We now determine which unramified representations of $WD$ are selfdual or conjugate-dual (with respect to the unramified involution $\sigma$ of $k$).
\vskip 10pt

\begin{prop}
Assume that $M$ is an unramified representation of $WD$ and is either selfdual or conjugate-dual. 
Then $M$ is isomorphic to a direct sum of the form:
\[  M \cong \oplus_i (\CC(s_i) +\CC(s_i^{-1}))  \oplus  m\cdot \CC(-1) \oplus  n \cdot \CC(1), \]
with $s_i \ne s_i^{-1}$ in $\CC^{\times}$   and $m,n \geq 0$ in $\Z$. 
\vskip 5pt

If $M$ has this form, then we have:
\vskip 5pt

\begin{enumerate}[(i)]

\item $M$ is orthogonal and its centralizer in $\SO(M)$ has component group 
\[  A_M^+ = \begin{cases} 
\Z/2\Z, \text{  if both $m,n > 0$,} \\
1, \text{  otherwise.} \end{cases} \]
\vskip 5pt

\item $M$ is symplectic if and only if  $m \equiv n \equiv 0 \mod 2$, in which case its centralizer in $\Sp(M)$ has component group
\[  A_M = 1. \]
\vskip 5pt

\item $M$ is conjugate-orthogonal if and only if  $m \equiv 0 \mod 2$, in which case its centralizer in $\Aut(M,B)$ has component group
\[ A_M = \begin{cases} 
\Z/2\Z, \text{  if $n > 0$,} \\
1, \text{  otherwise.} \end{cases} \]

\vskip 5pt

\item $M$ is conjugate-symplectic if and only if  $n \equiv 0 \mod 2$, in which case its centralizer in $\Aut(M,B)$ has component group
\[  A_M = \begin{cases} 
\Z/2\Z, \text{  if $m > 0$,} \\
1, \text{  otherwise.} \end{cases} \] 
\end{enumerate}

\end{prop}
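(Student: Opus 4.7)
The plan is to first reduce both the selfdual and conjugate-dual problems to the single condition $M\cong M^\vee$, then read off the decomposition, and finally invoke the general structure of \S\ref{S:centralizer} for each of the four sign-types separately.

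First, I would show that for any unramified one-dimensional representation, $\CC(s)^\sigma\cong\CC(s)$. Indeed, $\CC(s)$ is trivial on the inertia subgroup $I\lhd W(k)$ and $s_0 I s_0^{-1}=I$ for any $s_0\in W(k_0)\smallsetminus W(k)$, so it suffices to compute $\chi^\sigma(F)=\chi(s_0 F s_0^{-1})$. Since $k/k_0$ is unramified we have $k^{\mathrm{ur}}=k_0^{\mathrm{ur}}$, and the conjugation action of $\mathrm{Gal}(k/k_0)$ on $\mathrm{Gal}(k^{\mathrm{ur}}/k)=\langle F\rangle\subset\mathrm{Gal}(k_0^{\mathrm{ur}}/k_0)$ is the inner action in an abelian group, hence trivial; thus $s_0 F s_0^{-1}\equiv F\bmod I$ and $\chi^\sigma(F)=s$. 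Consequently $M^\sigma\cong M$ for every unramified $M$, so conjugate-duality $M^\sigma\cong M^\vee$ and selfduality $M\cong M^\vee$ become the same constraint. Since $\CC(s)^\vee=\CC(s^{-1})$, this constraint says exactly that the multiset of Frobenius eigenvalues is stable under $s\mapsto s^{-1}$, with fixed points $s=\pm 1$; this yields the asserted decomposition $M=\bigoplus_i(\CC(s_i)+\CC(s_i^{-1}))\oplus m\CC(-1)\oplus n\CC(1)$.

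Next I would determine the intrinsic sign carried by each irreducible summand. Any one-dimensional selfdual representation carries the canonical symmetric form $B(z,w)=zw$, so $\CC(\pm 1)$ are always orthogonal. For conjugate-duality I apply Lemma \ref{L:1-dim}: $\CC(1)$ is trivial on $k_0^\times$ hence conjugate-orthogonal, while $\CC(-1)$ sends a uniformizer of $k_0$ (which is also a uniformizer of $k$ by the unramified hypothesis) to $-1$, so it is nontrivial on $k_0^\times$ but trivial on $\NN k^\times$, hence conjugate-symplectic. The pairs $\CC(s_i)+\CC(s_i^{-1})$ with $s_i\ne s_i^{-1}$ are of ``$P+P^\vee$'' type in the sense of \S\ref{S:centralizer} in all four settings and contribute a $\GL(1)$ factor to the centralizer regardless of the ambient sign.

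Having identified the sign of each summand, I would apply the general formula from \S\ref{S:centralizer}: writing $C\cong\prod\O(m_i)\times\prod\Sp(2n_j)\times\prod\GL(p_k)$, the $\O$-factors come from summands of the \emph{same} sign as $M$, the $\Sp$-factors from summands of the \emph{opposite} sign (forcing even multiplicity), and the $\GL$-factors from pairs. Case-by-case:
\begin{itemize}
\item[(i)] $M$ orthogonal: both $\CC(\pm 1)$ match the sign, so $C\cong\O(n)\times\O(m)\times(\CC^\times)^{\#\mathrm{pairs}}$, with no constraint on $m,n$. The pairs sit inside $\SO$, so $A_M^+$ is the kernel of the product-of-determinants map on $\O(n)\times\O(m)$; this kernel modulo its identity component is $\Z/2$ exactly when both $m,n>0$, and trivial otherwise.
\item[(ii)] $M$ symplectic: both $\CC(\pm 1)$ are of opposite sign, forcing $m\equiv n\equiv 0\bmod 2$, and $C\cong\Sp(n)\times\Sp(m)\times(\CC^\times)^{\#\mathrm{pairs}}$ is connected.
\item[(iii)] $M$ conjugate-orthogonal: $\CC(1)$ matches, $\CC(-1)$ is opposite (forcing $m$ even), giving $C\cong\O(n)\times\Sp(m)\times(\CC^\times)^{\#\mathrm{pairs}}$ and $A_M=\Z/2$ iff $n>0$.
\item[(iv)] $M$ conjugate-symplectic: $\CC(-1)$ matches, $\CC(1)$ is opposite (forcing $n$ even), giving $C\cong\Sp(n)\times\O(m)\times(\CC^\times)^{\#\mathrm{pairs}}$ and $A_M=\Z/2$ iff $m>0$.
\end{itemize}

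The only mildly subtle step is the identification $\CC(s)^\sigma\cong\CC(s)$; everything else is essentially bookkeeping against \S\ref{S:selfdual}--\S\ref{S:centralizer}. Once that reduction is in place the proposition is immediate.
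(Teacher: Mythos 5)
Your proposal is correct and follows essentially the same route as the paper: one reduces to the facts $\CC(s)^{\vee}\cong\CC(s^{-1})$ and $\CC(s)^{\sigma}\cong\CC(s)$ (the latter from unramifiedness), identifies the signs of $\CC(\pm 1)$ via the unramified quadratic character $\mu(\alpha)=(-1)^{\ord_{\varpi}(\alpha)}$ (i.e.\ Lemma \ref{L:1-dim}), and then reads off the centralizers and component groups from the structure theory of \S\ref{S:centralizer}. You have merely written out the case-by-case bookkeeping that the paper compresses into ``the proposition follows easily,'' including the correct handling of $A_M^+$ versus $A_M$ in the orthogonal case.
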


\vskip 5pt

\begin{proof}
Since $\CC(s)^{\vee} \cong \CC(s^{-1})$ and $\CC(s)^{\sigma} \cong \CC(s)$, the one dimensional representation $\CC(s)$ is selfdual or conjugate-dual if and only if  $s^2 = 1$. In the selfdual case, 
both $\CC(-1)$ and $\CC(1)$ are orthogonal.
In the conjugate-dual case, $\CC(1)$ is conjugate-orthogonal 
and $\CC(-1)$ is conjugate-symplectic.
Indeed, the character 
\[  \mu: k^{\times}/ \mathbb{N} k^{\times} \longrightarrow \langle \pm 1\rangle \]
defined by 
\[  \mu(\alpha) = (-1)^{\ord_{\varpi}(\alpha)} \]
is nontrivial on $k_0^{\times}$. The proposition follows easily. 
\end{proof}

\vskip 10pt

\begin{prop}
(i) If $M$ and $N$ are two selfdual unramified representations of $WD$ of even dimension, with signs $c_M$ and $c_N$ respectively, then the character 
\[  \chi_N: A_M^+ \longrightarrow \langle \pm 1\rangle \]
defined by 
\[  \chi_N(a) = \epsilon(M^a \otimes N,\psi) \cdot \det M^a(-1)^{\frac{1}{2}\dim N} \cdot \det N(-1)^{\frac{1}{2} \dim M^a} \]
is trivial.
\vskip 10pt

\noindent (ii) If $M$ and $N$ are two conjugate-dual unramified representations with signs $c_M$ and $c_N$ respectively and $\psi^{\sigma} = \psi^{-1}$, then the character 
\[  \chi_N: A_M^+ \longrightarrow \langle \pm 1\rangle \]
defined by
\[  \chi_N(a) = \epsilon(M^a \otimes N, \psi) \]
is trivial.
\end{prop}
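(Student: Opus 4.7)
The plan is to use the structural classification in the preceding proposition to pin down the generator of the cyclic $2$-group $A_M^+$ (in case (i)) or $A_M$ (in case (ii)), and then to reduce the computation of $\chi_N(a)$ to the vanishing of local epsilon factors of unramified representations against unramified additive characters.

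If the relevant component group is trivial there is nothing to prove, so assume it has order two. In case (i), the proposition forces $M$ to be orthogonal with both $m,n>0$, and the generator $a$ of $A_M^+$ is the product of the simple reflections on the $\CC(-1)$- and $\CC(+1)$-isotypic components, giving $M^a \cong \CC(-1) \oplus \CC(+1)$. In case (ii), the generator is a single simple reflection on the $\CC(\pm 1)$-isotypic component of the same sign as $M$, so $M^a = \CC(+1)$ for $M$ conjugate-orthogonal and $M^a = \CC(-1)$ for $M$ conjugate-symplectic. In each situation $M^a \otimes N$ is again a direct sum of unramified one-dimensional representations of $k^{\times}$.

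The key computational input is the classical formula that for an unramified character $\chi$ of $k^{\times}$ and an unramified additive character $\psi$ of $k$ one has $\epsilon(\chi,\psi,1/2) = \chi(\varpi)^{d(\psi)} = 1$, since $d(\psi) = 0$ by hypothesis. Multiplying over the unramified constituents of $M^a \otimes N$ yields $\epsilon(M^a \otimes N, \psi) = 1$, which already settles case (ii).

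For case (i) it remains to verify that the two determinantal corrections are trivial. A direct computation shows that $\det M^a = \CC(-1) \otimes \CC(+1) = \CC(-1) = \mu$, the unique unramified quadratic character of $k^{\times}$; similarly, writing $N$ in the form given by the preceding proposition, $\det N$ is either trivial or equal to $\mu$, depending on the parity of the multiplicity of $\CC(-1)$ appearing in $N$. Since the residue characteristic of $k$ is odd, $-1 \in A^{\times}$, and every unramified character of $k^{\times}$ is trivial on $A^{\times}$; hence $\det M^a(-1) = \det N(-1) = +1$. There is no genuine obstacle in the argument: the content is entirely the classification in the previous proposition together with the triviality of unramified epsilon factors and the triviality of unramified characters on units.
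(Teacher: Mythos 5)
Your proof is correct and takes essentially the same route as the paper, whose entire argument is that for any unramified representation and unramified additive character $\psi$ one has $\epsilon(\cdot,\psi)=1$ and $\det(\cdot)(-1)=1$ (the latter because $-1$ is a unit and unramified characters are trivial on units). Your explicit identification of the generator of $A_M^+$ via the preceding classification is fine but not needed: for \emph{every} $a$ the representation $M^a$ is unramified, so $M^a\otimes N$, $\det M^a$ and $\det N$ are unramified and the two facts above give the triviality of each factor directly.
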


\vskip 5pt

\begin{proof}
If $M$ is any unramified representation of $WD$ and $\psi$ is an  unramified additive character, then we have the formulae:
\[  \epsilon(M,\psi) = 1 \quad \text{and} \quad \det M(-1) = 1. \] 
The proposition follows easily from these facts.
\end{proof}
\vskip 10pt

We now turn to the restriction conjectures for unramified generic parameters $\varphi$. Since 
$\chi_N \times \chi_M$ is the trivial character of $A_{\varphi}$, the unique representation in the associated Vogan packet which supports a nonzero Hom space should be the one indexed by the trivial character. In that case, for the purpose of global applications, 
we can make our conjectures more refined. 
\vskip 10pt

Recall that $W \subset V$ is a pair of nondegenerate spaces for the sesquilinear form 
$\langle - , -\rangle$.  and that $W^{\perp}$ is split. We say that an $A$-lattice $L \subset V$ is nondegenerate if
\begin{enumerate}[(1)]
\item $\langle - , - \rangle: L \times L \rightarrow A$;
\vskip 5pt

\item the map $L \rightarrow \Hom(L,A)$ defined by mapping $w$ to 
$f_w(v) = \langle v,w \rangle$ is an isomorphism of $A_0$-modules.
\end{enumerate}

\vskip 5pt

We assume henceforth that there is a nondegenerate $A$-lattice $L \subset V$with the additional property that $L_W = L \cap W$ is a nondegenerate $A$-lattice in $W$. Then the orthogonal complement $L_W^{\perp}$ of $L_W$ in $L$ is a nondegenerate lattice  in $W^{\perp}$, so that
$L_W^{\perp}$ has the form 
\[ Y+Y^{\vee} \quad \text{or} \quad   Y+Y^{\vee} + Ae \] 
with $Y$ isotropic and $Y^{\vee} \cong \Hom(Y,A)$. Moreover, $L = L_W + L_W^{\perp}$.
\vskip 5pt
Under this assumption, the group $G=G(V) \times G(W)$ is quasi-split and split by an unramified extension of $k_0$. Indeed, the subgroup $J=\Aut(L) \times \Aut(L_W)$ is a hyperspecial maximal compact subgroup of $G$. We now construct the subgroup $H$ of $G$ and the unitary representation 
$\nu$ of $H$ using this unramified data.
\vskip 5pt

Write $L = L_W  + L_W^{\perp}$, and define the parabolic subgroup $P_A$ and its unipotent radical $N_A$  using a complete $A$-flag in the isotropic subspace $Y \subset L_W^{\perp}$.  Then 
\[  H_A =N_A \cdot \Aut(L_W)  \]
 gives a model of $H$ over $A_0$ and 
\[ H_A = J \cap H_A(k_0) = J \cap H. \]
\vskip 5pt

In the orthogonal and hermitian cases, the one dimensional representation $\nu$ of $H$ associated to the decomposition 
\[  L_W^{\perp}= Y+Y^{\perp}+Ae  \]  
 and a suitable unramified additive 
character $\psi$ has trivial restriction to the subgroup $J \cap H$. In the metaplectic case, we can define $\nu = \nu_{\psi}$ using an unramified additive character $\psi$ (there are two choices, up to translation by $A^{\times 2}$). In the skew-hermitian case, we define $\nu = \nu_{\psi,\mu}$ 
using an unramified character $\psi$ with $\psi^{\sigma} = \psi$ (which is unique up to translation by $\mathbb{N}A^{\times}$) and the unramified symplectic character $\mu$ associated to the representation $\CC(-1)$.  Then in all cases, the representation $\nu$ of $H$ is $J \cap H$-spherical; it has a unique line fixed by the compact open subgroup $J \cap H$.
\vskip 10pt

Since the group $G$ is quasi-split over $k_0$, we can also define unramified generic characters 
$\theta$ of the unipotent radical $U$ of a Borel subgroup, using the pair $L_W\subset L$ of 
nondegenerate lattices and a suitable unramified additive character $\psi$. Again, the $T$-orbit of 
$\theta$ is unique except in the metaplectic case when there are two unramified orbits. In all cases, the restriction of $\theta$ to the compact open subgroup $J \cap U$ is trivial.
\vskip 10pt

To summarize, if we use unramified data to define the representations $\theta$ of $U$ and $\nu$ of $H$, then the complex vector spaces 
\[  \Hom_{J \cap U} (\CC , \theta) \quad \text{and} \quad \Hom_{J \cap H}(\CC, \nu) \]
both have dimension equal to $1$.
\vskip 10pt

Now let $\varphi$ be an unramified generic parameter and let $\pi$ be the unique $\theta$-generic 
element in the Vogan packet $\pi_{\varphi}$. Then the formula of Casselman and Shalika [CS] shows that
\vskip 5pt

\begin{enumerate}[(i)]
\item  $\Hom_J(\CC, \pi)$ has dimension $1$;
\vskip 5pt

\item the pairing of one-dimensional complex vector spaces 
\[  \Hom_J(\CC, \pi) \otimes \Hom_U(\pi,\theta) \longrightarrow \Hom_{J \cap U}(\CC, \theta) = \CC \]
is nondegenerate.
\end{enumerate}
\vskip 5pt

We conjecture that the same is true for the representation $\nu$ of $H$.
\vskip 10pt

\begin{con}  \label{conj-unramified}
Let $\pi$ be the unique $J$-spherical representation in the Vogan packet $\Pi_{\varphi}$. Then
\vskip 5pt

\begin{enumerate}[(i)]
\item $\Hom_H(\pi,\nu)$ has dimension $1$;

\vskip 5pt

\item the pairing of one-dimensional complex vector spaces
\[  \Hom_J(\CC, \pi) \otimes \Hom_H(\pi,\nu)  \longrightarrow \Hom_{J \cap H}(\CC, \nu) \]
is nondegenerate.
\end{enumerate}
\end{con}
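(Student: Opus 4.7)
\textbf{Part (i)} is an immediate consequence of Conjecture \ref{conj-character} combined with the second Proposition of this section. For an unramified parameter $\varphi$, the vanishing of the epsilon factors $\epsilon(M^a \otimes N, \psi) = 1$ established there shows that the distinguished character $\chi$ on $A_\varphi$ (defined case-by-case in \S\ref{S:local-conj}) is the trivial character. Under the Vogan bijection $J(\theta)$ normalized by the unramified generic character $\theta$, the trivial character corresponds to the unique $\theta$-generic representation in $\Pi_\varphi$, which coincides with the unique $J$-spherical representation by the Casselman--Shalika theory of spherical representations in unramified principal series. Combined with the known bound $d(\pi) \le 1$, this gives $\dim \Hom_H(\pi,\nu) = 1$.

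\textbf{Part (ii)} is the substantial part. Both sides of the pairing being one-dimensional, we must show that $\ell(v_0) \ne 0$ in $\nu^{J \cap H}$ for nonzero $\ell \in \Hom_H(\pi,\nu)$ and nonzero $v_0 \in \pi^J$. My approach is to realize $\ell$ canonically as a (regularized) integral of matrix coefficients, paired against a spherical vector of $\overline{\nu}$:
\[
\ell(v) \;=\; \int_{H/Z_H} \bigl(\pi(h)v,\, v_0^\vee\bigr)\, \overline{\nu(h)\, \xi_0}\; dh,
\]
so that $\ell(v_0)$ becomes a period of the spherical matrix coefficient of $\pi$. Using the Iwasawa decomposition for $H$ together with the Casselman--Shalika--Macdonald formula for the spherical matrix coefficient of $\pi$, this period collapses to a sum over a lattice in the maximal torus of $H$, which evaluates to an explicit ratio of local $L$-factors attached to $\varphi$ (and to the Weil representation, in the Fourier--Jacobi cases). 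For a generic unramified $\varphi$, these $L$-factors have neither zeros nor poles, so the resulting expression is manifestly nonzero, which forces the pairing to be nondegenerate.

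\textbf{Main obstacle.} The hardest step is establishing this explicit spherical function formula --- a Macdonald-type identity for the unramified Bessel or Fourier--Jacobi functional. Such formulas are known in the literature for many of the basic cases $\dim W^\perp = 0$ or $1$, starting from the Rankin--Selberg computation of Jacquet--Piatetski-Shapiro--Shalika for $\GL_n \times \GL_{n-1}$, and extending through subsequent work on the orthogonal and unitary Gan--Gross--Prasad settings. For the general case, the parabolic-induction reductions of Theorems \ref{T:SOVPS} and \ref{T:FJPS} use a supercuspidal $\tau$ and so do not preserve unramifiedness; one would instead attempt an induction either by replacing $\tau$ by an unramified principal series (and tracking the additional contributions on non-open Mackey orbits), or by exploiting the Rallis tower and seesaw identities to reduce to low-rank configurations where direct computation is feasible. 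In either strategy, convergence and regularization of the defining integral pose technical hurdles that have only been resolved case by case in the existing literature, and carrying out the calculation uniformly across all four families of classical groups is the principal difficulty.
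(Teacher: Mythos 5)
You have not proved the statement, and indeed the paper does not either: Conjecture \ref{conj-unramified} is stated as a conjecture, motivated only by the observation that the characters $\chi_N\times\chi_M$ are trivial for unramified data, and the paper's sole support for it is Theorem \ref{T:unramified}, which records the cases known from the literature (Casselman--Shalika for generic characters, Kato--Murase--Sugano and Khouri for the Bessel cases, Murase--Sugano for the general linear and low-corank symplectic Fourier--Jacobi cases). Your part (i) is conditional rather than proved: you deduce $\dim\Hom_H(\pi,\nu)=1$ from Conjecture \ref{conj-character} together with the triviality of the unramified epsilon characters, but Conjectures \ref{conj-mult1} and \ref{conj-character} are themselves open in the paper, and the unconditional inputs (Theorem \ref{T:mult1} and Corollaries \ref{C:orthogonal-hermitian}, \ref{C:symplectic}, \ref{C:skew-hermitian}) give only the upper bound $d(\pi)\le 1$, never the nonvanishing. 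Moreover, even granting the local conjectures, part (ii) of Conjecture \ref{conj-unramified} carries genuinely extra content --- it asserts that the $H$-functional is nonzero \emph{on the $J$-fixed line}, not merely that it exists --- so it cannot be obtained by bookkeeping with characters of component groups.

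For part (ii), the step you flag as the ``main obstacle'' is in fact the entire content of the conjecture: one needs an explicit Casselman--Shalika/Macdonald-type evaluation of the spherical Bessel or Fourier--Jacobi functional on the $J$-spherical vector, uniformly in all four families and in arbitrary corank, and this is exactly what is only known case by case (and is how [KMS], [Kh], [MS1, MS2] proceed). Your proposed construction of $\ell$ as an $H$-period of a matrix coefficient has additional problems: for a general generic unramified parameter $\pi$ need not be tempered, so the integral need not converge and the regularization is not supplied; in the Fourier--Jacobi cases one must also pair against the spherical vector of the Weil representation, whose contribution does not obviously ``collapse'' by an Iwasawa argument; and even when the integral converges one must still show that the resulting functional is not identically zero before identifying $\ell$ with it, which again presupposes a nonvanishing of the same nature as the one to be proved. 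Finally, your suggested reductions cannot be run as stated: Theorems \ref{T:SOVPS} and \ref{T:FJPS} require a supercuspidal $\tau$, and replacing $\tau$ by an unramified principal series reintroduces the closed-orbit terms in the Mackey filtration, which is precisely the situation those theorems were designed to avoid. So the proposal describes the expected strategy but leaves the conjecture where the paper leaves it: open outside the cases listed in Theorem \ref{T:unramified}.
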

 
\vskip 15pt

Besides the cases treated by Casselman-Shalika [CS], this conjecture has been verified in a large number of cases, which we summarize below.
\vskip 15pt

\begin{thm} \label{T:unramified}
Conjecture \ref{conj-unramified} is known in the following cases:
\vskip 10pt

\begin{enumerate}[(i)]
\item the special orthogonal and hermitian cases;
\vskip 5pt

\item the general linear case, with $\dim W^{\perp} = 1$;
\vskip 5pt
 
\item the symplectic case, with $\dim W^{\perp} = 2$;
\end{enumerate}
\end{thm}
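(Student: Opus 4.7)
The plan is to verify both assertions of Conjecture \ref{conj-unramified} in each of the three cases by exhibiting the $J \cap H$-fixed vector $\ell(v_\pi) \in \nu$ explicitly as the value of an unramified local integral of Casselman-Shalika type, for a fixed nonzero $\ell \in \Hom_H(\pi,\nu)$ and the spherical vector $v_\pi \in \pi^J$. The first assertion, the one-dimensionality of $\Hom_H(\pi,\nu)$, is an immediate consequence of the multiplicity-one results of Section \ref{S:multiplicity-one} (Corollaries \ref{C:orthogonal-hermitian} and \ref{C:symplectic}) together with the observation that the spherical representation in an unramified generic Vogan packet is the unique $\theta$-generic member for the standard choice of unramified $\theta$. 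The substantive content is therefore the nonvanishing of $\ell(v_\pi)$ for nonzero $\ell$.

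For case (ii), the split hermitian situation $G = \GL(V_0) \times \GL(W_0)$ with $\dim W_0 = \dim V_0 - 1$, the representation $\nu$ is a generic character on a semidirect product involving a mirabolic subgroup, and the classical unramified local Rankin-Selberg computation of Jacquet, Piatetski-Shapiro and Shalika evaluates the natural zeta integral at the pair of spherical Whittaker functions as a nonzero scalar times $L(1/2,\pi_V \times \pi_W^\vee)$. For unramified $\varphi$ this $L$-value is finite and nonzero, which yields the required nondegenerate pairing.

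For case (i), the special orthogonal and hermitian cases, the strategy is to reduce to the basic case $\dim W^\perp = 1$ via the argument of Theorem \ref{T:SOVPS}, taking care to preserve sphericity by replacing the supercuspidal inducing datum $\tau$ with an unramified character; the spherical vector of $\pi_V$ then corresponds, via the Iwasawa decomposition, to the spherical vector of $I(\tau,\pi_W)$ on the open orbit. In codimension one, $H = N$ and $\nu$ is an unramified generic character, so the nondegeneracy is precisely the Casselman-Shalika formula. Alternatively, one may invoke the direct unramified Bessel integral computations available in the literature for $\SO$ and $\U$ groups, which express the spherical Bessel period as a ratio of standard unramified local factors attached to $\varphi$. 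For case (iii), the symplectic case with $\dim W^\perp = 2$, the representation $\nu_\psi$ is a Fourier-Jacobi representation, realized as a tensor product of a small Weil representation with an unramified generic character, carrying a canonical $J \cap H$-fixed vector constructed from the Schr\"odinger lattice model; the required nonvanishing follows from the explicit unramified Fourier-Jacobi period computation, which again yields a nonzero ratio of local unramified $L$- and $\gamma$-factors attached to $\varphi$.

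The main obstacle will be the bookkeeping of normalizations: the spherical vector in $\nu$, defined through the integral lattice $L$, the unramified additive character $\psi$, and in the skew-hermitian case the unramified character $\mu$, must be matched with the normalizations implicit in each of the unramified integrals invoked above and with the reduction mechanism used in case (i). In particular, in the symplectic Fourier-Jacobi case (iii), one must verify that the canonical $J \cap H$-fixed vector in the Weil representation factor of $\nu_\psi$ is precisely the one paired nontrivially by the unramified Fourier-Jacobi zeta integral against the spherical vector of $\pi$. Once these compatibilities are set up, the conclusion in each case reduces to the non-vanishing of an explicit product of unramified local $L$-values, which is automatic.
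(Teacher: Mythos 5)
Your fallback option (``invoke the direct unramified Bessel and Fourier--Jacobi computations available in the literature'') is, in effect, the entirety of the paper's proof: Theorem \ref{T:unramified} is established there by citation, namely Kato--Murase--Sugano [KMS] for the orthogonal case, Khouri's thesis [Kh] for the hermitian case, and Murase--Sugano [MS1, MS2] for cases (ii) and (iii). The substantive content -- the existence of the spherical Bessel/Fourier--Jacobi functional and the nonvanishing of its value on the $J$-spherical vector, normalized against the $J\cap H$-fixed line in $\nu$ -- is exactly the explicit Whittaker--Shintani and Shintani function computations carried out in those references, and nothing in your sketch replaces them. The problem is that your \emph{primary} constructive route contains genuine errors, so as written the proposal does not amount to a proof.

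Concretely: (a) the proposed reduction of case (i) to $\dim W^{\perp}=1$ via the argument of Theorem \ref{T:SOVPS} cannot be run with an unramified inducing datum. That theorem needs $\tau$ supercuspidal: the Gelfand--Kazhdan identity $\tau|_R \cong {\rm ind}_U^R\chi$ used on the open orbit fails for non-supercuspidal $\tau$ (extra Bernstein--Zelevinsky pieces appear), and the hypothesis that $\pi_V^{\vee}$ avoid the Bernstein component attached to $(\GL(Y')\times G(V'), \tau\otimes\mu)$ is violated precisely when $\tau$ and $\pi_V$ are both unramified, so the closed-orbit term can no longer be discarded. Moreover Theorem \ref{T:SOVPS} is an isomorphism of abstract Hom spaces; it does not track spherical vectors, the lattice normalization of $\nu$, or the pairing in Conjecture \ref{conj-unramified}, so even with admissible $\tau$ it would not yield the refined nondegeneracy statement. (b) The basic case $\dim W^{\perp}=1$ is not a Casselman--Shalika situation: there $X=0$, $N=1$, $H=\SO(W)$ or $\U(W)$ and $\nu$ is trivial, so what is needed is the spherical Bessel-period computation of [KMS] and [Kh]; the configuration $H=N$ with $\nu$ a generic character occurs only in the degenerate case $\dim W\le 1$. (c) The claim that $\dim\Hom_H(\pi,\nu)=1$ is ``immediate'' is off: Corollaries \ref{C:orthogonal-hermitian} and \ref{C:symplectic} give only the bound $\le 1$, and $\theta$-genericity of the spherical member does not imply nonvanishing of the Bessel or Fourier--Jacobi period -- that nonvanishing is precisely what must be (and in the cited works is) computed. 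The same remark applies to case (iii), where matching the canonical $J\cap H$-fixed vector in the lattice model of the Weil representation with the vector entering the unramified Fourier--Jacobi computation is genuine work that you flag but do not carry out.
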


\vskip 5pt

\begin{proof}
The orthogonal case is due to Kato-Murase-Sugano [KMS]. Their proof is extended to the unitary case by Khouri in his Ohio-State PhD thesis [Kh]. 
Parts (ii)  and (iii) are both due to Murase-Sugano [MS1, MS2]. 
 \end{proof}

 \vskip 15pt
%%%%%%%%%%%%%%%%%%%%%%%%%%%%%%%%%%%%%%%%%%%%%%%

\section{Automorphic forms and $L$-functions}  \label{S:automorphic}

The remainder of this paper is devoted to formulating global analogs of our local conjectures. 

\vskip 10pt

Let  $F$ be a global field with ring of ad\`{e}les $\A$ and let
$G$ be a reductive algebraic group over $F$. Then $G(F)$ is a discrete subgroup of the locally compact group $G(\A)$. For simplicity, we shall further assume that the identity component of the center of 
$G$ is anisotropic, so that the quotient space $G(F) \backslash G(\A)$ has finite measure. 
\vskip 10pt

We shall consider the space $\mathcal{A}(G)$ of automorphic forms on $G$, which consists of smooth functions
\[  f : G(F) \backslash G(\A) \longrightarrow \CC \]
satisfying the usual finiteness conditions, except that we do not impose the condition of $K_{\infty}$-finiteness at the archimedean places.
For each open compact $K_f \subset  G(\A_f)$, the space $\mathcal{A}(G)^{K_f}$ has a natural topology, giving it the structure of an LF-space(see [W2]) with respect to which the action of $G(F \otimes \R)$ is smooth.
\vskip 10pt

Let $\mathcal{A}_0(G) \subset \mathcal{A}(G)$ denote the subspace of cusp forms.  An irreducible admissible representation $\pi =\pi_{\infty} \otimes \pi_f$ of $G(\A)$ is cuspidal if it admits a continuous embedding
\[   \pi \hookrightarrow \mathcal{A}_0(G). \]
The multiplicity of $\pi$ in $\mathcal{A}_0(G)$ is the dimension of the space $\Hom_{G(\A)}(\pi, \mathcal{A}_0(G))$, which is necessarily finite.
\vskip 10pt

Suppose now that $G$ is quasi-split, with a Borel subgroup $B = T \cdot U$ defined over $F$. 
A homomorphism $\lambda :  U \longrightarrow \G_a$
is generic if its centralizer in $T$ is equal to the center of $G$.  Composing $\lambda_{\A}$ with a nontrivial additive character $\psi$ of $\A/F$ gives an automorphic generic character $\theta = \psi \circ \lambda$.
Now one may consider the map
\[  F(\theta): \mathcal{A}(G) \longrightarrow \CC(\theta) \]
defined by
\[  f \mapsto \int_{U(F) \backslash U(\A)} f(u) \cdot \overline{\theta(u)} \, du. \]
The map $F(\theta)$ is a nonzero continuous homomorphism of $U(\A)$-modules, 
which is known as the $\theta$-{\it Fourier coefficient}.
If $F(\theta)$ is nonzero when restricted to $\pi \subset \mathcal{A}(G)$, we say that $\pi$ is globally generic with respect to $\theta$.

\vskip 10pt   
  
The notion of automorphic forms can also be defined for nonlinear finite covers $\widetilde{G}(\A)$ of $G(\A)$,
which are split over the discrete subgroup $G(F)$; see [MW]. For the purpose of this paper, we only need to consider this in the context of the metaplectic double cover of $\Sp(W)(\A)$ and so we give a brief description in this case.
\vskip 10pt

Assume that the characteristic of $F$ is not two.
For each place $v$ of $F$, we have a unique nonlinear double cover 
$\widetilde{\Sp}(W)(F_v)$ of $\Sp(W)(F_v)$. If the residual characteristic of $F_v$ is odd,
then this cover splits uniquely over a hyperspecial maximal compact subgroup $F_v$ of $\Sp(W)(F_v)$.
Hence, one may form the restricted direct product
\[  \prod_{K_v} \widetilde{\Sp}(W)(F_v),  \]
which  contains a central subgroup $Z = \oplus_v \mu_{2,v}$. If $Z^+$ denotes the index two subgroup of $Z$ consisting of elements with an even number of components equal to $-1$, then the group  
\[  \widetilde{\Sp}(W)(\A) := \left( \prod_{K_v} \widetilde{\Sp}(W)(F_v) \right) / Z^+ \]
is a nonlinear double cover of $\Sp(W)(\A)$.
It is a result of Weil that this double cover splits uniquely over the subgroup $\Sp(W)(F)$, so that we may speak of automorphic forms on
 $\widetilde{\Sp}(W)(\A)$.
An automorphic form $f$ on $\widetilde{\Sp}(W)(\A)$ is said to be genuine if it satisfies
\[  f(\epsilon \cdot  g) = \epsilon \cdot f(g) \quad \text{for $\epsilon \in \mu_2$.} \]
We denote the space of genuine automorphic forms on $\widetilde{\Sp}(W)(\A)$ by 
$\mathcal{A}(\widetilde{\Sp}(W))$.
\vskip 10pt

If $B = T \cdot U$ is a Borel subgroup of $\Sp(W)$, then the double covering splits uniquely over $U(F_v)$ for each $v$. Hence, in the adelic setting, there is a unique splitting of the double cover over $U(\A)$, and more generally over the adelic  group of the unipotent radical of any parabolic subgroup of 
$\Sp(W)$. As a result, one can define the notion of cusp forms as in the linear case, and we denote the space of such cusp forms by $\mathcal{A}_0(\widetilde{\Sp}(W))$.  
Moreover, if $\theta$ is a generic automorphic character of $U$, then one can define the $\theta$-Fourier coefficient of $f$ in the same way as before. 
\vskip 10pt

For the global analog of our restriction problems, we also need to discuss the notion of automorphic forms on the non-reductive group 
\[  J_W = \Sp(W) \ltimes H(W) \]
where $H(W)$ is the Heisenberg group associated to $W$.
 The group $J_W$ is called the Jacobi group associated to 
 $W$ and 
we shall consider its double cover $\widetilde{J}_W(\A) = \widetilde{\Sp}(W)(\A) \cdot H(W)(\A)$.
The space of automorphic forms $\mathcal{A}_{\psi}(\widetilde{J}_W)$ 
on $\widetilde{J}_W(\A)$ is called the space of Jacobi forms. It consists of certain smooth functions on $J_W(F) \backslash \widetilde{J}_W(\A)$ with central character $\psi$.
\vskip 10pt
 
For our applications, we are interested in a particular automorphic representation of $\widetilde{J}_W(\A)$, namely the automorphic realization of the global Weil representation associated to 
$\psi = \prod_v \psi_v$. Recall that for each place $v$, the group 
\[ \widetilde{J}_W(F_v)  = \widetilde{\Sp}(W)(F_v) \cdot H(W)(F_v) \]
has a local Weil representation $\omega_{\psi_v}$ whose restriction to $H(W)(F_v)$ is the unique irreducible representation with central character $\psi_v$. The restricted tensor product 
\[  \omega_{\psi} = \hat{\otimes}_v \omega_{\psi_v} \]
is the global Weil representation associated to $\psi$. One of the main results of Weil [We] is that there is a unique continuous embedding
\[  \theta_{\psi}: \omega_{\psi} \hookrightarrow  \mathcal{A}_{\psi}(\widetilde{J}_W). \]
Composing $\theta_{\psi}$ with the restriction of functions from $\widetilde{J}_W(\A)$ to $\widetilde{\Sp}(W)(\A)$ gives a
$\widetilde{\Sp}(W)(\A)$-equivariant (but not injective) map
\[  \omega_{\psi} \longrightarrow \mathcal{A}(\widetilde{\Sp}(W)). \]
\vskip 10pt

We now come to the global $L$-functions and epsilon factors  associated to a cuspidal representation $\pi$, following Langlands. To define an $L$-function or epsilon factor, one needs the extra data of a finite dimensional representation $R$ of the $L$-group ${^L}G$. 
If $\pi = \otimes_v \pi_v$ is an automorphic representation and we assume the local Langlands-Vogan correspondence for $G(F_v)$, then each $\pi_v$ determines a local $L$-parameter
\[  \phi_v: WD(F_v) \longrightarrow {^L}G \]
Hence, one has the local $L$-factors $L(R  \circ \phi_v, s)$ and 
one defines the global $L$-function
\[  L(\pi, R, s) = \prod_v L(R  \circ \phi_v, s), \]
which converges when $Re(s)$ is sufficiently large.
One expects that this $L$-function has meromorphic continuation to the whole complex plane and satisfies a functional equation of a standard type. 
Similarly, one has the local epsilon factors
\[  \epsilon_v(\pi, R, \psi, s) = \epsilon(R \circ \phi_v, \psi_v ,s), \]
and one defines the global epsilon factor by
\[  \epsilon(\pi, R, s)   = \prod_v  \epsilon_v(\pi, R,\psi, s). \]
It is a finite product independent of the additive character $\psi$ of $\A/F$. 
\vskip 10pt

The following table gives some examples of $R$ and their associated 
$L$-functions which appear in this paper. When the cuspidal representation $\pi$ is globally generic, the meromorphic continuation of these $L$-functions are known.
\vskip 10pt

\begin{center}
\begin{tabular}{|c|c|c|}
\hline
& &  \\
$G$ & ${^L}G$  & $R$    \\
\hline 
 & &   \\
$\GL(V)$ & $\GL(M)$ & $\Sym^2 M$   \\
\hline
 & &   \\
$\GL(V)$ & $\GL(M)$ & $\wedge^2 M$  \\ 
 \hline 
 & &   \\
$\GL(V/E)$, $E/F$ quadratic & $(\GL(M) \times \GL(M)) \cdot \Gal(E/F)$ & ${\rm As}^{\pm}(M)$  \\  
\hline
 & & \\
$\GL(V) \times \GL(W)$ & $\GL(M) \times \GL(N)$ &  $M \otimes N$   \\
\hline 
& &   \\
$\SO(W) \times \SO(V)$, $\dim W^{\perp}$ odd & $\O(M) \times \Sp(N)$, $\dim M$ even & $M \otimes N$   
\\
\hline
& &   \\
$\Sp(W) \times \widetilde{\Sp}(V)$ & $\SO(M) \times \Sp(N)$, $\dim M$ odd & $M \otimes N$   \\
\hline
& &  \\
$\U(W) \times \U(V)$ & $(\GL(M) \times \GL(N)) \cdot \Gal(E/F)$ & $\Ind_{\widehat{G}}^{{^L}G} (M \otimes N)$     \\
& &  \\
\hline 

\end{tabular}
\end{center}

\vskip 20pt

\section{Global Restriction Problems} \label{S:global}

We are now ready to formulate the global restriction problems. 
We shall change notations slightly from the earlier part of the paper, by replacing the pair of fields
$k_0 \subset k$ in the local setting by $F \subset E$ in the global setting. Hence $\sigma$ is an involution (possibly trivial) on $E$ with $E^{\sigma} = F$, and $V$ is a vector space over $E$ equipped with a sesquilinear form $\langle-,-\rangle$ of the relevant type. The group $G = G(V)$ is then an algebraic group over $F$. Also, we shall include the case $E = F \times F$ in our discussion.

\vskip 10pt

Suppose that we have a pair of vector spaces $W \subset V$
over $E$ equipped with a sesquilinear form of sign $\epsilon$, such that $W^{\perp}$ is split and $\epsilon \cdot (-1)^{\dim W^{\perp}} =-1$. Then we have the groups 
 \[  \begin{cases}
 G = G(V) \times G(W); \\
 H = N \cdot G(W) \end{cases} \]
 over $F$, as defined  in \S \ref{S:classical}. 
 \vskip 5pt
 
 The groups of $F$-points $G(F)$ and $H(F)$ are discrete subgroups of the locally compact adelic groups $G(\A)$ and $H(\A)$ respectively. In the orthogonal case, we assume that if $V$ or $W$ has dimension 2, then it is not split. Then the quotient spaces $G(F) \backslash G(\A)$ and $H(F)\backslash H(\A)$ both have finite measure. We may then consider the space of automorphic forms and cusp forms for these groups, as in \S \ref{S:automorphic}.
 \vskip 10pt
 
In this section, we will consider irreducible tempered representations $\pi$ of $G(\A)$ which occur in the space of cusp forms 
 $\mathcal{A}_0(G)$ on $G(F) \backslash G(\A)$ and study their restriction to $H(\A)$. As in the local setting, when $G$ is quasi-split, we need to introduce an automorphic generic character
 \[  \theta:  U(F) \backslash U(\A) \longrightarrow \SS^1 \]
for the group $G$; this serves to fix the local Langlands-Vogan parameterization at all places $v$ of $F$. In addition, we need to construct an automorphic representation
$\nu$ on $H(F) \backslash H(\A)$ for the restriction problem.  
\vskip 10pt

Assume that $G = G(V) \times G(W)$ is quasi-split. 
 In the orthogonal or symplectic case, we use the spaces $W \subset V$ to naturally define a generic 
 $F$-homomorphism
 \[  \lambda: U \longrightarrow \G_a \]
as in \S \ref{S:nu}. Composing $\lambda_{\A}$ with a nontrivial additive character
\[  \psi: \A/F \longrightarrow \SS^1 \]
gives an automorphic generic character 
\[  \theta = \psi \circ \lambda. \]
In the hermitian or skew-hermitian case, we use the spaces $W \subset V$ to construct a generic homomorphism
\[  \lambda: U \longrightarrow \text{Res}_{E/F}(\G_a) \]
with image in $\text{Res}_{E/F}(\G_a)^{\sigma = -\epsilon}$.  Then, 
in the hermitian case, we compose $\lambda_{\A}$ with a fixed nontrivial additive character
\[  \psi_0: \A_E/ (E+\A) \longrightarrow \SS^1 \]
to obtain an automorphic generic character  $\theta_0 = \psi_0 \circ \lambda_{\A}$. We then translate $\theta_0$ by $-2$ times the discriminant of the odd hermitian space to obtain an automorphic generic character $\theta$ of the even unitary group.
In the skew-hermitian case, we compose $\lambda_{\A}$ with a nontrivial additive character 
\[  \psi: \A/F \longrightarrow  \SS^1 \]
to obtain $\theta = \psi \circ \lambda_{\A}$. 
\vskip 10pt

 Next, we need to define an automorphic version of the representation $\nu$ of $H(\A) = N(\A) \cdot G(W)(\A)$. In the orthogonal and hermitian cases, we define $\nu$ by composing the generic 
 $G(W)$-invariant map
 \[  l : N \longrightarrow \text{Res}_{E/F}(\G_a), \]
 constructed in \S \ref{S:nu} using the spaces $W \subset V$, with a nontrivial additive character $\psi: \A_E/E \rightarrow \SS^1$ and then extending this trivially on $G(W)(\A)$;
 \[  \nu = \psi \circ l_{\A}. \] 
 As in the local case, the choice of $\psi$ is unimportant. Then we define:
 \[  F(\nu) : \mathcal{A}_0(G)  \longrightarrow \CC(\nu) \]
  by
 \[  f \mapsto  \int_{H(F) \backslash H(\A)} f(h) \cdot \overline{\nu(h)} \cdot dh. \]
  The map $F(\nu)$ is called a Bessel coefficient.
  
  \vskip 10pt
  
  In the symplectic and skew-hermitian cases, the representation $\nu$ is infinite-dimensional; so the situation is slightly more involved. Recall from \S \ref{S:nu} that, using the spaces $W \subset V$, we have defined a $G(W)$-invariant generic linear form
  \[  l: N \mapsto \G_a. \]
 Composing this with a nontrivial additive character $\psi: \A/F \to \SS^1$, and extending trivially to $G(W)(\A)$,  we obtain an automorphic character
 \[  \lambda = \psi \circ l_{\A} \]
 of $H(\A)$. On the other hand, we have also defined a homomorphism 
 \[   N \longrightarrow H(W) \]
 where
 \[  H(W) = F \oplus \text{Res}_{E/F}W \]
 is the Heisenberg group associated to $\text{Res}_{E/F}(W)$. 
Thus, we have a homomorphism
\[  H = G(W) \cdot N \longrightarrow J_W := \Sp(\text{Res}_{E/F}(W)) \cdot H(W). \]
As discussed in \S \ref{S:automorphic}, the group $\widetilde{J}_W(\A)$ has a global Weil representation $\omega_{\psi}$ with central character $\psi$, and one has a canonical automorphic realization 
\[  \theta_{\psi}: \omega_{\psi} \hookrightarrow \mathcal{A}(\widetilde{J}_W). \]
It will now be convenient to consider the symplectic and skew-hermitian cases separately. 
 
 \vskip 10pt
 
 In the symplectic case, the map $H \longrightarrow J_W$ defined above gives rise to a map
 \[  \widetilde{H}(\A) \longrightarrow \widetilde{J}_W(\A). \]
By pulling back, one can thus regard $\omega_{\psi}$ as a representation of $\widetilde{J}(\A)$. 
Moreover, the above map gives rise to a natural inclusion
\[  \mathcal{A}(\widetilde{J}_W) \hookrightarrow \mathcal{A}(\widetilde{H}). \]
Composing the automorphic realization $\theta_{\psi}$ with this inclusion realizes $\omega_{\psi}$ as a submodule in $ \mathcal{A}(\widetilde{H})$. Multiplying by the automorphic character $\lambda$ of $H$, one obtains an automorphic realization 
\[   \theta_{\psi}:  \nu_{\psi} = \omega_{\psi} \otimes \lambda  \hookrightarrow \mathcal{A}(\widetilde{H}).  \]
Now we can define the map
\[  F(\nu_{\psi}): \mathcal{A}_0(G) \otimes \overline{\nu_{\psi}} \longrightarrow \CC \]
by
\[  f  \otimes \phi \mapsto  \int_{H(F)\backslash H(\A)} f(h) \cdot \overline{\theta_{\psi}(\phi)} \, dh. \]
The map $F(\nu_{\psi})$ is called a Fourier-Jacobi coefficient. 
\vskip 10pt

In the skew-hermitian case, we choose an automorphic character 
\[  \mu: \A_E^{\times}/E^{\times} \longrightarrow \CC^{\times} \]
satisfying 
\[  \mu|_{\A^{\times}} = \omega_{E/F}. \]
Then one obtains a splitting homomorphism 
\[  s_{\psi,\mu} : H(\A) \longrightarrow \widetilde{J}_W(\A). \]
Using $s_{\psi,\mu}$, one may pull back the global Weil representation $\omega_{\psi}$ to obtain a representation $\omega_{\psi,\mu}$ of $H(\A)$. As above, one also obtains an automorphic realization
\[ \theta_{\psi,\mu}:  \nu_{\psi,\mu} = \omega_{\psi, \mu} \otimes \lambda \longrightarrow \mathcal{A}(H). \]
Thus, we can define the map
\[  F(\nu_{\psi, \mu}): \mathcal{A}_0(G) \otimes \overline{\nu_{\psi, \mu}} \longrightarrow \CC \]
by
\[  f  \otimes \phi \mapsto  \int_{H(F)\backslash H(\A)} f(h) \cdot \overline{\theta_{\psi,\mu}(\phi)} \, dh. \]
The map $F(\nu_{\psi,\mu})$ is called a Fourier-Jacobi coefficient in the context of unitary groups.
\vskip 10pt

Now the global restriction problem is:
\vskip 5pt

\noindent {\em Determine whether the map $F(\nu)$ defined in the various cases above is nonzero when restricted to a tempered cuspidal representation $\pi$ of $G(\A)$.}
\vskip 15pt

\section{Global conjectures: central values of $L$-functions} \label{S:global-conj}

To formulate our global conjectures to the restriction problem of the previous section, we need to introduce a distinguished symplectic representation $R$ of the $L$-group ${^L}G$ over $F$. We set
\[  R = \begin{cases}
M \otimes N \quad \text{ in the orthogonal and symplectic cases;} \\
\text{  } \\
\text{Ind}_{\widehat{G}}^{{^L}G} (M \otimes N), \text{ in the hermitian case;} \\
\text{ } \\
\text{Ind}_{\widehat{G}}^{{^L}G} (M \otimes N(\mu^{-1})), \text{ in the skew-hermitian case.} 
\end{cases} \]
This representation $R$ was already introduced in the table at the end of \S \ref{S:automorphic}, except that in the skew-hermitian case, we
incorporate the character $\mu$ used in the definition of $\nu$. It is this twist by $\mu$ that makes $R$ a symplectic representation in the skew-hermitian case (by Lemmas \ref{L:1-dim}  and \ref{L:ind-tensor}).  As explained in \S \ref{S:automorphic}, 
we can then speak of the $L$-function $L(\pi, R, s)$ and the global epsilon factor $\epsilon(\pi, R, s)$ for any automorphic representation $\pi$ of $G$.
\vskip 10pt

The first form of our global conjecture is:

\vskip 15pt

  \begin{con} \label{conj:global1}
  Let $\pi$ be an irreducible tempered representation of $G(\A)$ which occurs with multiplicity one in the space $\mathcal{A}_0(G)$ of cusp forms on $G(F) \backslash G(\A)$. Then the following are equivalent:
  \vskip 5pt
  
  \noindent (i) the restriction of the linear form $F(\nu)$ to $\pi$ is nonzero;
  \vskip 5pt
  
  \noindent (ii) the complex vector space $\Hom_{H(\A)}(\pi,\nu)$ is nonzero and the $L$-function
  $L(\pi, R, s)$ does not vanish at $s = 1/2$, which is the center of the critical strip;
  \vskip 5pt
  
  \noindent (iii) the complex vector spaces $\Hom_{H(F_v)}(\pi_v,\nu_v)$ are nonzero for all $v$ and $L(\pi,R, 1/2) \ne 0$.
  \end{con}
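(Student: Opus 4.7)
The plan is to establish the three implications (i) $\Rightarrow$ (ii) $\Rightarrow$ (iii) $\Rightarrow$ (i), treating the central $L$-value condition and the local/global Hom conditions somewhat separately since they decouple nicely.

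First I would handle (i) $\Rightarrow$ (ii). The period integral $F(\nu)$ restricted to $\pi$ is tautologically an element of $\Hom_{H(\A)}(\pi \otimes \overline{\nu}, \CC)$, and since $\pi = \widehat{\otimes}_v \pi_v$ and $\nu = \widehat{\otimes}_v \nu_v$ factor as restricted tensor products, its nonvanishing forces each local $\Hom_{H(F_v)}(\pi_v, \nu_v) \ne 0$. The genuinely substantive part is the deduction that $L(\pi, R, 1/2) \ne 0$, and for this I would appeal to an integral representation of $L(\pi, R, s)$ in the Rankin-Selberg family: in each of the four cases, a global zeta integral $Z(s, f, \Phi)$ formed by pairing a cusp form $f \in \pi$ with an Eisenstein series built from a section $\Phi$ on a larger classical group unfolds, by the very construction of $(H, \nu)$, to the period $F(\nu)$ against a Whittaker/Bessel model, producing an Eulerian product whose unramified local factors are $L(\pi_v, R, s)$. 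At $s = 1/2$, the integral is a nonzero multiple of $L(\pi, R, 1/2)$ times local zeta integrals which can be made nonvanishing by choice of $\Phi$; nonvanishing of $F(\nu)$ on $\pi$ then forces $L(\pi, R, 1/2) \ne 0$.

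The equivalence (ii) $\Leftrightarrow$ (iii) is formal given the multiplicity-one results in Section \ref{S:multiplicity-one} and the unramified computations in Theorem \ref{T:unramified}. Indeed, $\Hom_{H(\A)}(\pi, \nu)$ always injects into $\prod_v \Hom_{H(F_v)}(\pi_v, \nu_v)$, and the local dimensions are at most one by Corollaries \ref{C:orthogonal-hermitian}, \ref{C:symplectic}, \ref{C:skew-hermitian}; Conjecture \ref{conj-unramified} (verified in the cases of Theorem \ref{T:unramified}) identifies the global and local Hom spaces at almost all places, and the global Hom is then simply the tensor product of the local ones, so nonvanishing propagates in both directions.

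The hard direction is (iii) $\Rightarrow$ (i), and this I would attack by a relative trace formula, comparing the $H$-period on $G$ with a simpler distribution on an auxiliary group, in the style of Jacquet-Rallis for the unitary case or Ginzburg-Jiang-Rallis in the orthogonal/symplectic cases. The trace formula identity would reduce the claim to a collection of local identities of orbital integrals together with a fundamental lemma; the global spectral expansion then relates the $H$-period on $\pi$ to the $L$-value $L(\pi, R, 1/2)$, with the local Hom conditions in (iii) ensuring that the local terms in the spectral side do not force cancellation. A parallel, more bare-hands approach would use induction on $\dim W^\perp$: Theorems \ref{T:SOVPS} and \ref{T:FJPS} give local reductions to the basic cases $\dim W^\perp = 0, 1$, and globalizing these by inducing from a cuspidal $\tau$ on a $\GL$ factor and computing the constant term of the resulting Eisenstein series should match the global period against the period in the basic case, modulo an $L$-factor that is harmless at $s = 1/2$. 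The main obstacle, and where the bulk of the work sits, is precisely this last step: matching the spectral sides of the two trace formulas (or carrying out the Eisenstein-series comparison) and controlling the archimedean local factors, which is why in practice this direction has only been proven case-by-case (Waldspurger, Ichino-Ikeda, Wei Zhang, Beuzart-Plessis, $\ldots$) rather than in the generality stated here.
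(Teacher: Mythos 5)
You are attempting to prove Conjecture \ref{conj:global1}, which the paper does not prove: it is stated as a conjecture, supported only by special cases and partial results (the split case $E=F\times F$ with $\dim W^{\perp}=1$ via Rankin--Selberg theory [JPSS], the implication (i)$\Rightarrow$(ii) for \emph{globally generic} $\pi$ by Ginzburg--Jiang--Rallis [GJR1,2,3], and the Jacquet--Rallis relative trace formula program [JR] in the hermitian case with $\dim W^{\perp}=1$). So there is no proof in the paper to compare against, and your text should be judged as a purported proof on its own terms; as such it has genuine gaps in all three implications.

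For (i)$\Rightarrow$(ii), the Eulerian zeta integral you invoke --- one that unfolds to the $(H,\nu)$-period and whose unramified factors are $L(\pi_v,R,s)$ --- is not available in the stated generality: for Bessel and Fourier--Jacobi periods with $\dim W^{\perp}>1$ no such Rankin--Selberg integral is known, and the results of Ginzburg--Jiang--Rallis that go in this direction assume $\pi$ globally generic, which is not part of the hypotheses here (only tempered, cuspidal, multiplicity one). Even where an integral exists, nonvanishing of the period gives nonvanishing of the full zeta integral at $s=1/2$; to extract $L(\pi,R,1/2)\neq 0$ one must also show the ramified and archimedean local integrals can be made nonzero and have no compensating poles, a genuine local problem rather than a formality. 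Your claim that (ii)$\Leftrightarrow$(iii) is ``formal'' is also not correct: the direction (iii)$\Rightarrow$(ii) requires assembling a global functional on the restricted tensor product from nonzero local ones, which needs the local functional to be nonzero on the $J_v\cap H_v$-fixed vector at almost all places --- exactly Conjecture \ref{conj-unramified}, which is open outside the cases listed in Theorem \ref{T:unramified}; and in neither direction does the abstract nonvanishing of $\Hom_{H(\A)}(\pi,\nu)$ together with $L(\pi,R,1/2)\neq 0$ say anything about the specific automorphic functional $F(\nu)$, which is what (i) asserts. Finally, for (iii)$\Rightarrow$(i) you yourself concede that the comparison of relative trace formulas (fundamental lemma, smooth transfer, spectral matching, archimedean control) or the Eisenstein-series induction is exactly what is missing; saying a trace formula ``would reduce'' the statement to local identities describes a research program, not a proof. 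In short, your proposal is an accurate map of the known strategies and evidence, but it closes none of the implications, consistent with the fact that the paper records this statement as a conjecture rather than a theorem.
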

  \vskip 10pt

Let us make some remarks about this conjecture.
  \vskip 10pt
  
When $E = F \times F$, with $G \cong \GL(V_0) \times \GL(W_0)$, then the $L$-function $L(\pi, R,s)$ is the product
\[  L(\pi, R,s) = L(\pi_V \otimes \pi_W, s) \cdot L(\pi_V^{\vee} \times \pi_W^{\vee}, s)  \]
of two Rankin-Selberg $L$-functions, so that
\[  L(\pi, R, 1/2) = |L(\pi_v \times \pi_W, 1/2)|^2. \]
In this case, the conjecture is known when $\dim W^{\perp}  =1$. Indeed, this is an immediate consequence of the integral representation  of the global Rankin-Selberg  $L$-function $L(\pi_V \times \pi_W, 1/2)$ [JPSS]. The general case seems to be open.
  \vskip 10pt
  
More generally,  under the assumption that $\pi$ is globally generic, the implication (i) $\Longrightarrow$ (ii)  has been shown by Ginzburg-Jiang-Rallis in a series of papers for the various cases [GJR1,2,3].
  Moreover, in the hermitian case with $\dim W^{\perp} = 1$, an approach to this conjecture via the relative trace formula has been developed by Jacquet and Rallis [JR].
 \vskip 10pt

Further,  one expects a refinement of Conjecture \ref{conj:global1}   in the form of an exact formula relating $|F(\nu)|^2$ with the central value $L(\pi,R, 1/2)$. 
 Such a refinement has been formulated by Ichino-Ikeda [II] in the orthogonal case, with $\dim W^{\perp} = 1$. In the analogous setting for the hermitian case, the formulation of this refined conjecture is the ongoing thesis work of N. Harris at UCSD.
\vskip 15pt

As formulated, the global conjecture \ref{conj:global1} is essentially independent of the local conjectures \ref{conj-mult1} and 
\ref{conj-character}. Rather, they complement each other, since the local non-vanishing in Conjecture \ref{conj:global1} is governed by our local conjectures. From this point of view, the appearance of the particular central $L$-value $L(\pi, R, 1/2)$ may not seem very well-motivated. However, as we shall explain in the next two sections, if we examine the implications of our local conjectures in the framework of the Langlands-Arthur conjecture on the automorphic discrete spectrum, the appearance of 
$L(\pi, R, 1/2)$ is very natural.
\vskip 10pt

For example,
observe that the global conjecture \ref{conj:global1} in the symplectic/metaplectic case involves the central $L$-value
of the symplectic representation $R = M \otimes N$ with $M$ symplectic and $N$ odd orthogonal, whereas in the local conjecture \ref{conj-character}, it is the epsilon factor associated to $M \otimes (N \oplus \CC)$ which appears. So in some sense, the global conjecture is less subtle than the local one. The explanation of this can be found in \S \ref{S:central}, as a consequence of the Langlands-Arthur conjecture (or rather its extension to the metaplectic case).

\vskip 10pt

Finally,
we highlight a particular case of the conjecture. As we explained in \S \ref{S:nu}, special cases of the data $(H,\nu)$ are
automorphic generic characters $\nu = \theta$ on $H = U$. These cases are highlighted in the following table, and arise when the smaller space $W$ is either $0$ or $1$-dimensional.  
\vskip 10pt

\begin{center}
\begin{tabular}{|c|c|c|c|}
\hline 
& & & \\
$G(V)$ & $\dim W$ & $\widehat{G(W)}$ & $N$ \\
\hline
& & & \\
odd orthogonal & $0$ & $\SO(0)$ & $0$ \\
\hline 
 & & & \\
 even orthogonal & $1$ & $\Sp(0)$ & $0$ \\
 \hline
 & & & \\
 symplectic & $0$ & $\Sp(0)$  & $0$ \\
 \hline
 & & & \\
 metaplectic & $0$ & $\SO(1)$ & $\CC$ \\
 \hline
 & & & \\
 odd hermitian & $0$ & $\GL(0)$ & $0$ \\
 \hline
 & & & \\
 even skew hermitian & $0$ & $\GL(0)$ & $0$ \\
 \hline
 \end{tabular}
 \end{center}
\vskip 10pt

As one sees from the table, in all except the metaplectic case, $N = 0$ so that $R = 0$ and $L(\pi, R, s)$ is identically 1. In the metaplectic case, $N = \CC$ so that $R = M$ and $L(\pi, R, s)$ is the standard $L$-function $L(\pi, s)$.
Hence Conjecture \ref{conj:global1} specializes to the following conjecture in these degenerate cases.   
  \vskip 10pt

 \begin{con}  \label{con:generic}
 Let $\pi$ be an irreducible tempered representation of $G(\A)$ which occurs with multiplicity one in the 
 space $\mathcal{A}_0(G)$ of cusp forms on $G(F) \backslash G(\A)$ and let $\theta$ be an automorphic generic character for $G$. Then, when $G$ is linear, the following are equivalent:
 \vskip 5pt
 
 \begin{enumerate}[(i)]
\item  the restriction of the map $F(\theta)$ to
 $\pi$ is nonzero;
 \vskip 5pt
 
 \item the complex vector space $\Hom_{U(\A)}(\pi, \theta)$ is nonzero.
 \vskip 5pt
 
 \item the complex vector spaces $\Hom_{U(F_v)}(\pi_v, \theta_v)$ are nonzero for all places $v$ of $F$.
 \end{enumerate}

When $G = \widetilde{\Sp}(V)(\A)$ is metaplectic, we fix an additive character $\psi$ of $\A/F$ which determines an automorphic generic character $\theta$ and also gives the notion of  Langlands-Vogan parameters. For any element $c \in F^{\times}/ F^{\times 2}$, let 
let $\chi_c$ be the associated quadratic character of $\A^{\times}/ F^{\times}$ and let  
$\theta_c$ denote the translate of $\theta$ by $c$. Then the following are equivalent:
\vskip 10pt

\begin{enumerate}[(a)] 
\item  the restriction of the map $F(\theta_c)$ to
 $\pi$ is nonzero;
 \vskip 5pt
 
 \item  the complex vector space $\Hom_{U(\A)}(\pi, \theta_c)$ is nonzero and 
 $L(\pi \otimes \chi_c, 1/2) \ne 0$;
 \vskip 5pt
 
\item the complex vector spaces $\Hom_{U(F_v)}(\pi_v, \theta_{c,v})$ are nonzero for all places $v$ of $F$ and $L(\pi \otimes \chi_c, 1/2) \ne 0$.
 \end{enumerate}
  \vskip 5pt

  \noindent If the above conditions hold, the space $\Hom_{U(\A)}(\pi,\theta)$ has dimension $1$ and $F(\theta)$ is a basis.  Moreover, the adjoint $L$-function $L(\pi, Ad, s)$ of $\pi$ is regular and nonzero at $s = 1$ (which is the edge of the critical strip). 

 \end{con}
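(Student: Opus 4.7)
The plan is to treat the linear and metaplectic cases separately. For the linear case, the key input is the uniqueness of local Whittaker models for quasi-split classical groups, which gives $\dim_{\CC} \Hom_{U(F_v)}(\pi_v,\theta_v) \le 1$ at every place, with equality precisely when $\pi_v$ is locally generic. Since $\pi$ and $\theta$ decompose as restricted tensor products, this yields the local-global factorization
\[
\Hom_{U(\A)}(\pi,\theta) \;=\; \bigotimes_v \Hom_{U(F_v)}(\pi_v,\theta_v),
\]
from which (ii) $\Leftrightarrow$ (iii) and the dimension-one assertion are immediate. The implication (i) $\Rightarrow$ (ii) is trivial, since a nonzero restriction of $F(\theta)$ to $\pi$ is itself an element of $\Hom_{U(\A)}(\pi,\theta)$. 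The reverse direction (ii) $\Rightarrow$ (i) is the local-to-global principle for genericity: if $\pi$ is tempered and cuspidal and is locally generic at every place, then the concrete Fourier coefficient $F(\theta)$ is nonzero on the cuspidal realization of $\pi$. For classical groups this follows from the descent construction of Ginzburg--Rallis--Soudry combined with Arthur's endoscopic classification [A3]; the descent produces a globally $\theta$-generic cuspidal representation in the same tempered $L$-packet as $\pi$, and the multiplicity-one hypothesis forces it to coincide with $\pi$.

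For the metaplectic case the strategy is to transport the restriction problem to the special odd orthogonal side via Theorem \ref{T:KR}. Let $V'$ denote the split $(2n+1)$-dimensional quadratic space over $F$ of trivial discriminant. The plan has two steps. First, I would show that $F(\theta_c)$ is nonzero on $\widetilde\pi$ if and only if the global $\psi_c$-theta lift of $\widetilde\pi$ to $\SO(V')(\A)$ is a nonzero globally generic cuspidal representation; this is a see-saw/tower argument, exploiting the fact that the $\theta_c$-Whittaker coefficient on the metaplectic side is intertwined with the Whittaker coefficient of the orthogonal lift. Second, I would invoke the Rallis inner-product formula (in the form of Gan--Qiu--Takeda or Yamana) to convert the non-vanishing of the $\psi_c$-theta lift to the split $\SO(V')$ into the non-vanishing of the central value $L(\pi\otimes\chi_c,1/2)$, where $\pi$ is the representation of $\SO(V)(\A)$ associated to $\widetilde\pi$ under the $\psi$-correspondence of Theorem \ref{T:KR}. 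Combined with the linear case already established for $\SO(V')$, this yields the chain (a) $\Leftrightarrow$ (b) $\Leftrightarrow$ (c).

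The hardest step will be the second, since one must ensure that the Rallis inner-product formula applies at precisely the point on the Witt tower corresponding to the split $\SO(2n+1)$ and that the local and global $L$- and $\epsilon$-factors combine cleanly into $L(\pi\otimes\chi_c,1/2)$ without extraneous abelian factors; the twist by $\chi_c$ enters because rescaling $\psi$ by $c$ alters the global splitting of the metaplectic cover by the Weil index attached to $\chi_c$. Finally, the regularity of $L(\pi,\Ad,s)$ at $s=1$ is built into the definition of a generic $L$-packet (Desideratum (2) of \S\ref{S:vogan-desiderata}), while its non-vanishing at $s=1$ reduces, via Arthur's endoscopic lift of $\pi$ to an isobaric sum on a general linear group, to the Jacquet--Shalika non-vanishing theorem for standard $L$-functions of unitary cuspidal automorphic representations of $\GL_n$ at the edge of the critical strip.
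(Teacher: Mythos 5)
You should first be clear about what the paper actually does with this statement: it offers no proof at all. It is Conjecture \ref{con:generic}, obtained by specializing Conjecture \ref{conj:global1} to the degenerate cases $\dim W = 0$ or $1$ (where the representation $R$ is $0$ in the linear cases, so $L(\pi,R,s)\equiv 1$, and $R=M$ in the metaplectic case, so $L(\pi,R,s)$ is the standard $L$-function), and the only case the authors assert to be known is the metaplectic case with $\dim V = 2$, by Waldspurger [Wa1]. So your proposal cannot be measured against a proof in the paper; it is an attempted proof of what the paper presents as an open conjecture, and it leans on inputs that the paper itself treats as hypotheses (the desiderata of \S \ref{S:vogan-desiderata}, the then-unavailable classification [A3]) or that lie entirely outside it (Ginzburg--Rallis--Soudry descent, the equal-size Rallis inner product formula).

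Judged as a proof, your sketch has two genuine gaps. In the linear case, (ii) $\Rightarrow$ (i) is the entire content, and your descent argument needs more than the citations you give: to conclude that the descended representation $\sigma$ is abstractly isomorphic to $\pi$ you must know both that each $\pi_v$ is the \emph{unique} $\theta_v$-generic member of its local tempered packet and that the local components of the descent are exactly these $\theta_v$-generic members (local-global compatibility of the descent with the Langlands--Vogan parameterization and the chosen Whittaker datum). These are precisely the statements assumed, not proved, at the level of this paper; only once they are granted does the multiplicity-one hypothesis identify the cuspidal realizations and give $F(\theta)|_{\pi}\neq 0$. In the metaplectic case the reduction is partially circular: the Rallis inner product formula controls the \emph{nonvanishing} of the global $\psi_c$-theta lift to $\SO(2n+1)$ in terms of $L(\pi\otimes\chi_c,1/2)$ and nonvanishing of \emph{local} theta lifts, whereas (b) and (c) are phrased via abstract local $\psi_c$-genericity of $\widetilde{\pi}_v$ and (a) is about a global Whittaker--Fourier coefficient. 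Bridging these requires a local theta statement ($\widetilde{\pi}_v$ is $\psi_{c,v}$-generic if and only if its lift to the split $\SO(2n+1)(F_v)$ is nonzero and generic) and the implication ``locally generic everywhere $\Rightarrow$ globally generic'' for the lift on $\SO(2n+1)$, which is the same hard problem as (ii) $\Rightarrow$ (i) in the linear case and does not follow from the see-saw or the inner product formula; in effect you would be invoking the metaplectic descent theory rather than deducing the equivalence. The closing assertions (dimension one of $\Hom_{U(\A)}(\pi,\theta)$, and the behaviour of $L(\pi, Ad, s)$ at $s=1$ via transfer to general linear groups and Jacquet--Shalika/Shahidi) are plausible, but they too rest on local uniqueness of Whittaker models and on the classification, not on anything proved in this paper.
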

  \vskip 10pt
  
  In the metaplectic case, with $\dim V = 2$, the above conjecture is known by the work of Waldspurger [Wa1].
 \vskip 15pt

\section{Global $L$-parameters and Multiplicity Formula}  \label{S:arthur}

  In this section, we review the notion of global $L$-parameters in the context of a fundamental
conjecture of Langlands and Arthur [A1, A2], concerning multiplicities of representations in the automorphic discrete spectrum. We will only present this conjecture for tempered representations, and will also discuss its simplification for the classical groups considered in this paper. 
In the next section, we shall re-examine the global conjecture \ref{conj:global1} in the framework of the Langlands-Arthur conjecture. We assume that $E$ is a field henceforth.
\vskip 10pt

Let $G$ be a connected reductive group over the global field $F$, 
and assume that the quotient space $G(F)\backslash G(\A)$ has 
finite volume. The Langlands-Arthur conjecture gives a description 
of the decomposition of the discrete spectrum $L^2_{\rm disc}(G(F) \backslash G(\A))$ or equivalently the space $\mathcal{A}^2(G)$ of square-integrable automorphic forms. We shall only describe this conjecture for  the tempered part of the discrete spectrum, which we denote by $L^2_{\rm disc, temp}(G)$. Note that $L^2_{\rm disc, temp}(G)$ is necessarily contained in the cuspidal spectrum by a result of Wallach [W3].
\vskip 10pt

Suppose that $G_0$ is the quasi-split inner form of $G$ over $F$, with a Borel-subgroup $B = T \cdot U$. Fix  an automorphic generic character $\theta = \otimes_v \theta_v$ of $U$ as in the previous section. We fix an integral structure on $G$, which determines a hyperspecial maximal compact subgroup $J_v \subset G_0(F_v)$ for almost all finite places $v$, as in \S \ref{S:unramified}. If $G=G(V)$ is a classical group, such an integral structure is given by fixing a lattice $L \subset V$.
\vskip 10pt

\noindent{\bf \underline{The Langlands-Arthur Conjecture}}
\vskip 10pt

\noindent (1) For any pure inner form $G = G(V)$ of $G_0$, there is a decomposition
\[  L^2_{\rm disc, temp}(G) = \widehat{\bigoplus}_{\phi} L^2_{\phi}(G), \]
where $\phi$ runs over the discrete global $L$-parameters and each $L^2_{\phi}$ is a
$G(\A)$-submodule. The precise definitions of these objects are given as follows.
\vskip 5pt

By definition, a discrete global $L$-parameter is a homomorphism
\[  \phi:  L_F \longrightarrow {^L}G = {^L}G_0 = \widehat{G} \rtimes W_F\]
such that its centralizer in the dual group $\widehat{G}$ is finite.
These parameters  are taken up to conjugacy by the dual group $\widehat{G}$.
Moreover, $L_F$ is the hypothetical Langlands group of $F$ -- the global analog of the Weil-Deligne group $WD(F_v)$ -- whose existence is only conjectural at this point. One postulates however that 
there is a natural surjective map
\[  L_F \longrightarrow W_F \quad (\text{the Weil group of $F$}), \]
and the projection of $\phi$ to the second factor $W_F$ in ${^L}G$ is required to be this natural surjection. Moreover, one postulates that
for each place $v$ of $F$, there is a natural conjugacy class of embedding
\[ WD(F_v)  \longrightarrow L_F. \]
\vskip 10pt

Assuming the above, one may attach the following data to a given discrete global $L$-parameter $\phi$:
\vskip 10pt

\begin{enumerate}
\item[(i)] a global component group 
\[  A_{\phi} =  Z_{\widehat{G}}({\rm Im}(\phi)), \]
which is finite by assumption.
\vskip 5pt

 \vskip 5pt

\item[(ii)] for each place $v$ of $F$, a local $L$-parameter
\[ \begin{CD}
 \phi_v : WD(F_v) @>>> L_F @>\phi>> {^L}G_0
 \end{CD}  \]
for the local group $G_{0,v}$, such that for almost all $v$, $\phi_v$ is unramified.
This gives rise to a natural map of component groups:
\[  A_{\phi} \longrightarrow A_{\phi_v}.\]
One thus has a diagonal map
\[  \Delta: A_{\phi} \longrightarrow \prod_v A_{\phi_v}. \]
\vskip 5pt

\item[(iii)]  for each place $v$,  the local Vogan packet $\Pi_{\phi_v}$ of irreducible representations of the pure inner forms $G(F_v)$, together with a bijection
\[  J(\theta_v): \Pi_{\phi_v} \leftrightarrow \text{Irr}(A_{\phi_v}) \]
specified by the local component $\theta_v$ of  the automorphic generic character 
$\theta$. For an irreducible character $\eta_v$ of $A_{\phi_v}$, we denote the corresponding representation in $\Pi_{\phi_v}$ by $\pi_{\eta_v}$.
In particular, the representation corresponding to the trivial character of $A_{\phi_v}$ is a representation of $G_0(F_v)$ and 
for almost all $v$,  it is spherical with respect to the hyperspecial maximal compact subgroup $J_v$.
 \vskip 5pt

 \item[(iv)] a global Vogan packet 
\[  \Pi_{\phi}= \{ \pi_{\eta} = \bigotimes_v \pi_{\eta_v}:  \, \text{$\pi_{\eta_v}  \in \Pi_{\phi_v}$ and $\eta_v$ is trivial for almost all $v$} \}. \]
In particular, the representations in the global packet are indexed by irreducible characters 
\[  \eta = \otimes_v \eta_v  \quad \text{of } \quad \prod_v A_{\phi_v}. \]
If $\pi_{\eta_v}$ is a representation of $G_{\eta_v}(F_v)$, then $\pi_{\eta}$ is a representation of 
the restricted direct product 
\[  G_{\eta} := \prod_{J_v} G_{\eta_v}(F_v). \]
Note, however, that the group $G_{\eta}$ need not be the 
adelic group of a pure inner form of $G_0$. For example, in the classical group case, $G_{\eta}$ need not be associated to a space $V$ equipped with a relevant sesquilinear form over $F$. If $G_{\eta} = G(V)(\A)$ for some space $V$ over $F$, 
we shall call the representation $\eta$ coherent.
\vskip 5pt

\item[(v)] for each $\pi_{\eta} \in \Pi_{\eta}$, a non-negative integer
\[  m_{\eta} =  \langle \Delta^*(\eta), 1 \rangle_{A_{\phi}}, \]
where the expression on the right denotes the inner product of the two characters of the finite group $A_{\phi}$. Thus $m_{\eta}$ is the multiplicity of the trivial character of $A_{\phi}$, in the representation
obtained by restriction of the tensor product of the representations $\eta_v$ to the diagonal.
If $\eta$ is not coherent, then one can show that $m_{\eta}$ is equal to zero.  When $\eta$ is coherent, so the adelic group $G_{\eta}$ is defined over $F$, the Langlands-Arthur conjecture for tempered representations predicts that $m_{\eta}$ is the multiplicity of the representation $\pi_{\eta}$ in the discrete spectrum of $G_{\eta}$
 \end{enumerate}

\vskip 5pt
With the above data, we have:
\vskip 10pt

\noindent (2) As $G$ runs over all pure inner forms of $G_0$ over $F$, there is an equivariant decomposition:
\[  \bigoplus_G L^2_{\phi}(G) =  \bigoplus_{\eta}  m_{\eta} \cdot \pi_{\eta}. \]
\vskip 5pt

\noindent  
We denote the representation in $\Pi_{\phi}$ associated to the trivial character by $\pi_0$. It is a representation of $G_0(\A)$  and is the unique representation in $\Pi_{\phi}$ which is abstractly 
$\theta$-generic. According to the multiplicity formula in (v), its multiplicity  in $L^2_{\phi}(G_0)$ is $1$, and the conjecture \ref{con:generic} then says that $\pi_0$ has a nonzero $\theta$-Fourier coefficient.
\vskip 10pt

Though the above conjecture of Langlands and Arthur is extremely elegant, it has a serious drawback: the group $L_F$ is not known to exist. However, in the case of the classical groups considered in this paper, one can present the conjecture on multiplicities in a way that avoids mentioning the group $L_F$. We do this below. In the case of classical groups, there is a further simplification, as the component groups $A_{\phi_v}$ are all elementary abelian $2$-groups. In particular, the representations $\eta_v$ are all $1$-dimensional, so their restricted tensor product $\eta$ also has dimension $1$. Hence the predicted multiplicity $m_{\eta}$ is either zero or one, the latter case occurring when $\eta$ has trivial restriction to the diagonal. In the general case, the groups $A_{\phi_v}$ can be non-abelian, and both the dimension of the representation $\eta$ and the dimension $m_{\eta}$ of its $A_{\phi}$-invariants can be arbitrarily large.
\vskip 5pt

We now specialize to the case where $G = G(V)$ is a classical group.  Let $G_0 = G(V_0)$ be the quasi-split inner form. Arguing exactly as we did in the local case, one sees that giving a global $L$-parameter for $G$
 \[  \phi: L_F \longrightarrow {^L}G_0 \]
is equivalent to giving a representation
\[  \varphi: L_F \longrightarrow \GL(M) \]
which is selfdual or conjugate-dual with a specific sign $b$.
 The requirement that $\phi$ is discrete then translates to the requirement that as a representation of $L_E$, 
\[  M \cong  \bigoplus_i M_i \]
where each $M_i$ is selfdual or conjugate-dual with the same sign $b$ as $M$ and $M_i \ncong M_j$ if $i \ne j$. In this case, the global component group is the 2-group
\[  A_{\phi} =A_{\varphi} = \prod_i (\Z/2\Z)_{M_i}, \]
with a natural basis indexed by the $M_i$'s.
\vskip 5pt

Now to remove the mention of the hypothetical group $L_E$, observe that when specialized to the case $G = \GL(V)$, with $\dim V = n$, the Langlands-Arthur conjecture simply says that there is a natural bijection
\[ \xymatrix { \left \{ \text{cuspidal representations of $\GL(V)$}\right \} 
\ar@{<->}[d] \\
\left \{ \text{ irreducible $n$-dimensional representations of $L_F$} \right 
\}.} \]
Thus, one may suppress the mention of $L_F$ by replacing the latter set with the former. 
Hence, in the context of the classical groups $G(V)$,
one replaces the data of each $M_i$ by a cuspidal representation $\pi_i$ of $\GL_{n_i}(\A_E)$, with $n_i = \dim M_i$. Moreover,
in view of Proposition \ref{P:cd-invariant} and its analog for symplectic and orthogonal groups, 
the selfduality or conjugate-duality of $M_i$ with sign $b$ can be described invariant theoretically and hence can be captured by the following $L$-function condition:
\vskip 5pt
\noindent (a)  a cuspidal representation $\pi$ of $\GL_n(\A)$ is selfdual of sign
\[  \begin{cases}
 \text{$+1$ if its symmetric square $L$-function $L(s, \pi, \text{Sym}^2)$ has a pole at $s = 1$;} \\
\text{$-1$ if its exterior square $L$-function $L^S(s, \pi, \wedge^2)$ has a pole at $s =1$.} \end{cases} \]
\vskip 10pt

\noindent (b) a cuspidal representation $\pi$ of $\GL_n(\A_E)$ is conjugate-dual of sign
\[  \begin{cases}
\text{$+1$, if the Asai $L$-function $L(s, \pi, {\rm As})$ has a pole at $s = 1$;} \\
\text{$-1$, if the Asai $L$-function $L(s, \pi, {\rm As}^-)$ has a pole at $s = 1$.} 
\end{cases} \]

\vskip 10pt
To summarize, a discrete global $L$-parameter $\varphi$ for $G_0 = G(V_0)$ is the data of a number of 
cuspidal representations of $\GL_{n_i}(\A_E)$, with $\sum_i n_i = \dim V_0$, satisfying the above $L$-function conditions.   The point of this reformulation is that given such a global $L$-parameter, one still has the data given in (i) -(v) above. More precisely, one has:
\vskip 10pt

\begin{enumerate}
\item[(i)]  The global component group $A_{\varphi}$ is simply the 2-group 
$\prod_i (\Z/2\Z)_{\pi_i}$ with a canonical basis indexed by the $\pi_i$'s. 
\vskip 5pt

\item[(ii)] For each $v$, the associated local $L$-parameter is the representation 
\[  \varphi_v = \bigoplus_i  \varphi_{\pi_{i,v}} \]
of $WD(k_v)$,  where $\varphi_{\pi_{i,v}}$ is the local $L$-parameter of the local component $\pi_{i,v}$ of $\pi_i$.
The $L$-function condition presumably forces each $\varphi_{i,v}$ to be selfdual or conjugate-dual with the given sign $b$. Moreover, one has a natural homomorphism 
\[  A_{\varphi} \longrightarrow A_{\varphi_v} = \prod_i A_{\varphi_{\pi_{i,v}}} \]
arising from the natural map
\[  (\Z/2\Z)_{\pi_i} \rightarrow C_{\varphi_{\pi_{i,v}}} \rightarrow A_{\varphi_{\pi_{i,v}}}, \]
obtained by sending $(\Z/2\Z)_{\pi_i}$ to the central subgroup $\langle \pm 1 \rangle$ in the centralizer $C_{\varphi_{\pi_{i,v}}}$. Thus, one continues to have the diagonal map $\Delta$.
\vskip 5pt
\item[(iii)] For each place $v$, the local parameter thus gives rise to a local Vogan packet 
$\Pi_{\varphi_v}$ as before.
\vskip 5pt

\item[(iv)] One can now define the global Vogan packet as before.
\vskip 5pt

\item[(v)] The formula for $m_{\eta}$ is as given before.
\end{enumerate}
\vskip 10pt

The formulation of the Langlands-Arthur conjecture given above amounts to a description of the discrete spectrum of classical groups in terms of the automorphic representations of $\GL_n$. The proof has been promised in a forthcoming book [A3]. 
\vskip 10pt

In the remainder of this section, we formulate
an extension of the Langlands-Arthur conjecture to the case of the metaplectic groups $\widetilde{\Sp}(W)$.

\vskip 10pt

Motivated by Theorem \ref{T:KR}, one expects that discrete global $L$-parameters for $\widetilde{\Sp}(W)$
should be discrete global $L$-parameters for $\SO(2n+1)$ with $2n = \dim W$.
Thus, a discrete global $L$-parameter of $\widetilde{\Sp}(W)$ should be a multiplicity free $2n$-dimensional symplectic representation of $L_F$:
\[ M= M_1 \oplus \cdots\oplus M_r \] 
with each irreducible summand $M_i$ also symplectic. 
In the reformulation of $L$-parameters given above, it is thus given by the data of a collection of pairwise inequivalent cuspidal representations $\pi_i $ of $\GL_{2n_i}(\A)$ with
\vskip 5pt
\begin{enumerate}[(a)]
\item $\sum_i n_i = n$ and
\vskip 5pt
\item  $L(s, \pi_i, \wedge^2)$ having a pole at $s = 1$ for each $i$. 
\end{enumerate}
Using Theorem \ref{T:KR} and Corollary \ref{C:KR}, one sees that such a global $L$-parameter   
$\varphi$ continues to give rise to the data (i) - (iv) above in the context of $\widetilde{\Sp}(W)$.
In particular, one obtains a global Vogan packet $\Pi_{\varphi}$ of irreducible genuine representations of $\widetilde{\Sp}(W)(\A)$ with a bijection
\[  \Pi_{\varphi} \longleftrightarrow \text{Irr}\left( \prod_v A_{\varphi_v} \right). \]
However, the multiplicity formula given in (v) above needs to be modified. Motivated by results of Waldspurger in the case when $\dim W = 2$, we make the following conjecture.

\vskip 10pt

\begin{con} \label{conj:arthur-meta}
Let $(\varphi, M)$ be a discrete global $L$-parameter for $\widetilde{\Sp}(W)$ with associated global Vogan packet $\Pi_{\varphi}$. Let $\chi_{\varphi}$ be the character on the global component group $A_{\varphi}$ defined by
\[   \chi_{\varphi}(a) = \epsilon(1/2, M^a). \]
More concretely, if $a_i \in A_{\varphi}$ is the basis element associated to the factor $(M_i,\pi_i)$ in $M$, then $\chi_{\varphi}(a_i) = \epsilon(\pi_i, 1/2)$. Then
\[  L^2_{\varphi}(\widetilde{\Sp}(W)) \cong \bigoplus_{\eta} m_{\eta} \pi_{\eta} \] 
where
\[  m_{\eta} = \langle \Delta^*(\eta), \chi_{\varphi} \rangle. \]
\end{con}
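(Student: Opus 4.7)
The plan is to deduce Conjecture \ref{conj:arthur-meta} from the Langlands--Arthur multiplicity formula for $\SO(2n+1)$ (which we grant ourselves, following [A3]) by transferring the assertion across the global theta correspondence between $\widetilde{\Sp}(W)$ and the two special orthogonal groups $\SO(V')$ of dimension $2n+1$ and trivial discriminant. Given a discrete global parameter $\varphi$ for $\widetilde{\Sp}(W)$, realized as a collection $\{\pi_i\}$ of pairwise inequivalent cuspidal representations of $\GL_{2n_i}(\A)$ with a pole of $L(s,\pi_i,\wedge^2)$ at $s=1$, the same collection is a discrete global parameter for $\SO(2n+1)$. Thus we have at our disposal both a global Vogan packet $\Pi_\varphi^{\SO}$ and the Arthur multiplicity formula for $L^2_\varphi(\SO(V'))$ in the discrete spectrum of each pure inner form $\SO(V')$.

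First, fix an additive character $\psi$ of $\A/F$. Theorem \ref{T:KR} and Corollary \ref{C:KR} provide, at each place $v$, a $\psi_v$-indexed bijection between $\Pi_{\varphi_v}$ and the disjoint union of the local $\SO$-packets over the two local spaces $V'_v$, and this bijection preserves the labels by irreducible characters of the common component group $A_{\varphi_v}$. Consequently $\Pi_\varphi$ is in canonical bijection with the union, over all (possibly incoherent) families $\{V'_v\}_v$, of the global Vogan packets for the various $\SO(V')$, and the multiplicity of $\pi_\eta$ in $\mathcal{A}_0(\widetilde{\Sp}(W))$ is transferred to the multiplicity in $\bigoplus_{V'}L^2_\varphi(\SO(V'))$ of the image $\sigma_\eta$ of $\pi_\eta$, restricted to those $\sigma_\eta$ that actually arise as global theta lifts from $\widetilde{\Sp}(W)$.

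The central step is then to establish a global cuspidal dichotomy: for each $\tilde\pi$ in $\Pi_\varphi$, the cuspidal theta lift $\Theta_{W,V',\psi}(\tilde\pi)$ is non-zero for exactly one of the two global spaces $V'$, and the selection of $V'$ is dictated by $\chi_\varphi$. The tool is the Rallis inner product formula: after regularization, the inner product of the theta lift of $\tilde\pi$ on $\SO(V')$ factors as the product $\prod_i L(1/2,\pi_i)$ times a product of purely local theta factors, while the local Howe duality of [HKS] and [KR] controls local non-vanishing via the local epsilon criteria. Packaging these together, the coherence condition $\Delta^*(\eta)=1$ on the $\SO$-side arising from the Langlands--Arthur formula translates, under the theta bijection, into the shifted coherence condition $\Delta^*(\eta)=\chi_\varphi$ on the metaplectic side, with the global sign governed precisely by the signs $\chi_\varphi(a_i)=\epsilon(\pi_i,1/2)$. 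Applying Arthur's formula to $\sigma_\eta$ and inverting the theta correspondence (for which the local injectivity and the global Rallis tower property are the key ingredients) yields $m_\eta=\langle \Delta^*(\eta),\chi_\varphi\rangle_{A_\varphi}$.

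The main obstacle will be proving the global cuspidal dichotomy in full generality, particularly in boundary cases where the theta lift to one of the two towers becomes residual rather than cuspidal and hence does not contribute to $L^2_\varphi(\SO(V'))$ in the naive way; handling this requires the full regularized Siegel--Weil machinery of Kudla--Rallis together with a fine analysis of the relevant second-term identities and the associated Eisenstein series. A secondary difficulty is verifying that the resulting formula is intrinsically $\psi$-independent, as the statement requires: the packet parameterization and the theta correspondence both depend on $\psi$ (cf.\ Conjecture \ref{conj-meta}), and one must check that these dependencies cancel in the final multiplicity pairing against the manifestly $\psi$-free character $\chi_\varphi$. The classical case $\dim W=2$, due to Waldspurger, should serve as a sanity check, and the behavior of our recipe under the twist by $c\in F^\times/F^{\times 2}$ predicted there must recur as a consistency test in the global argument.
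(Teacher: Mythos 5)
The statement you are trying to prove is Conjecture \ref{conj:arthur-meta}: the paper offers no proof of it at all. It is stated as a conjecture, motivated by Waldspurger's theorems for $\dim W = 2$, and the surrounding text (the discussion following the conjecture and Examples 1 and 2) is consistency-checking, not argument. So there is no "paper proof" to match; the only question is whether your proposed strategy would actually close the conjecture, and as written it would not.

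The central gap is your "global cuspidal dichotomy": the claim that for each $\tilde\pi \in \Pi_\varphi$ the global theta lift $\Theta_{W,V',\psi}(\tilde\pi)$ is nonzero for exactly one of the two spaces $V'$ of dimension $2n+1$. Theorem \ref{T:KR} gives this \emph{locally}, but globally it is false: by the Rallis inner product formula the lift to the same-dimensional tower is obstructed by the central value $\prod_i L(1/2,\pi_i)$, so when $L(M,1/2)=0$ the lift of a cuspidal $\tilde\pi$ vanishes for \emph{both} spaces $V'$, exactly as the paper points out in the paragraphs after the conjecture. In that situation one must ascend the Witt tower to $\SO(2n+3)$, where the lift has a non-tempered Arthur parameter (an extra $\SL_2$ factor), so the transfer requires Arthur's multiplicity formula for non-tempered packets together with the identification of the shifted parameter — none of which appears in your sketch, and these are the heart of the matter rather than "boundary cases." Moreover, when $\chi_\varphi \neq 1$ the paper's own discussion (and Example 2) shows that the characters $\eta$ indexing automorphic members of $\Pi_\varphi(W)$ and of $\bigoplus_{V'} L^2_\varphi(\SO(V'))$ are disjoint, so the metaplectic discrete spectrum cannot be obtained from the equal-rank orthogonal one by $\psi$-theta lifting at all; in particular the non-automorphy assertions your formula predicts (e.g.\ the representation $\sigma$ of Example 2) cannot be read off by "inverting the theta correspondence" from $\SO(V)$, but need a separate argument ruling out cuspidality via the epsilon-sign constraint on any putative lift. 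Finally, your plan leans on inputs not available in the paper — Arthur's multiplicity formula itself, regularized Siegel--Weil and second-term identities, Howe duality in residue characteristic $2$, and the $\psi$-twisting recipe of Conjecture \ref{conj-meta}, which is itself conjectural beyond $\dim W = 2$ — so even the steps you do describe are conditional. The strategy is in the right spirit (it is essentially the route one would expect to eventually succeed), but as a proof it has a false key step and leaves the genuinely hard global analysis unaddressed.
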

\vskip 10pt

  We note that Arthur has also introduced nontrivial quadratic characters of
the global component group $A_{\varphi}$  in his conjectures for the multiplicities of non-tempered representations of linear groups. 
\vskip 10pt

We conclude this section with some ramifications of Conjecture \ref{conj:arthur-meta}.
Given a discrete global L-parameter $(\varphi, M)$ (relative to a fixed additive character $\psi$) for $\widetilde{\Sp}(W)$, with $\dim W = 2n$, note that $M$ is also a discrete global L-parameter  for $\SO(V)$ with $\dim V = 2n+1$. For each place $v$, the elements in the associated Vogan packets 
\[ \begin{cases}
\text{$\Pi_{\varphi_v}(V)$ of $\SO(V)$} \\
\text{$\Pi_{\varphi_v}(W)$ of $\widetilde{\Sp}(W)$} 
\end{cases}  \]
 are both indexed by $\text{Irr}(A_{\varphi_v})$. For a character $\eta_v$ of $A_{\varphi_v}$, let 
\[  \pi_{\eta_v} \in \Pi_{\varphi_v}(V) \quad \text{and} \quad \sigma_{\eta_v} \in  \Pi_{\varphi_v}(W) \]
be the corresponding representations. By construction, $\pi_{\eta_v}$ and $\sigma_{\eta_v}$ are local theta lifts (with respect to $\psi$) of each other.
One might expect that, globally, the submodule $L^2_{\varphi}(\widetilde{\Sp}(W))$ of the discrete spectrum can be obtained from $\bigoplus_{V'} L^2_{\varphi}(\SO(V'))$ using global theta correspondence. As we explain below, this is not always the case.
 \vskip 10pt
 
More precisely, if $\eta = \otimes_v \eta_v$ is a character of $\prod_v A_{\varphi_v}$, then
the corresponding representations
\[  \pi_{\eta} \in \Pi_{\varphi}(V) \quad \text{and} \quad \sigma_{\eta} \in \Pi_{\varphi}(W) \]
may or may not be global theta lifts of each other. Indeed, 
\[  \text{$\pi_{\eta}$ occurs in the discrete spectrum of some $\SO(V')$} \Longleftrightarrow \Delta^*(\eta) = 1 \]
whereas
\[  \text{$\sigma_{\eta}$ occurs in the discrete spectrum of $\widetilde{\Sp}(W)$} \Longleftrightarrow
\Delta^*(\eta) = \chi_{\varphi}. \]
 Thus, if $\chi_{\varphi}$ is non-trivial, then the subset of $\eta$'s which indexes automorphic representations for $\SO(V)$ will be disjoint from that which indexes automorphic representations of $\widetilde{\Sp}(W)$. In such cases, there is clearly no way of obtaining the automorphic elements in $\Pi_{\varphi}(W)$ from those of $\Pi_{\varphi}(V)$ via global theta correspondence (with respect to $\psi$). 
 \vskip 10pt
 
 Suppose, on the other hand, that $\chi_{\varphi}$ is the trivial character, 
 so that 
 \[  \epsilon(M_i, 1/2) = 1 \]
  for all the irreducible symplectic summands $M_i$ of $M$. Then
 the automorphic elements in $\Pi_{\varphi}(W)$ and $\Pi_{\varphi}(V)$ are indexed by the same subset of $\eta$'s and are abstract theta lifts of each other.
 However, to construct $L^2_{\varphi}(\widetilde{\Sp}(W))$ from $L^2_{\varphi}(\SO(V))$ via global theta correspondence, there is still an issue with the  non-vanishing of global theta liftings. 
 In this case, the non-vanishing of the global theta lifting is controlled by the non-vanishing of the central L-value 
 \[  L(M, 1/2) = \prod_i  L(M_i, 1/2). \]
Only when $L(M, 1/2)$ is nonzero does one know that $L^2_{\varphi}(\widetilde{\Sp}(W))$ can be obtained from $\bigoplus_{V'} L^2_{\varphi}(\SO(V'))$ by global theta lifting (with respect to $\psi$).  
 \vskip 10pt
 
Another observation is that while the packet $\Pi_{\varphi}(V)$ always contains automorphic elements (for example the representation corresponding to $\eta = 1$), it is possible that none of the elements in the packet $\Pi_{\varphi}(W)$ are automorphic.

\vskip 10pt

We  give two examples which illustrate Conjecture \ref{conj:arthur-meta} and the phenomena noted above,  in the case $\dim W = 2$. In this case, the conjecture is known by the work of Waldspurger [Wa1, 2]. 
\vskip 10pt

\noindent{\bf Example 1}:  Suppose that $(\varphi,M)$ is a discrete global L-parameter for $\SO(3) \cong \PGL(2)$ and $\widetilde{\Sp}(2)$, so that 
\[  A_{\varphi} = \Z/2\Z.  \] 
Suppose that for 3 places $v_1$, $v_2$ and $v_3$,  the local L-parameter $\varphi_{v_i}$ corresponds to the Steinberg representation of $\PGL(2)$, and $\varphi_v$ is unramified for all other $v$. Then 
\[  \epsilon(M, 1/2) = -1, \]
so that $\chi_{\varphi}$ is the non-trivial character of $A_{\varphi}$.  
\vskip 5pt

In this case, the local Vogan packets (for both $\SO(3)$ and $\widetilde{\Sp}(2)$) have size $2$
at the 3 places $v_i$, and we label the representations by
\[  \Pi_{\varphi_v}(\SO(3))  = \{ \pi_v^+, \pi_v^- \} \quad \text{and} \quad \Pi_{\varphi_v}(\widetilde{\Sp}(2)) = \{ \sigma_v^+, \sigma_v^- \}, \]
with the minus sign  indicating the non-trivial character of $A_{\varphi_v}$. 
At all other places, the local packets are singletons.
Thus, the global L-packet of $\SO(3)$ has $8$ elements, which we can label as $\pi^{+++}$, 
$\pi^{++-}$ and so on. Similarly, the global L-packet for $\widetilde{\Sp}(2)$ also has 8 elements, denoted by $ \sigma^{+++}$, $\sigma^{++-}$ and so on.
\vskip 5pt

Now observe that a representation in the global Vogan packet for $\SO(3)$ is automorphic if and only if it has an even number of minus signs in its label,
whereas a representation in the global Vogan packet for $\widetilde{\Sp}(2)$ is automorphic  if and only if  it has an odd number of minus signs in its label.
\vskip 10pt

\noindent{\bf Example 2}:  Suppose again that $(\varphi, M)$ is a discrete global parameter for $\SO(3)$ and $\widetilde{\Sp}(2)$, but now assume that
$\varphi_v$ is reducible for all v. Moreover,  suppose that 
\[  \epsilon(M,1/2) =  -1, \]
so that $\chi_{\varphi}$ is the non-trivial character of $A_{\varphi} = \Z/2\Z$.
These conditions can be arranged. 
\vskip 5pt

In this case,  the global  Vogan packets for $\SO(3)$ and $\widetilde{\Sp}(2)$ are both singletons, containing the representation $\pi$ and $\sigma$ respectively, which are indexed by the trivial character $\eta$ of 
\[  \prod_v A_{\varphi_v} = {1}. \]
Hence,  $\Delta^*(\eta)$ is the trivial character of $A_{\varphi}$. In particular, $\pi$ is automorphic for SO(3), whereas $\sigma$ is not automorphic for $\widetilde{\Sp}(2)$.

 \vskip 15pt

 \section{Revisiting the global conjecture} \label{S:central}
 
 In this section, we shall revisit the global conjecture formulated in \S \ref{S:global-conj}. In particular, we shall approach the restriction problem using the framework of the Langlands-Arthur conjecture reviewed in \S \ref{S:arthur}. 
 \vskip 10pt

We start with a quasi-split group $G_0$ over $F$ associated to the pair of spaces $W_0 \subset V_0$ and fix an automorphic generic character $\theta$ of $U$ as in the previous section; in particular, 
$\theta$ may depend on the choice of an appropriate additive character $\psi$ in various cases.
Given a discrete global $L$-parameter $(\varphi, M, N)$ for $G_0$, there is a corresponding submodule $\mathcal{A}^2_{\varphi}$ in the automorphic discrete spectrum and we are interested in 
the restriction of the linear functional $F(\nu)$ to this submodule.
 \vskip 10pt

Recall that a natural symplectic representation $R$ of ${^L}G$ plays a prominent role in the global conjecture \ref{conj:global1}. Using the global component group $A_{\varphi}$ of the parameter $\varphi$, we 
may refine the associated $L$-function $L(\pi, R,s)$ for $\pi \in \Pi_{\varphi}$, as follows.
If $a \in A_{\varphi}$, we may consider it as an element of $A_{\varphi_v}$ for any $v$ and then choose any element in $C_{\varphi_v}$ projecting to it. Denoting any such element in 
$C_{\varphi} \subset \widehat{G}$ by $a$ again, we see that the subspace $R^a$ is a submodule of $WD(F_v)$ under $\varphi_v$. Thus, one has the associated $L$-function  
\[  L(\pi, R^a, s) = \prod_v L(R^a \circ \varphi_v, s) \]
and epsilon factor 
\[  \epsilon(\pi, R^a, s) = \prod_v \epsilon(R^a \circ \varphi_v, \psi_v, s). \]
\vskip 15pt

We are now ready to revisit the global restriction problem. 
Let us first draw some implications of the various local conjectures we have made so far.
\vskip 5pt

\noindent (i) According to our local conjectures \ref{conj-mult1} and \ref{conj-character}, there is a unique representation $\pi_v$ in the local Vogan packet $\Pi_{\varphi_v}$ such that  
\[  \Hom_{H(F_v)}(\pi_v \otimes \overline{\nu_v}, \CC) \ne 0, \]
 and this distinguished representation is indexed by a distinguished (relevant) character 
 \[  \chi_v  \quad \text{of $A_{M_v} \times A_{N_v}$}. \]
For each $v$, the representation $\pi_{\chi_v}$ is a representation of a pure inner form $G_v$ of $G_0$ over $F_v$, associated to a pair of spaces $W_v \subset V_v$. 
\vskip 5pt

\noindent (ii) According to our unramified local conjecture \ref{conj-unramified}, 
for almost all $v$, the distinguished character $\chi_v$ is trivial and the representations $\pi_{\chi_v}$,
$\theta_v$ and $\nu_v$ are all unramified. 
At these places, the pair of spaces $W_v \subset V_v$ is
simply $W_{0,v} \subset V_{0,v}$ and the group $G_v$ is simply $G_0(F_v)$.
Thus, we can form the restricted direct product groups
\[   \begin{cases}
  G_{\A} = \prod_{J_v} G_v; \\
  H_{\A} = \prod_{J_v \cap H_v} H_v,\end{cases}  \]
and  representations
\[  \begin{cases}
\pi_{\chi} = \hat{\otimes}_v \pi_{\chi_v} \, \text{of $G_{\A}$;} \\
\nu = \hat{\otimes}_v \nu_v \, \text{  of $H_{\A}$,} \end{cases} \]
which are restricted tensor products, defined using the unique line of $J_v$-invariant  or $J_v \cap H_v$-invariant vectors for almost all $v$.  The representation $\pi_{\chi}$ is simply the element in the global Vogan packet $\Pi_{\varphi}$ indexed by the distinguished character 
\[  \chi = \otimes_v \chi_v  \]
of the compact group $\prod_v A_{M_v} \times A_{N_v}$.
It is the  only (relevant) element in $\Pi_{\varphi}$ such that
 \[  \Hom_{H(\A)}(\pi_{\chi} \otimes \overline{\nu},\CC) \ne 0.   \]
\vskip 15pt
 
 With these preliminaries out of the way, there are now three questions to address.
 \vskip 10pt
 
 \noindent (1) Are $H_{\A} \hookrightarrow G_{\A}$ the adelic points of algebraic groups 
 \[  H = N \cdot G(W)  \hookrightarrow G = G(V) \times G(W) \]
 defined over $F$, associated to a relevant pair of spaces $W \subset V$ over $E$?
  
 \vskip 5pt
 
  In the symplectic case, this question clearly has a positive answer, since there is a unique symplectic vector space in any even dimension over any field. Hence, we focus on the other cases, where the issue is whether the collection $(W_v \subset V_v)$ of local spaces is coherent in the terminology of Kudla.  Now these local spaces have the same rank as $W_0 \subset V_0$ and the same discriminant in the orthogonal case. Hence, they form a coherent collection if and only if we have changed the Hasse-Witt invariant or the hermitian/skew-hermitian discriminants at an even number of places $v$. This is equivalent to the identity
  \[  1 = \prod_v \chi_v(-1,1) = \begin{cases}
  \prod_v \epsilon (M_v \otimes N_v, \psi_v), \text{ in the orthogonal and hermitian cases;} \\
  \text{} \\
   \prod_v \epsilon(M_v \otimes N_v(\mu^{-1}_v), \psi_v),  \text{ in the skew-hermitian case.} \end{cases}
 \]
 Thus, we will have a global pair of spaces $W \subset V$ with these localizations if and only if
 \[  \epsilon(\pi_0, R, 1/2)  =1. \]
 \vskip 5pt
 
 \noindent  Assuming this is the case, the second question is:
 \vskip 10pt
 
 \noindent (2) Does the representation $\pi_{\chi}$ in the global Vogan packet $\Pi_{\varphi}$ 
  occur in the space $\mathcal{A}_0(G)$ of cusp forms? 
  
  \vskip 10pt
  
  To answer this question, we exploit the Langlands-Arthur conjecture discussed in \S \ref{S:arthur}. Thus, in the 
  orthogonal, hermitian and skew-hermitian cases, we need to see if the distinguished character $\chi$ is 
    trivial when restricted to the global component group $A_{\varphi}$ via the diagonal map $\Delta$. 
  This amounts to the assertion that, for all $a \in A_{\varphi}$,
  \[  1 = \prod_v \chi_v(a) = \begin{cases} 
  \prod_v \epsilon((M_v \otimes N_v)^a, \psi_v), \text{ in the orthogonal or hermitian cases;} \\
  \text{} \\
 \prod_v  \epsilon(M_v \otimes N_v(\mu_v^{-1}))^a, \psi_v), \text{  in the skew-hermitian case,} \end{cases} \] 
 or equivalently, that
\[    \epsilon(\pi_0, R^a, 1/2) = 1  \quad \text{for all $a \in A_{\varphi}$.} \]
    On the other hand, in the symplectic case, using the multiplicity formula given in Conjecture 
 \ref{conj:arthur-meta}, we see that we need the distinguished character to be equal on $A_{\varphi} = A_M \times A_N^+$ to the character $\chi_0 \times 1$ (assuming that $M$ is symplectic and $N$ orthogonal). This translates to the same condition
 \[  1 = \prod_v \epsilon((M_v \otimes N_v)^a, \psi_v) = \epsilon(\pi_0, R^a, 1/2). \]

  \vskip 5pt
Finally, assuming that $\epsilon(\pi_0, R^a, 1/2) = 1$ for all $a \in A_{\varphi}$, so that $\pi_{\chi}$ occurs in the space of cusp forms, we can ask:
   
   \vskip 10pt
  
 \noindent (3) Does the linear form  $F(\nu)$ have nonzero restriction to $\pi_{\chi}$?
  \vskip 10pt

The point is that, since the above conditions on epsilon factors are assumed to hold, there are no trivial reasons for the central critical $L$-value  $L(\pi_0, R, 1/2)$ to vanish. Here then is the second form of our global conjecture:
 \vskip 10pt
 
 \begin{con} \label{conj:global2}
 Let $\pi_{\chi}$ be the representation in the global Vogan packet  $\Pi_{\varphi}$ corresponding to the distinguished character $\chi$. Then the following are equivalent:
 \vskip 5pt
 
 \begin{enumerate}[(i)]
 \item $\pi_{\chi}$ occurs in $\mathcal{A}_0(G)$ and the linear form $F(\nu)$  is nonzero on $\pi_{\chi}$
 \vskip 5pt
 
 \item $L(\pi_{\chi}, R, 1/2) \ne 0$. 
  \end{enumerate}
  \end{con}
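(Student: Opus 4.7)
The plan is to reduce the general case to basic cases ($\dim W^\perp = 0$ or $1$) via global analogs of Theorems \ref{T:SOVPS} and \ref{T:FJPS}, then attack the basic cases by combining an integral representation of $L(\pi_\chi, R, s)$ with a refined Ichino-Ikeda type identity proved via a relative trace formula.

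First I would develop global analogs of the reduction Theorems \ref{T:SOVPS} and \ref{T:FJPS}. For $W \subset V$ with larger codimension, the strategy is to express the period $F(\nu)$ on a cuspidal $\pi_\chi$ as a residue of a Rankin-Selberg type integral coupling a basic period for a pair in codimension $0$ or $1$ against an auxiliary cuspidal representation $\tau$ of a general linear group. One chooses $\tau$ so that the globally induced representation sits in the appropriate endoscopic packet, and verifies that the auxiliary L-factors arising from $\tau$ contribute only nonvanishing quantities at $s = 1/2$, using temperedness of $\pi_\chi$ to control the relevant local L-factors. Together with the local reduction in Theorem \ref{T:reduction}, this should reduce both implications to the basic case.

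For the basic case the implication (i) $\Longrightarrow$ (ii) should follow from the integral representation method developed by Ginzburg, Jiang and Rallis [GJR1, GJR2, GJR3]. One constructs a global zeta integral involving $F(\nu)$ which unfolds, after coupling with a suitable Eisenstein series, to produce a product of the partial L-function $L^S(\pi_\chi, R, s)$ and explicitly computable local zeta integrals. The local zeta integrals are nonvanishing at $s = 1/2$ under Conjecture \ref{conj-character}, which pins down the correct signs of the local epsilon factors; the unramified computation at almost all places is furnished by Conjecture \ref{conj-unramified}, known in the required generality by Theorem \ref{T:unramified}.

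For the converse (ii) $\Longrightarrow$ (i), the goal is to establish a refined Ichino-Ikeda type identity of the shape
\[ |F(\nu)(f)|^2 = C_\pi \cdot \frac{L(\pi_\chi, R, 1/2)}{L(\pi_\chi, \mathrm{Ad}, 1)} \cdot \prod_v \alpha_v(f_v), \]
where each $\alpha_v$ is a normalized local matrix coefficient integral on $H(F_v)$ that equals $1$ at almost all unramified places by Conjecture \ref{conj-unramified} and is nonzero at every place by the local conjectures \ref{conj-mult1} and \ref{conj-character}. The attack in the basic case would proceed via a relative trace formula comparison in the style of Jacquet-Rallis [JR]: one integrates the kernel of $G \times G$ against $\nu \boxtimes \overline{\nu}$ on the diagonal copy of $H$ and matches the resulting relative orbital integrals against those of an auxiliary trace formula on a split group where the central L-value appears from a Whittaker-Rankin-Selberg integral. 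The main obstacle is this converse direction: even in the prototype orthogonal codimension-one case the Ichino-Ikeda identity requires substantial technical machinery, and the metaplectic and skew-hermitian cases carry the additional burden of handling the infinite-dimensional Weil-representation factor in $\nu$, as well as invoking the extended multiplicity formula of Conjecture \ref{conj:arthur-meta}, which lies deeper than its linear counterpart.
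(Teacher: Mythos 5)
The statement you are addressing is Conjecture \ref{conj:global2}: the paper does not prove it. It is formulated as an open global conjecture, and the only ``argument'' the paper gives is the heuristic in \S \ref{S:central}: assuming the local conjectures \ref{conj-mult1}, \ref{conj-character}, \ref{conj-unramified} and the Langlands--Arthur multiplicity formula (plus Conjecture \ref{conj:arthur-meta} in the metaplectic case), the condition $\epsilon(\pi_0,R^a,1/2)=1$ for all $a\in A_\varphi$ is exactly what makes the distinguished member $\pi_\chi$ coherent and automorphic, and then, since no sign forces $L(\pi_0,R,1/2)$ to vanish, the equivalence of period nonvanishing with $L(\pi_\chi,R,1/2)\ne 0$ is conjectured. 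So there is no paper proof against which your argument can be matched; what you have written is a research program, and it should be presented as such rather than as a proof.

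As a program it also has concrete gaps. First, the ``global analogs of Theorems \ref{T:SOVPS} and \ref{T:FJPS}'' do not exist and are not a formality: globally, inducing $\tau\boxtimes\pi_W$ from $P(Y)$ produces an Eisenstein series, not a cusp form, so the clean identity of Hom-spaces in the local theorems is replaced by period integrals of (residues of) Eisenstein series against cusp forms, which typically diverge and need regularization; this is precisely the nontrivial content of the Ginzburg--Jiang--Rallis papers, not a reduction one can quote. Second, and more seriously, your treatment of (ii) $\Rightarrow$ (i) takes as a lemma a refined Ichino--Ikeda type identity proved by a Jacquet--Rallis relative trace formula; but that identity is itself a conjecture (the paper cites [II] only in the orthogonal case with $\dim W^\perp=1$, and the hermitian analogue as thesis work in progress), and the trace formula comparison requires a fundamental lemma and smooth transfer that were not available --- in the symplectic/metaplectic and skew-hermitian cases not even the comparison has been formulated, and the Weil-representation factor in $\nu$ together with Conjecture \ref{conj:arthur-meta} adds further unproven input. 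Finally, the GJR results you invoke for (i) $\Rightarrow$ (ii) are proved only for globally generic $\pi$, whereas $\pi_\chi$ is generally a non-generic member of the packet on a non-quasi-split form, so even that direction is not covered by the cited theorems. In short, every load-bearing step of the proposal rests on statements at least as deep as the conjecture itself.
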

  
         \vskip 15pt
    
\section{The first derivative}  \label{S:derivative}

We maintain the notation and setup of the previous section, so that $W_0 \subset V_0$ is a pair of spaces over $E$ with quasi-split  group $G_0 = G(V_0) \times G(W_0)$ over $F$. For a given discrete global
 $L$-parameter $(\varphi, M,N)$ of $G_0$, we have a distinguished representation 
 \[  \pi =\pi_{\chi} = \hat{\otimes}_v \pi_v \]
 in the global Vogan packet $\Pi_{\varphi}$, which is a representation of a restricted direct product 
 \[  G_{\A} = \prod_{J_v} G_v(F_v) \]
  and is the unique element in the packet such that
 \[  \Hom_{H_{\AA}}(\pi \otimes \overline{\nu}, \CC) \not= 0. \]
 \vskip 5pt
 
\noindent  In this final section, we specialize to the orthogonal and hermitian cases (i.e. where $\epsilon = 1$)
and assume that 
\[
\ep(\pi_0, R, 1/2) = -1 \quad \text{so that $L(\pi_0, R, 1/2) = 0$,} 
\]  
where $\pi_0$ is the generic automorphic representation of $G_0(\AA)$ with parameter $\varphi$.
In this case the group $G_{\AA}$  does not arise from a pair of orthogonal or hermitian spaces $W \subset V$ over $E$.  In Kudla's 
terminology, the local data $(W_v \subset V_v)$ is incoherent. 
Nevertheless, we can formulate a global conjecture in this case, provided that the following condition holds:
\vskip 5pt

\begin{itemize}
\item[($\ast$)]  There is a non-empty  set $S$ of places of 
$F$, containing all archimedean primes, such that
the groups $G_v(F_v)$ and $H_v(F_v)$ are {\it compact} for all places $v 
\in S$. 
\end{itemize}
\vskip 10pt

\noindent This condition has the following implications:
\vskip 5pt

\begin{enumerate}[(i)]
\item  $\dim W_0^{\perp} = 1$. Indeed, 
this follows from the fact that $H_v = N_v .G(W_v)$ and a nontrivial unipotent subgroup $N_v$ cannot be compact.  Hence, for all places $v$, we have
$$\dim V_v = \dim W_v + 1.$$
If  we consider the orthogonal decomposition
\[  V_0 = W_0 \oplus L \]
over $E$, then  since $W_v \subset V_v$ is relevant for all $v$,  we have 
\[ L_v = W^{\perp}_v.\] 
Thus, though the collection $(W_v \subset V_v)$ is not coherent, the collection $(W_v^{\perp})$ is.
\vskip 10pt

\item  Any archimedean place $v$ of $F$ is real  and the space 
$V_v$ must be definite.  In the hermitian case, we must have $E_v = \CC$.  
Hence, in the number field case, $F$ is totally real and, in the hermitian case, $E$ is a $CM$ field.  
Moreover, at all archimedean places $v$ of $F$, the generic representation $\pi_{0,v}$ of 
$G_0(F_v)$ is in the discrete series, and $\pi_v$ is a finite 
dimensional representation of the compact group 
$$G_v(F_v) = \SO(n) \times \SO(n-1) \, {\rm or}\    \U(n) \times 
\U(n-1)$$
with a unique line fixed by $H(F_v) = \SO(n-1)$ or $ \U(n-1)$. 
\vskip 10pt

\item At finite primes $v \in S$, we must have $\dim (V_v/F_v) \le 4$.  Indeed, 
a 
quadratic form of rank $ \geq 5$ over $F_v$ represents $0$.  Hence, for 
function fields $F$, we have the following nontrivial cases:

\[   
(\dim V_v, \dim W_v) = \begin{cases}
\text{$(3,2)$ or $(4,3)$ in the orthogonal case;} \\
\text{$(2,1)$ in the unitary case.}  
\end{cases} \]
\end{enumerate}
\vskip 10pt

For simplicity, we will assume that $F$ is a totally real number field and
$S$ consists only of the archimedean places.  In the hermitian 
case, the quadratic extension $E$ of $F$ is a $CM$ field.  
\vskip 10pt

Suppose first that the 
spaces $V_v$ are orthogonal  of dimension $n \geq 3$. Fix a real place 
$\alpha$. If $V_{\alpha}$ has signature $(n, 0)$, let 
\[  W^*_{\alpha} 
\subset V^*_{\alpha} \]
be the unique orthogonal spaces over $F_{\alpha} = \RR$ with signatures 
\[  (n-3,2) \subset (n-2, 2). \]
 If $V_{\alpha}$ has signature $(0, n)$, let 
\[ W^*_{\alpha} \subset V^*_{\alpha} \]
 have signatures 
 \[  (2, n-3) \subset (2, n-2). \]
 Since we have modified the 
Hasse-Witt invariant at a single  place of $F$, and kept the discriminant 
of $W_{\alpha}^{*\perp} \simeq W_v^{\perp}$ equal to the discriminant of 
$L_{\alpha}$, there are unique global spaces 
\[  W^{\alpha} \subset V^{\alpha} \quad \text{over $E$} \]
with localizations 
\[ \begin{cases}
\text{$W_v \subset V_v$ for all $ v \not= \alpha$,} \\ 
\text{$W^*_{\alpha} \subset V_{\alpha}^*$ at $\alpha$.} \end{cases} \]
 We note that we can make such a pair of global spaces for  {\it any} place $\alpha$ of $F$, having localizations $W_v \subset V_v$ 
for all $v \not= \alpha$, provided that $\dim W_{\alpha} \geq 3$.  When 
$\dim W_{\alpha} = 2$, we can make such a global space provided that 
$W_{\alpha}$ is {\it not  split} over $F_{\alpha}$, i.e. for all primes 
$\alpha$ which are ramified or inert in the splitting field $E$ of the 
$2$-dimensional space $W^0$.  The proof is similar to [Se, Prop 7].
\vskip 10pt

We can use the global spaces $W^{\alpha} \subset V^{\alpha}$ so constructed  to define 
the groups 
\[  H^{\alpha} \hookrightarrow G^{\alpha} = \SO(V^{\alpha}) 
\times \SO(W^{\alpha}) \] 
over $F$.  These have associated Shimura varieties 
$$\Sigma (H^{\alpha}) \hookrightarrow \Sigma(G^{\alpha})$$
over $\CC$, of dimensions $n-3$ and $2n-5$ respectively, which are 
defined over the reflex field $E= F$, embedded in $\CC$ via the place 
$\alpha$.  The varieties over $F$ are independent of the choice of the  real place $\alpha$, so we denote them simply by
$$\Sigma (H) \hookrightarrow \Sigma(G),$$ 
suppressing the mention of $\alpha$.
\vskip 5pt

Next, suppose that the spaces $V_v$ are hermitian over $E_v$ of dimension $n 
\ge 2$.  Fix a real place $\alpha$ and a complex embedding $z : 
E_{\alpha} \to \CC$.  If $V_{\alpha}$ has signature $(n, 0)$, let 
\[  W_{\alpha}^* \subset V^*_{\alpha} \]
be the unique hermitian spaces over  $E_{\alpha}$ with signature 
\[  (n-2, 1) \subset (n-1, 1). \]  
If  $\Va$ has signature $(0, n)$, let 
\[  W_{\alpha}^* \subset V_{\alpha}^* \]
be the 
unique hermitian spaces over $E_{\alpha}$ with signatures 
\[ (1, n-2) \subset (1, n-1). \]
 Again, since we have modified the hermitian 
discriminants  at a single  place $\alpha$ of $F$, and kept 
$(W_{\alpha}^*)^{\perp} \simeq W_v^{\perp}$ constant, there is a unique 
pair of global spaces 
\[  W^{\alpha} \subset V^{\alpha} \] 
over $E$ with  localizations 
\[  \begin{cases}
\text{$W_v \subset V_v$, for all $v \not= \alpha$;} \\ 
 \text{$W^*_{\alpha} \subset \Va^*$ at $\alpha$.} \end{cases} 
 \]  
Again,  we can make such a modification at any place $\alpha$ of $F$ which is not split in 
the quadratic extension $E$. 
\vskip 5pt

As before,  we use the global spaces $W^{\alpha} 
\subset V^{\alpha}$ to define groups 
\[  H^{\alpha} \hookrightarrow G^{\alpha} = \U(V^{\alpha}) \times \U(W^{\alpha}) \]
over $F$.  These have associated Shimura varieties 
\[ \Sigma(H^{\alpha}) \hookrightarrow \Sigma(G^{\alpha}) \]
over $\CC$, of dimensions $(n-2)$ and $(2n-3)$ respectively, which are 
defined over the reflex field $= E $, embedded in $\CC$ via the 
extension $z$ of the place $\alpha$.  These varieties over $E$ are 
independent of the choice of real place $\alpha$ of $F$, so we denote 
them simply by:
$$\Sigma(H) \hookrightarrow \Sigma (G).$$
\vskip 10pt

We sketch the definition of the Shimura variety $\Sigma$ of dimension $n-1$ associated to incoherent 
hermitian data $\{W_v\}$ of dimension $n$ which is definite at all real 
places $v$ of $F$; the orthogonal case is similar.  Take the modified 
space $W^{\alpha}$ at a real place $\alpha$, and let $G = Res_{F/\Q} 
\U(W^{\alpha})$.  We define a homomorphism 
$$h : S_{\RR} = Res_{\CC/\RR} \GG_m \to G_{\RR} = 
{\displaystyle{\prod_{v|\infty}}} \U(W^{\alpha}_v)$$ 
as follows. Let $\langle e_1, \cdots, e_n\rangle $ be an orthogonal basis of 
$W^*_{\alpha}$, such that the definite space $e_1^{\perp}$ has the same 
sign as the definite space $W_{\alpha}$.  We set 
\[  h(z) = \begin{pmatrix}z/\overline{z}\\ & \ddots  &   \\
& & 1_{ \ddots 1} 
\end{pmatrix} \quad
 \text{in $\U(W^*_{\alpha})$} \]  
 and  
 \[  \text{$h(z) = 1$ in all the other (compact) components 
${\displaystyle{\prod_{v \not= \alpha}}} \U(W_v)$.} \]
 Let $X$ be the 
$G_{\RR}$-conjugacy class of $h$, which is isomorphic to the unit ball 
$\U_{n-1,1} / \U_{n-1} \times \U_1$ in $\CC^{n-1}$.  The pair $(G, X)$ 
satisfies the axioms for a Shimura variety [De3, $\S$2.1].  The 
composite homomorphism 
$$w: (\G_m)_{\RR} \to S_{\RR} \to G_{\RR}$$ 
is trivial, and the reflex field of $\Sigma(G) = M(G, X)$ is equal to $E$, 
embedded in $\CC$ via the homomorphism $z$  extending $\alpha$.  Indeed, 
the miniscule co-character  $\mu : (\G_m)_{\CC} \to G_{\CC}$ is 
defined over $E$:
$$\mu(\alpha) = \begin{pmatrix}\alpha \\ & \ddots & \\
%&  & 1_ \ddots 1 \\
 \end{pmatrix} \times 1 .$$
The complex points of $\Sigma(G)$ are:
$$\Sigma (G,\CC) = G(\Q)\backslash [X \times G (\widehat{\Q})].$$
Over $E$, the variety $\Sigma(G)$ and its action of $G(\widehat{\Q}) = 
{\displaystyle{\prod_{v {\rm\ finite}}}} \U(W_v)$ depend only on the 
incoherent family $\{W_v\}$.  If 
\[   \pi_{\infty} = {\displaystyle{\otimes_{v \ {\rm real}}}} \pi_v\] 
is any finite dimensional representation of the compact group 
${\displaystyle{\prod_{v \ {\rm real}}}} \U(W_v)$, there is a local system 
${\mathcal F}$ on $\Sigma(G)$ over $E$ associated to $\pi_{\infty}$.  
\vskip 10pt

We now return to 
the study of the $L$-function $L(\pi_0, R, s)$ at $s = 1/2$, 
using the arithmetic geometry of the cycle 
$$\Sigma(H) \hookrightarrow \Sigma(G)$$
associated to the incoherent family $(W_v \subset V_v)$.  The 
representation 
\[ \text{$\pi_{\infty} = {\displaystyle{\otimes_{v\ {\rm real}}}}\pi_v$ of $\prod_{\text{$v$ real}} G_v(F_v)$} \]
 gives a local system ${\mathcal F}$ on $\Sigma(G)$ which contains the 
trivial local system $\CC$ when restricted to the cycle $\Sigma(H)$.   
\vskip 10pt

To get the appropriate representation $\pi_f = 
{\displaystyle{\widehat{\otimes}_{v~{\rm finite}}}} \pi_v$ of 
$G(\widehat{\Q})$ on the Chow group of $\Sigma(G)$ with coefficients in 
${\mathcal F}$, we need to find this representation in the middle 
dimensional cohomology of $\Sigma(G)$ with coefficients in ${\mathcal F}$ 
(which is the ``tangent space'' of the Chow group).  Hence we need
$$\Hom_{G(\widehat{\Q})} (\pi_f, H^d(\Sigma(G), {\mathcal F})) \not= 0$$
with $d = \dim \Sigma(G)$.  We put 
\[  n = \begin{cases}
\dim V^{\alpha}, \text{  in the hermitian case;}\\
 \dim V^{\alpha} -1, \text{  in the orthogonal case,} \end{cases} \]
so that $n \geq 2$ in all cases.  We have
\[  \begin{cases}
\dim \Sigma(G) = d =  2n-3 \\
\dim \Sigma (H)  =  n-2 \\
{\rm codim} \  \Sigma(H)  =  n-1.
\end{cases} \]
\vskip 5pt

\noindent  Now Matsushima's formula for cohomology shows that 
\[  \dim  \Hom_{G(\widehat{\Q})}(\pi_f, H^{d} (\Sigma(G),{\mathcal F})) \]
is equal to the sum of multiplicities in the cuspidal spectrum 
$$\sum_{\pi^{\alpha}}  m (\pi^{\alpha} \otimes (\otimes_{\text{$\beta \ne \alpha$ real}} \pi_{\beta})\otimes \pi_f)  \cdot  (G 
(\RR_\alpha) : G(\RR_\alpha)^0)$$
over the discrete series representations $\pi^{\alpha}$ of 
$G(\RR_\alpha)$ with the same infinitesimal and central character as 
$\pi_{\alpha}$.  If all of the multiplicities are $1$, the middle 
cohomology of $\Sigma(G)$ with coefficients in ${\mathcal F}$ will contain the 
motive $M \otimes N$ over $F$ or $E$, associated to the parameter of the 
$L$-packet of $\pi_0$. 
\vskip 10pt

On the other hand, the conjecture of Birch and Swinnerton-Dyer, as 
extended by Bloch and Beilinson, predicts that
$$\dim \Hom_{G(\widehat{\Q})} (\pi_f, CH^{n-1} (\Sigma(G), {\mathcal F}))$$
is equal to the order of vanishing of the $L$-function
$$L(\pi_0, R, s)$$
at the central critical point $s = 1/2$.  If the first derivative is nonzero, we 
should have an embedding, unique up to scaling
$$\pi_f \hookrightarrow CH^{n-1} (\Sigma(G), {\mathcal F})$$
and the Chow group of codim$(n-1)$ cycles plays the role of the space 
of automorphic forms in \S \ref{S:central}. 
\vskip 10pt

The height pairing against the codimension $(n-1)$ cycle $\Sigma(H)$, on 
which ${\mathcal F}$ has a unique trivial system, should give a nonzero 
linear form
$$ F : CH^{n-1} (\Sigma(G), {\mathcal F}) \to  \CC$$ 
analogous to the integration of automorphic forms over $H(F) \backslash 
H(\AA)$.   This form is $H(\widehat{\Q})$-invariant, and our global conjecture in this setting is:
\vskip 10pt

\begin{con} \label{conj:global3} 
The following are equivalent:
\vskip 5pt

\begin{enumerate}[(i)]
\item The representation $\pi_f$ occurs in $CH^{n-1}(\Sigma(G), \mathcal{F})$ with multiplicity one and the linear form $F$ is nonzero on $\pi_f$;
\vskip 5pt

\item $L'(\pi_0, R, 1/2) \ne 0$.
\end{enumerate}
\end{con}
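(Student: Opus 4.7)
The plan is to establish Conjecture \ref{conj:global3} in the style of an arithmetic Gross--Zagier / arithmetic Gan--Gross--Prasad formula, viewing it as the arithmetic counterpart of Conjecture \ref{conj:global2}. First I would replace the geometric cycle $\Sigma(H)$ by a cohomologically trivial modification $\mathcal{Z}(H)$: since $\Sigma(H)$ has codimension $n-1$ in $\Sigma(G)$ and the cohomology class may be nontrivial, one must subtract off its projection to the $\pi_f$-free part of $H^{2(n-1)}(\Sigma(G),\mathcal F)$. Once this is done, $\mathcal{Z}(H)$ defines an element of $CH^{n-1}(\Sigma(G),\mathcal F)_0$, and one can use the Beilinson--Bloch height pairing $\langle -, - \rangle_{BB}$ to produce a candidate functional $F$. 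Concretely, for a Hecke operator $T \in \mathcal{H}(G(\widehat\Q))$, one sets
\[
P(T) \;=\; \langle \mathcal{Z}(H),\, T \cdot \mathcal{Z}(H)\rangle_{BB},
\]
and the goal is an identity of the shape
\[
P(T) \;=\; c_\pi \cdot L'(\pi_0, R, 1/2) \cdot \prod_v \alpha_v(T_v),
\]
where $\alpha_v$ are the normalized local matrix coefficient integrals that arise in the refined Ichino--Ikeda formulation of Conjecture \ref{conj:global2}. Given such a formula, the equivalence in the conjecture follows by letting $T$ vary and invoking the local nonvanishing statements coming from Conjectures \ref{conj-mult1} and \ref{conj-character}, exactly as Conjecture \ref{conj:global2} was extracted from Conjecture \ref{conj:global1}.

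To produce the identity, I would set up a doubling/Rankin--Selberg integral on the ambient similitude group $G(V^\alpha) \times G(V^\alpha)$ (or on the corresponding unitary similitude group), whose special value is $L(\pi_0, R, s)$. The central derivative of this integral should then be compared, term by term on both geometric and spectral sides, with an arithmetic generating series. The natural candidate for the generating series is an arithmetic theta lift: one forms a Kudla-style generating function of special cycles on $\Sigma(G)$ of varying codimension (built out of the spaces $W_v^\perp$, which, as noted in the excerpt, form a coherent family even though $(W_v \subset V_v)$ is incoherent), shows it is a modular form valued in $\widehat{CH}^*(\Sigma(G))$, and pairs it with $\mathcal{Z}(H)$. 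The modularity of this generating series, in the generality required here, is the analog of results of Kudla--Millson and Yuan--Zhang--Zhang, and would be one of the substantial technical inputs.

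The computation then splits, place by place, into local height pairings that must be matched with local derivatives. At a non-archimedean place $v$ of good reduction, the intersection number on the integral model of $\Sigma(G)$ against $\mathcal Z(H)$ should equal the $v$-part of $\tfrac{d}{ds}L_v(\pi_0,R,s)\vert_{s=1/2}$ divided by $L_v(\pi_0,R,1/2)$ (which is finite), via an explicit orbital-integral identity. At archimedean $v = \alpha$ (the ``special'' real place), the Green current / star product computation of Kudla should reproduce the archimedean derivative. At the other archimedean places, compactness of $G_v(F_v)$ makes the contribution match that of Conjecture \ref{conj:global2} directly. Combining these local identities with the doubling integral gives the desired global formula.

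The main obstacle, by a wide margin, will be the arithmetic intersection theory: the Beilinson--Bloch height pairing is still conjectural in this level of generality (it requires good integral models of $\Sigma(G)$ and control of boundary contributions in the noncompact case, plus convergence of the nonarchimedean height), so the proof is necessarily conditional on these foundational inputs. Even granting these, the local arithmetic Siegel--Weil identities at places of bad reduction — matching intersection multiplicities on Rapoport--Zink spaces with derivatives of local orbital integrals — are known only in low rank (e.g.\ the work of Kudla--Rapoport and of W.\ Zhang on the arithmetic fundamental lemma), and extending them to the generality needed here is the deepest step. The cohomological triviality modification $\mathcal Z(H) - \Sigma(H)$, while expected to be harmless thanks to the endoscopic classification and the fact that the relevant cohomology class is of non-tempered type, will also require care, as will ruling out contributions from $L$-packets other than $\Pi_\varphi$ in the middle cohomology.
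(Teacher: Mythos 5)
There is nothing in the paper for your argument to be checked against: the statement you are addressing is Conjecture \ref{conj:global3}, and the authors do not prove it. What the paper supplies is only motivation — Matsushima's formula to locate $\pi_f$ in $H^{2n-3}(\Sigma(G),\mathcal{F})$, the Bloch--Beilinson extension of Birch--Swinnerton-Dyer to predict that $\dim \Hom_{G(\widehat{\Q})}(\pi_f, CH^{n-1}(\Sigma(G),\mathcal{F}))$ equals ${\rm ord}_{s=1/2} L(\pi_0,R,s)$, and the expectation that the height pairing against the cycle $\Sigma(H)$ furnishes the $H(\widehat{\Q})$-invariant functional $F$, generalizing Gross--Zagier ($n=2$) with a refined formula only ``proposed'' in [Zh]. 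So your text should be judged as a proposed proof of an open conjecture, not as an alternative to an existing one.

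Judged on those terms, what you have written is the expected programme (Kudla's arithmetic theta lifting together with a relative-trace-formula/height identity in the spirit of [GZ], [YZZ], [Zh]), but it is not a proof, and you have in effect conceded this yourself: every load-bearing step is itself an open problem in the generality required. Concretely: (a) the Beilinson--Bloch height pairing on $CH^{n-1}(\Sigma(G),\mathcal{F})$ is not known to be well defined (integral models, boundary in the noncompact case, convergence of nonarchimedean local heights), so the functional $P(T)=\langle \mathcal{Z}(H), T\cdot \mathcal{Z}(H)\rangle_{BB}$ is not yet an object; (b) the modularity of the arithmetic generating series in the Chow group, and the local arithmetic Siegel--Weil/arithmetic fundamental lemma identities at places of bad reduction, are known only in low rank; (c) even granting the putative identity $P(T)=c_\pi\, L'(\pi_0,R,1/2)\prod_v \alpha_v(T_v)$, the equivalence (i) $\Leftrightarrow$ (ii) does not follow from it alone: the direction (ii) $\Rightarrow$ (i) requires both nonvanishing of the local factors $\alpha_v$ for the distinguished tempered member of the packet (a refined local statement beyond Conjectures \ref{conj-mult1} and \ref{conj-character}) and the multiplicity-one assertion for $\pi_f$ in the Chow group, which your formula does not produce — it is essentially the Bloch--Beilinson prediction itself, i.e.\ part of what is to be proved; and the cohomological-triviality modification $\mathcal{Z}(H)$ must be shown not to alter the restriction of $F$ to the $\pi_f$-isotypic part. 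So the proposal is a sensible roadmap, consistent with the route the literature has since pursued, but as a proof of the statement it has a gap at every one of these points.
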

\vskip 10pt

\noindent{\bf Remark :} Just as the cohomology of a pro-Shimura variety associated to a reductive group
$G$ over $\QQ$  carries an admissible, automorphic action of $G(\widehat{\QQ})$, it is reasonable to expect that the Chow groups of cycles defined over the reflex field $E$ will also be admissible and automorphic. We note that this is true for the Shimura curves associated to inner forms $G$ of
to $\GL_2(\QQ)$: the action of $G(\widehat{\QQ})$ on the Chow
group of zero cycles of degree 0 is the Hecke action on the Mordell-Weil group of the Jacobian over $\QQ$, which factors through the action of endomorphisms on the differential forms. Here the multiplicity of a representation $\pi_f$ of $G(\widehat{\QQ})$ on the Chow group in the tower is conjecturally equal to the order of zero of the standard $L$-function associated
to $\pi_f$ at the central critical point.

\vskip 10pt

As in the global conjecture in central value case, one expects a refinement of the above conjecture, in the form of an exact formula relating the pairing
$$\langle  \Sigma (H) (\pi_f), \Sigma (H) (\pi_f)\rangle $$ 
to the first derivative $L' (\pi_0, R, 1/2)$. This would 
generalize the formula of Gross-Zagier [GZ], as completed by Yuan-Zhang-Zhang [YZZ], which is the case $n = 2$ where the codimension of 
the cycle is $1$. Such a refined formula  in higher dimensions has been proposed in a recent preprint of W. Zhang [Zh].

\vskip 10pt

%%%%%%%%%%%%%%%%%%%%%%%%%%%%%%%%%%%%%%%%%%%%%%%%%


\begin{thebibliography}{MVW} 

\bibitem[A1]{A1} J. Arthur, {\em
On some problems suggested by the trace formula},  Lie group representations, II (College Park, Md., 1982/1983), 1--49, Lecture Notes in Math., 1041, Springer, Berlin, 1984.

\bibitem[A2]{A2} J. Arthur, {\em Unipotent automorphic representations: conjectures},  Orbites unipotentes et reprŽsentations, II. AstŽrisque No. 171-172 (1989), 13--71.

\bibitem[A3]{A3} J. Arthur, {\em Automorphic representations of classical groups}, in preparation.

 \bibitem[AB]{}J.Adams and D. Barbasch, {\em Genuine representations of the
metaplectic group}, Compositio Math 113 (1998), no. 1, 23-66.

\bibitem[AG]{AG} A. Aizenbud and  D. Gourevitch, {\em Multiplicity one theorem for $\GL_{n+1}(\R), \GL_n(\R))$}, to appear in Selecta.

\bibitem[AGRS]{}A. Aizenbud, D. Gourevitch, 
S. Rallis and G. Schiffmann, {\em Multiplicity 
One Theorems}, Arxive: 0709.4215; to appear in the \emph{Annals of Mathematics}.

%\bibitem[BC]{} J. Bella\"iche and C. Chenevier, \emph{$p$-adic families 
%of Galois representations, and higher Selmer groups} Asterisque (2008).

%\bibitem[BZ]{}J. Bernstein and A. Zelevinsky, \emph{Induced
%Representations of Reductive $p$-adic groups}, Ann. Scient. Ecole Norm. Sup.
%10 (1977) 441-472.

\bibitem [B] {} N. Bourbaki, {\em Elements of Mathematics, Lie groups and 
Lie Algebras}, Chapters 7-9, Springer-Verlag (2005).

\bibitem[C]{C} W. Casselman, {\em  Canonical extensions of Harish-Chandra modules to representations of $G$},  Canad. J. Math. 41 (1989), no. 3, 385--438.

\bibitem[CS]{CS} W. Casselman and J. Shalika, {\em The unramified principal series of $p$-adic groups. II. The Whittaker function},  Compositio Math. 41 (1980), no. 2, 207--231.

\bibitem[De1]{}P. Deligne, {\em Les constantes des \'equations fonctionnelles des fonctions $L$}, Modular Functions on one variable, II, 
pp 501-597, lecture notes in mathematics, vol. 349, Springer, Berlin 1973.

\bibitem[De2]{}P.Deligne, {\em Les constantes locales de l'\'{e}quation fonctionnelle de la fonction $L$ d'Artin d'une reprŽsentation orthogonale}, Invent. Math. 35 (1976), 299--316.

\bibitem[De3]{De3} P. Deligne, {\em Vari\'{e}t\'{e}s de Shimura: interprŽtation modulaire, et techniques de construction de mod\`{e}les canoniques},  Automorphic forms, representations and $L$-functions (Proc. Sympos. Pure Math., Oregon State Univ., Corvallis, Ore., 1977), Part 2, pp. 247--289, Proc. Sympos. Pure Math., XXXIII, Amer. Math. Soc., Providence, R.I., 1979.


\bibitem[FQ]{} A. Fr\"ohlich and J. Queyrut, {\em On the functional
equation of an Artin $L$-function for characters of real representations},
Inv. Math. 20 (1973) 125-138.

\bibitem[GT1]{}W. T. Gan, and S. Takeda, 
{\em The Local Langlands Conjecture for $GSp(4)$}; preprint, available at {\em http://www.math.ucsd.edu/~wgan/LLC-Gsp4.pdf}.

\bibitem[GT2]{}W. T. Gan, and S. Takeda, 
{\em The Local Langlands Conjecture for $Sp(4)$}; preprint, available at {\em http://www.math.ucsd.edu/~wgan/LLC-Sp4-fin.pdf}.

\bibitem[GJR1]{GJR1} D. Ginzburg, D. Jiang, and S. Rallis, {\em  On the nonvanishing of the central value of the Rankin-Selberg $L$-functions},  J. Amer. Math. Soc. 17 (2004), no. 3, 679--722.

\bibitem[GJR2]{}D. Ginzburg, D. Jiang, and S. Rallis, {\em On the nonvanishing
of the central value of the Rankin-Selberg $L$-functions, II}, Automorphic 
Representations, $L$-functions and Applications: Progress and Prospects, 157-191, Ohio State Univ. Math. Res. Inst. Publ. 11, de Guyter, Berlin 2005. 

\bibitem[GJR3]{}D. Ginzburg, D. Jiang, and S. Rallis, {\em Models for certain residual
representations of unitary groups}, AMS Contemporary series (Gelbart volume); to appear (2008). 



%\bibitem[GRS]{} S. Gelbart, J. Rogawski, and D. Soudry, {\emph Endoscopy,
%theta-liftings, and period integrals for the unitary group 
%in three variables}. Annals of Math, 145 (1997), 419-476.

\bibitem[GP1]{} B. Gross and D. Prasad, {\em
 On the decomposition of a representation of ${\rm SO}\sb n$ 
when restricted to ${\rm SO}\sb {n-1}$.}  Canad. J. Math.  44  (1992),  
no. 5, 974--1002.

\bibitem[GP2]{} B. Gross and D. Prasad, {\em
 On irreducible representations of} ${\rm SO}\sb {2n+1}\times{\rm SO}\sb 
{2m}$.  Canad. J. Math.  46  (1994),  no. 5, 930--950. 
 


\bibitem[GR]{GR} B. Gross and M. Reeder, 
{\em Arithmetic invariants of discrete Langlands parameters}, preprint, available at {\em http://www.math.harvard.edu/~gross/preprints/adjointgamma3.pdf}.

\bibitem[GR2]{GR} B. Gross and M. Reeder, {\em From Laplace to Langlands 
via representations of orthogonal
groups,} Bulletin of the AMS 43 (2006) 163-205.

\bibitem[GZ]{} B. Gross, and D. Zagier, {\em Heegner points and derivative of
$L$-series}, Invent. Math 84 (1986), no. 2, 225-320.

%\bibitem[HK]{hk}{}M. Harris and S. Kudla, 
%\emph{On a conjecture of H. Jacquet}. Contributions 
%to Automorphic forms, Geometry, and Number Theory, 355-371; Johns Hopkins 
%Univ. Press, Baltimore (2004); edited by H. Hida,
%D. Ramakrishnan, and F. Shahidi.

\bibitem[GPSR]{GPSR} S. Gelbart, I. Piatetski-Shapiro and S. Rallis, {\em Explicit constructions of automorphic $L$-functions},  Lecture Notes in Mathematics, 1254. Springer-Verlag, Berlin, 1987,  vi+152 pp.


\bibitem[HKS]{}M. Harris, S. Kudla and W. J. Sweet, {\em Theta dichotomy
for unitary groups}, J. of the AMS, vol. 9, No. 4 (1996), 941-1004.

\bibitem[HT]{HT} M. Harris and R. Taylor, {\em The geometry and cohomology of some simple Shimura varieties}, With an appendix by Vladimir G. Berkovich,  Annals of Mathematics Studies, 151. Princeton University Press, Princeton, NJ, 2001.

\bibitem[He]{He} G. Henniart, {\em Une preuve simple des conjectures de Langlands pour ${\rm GL}(n)$ sur un corps $p$-adique},  Invent. Math. 139 (2000), no. 2, 439--455.

\bibitem[Ho]{Ho} R. Howe, {\em Transcending classical invariant theory}, J. Amer. Math. Soc. 2 (1989), no. 3, 535--552.

\bibitem[II]{II} A. Ichino and T. Ikeda, {\em On the periods of automorphic forms on special orthogonal groups and the Gross-Prasad conjecture}, to appear in Geometric and Functional Analysis.

\bibitem[JPSS]{JPSS} H. Jacquet, I. Piatetski-Shapiro and J. Shalika, {\em Rankin-Selberg convolutions},  Amer. J. Math. 105 (1983), no. 2, 367--464.

\bibitem[JR]{JR} H. Jacquet and S. Rallis, {\em On the Gross-Prasad conjecture for unitary groups}, to appear. 

\bibitem[JS]{JS} H. Jacquet and J. Shalika, {\em Rankin-Selberg convolutions: Archimedean theory}, Festschrift in honor of I. I. Piatetski-Shapiro on the occasion of his sixtieth birthday, Part I (Ramat Aviv, 1989), 125--207, Israel Math. Conf. Proc., 2, Weizmann, Jerusalem, 1990.

\bibitem[JSZ]{JSZ} D. H. Jiang, B. Y. Sun and C. B. Zhu, {\em Uniqueness of Bessel models: the archimedean case},   available at  http://xxx.lanl.gov/pdf/0908.1728.

\bibitem[K]{K} S. Kudla, {\em On the local theta-correspondence},  Invent. Math. 83 (1986), no. 2, 229--255. 

\bibitem[KR]{} S. Kudla and S. Rallis, {\em On first occurrence in local theta
correspondence}, Automorphic representations, $L$-functions and applications; 
progress and prospects, 273-308, Ohio State Univ. Math. Res. Publ., 11, de Gruyter, Berlin, 2005. 

\bibitem[Kh]{Kh} M. Khoury, {\em Multiplicity one results and explicit formulas for quasi-split p-adic 
unitary groups}, thesis, Ohio-State University (2008).

\bibitem[KMRT]{}M-A. Knus, A. Merkujev, M. Rost, J-P. Tignol, {\em The book of
involutions}, American Mathematical Society Colloquium Publications, 
44, AMS, 1998.

\bibitem[KMS]{KMS} S. Kato, A. Murase and T. Sugano, {\em Whittaker-Shintani functions on orthogonal groups}, Tohoku Math. J. 55 (2003), 1-64.

\bibitem[Ku]{} P. Kutzko, {\em The Langlands conjecture for $\GL_2$ of a local field}, Ann. of Math. (2) 112 (1980), no. 2, 381--412. 

\bibitem[LL]{} J.-P. Labesse and R. Langlands, {\em  $L$-indistinguishability for 
${\rm SL}(2)$},  Canad. J. Math. 31 (1979), no. 4, 726--785. 

\bibitem[Mo1]{Mo1} C. Moeglin, {\em Classification et changement de base pour les sŽries discrtes des groupes unitaires $p$-adiques}, Pacific J. Math. 233 (2007), no. 1, 159--204.

\bibitem[Mo2]{Mo2} C. Moeglin, {\em Classification des sŽries discrtes pour certains groupes classiques $p$-adiques} in {\em  Harmonic analysis, group representations, automorphic forms and invariant theory}, 209--245, Lect. Notes Ser. Inst. Math. Sci. Natl. Univ. Singap., 12, World Sci. Publ., Hackensack, NJ, 2007.

%\bibitem[Mu]{} G. Muic, {\em On the structure of theta lifts of discrete
%series for dual pairs $(Sp(n),O(V))$}, Israel J. of Math., 164 (2008) 87-124.

\bibitem[MS1]{MS1} A. Murase and T. Sugano, {\em Whittaker-Shintani functions on the symplectic group of Fourier-Jacobi type},  Compositio Math. 79 (1991), 321Ð349.

\bibitem[MS2]{MS2} A. Murase and T. Sugano, {\em Shintani functions and automorphic L-functions for $\GL(m)$},  Tohoku Math. J. 48 (1996), 165Ð202.

\bibitem[MVW]{} C.Moeglin, M.-F. Vigneras and J.-L. Waldspurger, {\em Correspondances de Howe sur un corps $p$-adique}, Lecture Notes in Mathematics, 1291. Springer-Verlag, Berlin, 1987. viii+163 pp. 

\bibitem[P1]{} D. Prasad, {\em On the local Howe duality correspondence}, Internat. Math. Res. Notices 1993, no. 11, 279--287. 

\bibitem[P2]{} D. Prasad, {\em Invariant forms for representations of ${\rm GL}\sb 2$ over a local field},  Amer. J. Math. 114 (1992), no. 6, 1317--1363.

\bibitem[P3]{}D. Prasad, {\em Theta correspondence for Unitary groups},
Pacific J. of Math. 194 (2000), 427-438.

 
 %\bibitem[P2]{ext-tunnell}
%D. Prasad, \emph{On an extension of a theorem of Tunnell}, Compositio 94
%(1994), 19-28.

%\bibitem[P3]{prasad_saito}D. Prasad, 
%\emph{Relating invariant linear form and local epsilon factors 
 %via global methods, with an appendix 
%by H. Saito}, Duke J. Math 138 (2007) no. 2, 233-261.

%\bibitem[P4]{prasad-duke}D. Prasad, \emph {On the decomposition of a 
%representation of ${\rm G}L(3)$ when restricted to ${\rm G}L(2)$}, Duke J. of Math 69,
%167-177 (1993).

%\bibitem[P5]{} D. Prasad {Some applications of seesaw duality to branching
%laws}, Math. Annalen, 304 (1996), 1-20.

%\bibitem[M]{reeder}M. Reeder, {\emph On the restriction of Deligne-Lusztig 
%characters}, J. Amer. Math. Soc. 20 (2007), 573-602.

%\bibitem[Ro1]{a}J. Rogawski, ``The multiplicity formula for A-packets," in: The 
%zeta functions of Picard Modular surface, edited by R. Langlands, and
 %D. Ramakrishnan.

\bibitem[R]{R} S. Rallis, {\em On the Howe duality conjecture}, Compositio Math. 51 (1984), no. 3, 333--399.

\bibitem[Ro]{}J.Rogawski, {\em Automorphic representations of unitary groups
in three variables}, Annals of Mathematics Studies, vol 123, Princeton University Press, 1990.


%\bibitem[Sa]{} H. Saito,
%\emph{On Tunnell's formula for characters of ${\rm GL}(2)$}.  Compositio Math.  85  (1993),  no. %1, 99--108. 

%\bibitem[Se]{}J.-P. Serre, \emph{A course in Arithmetic}, Graduate Text in Mathematics, vol. 7, Springer-Verlag (1973).

%\bibitem[Sh]{} F. Shahidi, \emph{ On the Ramanujan conjecture and 
%finiteness of poles for certain $L$-functions},  
%Ann. of Math. (2)  127  (1988),  no. 3, 547--584.

\bibitem[S]{}Binyong Sun,  {\em Multiplicity one theorems for symplectic groups},
arXiv: 0903.1417.

\bibitem[SZ]{}Binyong Sun, Chen-Bo Zhu, {\em Multiplicity one theorems: the Archimdean case}, arXiv 0903.1413.

\bibitem[Se]{Se} J.-P. Serre, {\em A course in arithmetic}, Translated from the French. Graduate Texts in Mathematics, No. 7. Springer-Verlag, New York-Heidelberg, 1973. viii+115 pp.

\bibitem[T]{} J.Tate,``Number theoretic background," in Automorphic forms,
Representations, and $L$-functions, Proc. Symposia Pure Math., AMS, 33, 3-26 
(1979).

%\bibitem[Ta1]{b}J. Tate, ``Local Constants,'' in 
%Algebraic Number Fields, (Proc. Sympos., Univ. Durham, Durham, 1975), pp 89-131, Academic Press, London (1977).

%\bibitem[Th]{} E. Thoma, \emph{Die Einschr\"ankung der Charakter 
%von ${\rm G}L(n,{\Bbb F}_q)$ auf ${\rm G}L(n-1,{\Bbb F}_q)$}. Math Z. 119 (1971) 321-338.
  
%\bibitem[Tu]{}	J. Tunnell, \emph{Local $\epsilon $-factors and 
%characters of ${\rm GL}(2)$.}  Amer. J. Math.  105  (1983),  no. 6, 1277--1307.

\bibitem[Vo]{}D. Vogan, {\em The local Langlands conjecture},
Contemporary Maths, AMS, 145, 305-379 (1993).
 
%\bibitem [Wa]{} J.-L. Waldspurger, \emph{Sur les valeurs de certaines 
%fonctions $L$ automorphes en leur centre de symétrie.}
 % Compositio Math.  54  (1985),  no. 2, 173--242. 

\bibitem[W1]{W1} N. Wallach, {\em Real reductive groups. II}, Pure and Applied Mathematics, 132-II. Academic Press, Inc., Boston, MA, 1992. xiv+454 pp.

\bibitem[W2]{W2} N. Wallach, {\em $C\sp \infty$ vectors},  Representations of Lie groups and quantum groups (Trento, 1993), 205--270, Pitman Res. Notes Math. Ser., 311, Longman Sci. Tech., Harlow, 1994. 

\bibitem[W3]{W3} N. Wallach, 
{\em On the constant term of a square integrable automorphic form}, in {\em Operator algebras and group representations, Vol. II (Neptun, 1980)}, 227--237, Monogr. Stud. Math., 18, Pitman, Boston, MA, 1984.

\bibitem[Wa1]{} J.-L. Waldspurger, {\em  Correspondance de Shimura},  J. Math. Pures Appl. (9) 59 (1980), no. 1, 1--132.

\bibitem[Wa2]{} J.-L. Waldspurger, {\em Correspondances de Shimura et quaternions}, Forum Math. 3 (1991), no. 3, 219--307.

\bibitem[Wa3]{}  J.-L. Waldspurger, {\em DŽmonstration d'une conjecture de dualitŽ de Howe dans le cas $p$-adique, $p\neq 2$}, in {\em Festschrift in honor of I. I. Piatetski-Shapiro on the occasion of his sixtieth birthday, Part I (Ramat Aviv, 1989)}, 267--324, Israel Math. Conf. Proc., 2, Weizmann, Jerusalem, 1990. 

\bibitem[Wa4]{}J.-L.Waldspurger, {\em  Une Formule int\'egrale reli\'ee \`a la conjecture locale de Gross-Prasad,} arXiv: 0902.1875.


\bibitem[Wa5]{}J.-L. Waldspurger, {\em  Une Formule int\'egrale reli\'ee \`a la conjecture locale de Gross-Prasad, 2\`eme partie: extension aux repre\'sentations temp\'er'ees}, arXiv: 0904.0314.


\bibitem[We1]{} A. Weil, {\em Basic number theory},  Third edition, Die Grundlehren der Mathematischen Wissenschaften, Band 144. Springer-Verlag, New York-Berlin, 1974. xviii+325 pp.

\bibitem[We2]{} A. Weil, {\em  Sur certains groupes d'opŽrateurs unitaires}, Acta Math. 111 1964 143--211.

\bibitem[YZZ]{YZZ} X. Yuan, S. W. Zhang and W. Zhang, {\em Heights of CM points I: Gross-Zagier formula}, preprint,  available at {\em http://www.math.columbia.edu/~szhang/papers/HCMI.pdf}.
 
\bibitem[Zh]{Zh} W. Zhang, {\em Relative trace formula and arithmetic
Gross-Prasad conjecture}, preprint (2009).

\bibitem[Z]{Z} C. Zorn, {\em Theta dichotomy and doubling gamma factors for 
$\widetilde{\Sp}_n(F)$}, preprint, available at {\em http://www.math.ohio-state.edu/~czorn}.
\end{thebibliography}
\end{document}